\newcolumntype{C}[1]{>{\centering\arraybackslash}m{#1}}
\tikzstyle{none}=[inner sep=0pt]
\title{Multipath cohomology of directed graphs}
\author{L. Caputi and C. Collari and S. Di Trani}
\newtheorem{thm}{Theorem}[section]
\newtheorem{prop}[thm]{Proposition}
\newtheorem{lem}[thm]{Lemma}
\newtheorem{cor}[thm]{Corollary}
\newtheorem{rem}[thm]{Remark}
\newtheorem{example}[thm]{Example} 
\newtheorem{q}[thm]{Question} 
\newtheorem{notation}[thm]{Notation} 
\newtheorem{conv}[thm]{Convention} 
\newtheorem{defn}[thm]{Definition}
\definecolor{aquamarine}{rgb}{0.5, 1.0, 0.83}
\definecolor{princetonorange}{rgb}{1.0, 0.56, 0.0}
\definecolor{caribbeangreen}{rgb}{0.0, 0.8, 0.6}
\definecolor{bunired}{rgb}{0.8, 0.0, 0.0}
\definecolor{cdgreen}{rgb}{0.0, 0.42, 0.24}
\address{Luigi Caputi: Institute of Mathematics, University of Aberdeen, AB24 3UE, UK}
\email{luigi.caputi@abdn.ac.uk}
\address{Carlo Collari: New York University Abu Dhabi, POBox 129188, UAE}
\email{carlo.collari@nyu.edu\\ carlo.collari.math@gmail.com}
\address{Sabino Di Trani: Dipartimento di Matematica ``G. Castelnuovo'', Sapienza Universit\`a di Roma,  IT}
\email{sabino.ditrani@uniroma1.it}
\keywords{Graph homology, Chromatic homology, Hochschild homology}
\subjclass[2020]{18G85, 05C20, 13D03}
\DeclareMathOperator{\Lan}{\mathrm{Lan}}
\DeclareMathOperator{\colim}{\mathrm{colim}}
\newcommand{\bN}{\mathbb N}
\newcommand{\bK}{\mathbb K}
\newcommand{\R}{\mathcal R}
\newcommand{\bZ}{\mathbb Z}
\newcommand{\bA}{\mathbf A}
\newcommand{\cF}{\mathcal F} 
\newcommand{\bP}{\mathbf P}
\newcommand{\rk}{{\rm rk} \,}
\newcommand{\x}{\times}
\DeclareMathOperator{\id}{id}
\newcommand{\tG}{{\tt G} }
\newcommand{\tH}{{\tt H} }
\newcommand{\tI}{{\tt I}}
\newcommand{\tP}{{\tt P}}
\newcommand{\lgt}{{\rm length}}
\newcommand{\Hasse}{{\tt Hasse}}
\begin{document}
\maketitle

\begin{abstract}
This work is part of a series of papers focusing on multipath cohomology of directed graphs.  Multipath cohomology is defined as the (poset) homology of the path poset -- i.e.,~the poset of disjoint simple paths in a graph -- with respect to a certain functor.  This construction is essentially equivalent, albeit more computable, to taking the higher limits of said functor on (a certain modification of) the path poset.
We investigate the functorial properties of multipath cohomology.  We provide a number of sample computations, show that the multipath cohomology does not vanish on trees, and that, when evaluated at the coherently oriented polygon, it recovers Hochschild homology.  Finally, we use the same techniques employed to study the functoriality to investigate the connection with the chromatic homology of (undirected) graphs introduced by L.~Helme-Guizon and Y.~Rong. 
\end{abstract}

\section{Introduction}

Directed graphs are ubiquitous objects in Mathematics and Science in general.  Due to their simplicity and flexibility, (directed) graphs find application in a wide range of fields: Physics, Computer Sciences, Complex Systems, Engineering, Biology, Neuroscience, Medicine, Robotics, \emph{etc.,}~encompassing and embracing most scientific domains.
Extracting topological and combinatorial information from directed graphs is, therefore, not only interesting, but also particularly important from different perspectives.

Cohomological invariants of directed graphs have been extensively studied in the last decades, with prominent work in combinatorial topology -- see~\cite{MR2022345, Kozlov, Jonsson} -- and have deep connections with other areas of mathematics -- see \cite[Chapter 1]{Jonsson} for an overview.  One of the common strategies is to construct suitable simplicial complexes -- e.g.,~matching complexes, independence complexes, complexes of directed trees, etc.~\cite{Jonsson} -- associated to a (directed) graph, and then to analyse the associated homology groups. In this work we follow a similar approach; we first represent a graph using a suitable poset, called \emph{path poset}~\cite{turner}, and then apply a cohomology theory of posets known as \emph{poset homology} -- see, e.g.,~\cite{chandler2019posets} -- to get cohomological invariants of directed graphs. We call these invariants \emph{multipath cohomology groups}, as they are constructed using the combinatorial information of multipaths (i.e.,~the elements of the path poset) in a directed graph.

Roughly speaking, poset homology associates to a poset $P$ and a functor $\mathcal{F}$ on $P$, a graded module. As, in our case, the poset $P$ (and the functor $\mathcal{F}$) depend on a directed graph $\tG$, we obtain a (cohomology) theory for directed graphs. This idea is not novel, for instance Helme-Guizon and Rong~\cite{HGRong} , using a different poset, defined chromatic homology. On a different note, one might use a different the homology theory of posets; for instance, the classical functor homology groups, an approach pursued by Turner and Wagner~\cite{turner}. Comparisons between multipath cohomology, Helme-Guizon and Rong's chromatic homology, Turner-Wagner's homology, and other theories obtained using different (co)homologies for posets will be carried out in Sections~\ref{sec:compare} and~\ref{sec:chromatic}.

Our main goal is to investigate structural and combinatorial properties of digraphs through the lenses of multipath cohomology.
In this work, we are interested in the definition and general properties of multipath cohomology, such as functoriality, and in its relationship with similar theories. The investigation of combinatorial properties, and the computations of multipath cohomology groups for various families of directed graphs are the subject of~\cite{secondo}, and forthcoming papers. We believe that the framework developed here can be helpful both in answering theoretical questions as well as solving problems in the applied setting -- {cf}.~Section~\ref{sec:questions}.

\subsection{Other approaches}
The development and investigation of homology theories for directed graphs (shortly, \emph{digraphs}) is far from being novel, and it is, in fact, a very active research field.
A first approach comes from the observation that a (directed) graph can naturally be seen as a topological space (a $1$-dimensional CW-complex), on which ordinary homology can be applied. However, in this case, the homology groups in degree $i>1$ would vanish. 
To sidestep this issue, there are various ways that can be pursued. For instance, one can define  (higher dimensional) simplicial complexes from a graph  -- e.g.,~by constructing the (directed) flag complex (also known as clique complex) \cite{IVASHCHENKO1994159,CHEN2001153,Aharoni, govc2021complexes}, or the matching complexes, independence complexes -- see \cite{Jonsson} -- hence, compute their ordinary (simplicial) homology. 
Alternatively, one can construct the so-called path complex (see, e.g.~\cite{Grigoryan_first} and the references therein) whose homology is called path homology. 
In a third approach, one can associate to a digraph the so-called path algebra. Then, homology groups of digraphs can be defined as the Hochschild homology groups of the path algebras -- cf.~\cite{happel, 2022arXiv220400462C}. 
A pitfall of most of these homology theories is that they vanish when evaluated on trees; this hints to the fact that they might be discarding an important part of the combinatorics of the input digraph, including their directionality information

Turner and Wagner in~\cite{turner} move in a different direction. Given a graph~\tG they consider its {path poset} $P(\tG)$; that is, the collection of all the unions of disjoint simple paths in $\tG$, partially ordered by inclusion. Since posets can be  seen as categories in a natural way, one can apply homology with functor coefficients~\cite{Gabriel1967CalculusOF} to the path poset and obtain topological invariants of the directed graph. 
On the one hand, this homology is non trivial. In particular, for a given algebra~$A$, the Turner-Wagner homology of the coherently oriented polygon with~$n$ edges is isomorphic, up to degree~$n$, to the Hochschild homology groups of $A$ -- cf.~\cite[Theorem 1]{turner}.
On the other hand, the homology of a category with coefficients in a functor is generally difficult to compute. 
Computations can be done with relative ease if one considers the constant functor, i.e.~the functor which associates to each element of the path poset the base ring. However, in this case, the result is trivial since the path poset has a minimum: the empty multipath.

Close to Turner-Wagner homology sits the so-called chromatic homology, introduced by Helme-Guizon and Rong~\cite{HGRong}. The chromatic homology is an homology theory for unoriented graphs, inspired by Khovanov homology~\cite{Khovanov}, and with the remarkable property that it categorifies the chromatic polynomial. Przytycki has shown that a version of the chromatic homology (further extended to incorporate the orientation in the case of linear and polygonal graphs) can recover (a truncation of) the Hochschild homology~\cite{Prz}. This fact was later used by Turner and Wagner to prove~\cite[Theorem~1]{turner}, by showing that for the polygons their homology is in fact isomorphic to Przytycki's version of the chromatic homology.

In this work, inspired by the Turner--Wagner's and the Helme-Guizon--Rong's approaches, we follow a certain modification of Turner--Wagner's functorial framework;
instead of directly applying functors to the path poset $P(\tG)$ of $\tG$, we use poset homology~\cite{chandler2019posets}, a suitable adaptation of Helme-Guizon and Rong's construction to this context. Alternatively, instead of the na\"ive poset homology, one  can use the so-called cellular cohomology, introduced by Turner and Everitt~\cite{TurnerEverittCell}. Cellular cohomology extends poset homology to arbitrary finite (ranked) posets; this  yields, after some minor modifications on the path poset, an essentially equivalent theory -- see Section~\ref{sec:compare}. Nonetheless, the advantage of poset homology over cellular homology is its computability, which is essential  in view of possible applications -- see Question~\ref{q:persistent} and the computations developed in \cite{secondo}.

 \subsection{Statement of results}

In this paper we construct a cochain complex~$(C_{\mathcal{F}}^*(P),d^*)$; this depends on the datum of a poset $P$ associated to a graph $\tG$, and a covariant functor $\mathcal{F}$, defined on (the category associated to) $P$ with values in an additive category~$\mathbf{A}$.
Roughly, $C_{\mathcal{F}}^n(P)$ is given by a directed sum of~$\mathcal{F}(x)$, for all $x\in P$ of ``level'' $n$.  The differential $d^*$ is induced by  the functor $\mathcal{F}$ applied to the covering relations in $P$.
In~Subsection~\ref{sec:multipathhom}, we specialise this construction to obtain multipath cohomology.
First,~we fix a ring $R$, an $R$-algebra $A$, and a $(A,A)$-bimodule $M$.
Then, the \emph{r\^ole} of the poset $P$ is played by the path-poset $P(\tG)$, and the part of the functor $\mathcal{F}$ is taken by $\mathcal{F}_{A,M}$. The latter assigns a tensor product of copies of $M$ and $A$ to each $\tH\in P(\tG)$.
We finally denote by $(C^{*}_{\mu}(\tG;A,M),d^*)$ the cochain complex~$(C^*_{\mathcal{F}_{A,M}}(P(\tG)),d^*)$.
The main definition (cf.~Definition~\ref{def:multipathhom}) is now the following;

\begin{defn}
 The \emph{multipath cohomology} $\mathrm{H}_{\mu}^*(\tG;A,M)$ of a digraph $\tG$ with $(A,M)$-coefficients is the cohomology  of the cochain complex $(C^{*}_{\mu}(\tG;A,M),d^*)$. 
\end{defn}

Unless otherwise specified, for the rest of the introduction we set $M=A$. In particular, we drop $M$ from the notation of multipath cohomology, writing  ${\rm H}^{*}_{\mu}(\tG;A)$ instead of ${\rm H}^{*}_{\mu}(\tG;A,M)$. 
Some computations of multipath cohomology, for $A= R = \bK$ a field, are collected in Table~\ref{tab: homology computatin}.

A key property of cohomology theories is that they are functorial. 
One of the main results of this paper is that functoriality for multipath cohomology holds once we fix the number of vertices in our graphs.

\begin{thm}\label{thm:functoriality}
Let $R\text{-}\mathbf{Alg}$ be the category of  $R$-algebras, let $\mathbf{Digraph}(n)$ be the category of digraphs with $n$ vertices, and let  $R\text{-}\mathbf{Mod}^{\mathrm{gr}}$ be the category of graded $R$-modules. Then, multipath cohomology
	\[
	\mathrm{H}_\mu(-;-)\colon \mathbf{Digraph}^{\mathrm{op}}{(n)}\times  {R\text{-}\mathbf{Alg}}\to R\text{-}\mathbf{Mod}^{\mathrm{gr}}
	\]
	is a bifunctor {for all $n$}.
\end{thm}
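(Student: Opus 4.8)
The plan is to construct the bifunctor at the level of cochain complexes and only then pass to cohomology, handling the two variables separately and finally checking the interchange law. Throughout I use that, by construction (cf.~the description preceding the statement), $C^{*}_{\mu}(\tG;A)=\bigoplus_{x}\mathcal{F}_{A}(x)$ is graded by the level of $x\in P(\tG)$, and that the differential $d^{*}$ raises the level by one, being assembled from the images under $\mathcal{F}_{A}$ of the covering relations of $P(\tG)$.

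\emph{The algebra variable (covariant).} Fixing $\tG$, a morphism of $R$-algebras $f\colon A\to B$ induces, for every multipath $\tH\in P(\tG)$, a map $\mathcal{F}_{A}(\tH)\to\mathcal{F}_{B}(\tH)$ obtained by applying $f^{\otimes}$ to the tensor factors. Since the structure maps of the coefficient functor are built from the multiplication and unit of the algebra, and $f$ respects both, these maps assemble into a natural transformation $\mathcal{F}_{A}\Rightarrow\mathcal{F}_{B}$ of functors on $P(\tG)$. A natural transformation of coefficient functors commutes with $d^{*}$, since the latter is assembled from the functor evaluated on covering relations; it therefore yields a cochain map $C^{*}_{\mu}(\tG;A)\to C^{*}_{\mu}(\tG;B)$ and, on cohomology, $\mathrm{H}^{*}_{\mu}(\tG;A)\to\mathrm{H}^{*}_{\mu}(\tG;B)$. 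The identity and composition axioms are then immediate from $\mathrm{id}^{\otimes}=\mathrm{id}$ and $(g\circ f)^{\otimes}=g^{\otimes}\circ f^{\otimes}$.

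\emph{The digraph variable (contravariant).} First I would turn a morphism of digraphs into a map of path posets. A morphism $\phi\colon \tG\to\tH$ in $\mathbf{Digraph}(n)$ is injective on the size-$n$ vertex set, hence bijective and edge-injective; consequently it carries simple paths to simple paths and disjoint unions to disjoint unions, inducing an order- and level-preserving map $P(\phi)\colon P(\tG)\to P(\tH)$. Next I would check that $P(\phi)$ is a \emph{full} embedding whose image is a \emph{down-set}: passing to a sub-multipath removes edges, and the preimage of such a sub-multipath under the vertex bijection is again a $\tG$-multipath, so the image is closed under going downward. Since $d^{*}$ only raises the level, the summands indexed by the (up-closed) complement of the image span a subcomplex of $C^{*}_{\mu}(\tH;A)$; the quotient complex is canonically identified, via the functor compatibility $\mathcal{F}_{A}(P(\phi)(y))\cong\mathcal{F}_{A}(y)$ and the bijection of covering relations inside the image with those of $P(\tG)$, with $C^{*}_{\mu}(\tG;A)$. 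The resulting projection $C^{*}_{\mu}(\tH;A)\twoheadrightarrow C^{*}_{\mu}(\tG;A)$ is a cochain map and induces $\mathrm{H}^{*}_{\mu}(\tH;A)\to\mathrm{H}^{*}_{\mu}(\tG;A)$, i.e.~the desired contravariant dependence; functoriality follows because composites of vertex bijections give composites of the corresponding projections.

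\emph{Bifunctoriality, and the main obstacle.} To conclude I would verify the interchange law: for $\phi\colon \tG\to\tH$ and $f\colon A\to B$ the two induced cochain maps commute, because the digraph map acts by selecting and zeroing whole summands while the algebra map acts \emph{inside} each summand by $f^{\otimes}$, and the same compatibility holds componentwise for $d^{*}$; passing to cohomology then gives the commuting square defining a bifunctor. I expect the genuine difficulty to lie entirely in the digraph variable, namely in showing that a digraph morphism yields a level-preserving poset map with down-closed image \emph{and} compatible coefficients. This is precisely where the hypothesis of a fixed vertex number is indispensable: a morphism collapsing vertices could send a simple path to a non-simple walk, destroying both the poset map and the identification of coefficients, so the restriction to $\mathbf{Digraph}(n)$ with vertex-bijective morphisms is exactly what makes the whole construction go through.
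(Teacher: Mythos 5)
Your outline reproduces the paper's own route: functoriality in the algebra variable via the maps $f^{\otimes k}\colon\mathcal{F}_{A,A}(\tH)\to\mathcal{F}_{B,B}(\tH)$ (this is Proposition~\ref{prop:functalg}), and functoriality in the digraph variable by observing that the image of $P(\phi)$ is a downward closed sub-poset, that the summands over the upward closed complement form a subcomplex, and that the quotient projects onto the complex of the source graph (this is the content of Proposition~\ref{prop:seschfsy}, of the short exact sequence~\eqref{eq:pigg}, and of Lemma~\ref{lemma:functo}). However, there is a genuine gap in the digraph step: you never address sign assignments. The differential of $C^{*}_{\mu}(-;A)$ carries signs $(-1)^{\epsilon(\tH,\tH')}$ coming from a chosen sign assignment, and a vertex bijection $\phi\colon\tG'\to\tG$ has no reason to carry the chosen assignment on $P(\tG)$, restricted to the image of $P(\phi)$, to the chosen assignment on $P(\tG')$; for instance, the explicit assignment $\sigma_{\rm e}$ of Equation~\eqref{eq:sigma_e} depends on the vertex ordering, which $\phi$ need not respect. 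Consequently the map which is the ``canonical identification'' on summands in the image and zero elsewhere need not commute with the differentials. The repair is the uniqueness, up to isomorphism, of sign assignments on downward (or upward) closed sub-posets of $SSG(\tG)$ (Theorem~\ref{thm:uni_signs}, via Corollary~\ref{cor:homology_not_sign}): one must compose your projection with the isomorphism induced by a change of sign assignment, and treat these isomorphisms coherently when verifying the composition law of Lemma~\ref{lemma:functo}; the paper is explicit about exactly this point in Remark~\ref{rem:signs} and at the end of the proof of Proposition~\ref{prop:seschfsy}.

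Your closing diagnosis of why one must fix the number of vertices is also incorrect. Morphisms in $\mathbf{Digraph}$ are regular (injective) by definition, and a morphism of digraphs can never identify the endpoints of an edge (Definition~\ref{def:morph}), so ``collapsing a simple path to a non-simple walk'' is not the phenomenon being excluded; it cannot occur anywhere in the paper's categories. The actual obstruction (Remark~\ref{rem:Faanotcsyst}) is that $\mathcal{F}_{A,A}$ fails to be a coefficients system: if $\phi\colon\tG'\to\tG$ is injective but not surjective on vertices, then every vertex of $\tG$ outside the image of $\phi$ is an isolated component of each multipath $P(\phi)(\tH)$, and hence contributes an extra tensor factor of $A$, so that $\mathcal{F}_{A,A}(P(\phi)(\tH))$ is $\mathcal{F}_{A,A}(\tH)\otimes A^{\otimes \#\left(V(\tG)\setminus \phi(V(\tG'))\right)}$ rather than $\mathcal{F}_{A,A}(\tH)$. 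Fixing the number of vertices forces morphisms to be vertex-bijective and kills this factor; taking $A=R$ kills it as well, which is precisely why the paper's constant-coefficient functoriality theorem holds on all of $\mathbf{Digraph}$ while the bifunctor with algebra coefficients only exists on $\mathbf{Digraph}(n)$.
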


We need to restrict to the category $\mathbf{Digraph}(n)$ for a purely technical reason; intuitively, the issue is due to the tensor products involved in the definition of multipath cohomology.
More formally, the functor $\mathcal{F}_{A,A}$ is not a coefficients system -- see Remark~\ref{rem:Faanotcsyst}.  
This technical issue is solved when either we fix the number of vertices, or we take $A= R$. In this case, we have a stronger result;

\begin{thm}\label{thm:functoriality const coeff}
	Let $\mathbf{Ring}$ be the category of unital rings, let $\mathbf{Digraph}$ be the category of digraphs, and  let $\mathbf{Ab}^{\mathrm{gr}}$ be the category of graded Abelian groups. Then, the multipath cohomology
	\[
	\mathrm{H}_\mu(-;-)\colon \mathbf{Digraph}^{\mathrm{op}}\times\mathbf{Ring}\to  \mathbf{Ab}^{\mathrm{gr}}
	\]
	is a bifunctor.
\end{thm}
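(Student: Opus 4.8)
The plan is to deduce the statement from Theorem~\ref{thm:functoriality} by showing that, once we set $A=R$, the single obstruction recorded in Remark~\ref{rem:Faanotcsyst} disappears. Concretely, the restriction to $\mathbf{Digraph}(n)$ was forced only because $\mathcal{F}_{A,A}$ fails to be a coefficient system: a morphism of digraphs may identify vertices or collapse edges, thereby altering the number of tensor factors attached to a multipath, and there is in general no canonical algebra map realising this change. My first step is therefore to isolate, in the proof of Theorem~\ref{thm:functoriality}, exactly where the coefficient-system hypothesis is invoked, and to check that this is the only place the bound on the number of vertices enters.

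The second step is to record that for $A=R$ the coefficient functor trivialises. Since every tensor product of copies of $R$ over $R$ is canonically isomorphic to $R$ (for left and right $R$-modules equal to $R$ this holds for arbitrary, possibly noncommutative, unital $R$), the functor $\mathcal{F}_{R,R}$ is naturally isomorphic to the constant functor $\underline{R}$ with value $R$, and all its structure maps---the multiplications and units that induce the differential---become the canonical identifications of $R$ with itself. The constant functor $\underline{R}$ is a coefficient system on all of $\mathbf{Digraph}$: a morphism $\varphi\colon \tG\to\tG'$ induces a map of path posets, and precomposing the value $R$ with the identity $R\to R$ supplies compatible transition data no matter how many vertices $\varphi$ identifies or how many edges it collapses. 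Feeding this into the argument of Theorem~\ref{thm:functoriality} produces a chain map $\varphi^{*}\colon C^{*}_{\mu}(\tG';R)\to C^{*}_{\mu}(\tG;R)$, functorial in $\tG$, with no restriction on the number of vertices.

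For the ring variable, a unital homomorphism $f\colon R\to R'$ defines a morphism of constant coefficient systems $\underline{R}\Rightarrow\underline{R'}$, given by $f$ at every object of the path poset. Because $f$ respects units and multiplication, this natural transformation commutes with the differential $d^{*}$, hence induces a chain map $f_{*}\colon C^{*}_{\mu}(\tG;R)\to C^{*}_{\mu}(\tG;R')$ and a map on cohomology; functoriality in $R$ (identities to identities, composites to composites) is then immediate. Finally, bifunctoriality reduces to the identity $\varphi^{*}\circ f_{*}=f_{*}\circ\varphi^{*}$, which already holds at the cochain level: $f_{*}=\bigoplus_{x}f$ applies $f$ inside each summand $\mathcal{F}_{R,R}(x)\cong R$, while $\varphi^{*}$ is a $\bZ$-linear combination of canonical identifications of such summands, dictated by the induced poset map; since $f$ is additive the two operations commute.

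The step I expect to be the main obstacle is the second one: making precise that $\underline{R}$ is genuinely a coefficient system over all digraphs. This requires checking that a digraph morphism induces a map of path posets compatible with the grading by level---sending a multipath either to a multipath of the same level or, when edges are collapsed, to a degenerate image that must be discarded---and that the resulting map of cochain groups intertwines the differentials $d^{*}$ on the nose, signs included. This orientation and sign bookkeeping is precisely what the tensor structure of $\mathcal{F}_{A,A}$ obstructs in general, and it is here that the collapse of every tensor power of $R$ to $R$ itself must be used to close the argument.
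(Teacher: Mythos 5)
Your overall strategy is the paper's own: for $A=R$ every tensor power $R^{\otimes_R k}$ collapses canonically to $R$, so constant coefficients form a genuine coefficient system on all of $\mathbf{Digraph}$, and one reruns the machinery of Theorem~\ref{thm:functoriality} (Propositions~\ref{prop:seschfsy} and~\ref{prop:ses sub-graphs}, Lemma~\ref{lemma:functo}); for the ring variable you apply $f\colon R\to R'$ summand by summand where the paper invokes extension of scalars $S\otimes_R(-)$, which is only a cosmetic difference. So the proposal is correct in substance and essentially follows the paper's route.

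One point needs correcting, because it misrepresents the obstruction you are removing. Morphisms in $\mathbf{Digraph}$ are \emph{regular}, hence injective on vertices, and by Definition~\ref{def:morph} a digraph morphism can never collapse an edge; so a morphism can neither ``identify vertices'' nor ``collapse edges'', and your anticipated main obstacle --- discarding ``degenerate images'' of multipaths and the attendant sign bookkeeping --- is vacuous: the induced map of path posets $P\phi$ is injective and level-preserving (Remark~\ref{rem:functordigrpo}), and the only sign subtleties are the change-of-sign-assignment isomorphisms of Remark~\ref{rem:signs}, which the proof of Theorem~\ref{thm:functoriality} already handles. The real reason $\mathcal{F}_{A,A}$ fails to be a coefficient system (Remark~\ref{rem:Faanotcsyst}) is non-\emph{surjectivity}: for a regular $\phi\colon\tG'\to\tG$ every vertex of $\tG$ outside the image is an isolated component of the image multipath, so the value of $\mathcal{F}_{A,A}$ on $P\phi(\tH)$ computed in $\tG$ differs from its value on $\tH$ computed in $\tG'$ by a tensor factor $A^{\otimes \#(V(\tG)\setminus V(\tG'))}$ --- exactly the factor appearing in Proposition~\ref{prop:ses sub-graphs}. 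Your proof survives because the one observation you do make, $R^{\otimes_R k}\cong R$, is precisely what kills this factor when $A=R$; but the justification you give for the constant functor being a coefficient system is aimed at phenomena that cannot occur in this category.
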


Hochschild homology is a homology theory for pairs $(A,M)$ with $A$ an algebra and $M$ an $(A,A)$-bimodule \cite{loday}. It has been proven by Przytycki~\cite{Prz} that (a suitable modification of) the chromatic homology of the coherently oriented $n$-polygon recovers the Hochschild homology up to degree $n$. 
It turns out that the multipath cohomology shares the same property.
\begin{prop}\label{prop:multipath recover HH}
	Let $A$ be a flat unital $R$-algebra, let $M$ be an $(A,A)$-bimodule, and let $\tP_n$ be the polygonal graph in Figure~\ref{fig:poly}. Then, we have the following chain of isomorphisms of homology groups:
	\[ \mathrm{H}_{\mu}^{i}(\tP_n;A,M) \cong  \widehat{\mathrm{H}}_{\rm Chrom}^i({\tt P}_n;A,M) \cong \mathrm{HH}_{n-i}(A,M),\quad \text{for } i= 1,\dots,n.\]
	Here $ \widehat{\mathrm{H}}^*_{\rm Chrom}(\tP_n;A,M)$ denotes Przytycki's variation of chromatic homology and $\mathrm{HH}_{*}(A,M)$ denotes the Hochschild homology.
\end{prop}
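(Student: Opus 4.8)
The plan is to split the chain of isomorphisms into two parts and treat them differently. The rightmost isomorphism $\widehat{\mathrm{H}}^i_{\rm Chrom}(\tP_n; A, M) \cong \mathrm{HH}_{n-i}(A,M)$ is Przytycki's theorem \cite{Prz}: it is there that the reindexing $i \mapsto n-i$ between the cohomological and the homological grading occurs, and where flatness of $A$ over $R$ is used to guarantee that the relevant tensor functors are exact and that the chromatic complex computes the Hochschild homology of the pair $(A,M)$. I would therefore invoke \cite{Prz} for this step and concentrate the argument on the leftmost, degree-preserving identification $\mathrm{H}^i_\mu(\tP_n; A, M) \cong \widehat{\mathrm{H}}^i_{\rm Chrom}(\tP_n; A, M)$, which I would establish by exhibiting an isomorphism already at the level of cochain complexes.

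First I would make the multipath complex of the polygon completely explicit. Since $\tP_n$ is the coherently oriented $n$-cycle, a subset $S$ of its $n$ edges spans a disjoint union of directed arcs exactly when $S$ is a proper subset; the full edge set is the cycle itself and is excluded from the path poset. Hence $P(\tP_n)$ is the Boolean lattice on the edge set with its top element removed, and the level of a multipath is controlled by its number of edges. For a multipath $\tH$ with $c$ connected components, the functor $\mathcal{F}_{A,M}$ assigns a tensor product of $c$ factors, one distinguished factor --- singled out by the coherent orientation --- equal to $M$ and the remaining $c-1$ equal to $A$; the differential induced by the covering relations (adjoining one edge) is governed by the multiplication of $A$ and the bimodule action on $M$. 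Unwinding this description already exhibits the multipath complex of $\tP_n$ as a (shifted, truncated) bar-type complex whose homology is Hochschild homology --- a useful sanity check for the statement.

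Next I would write down Przytycki's chromatic complex $\widehat{C}^*_{\rm Chrom}(\tP_n; A, M)$ for the same graph. Both theories are carried by the cube of edge-subsets, equipped with a functor assigning tensor products of copies of $A$ and $M$ and with merge-type differentials, so the heart of the argument is a subset-by-subset bookkeeping: I would match the $A$-factors and the distinguished $M$-factor, and check that the two differentials, including their signs, agree under this identification. The role of the orientation is precisely to pin down which connected component of each partial state carries the bimodule $M$, and this is what makes Przytycki's variant --- rather than the unoriented Helme-Guizon--Rong theory --- the correct comparison object. Being a chain isomorphism, the resulting map induces the claimed isomorphism on cohomology in every degree $i$.

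The step I expect to be the main obstacle is exactly this matching of functors and differentials: keeping track of which component of each resolution carries $M$ versus $A$, reconciling the sign and orientation conventions of the two constructions, and correctly accounting for the top element of the cube --- the full cycle, excluded from the path poset but reincorporated, together with the Hochschild closure, in Przytycki's complex. Pinning down this last point is also what fixes the degree range $i = 1, \dots, n$. Once the chain isomorphism is in place and \cite{Prz} is applied, the two isomorphisms compose to yield the statement.
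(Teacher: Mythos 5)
Your plan is correct and follows essentially the same route as the paper: the paper proves the left identification at chain level as the splitting $(C^{*}_{\mu}(\tP_n;A,M),d)\oplus(M[n+1],0)\cong(\widehat{C}^{*}_{\rm Chrom}(\tP_n;A,M),\widehat{\tt d})$ (its Theorem on the comparison with chromatic complexes), where your ``top element of the cube'' is exactly the summand $M[n+1]$ that splits off because Przytycki's cycle-closing differentials are zero, and then cites Przytycki's theorem for $\widehat{\mathrm{H}}^i_{\rm Chrom}(\tP_n;A,M)\cong \mathrm{HH}_{n-i}(A,M)$, with flatness used only in that step, just as you propose. The one attributional slip, which execution would surface, is that the $M$-factor is located by the base vertex rather than by the orientation: the orientation's actual role (the paper's remark on Przytycki's extension) is to make the chromatic construction well defined for non-commutative $A$, and what makes Przytycki's variant rather than Helme-Guizon--Rong's the right comparison object is that its cycle-closing maps are zero instead of identities.
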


A consequence of Proposition~\ref{prop:multipath recover HH} {and Theorem~\ref{thm:functoriality}} is that,  {once one fixes a} digraph $\tG$, the {functor} $\mathrm{H}_{\mu}(\tG;-)\colon R\text{-}\mathbf{Alg}\to R\text{-}\mathbf{Mod}^{\mathrm{gr}}$ can be seen as an homology theory of algebras. From this viewpoint, we can rephrase Proposition~\ref{prop:multipath recover HH} by stating that the family of homologies for algebras $\{ {\rm H}_{\mu}(\tG;-)\}_{\tG}$ generalises Hochschild homology (compare also with \cite{turner}).

In light of the previous result, one can expect chromatic homology, and multipath cohomology to be, in some sense, related. 
Despite being defined on different categories (of undirected and directed graphs, respectively) we obtain the following short exact sequence relating the two theories.

\begin{prop}\label{prop:sesChromMulti}
Let $\tG$ be an oriented graph, and let $A$ be a commutative $R$-algebra. Then, we have the following short exact sequence of complexes
\[ 0 \to \widetilde{C}_{\mu}^*(\tG;A) {\longrightarrow} \widehat{C}^*_{\rm Chrom}(\tG;A) \longrightarrow C_{\mu}^*(\tG;A)\to 0 \]
which induces the following long exact sequence in cohomology
\[ \cdots \to {\rm H}^{i - 1}_{\mu}(\tG;A) \longrightarrow 
\widetilde{\rm H}_{\mu}^i(\tG;A){\longrightarrow} \widehat{\rm H}_{\rm Chrom}^i(\tG;A) \longrightarrow {\rm H}^i_{\mu}(\tG;A)\to \cdots \]
where the cochain complex $\widetilde{C}_{\mu}^*(\tG;A)$ is defined in Subsection~\ref{sec:multipathvschrom}, and $ \widehat{C}^*_{\rm Chrom}(\tG;A)$ is the chromatic cochain complex of the underlying unoriented graph.
\end{prop}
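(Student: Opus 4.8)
The plan is to realise all three complexes as supported on subsets of the edge set of $\tG$, and to produce the short exact sequence from the partition of these subsets into multipaths and non-multipaths; the long exact sequence will then be the standard zig-zag sequence. First I would recall that $\widehat{C}^*_{\rm Chrom}(\tG;A)$ is the cube-shaped (Khovanov-style) cochain complex whose term in degree $n$ is the direct sum of $A^{\otimes k(s)}$ over the subsets $s \subseteq E(\tG)$ with $|s| = n$, where $k(s)$ is the number of connected components of the spanning subgraph $(V,s)$, and whose differential is the signed sum of the per-edge merge maps obtained by adding a single edge. The multipath complex $C^*_\mu(\tG;A)$ is, by contrast, supported only on those subsets $s$ whose spanning subgraph is a disjoint union of simple directed paths, i.e.\ on the elements of the path poset $P(\tG)$, with coefficients $\cF_{A,A}$.

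The combinatorial heart of the argument is the observation that the multipaths form an order ideal (down-set) inside the Boolean lattice $2^{E(\tG)}$: any subset of a disjoint union of simple directed paths is again such a union. Equivalently, the non-multipaths form a filter (up-set), since each obstruction to being a multipath -- a cycle, a vertex of in- or out-degree at least two, or an orientation conflict -- is preserved under adding edges. Because the chromatic differential only adds edges, the summands indexed by non-multipaths are closed under $d$ and hence span a subcomplex; this subcomplex is, by its definition in Subsection~\ref{sec:multipathvschrom}, precisely $\widetilde{C}^*_\mu(\tG;A)$. The quotient complex is then supported exactly on the multipath states.

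Next I would identify this quotient with $C^*_\mu(\tG;A)$. On a multipath state $s$ the chromatic coefficient $A^{\otimes k(s)}$ agrees with $\cF_{A,A}(\tH_s)$ evaluated at the corresponding multipath $\tH_s$, using $M=A$ and the commutativity of $A$ to identify the bimodule tensor factors attached to each path component with a single copy of $A$; the covering relations of $P(\tG)$ are exactly the edge-additions that remain within the multipaths, so the differential induced on the quotient matches the multipath differential, up to a consistent choice of sign on the edges of the cube. This gives the required isomorphism of complexes, hence the short exact sequence $0 \to \widetilde{C}^*_\mu \to \widehat{C}^*_{\rm Chrom} \to C^*_\mu \to 0$. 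Applying the long exact sequence associated to a short exact sequence of cochain complexes, whose connecting homomorphism raises the cohomological degree by one, then yields the displayed long exact sequence.

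The main obstacle will be the bookkeeping in this module-and-differential identification on the multipath states: checking that the chromatic per-edge maps restrict to the multipath face maps, that the component count $k(s)$ is compatible with the evaluation of $\cF_{A,A}$ (this is exactly where commutativity of $A$ is needed), and that the signs assigned to the edges of the cube can be chosen so as to agree on both sides. A second delicate point is to verify that the orientation-incompatible states -- those that are undirected paths but not directed ones -- land in the subcomplex $\widetilde{C}^*_\mu$ rather than in the quotient; this is the one place where the directed structure of $\tG$ must be reconciled with the fact that $\widehat{C}^*_{\rm Chrom}$ only sees the underlying unoriented graph.
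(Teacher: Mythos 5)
Your proposal is correct and is essentially the paper's own argument: the paper realises $\widehat{C}^*_{\rm Chrom}(\tG;A)$ as the poset cochain complex of $\mathcal{F}_{A,A}$ (extended by zero on the non-merging covering relations) on all of $SSG(\tG)$, and then applies its general short-exact-sequence machinery (Proposition~\ref{prop:seschfsy}) to the downward closed sub-poset $P(\tG)$, whose upward closed complement spans the subcomplex $\widetilde{C}^*_{\mu}(\tG;A)$ and whose quotient is $C^*_{\mu}(\tG;A)$ --- exactly your down-set/up-set decomposition of the cube, followed by the standard long exact sequence. The sign and coefficient bookkeeping you flag as the main obstacle is handled in the paper by Remark~\ref{rem:cromandours} together with the uniqueness of sign assignments on (complements of) downward closed sub-posets (Theorem~\ref{thm:uni_signs} and Corollary~\ref{cor:homology_not_sign}).
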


We remark that the complex $\widetilde{C}_{\mu}^*(\tG;A)$ is not just a formal kernel. Indeed, it can be obtained via the construction streamlined at the beginning of this section; in this case, the poset $P$ is the complement of~$P(\tG)$ in the poset of (spanning) sub-graphs of $\tG$, and $\mathcal{F} =\mathcal{F}_{A,A}$.

A great advantage of  multipath cohomology is that it is amenable to computations. 
We postpone the (combinatorial) analysis, as well as the description of an algorithm to calculate the multipath cohomology of certain  graphs, to \cite{secondo} and forthcoming papers. In the present work, we limit our computations to a restricted number of cases (with coefficients in a field $\bK= R =A = M$), cf.~Subsection~\ref{subs:examples} and Table~\ref{tab: homology computatin}. 
Such computations hint to the fact that multipath cohomology might be sensible to some combinatorial properties of graphs.
We observe that multipath cohomology does not vanish, nor it is concentrated in degree $0$, in the case of trees.
\begin{prop}
Let {\tt T} be an oriented tree. Then, the multipath cohomology ${\rm H}_{\mu}({\tt T};A,M)$ can be non-trivial nor it is necessarily concentrated in degree $0$.
\end{prop}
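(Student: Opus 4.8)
The plan is to treat this as an existence statement and verify it by exhibiting explicit oriented trees together with a direct computation of their multipath cohomology. Since the path poset $P({\tt T})$ of a tree on few vertices is small and finite, the cochain complex $(C^*_\mu({\tt T};A,M),d^*)$ is entirely explicit, and its cohomology can be read off by hand; concrete instances of this kind are recorded in Table~\ref{tab: homology computatin} for $A=R=M=\bK$ a field. So the whole argument reduces to selecting one good example and computing a two-term complex.

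The simplest informative example is the \emph{directed cherry}: the tree on three vertices with edges oriented as $1\to 2\leftarrow 3$, so that $2$ is a sink. First I would enumerate $P({\tt T})$. Its elements are the empty multipath, the two single edges $\{1\to 2\}$ and $\{3\to 2\}$, and nothing else: the only two-edge subgraph shares the vertex $2$, so its two edges are not vertex-disjoint and cannot be concatenated into a simple path, hence they do not form a multipath. Consequently $P({\tt T})$ has elements only in levels $0$ and $1$, and the complex $C^*_\mu({\tt T};A,M)$ is concentrated in cohomological degrees $0$ and $1$. Next I would apply $\mathcal{F}_{A,M}$: writing $A=M$, the empty multipath is sent to $A^{\otimes 3}$ (three isolated vertices), while each single-edge multipath is sent to $A^{\otimes 2}$ (one path component and one isolated vertex). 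The differential $d^0$ has two components, each induced by contracting the newly added edge via the multiplication $A\otimes A\to A$. Over a field $\bK$ these components are isomorphisms $\bK\to\bK$, so $d^0\colon \bK\to\bK^2$ is injective with one-dimensional image; this yields $\mathrm{H}_{\mu}^0({\tt T};\bK)=0$ and $\mathrm{H}_{\mu}^1({\tt T};\bK)\cong\bK$. In particular the multipath cohomology of this tree is non-trivial and lives entirely in degree $1$, which already establishes both assertions of the statement.

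The only real subtlety, and the point requiring care, is the \emph{choice of orientation}: for a coherently oriented path the edges can be glued into longer simple paths, which populates the higher levels of $P({\tt T})$ and can change the answer, so not every orientation of a tree produces cohomology in positive degree. The mechanism exploited above is that a sink (or source) prevents adjacent edges from being concatenated, forcing the top level of the path poset to be empty and thereby creating a non-trivial cokernel in degree $1$. Beyond this, one must only keep track of the sign conventions in $d^*$ and of the precise tensor-factor bookkeeping of $\mathcal{F}_{A,M}$, but these are routine. I would close by contrasting this behaviour with path homology and with the Hochschild homology of path algebras, both of which vanish on trees: the non-vanishing here shows that multipath cohomology retains the directional and combinatorial information that those theories discard.
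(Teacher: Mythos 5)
Your proposal is correct and follows essentially the same route as the paper: the paper justifies this proposition via the computations in Table~\ref{tab: homology computatin}, the first non-trivial case being exactly your ``directed cherry'' (the non-coherently oriented linear digraph $v_0 \to v_1 \leftarrow v_2$ of Figure~\ref{fig:nnstep}), for which it writes down the same two-term complex $0\to\bK\to\bK^2\to 0$ and concludes $\mathrm{H}^0_\mu=0$, $\mathrm{H}^1_\mu\cong\bK$. Your identification of the sink as the mechanism blocking concatenation of the two edges (so the full two-edge subgraph is not a multipath) is precisely the point the paper relies on.
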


Another consequence of the computations collected in Table~\ref{tab: homology computatin} is the following;

\begin{prop}
Chromatic, multipath, and Turner-Wagner (co)homologies are not isomorphic.
\end{prop}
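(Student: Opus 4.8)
The plan is to prove non-isomorphism in the standard way: by exhibiting explicit (di)graphs on which the three theories take genuinely different values. An isomorphism of (co)homology theories would in particular induce isomorphisms of the underlying graded modules for every input, so it suffices to find, for each of the three pairs, a single graph on which some isomorphism invariant---say the graded dimension (Poincar\'e polynomial) over a field $\bK$, or the graded Euler characteristic---disagrees. All the multipath data needed are already recorded in Table~\ref{tab: homology computatin}; the remaining inputs are the corresponding chromatic homologies of the underlying undirected graphs (from Helme-Guizon and Rong~\cite{HGRong}) and the Turner--Wagner homologies (from~\cite{turner}).

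The subtlety to address first is that the three theories are not even defined on the same category: chromatic homology is an invariant of undirected graphs, whereas multipath and Turner--Wagner homology are invariants of digraphs. I would therefore fix the comparison convention explicitly---comparing $\mathrm{H}^*_\mu(\tG;\bK)$ and the Turner--Wagner homology of $\tG$ with the chromatic homology of the underlying undirected graph of $\tG$---and compare all three through a convention-independent numerical invariant (graded ranks), so that any discrepancy reflects a real difference rather than a mismatch of homological-versus-cohomological indexing or of degree shifts.

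With this set up, I would read off the distinguishing examples from the table. It is worth stressing that the relevant chromatic theory here is the original Helme-Guizon--Rong homology of~\cite{HGRong}, not Przytycki's variation $\widehat{\mathrm{H}}_{\rm Chrom}$ appearing in Proposition~\ref{prop:multipath recover HH}; in particular, the agreement on polygons guaranteed by Proposition~\ref{prop:multipath recover HH} together with~\cite[Theorem~1]{turner} is with respect to the \emph{latter}, so even the coherently oriented $\tP_n$ may already separate multipath cohomology from the full chromatic homology. More robustly, the preceding proposition shows that $\mathrm{H}_\mu$ of a tree need not be concentrated in degree $0$, and I would pick a tree (and, if needed, one further small graph from the table) on which the total graded dimension differs across the three theories. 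A single graph whose three invariants have pairwise distinct Poincar\'e polynomials would settle all three non-isomorphisms simultaneously.

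The main obstacle is not conceptual but one of bookkeeping: one must have reliable computations of all three theories on the same explicit inputs, presented in a common normalization. The multipath side is handled by Table~\ref{tab: homology computatin}; the risk lies in correctly extracting the chromatic and Turner--Wagner values from~\cite{HGRong} and~\cite{turner} and in ruling out that an apparent difference is merely an artifact of the differing index conventions. I would guard against this by comparing graded Euler characteristics in addition to graded dimensions, since these are robust under re-indexing and immediately expose any genuine discrepancy in the categorified invariants.
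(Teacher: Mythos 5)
Your high-level strategy --- distinguish the theories by exhibiting explicit graphs on which graded dimensions differ, reading the multipath side off Table~\ref{tab: homology computatin} --- is the same in spirit as the paper's, but as written it has genuine gaps: for two of the three pairwise comparisons the distinguishing computation is never actually produced, and the plan for producing it would run into trouble. On the Turner--Wagner side, you propose to ``extract values from~\cite{turner}'', but no extraction is needed (nor supplied): the paper's argument is conceptual. With constant coefficients the path poset has a minimum (the empty multipath), so functor homology of $\mathbf{P}(\tG)$ with the constant functor is the homology of a point, i.e.\ $TW_0(\tG;R,R)\cong R$ and $TW_i(\tG;R,R)=0$ for $i>0$, for \emph{every} digraph $\tG$. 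Comparing this with any table entry that is nonzero in positive degree (e.g.\ $\mathrm{H}^1_\mu\cong\bK$ for the non-coherently oriented linear digraph on three vertices) settles that pair immediately; your proposal leaves this as unverified bookkeeping.

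On the chromatic side you miss the paper's key observation, which makes any chromatic computation unnecessary: chromatic homology is an invariant of the \emph{underlying unoriented} graph, whereas Table~\ref{tab: homology computatin} shows that multipath cohomology distinguishes orientations --- the coherently oriented linear digraph $\tI_2$ has vanishing multipath cohomology, while the non-coherently oriented linear digraph on three vertices has $\mathrm{H}^1_\mu\cong\bK$, and these two digraphs have the same underlying unoriented graph. Hence multipath cohomology cannot factor through the forgetful map, so it cannot be isomorphic to chromatic homology under any comparison convention; this disposes of the pair without computing a single chromatic group. Your alternative --- matching Poincar\'e polynomials against values taken from~\cite{HGRong} --- has a concrete pitfall that your Euler-characteristic safeguard does not cure: the computations in~\cite{HGRong} are carried out for algebras such as $\bZ[x]$ or $\bZ[x]/(x^2)$, while the table uses $A=M=\bK$. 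With $\bK$-coefficients the Helme-Guizon--Rong complex of any graph with at least one edge is a tensor product of acyclic two-term complexes $0\to\bK\to\bK\to 0$ and so vanishes identically; naively placing published chromatic numbers next to the table would therefore compare theories evaluated at different coefficients, which is a coefficient mismatch rather than a re-indexing artifact. In short, the ``bookkeeping'' you defer is exactly the mathematical content, and the paper replaces it with the two short arguments above.
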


We conclude with the following observation. Although not isomorphic ``on the nose'', Turner--Wagner and multipath cohomology are related, if $A=R$, by the universal coefficients short exact sequence. On the one hand, Turner--Wagner homology computes the higher colimits of the functor $\mathcal{F}_{R,R}$. On the other hand, multipath cohomology computes the associated higher limits; then, the short exact sequence gives the relation between the two, with a correcting $\mathrm{Ext}$ term. We refrain from giving a more detailed account of this case here, inviting the interested reader to Section~\ref{sec:compare}.

\begin{table}[h]
\centering
	{
		\setlength{\extrarowheight}{17.5pt}%
		\begin{tabular}{C{4cm}|C{1.5cm}|C{1.5cm}|C{1.5cm}|C{2.5cm}}
			\raisebox{.2cm}{Digraph \tG} & \raisebox{.2cm}{$\mathrm{H}_{\mu}^0(\tG;\bK)$} &  \raisebox{.2cm}{$\mathrm{H}_{\mu}^1(\tG;\bK)$} & \raisebox{.2cm}{$\mathrm{H}_{\mu}^2(\tG;\bK)$} & \raisebox{.2cm}{$\mathrm{H}_{\mu}^i(\tG;\bK)$, $i>2$}\\
			\hline\hline
			\raisebox{.15em}{%
				\begin{tikzpicture}[scale=0.5][baseline=(current bounding box.center)]
					\tikzstyle{point}=[circle,thick,draw=black,fill=black,inner sep=0pt,minimum width=2pt,minimum height=2pt]

					\draw[fill] (0,1)  circle (.05);
			\end{tikzpicture}} &  $\bK$ &  $0 $ & $0 $ & $0$ \\
			
			\raisebox{-.15em}{%
				\begin{tikzpicture}[scale=0.6][baseline=(current bounding box.center)]
					\tikzstyle{point}=[circle,thick,draw=black,fill=black,inner sep=0pt,minimum width=2pt,minimum height=2pt]
					\tikzstyle{arc}=[shorten >= 8pt,shorten <= 8pt,->, thick]
					\node[above] (v0) at (0,0) {};
					\draw[fill] (0,0)  circle (.05);
					\node[above] (v1) at (1.5,0) {};
					\draw[fill] (1.5,0)  circle (.05);
					\node[] at (3,0) {\dots};
					\node[above] (v4) at (4.5,0) {};
					\draw[fill] (4.5,0)  circle (.05);
					\node[above] (v5) at (6,0) {};
					\draw[fill] (6,0)  circle (.05);
					\draw[thick, bunired, -latex] (0.15,0) -- (1.35,0);
					\draw[thick, bunired, -latex] (1.65,0) -- (2.5,0);
					\draw[thick, bunired, -latex] (3.4,0) -- (4.35,0);
					\draw[thick, bunired, -latex] (4.65,0) -- (5.85,0);
			\end{tikzpicture}} &  $0$ &  $0$ & $0$ & $0$ \\
			
			\raisebox{.25em}{%
				\begin{tikzpicture}[scale=0.5][baseline=(current bounding box.center)]
					\tikzstyle{point}=[circle,thick,draw=black,fill=black,inner sep=0pt,minimum width=2pt,minimum height=2pt]
					\tikzstyle{arc}=[shorten >= 8pt,shorten <= 8pt,->, thick]
					\node[above] (v0) at (0,1) {};
					\draw[fill] (0,1)  circle (.05);
					\node[above] (v1) at (1.5,1) {};
					\draw[fill] (1.5,1)  circle (.05);
					\node[above] (v2) at (3,1) {};
					\draw[fill] (3,1)  circle (.05);
					\draw[thick, bunired, -latex] (0.15,1) -- (1.35,1);
					\draw[thick, bunired, -latex] (2.85,1) -- (1.65,1);
			\end{tikzpicture}} &  $0$ &  $\bK $ & $0 $ & $0$ \\
			
			\raisebox{-.5em}{%
				\begin{tikzpicture}[scale=0.35][baseline=(current bounding box.center)]
					\tikzstyle{point}=[circle,thick,draw=black,fill=black,inner sep=0pt,minimum width=2pt,minimum height=2pt]
					\tikzstyle{arc}=[shorten >= 8pt,shorten <= 8pt,->, thick]
					
					\node[above] (v0) at (0,0) {};
					\draw[fill] (0,0)  circle (.05);
					\node[above] (v1) at (1.5,0) {};
					\draw[fill] (1.5,0)  circle (.05);
					\node[above] (v2) at (3,1) {};
					\draw[fill] (3,1)  circle (.05);
					\node[above] (v3) at (3,-1) {};
					\draw[fill] (3,-1)  circle (.05);
					
					\draw[thick, bunired, -latex] (0.15,0) -- (1.35,0);
					\draw[thick, bunired, -latex] (1.65,0.05) -- (2.85,0.95);
					\draw[thick, bunired, -latex] (1.65,-0.05) -- (2.85,-0.95);
			\end{tikzpicture}} &  $0$ &  $0$ & $0 $ & $0$ \\
			
			\raisebox{-.55em}{%
				\begin{tikzpicture}[scale=0.35][baseline=(current bounding box.center)]
					\tikzstyle{point}=[circle,thick,draw=black,fill=black,inner sep=0pt,minimum width=2pt,minimum height=2pt]
					\tikzstyle{arc}=[shorten >= 8pt,shorten <= 8pt,->, thick]
					
					\node[above] (v0) at (0,0) {};
					\draw[fill] (0,0)  circle (.05);
					\node[above] (v1) at (1.5,0) {};
					\draw[fill] (1.5,0)  circle (.05);
					\node[above] (v2) at (3,1) {};
					\draw[fill] (3,1)  circle (.05);
					\node[above] (v3) at (3,-1) {};
					\draw[fill] (3,-1)  circle (.05);
					
					\draw[thick, bunired, -latex] (0.15,0) -- (1.35,0);
					\draw[thick, bunired, -latex] (2.85,0.95) -- (1.65,0.05);
					\draw[thick, bunired, -latex] (2.85,-0.95) -- (1.65,-0.05);
			\end{tikzpicture}} &  $0$ &  $\bK^2$ & $0$ & $0$ \\
			
			\raisebox{-1.5em}{%
				\begin{tikzpicture}[scale=0.4][baseline=(current bounding box.center)]
					\tikzstyle{point}=[circle,thick,draw=black,fill=black,inner sep=0pt,minimum width=2pt,minimum height=2pt]
					\tikzstyle{arc}=[shorten >= 8pt,shorten <= 8pt,->, thick]
					
					\node[above] (v0) at (-1,1) {};
					\draw[fill] (-1,1)  circle (.05);
					\node[above] (v1) at (1,1) {};
					\draw[fill] (1,1)  circle (.05);
					\node[above] (v2) at (3,1) {};
					\draw[fill] (3,1)  circle (.05);
					\node[below] (v3) at (-1,-1) {};
					\draw[fill] (-1,-1)  circle (.05);
					\node[below] (v4) at (1,-1) {};
					\draw[fill] (1,-1)  circle (.05);
					\node[below] (v5) at (3,-1) {};
					\draw[fill] (3,-1)  circle (.05);
					
					\draw[thick, bunired, -latex] (-0.85,1) -- (0.85,1);
					\draw[thick, bunired, -latex] (-0.85,-1) -- (0.85,-1);
					\draw[thick, bunired, -latex] (2.85,1) -- (1.15,1);
					\draw[thick, bunired, -latex] (2.85,-1) -- (1.15,-1);
					\draw[thick, bunired, -latex] (1,0.85) -- (1,-0.85);
			\end{tikzpicture}} &  $0$ &  $0$ & $\bK^2$ & $0$\\%
	\end{tabular}}
	\vspace{.5cm}
	\caption{Some digraphs and their respective multipath cohomologies.}
	\label{tab: homology computatin}
\end{table}

\subsection*{Conventions}

Typewriter font, e.g.~$\tG$, $\tH$, \emph{etc.}, will be used to denote graphs (both directed and unoriented). %
Calligraphic font, e.g.~$\mathcal{F}$, $\mathcal{G}$, \emph{etc.} will be used to denote functors.
Bold capital letters, e.g.~$\mathbf{A}$, $\mathbf{C}$, \emph{etc.} will be used to denote categories. Depending on the context, $\mathbf{A}$ will denote an \emph{Abelian}, or more generally, an \emph{additive} category, and, for a given poset $P$, we will denote with the same letter in roman and bold, that is {\bf P}, its associated category -- cf.~Remark~{\ref{rem:posetiscat}}. %
All rings are assumed to be unital and commutative, and algebras are assumed to be associative. Unless otherwise stated, $R$ will denote a base ring, $A$ will denote an $R$-algebra, $M$ will denote an $(A,A)$-bimodule, and all tensor products  $\otimes$  are assumed to be  over the base ring. %
Given a (co)chain complex $C^*$, we will denote by $C^*[i]$ the shifted complex~$C^{*+i}$. General references for category theory, algebra, and algebraic topology are {\cite{maclane:71}},{\cite{LangAlg}}, and \cite{hatcher}, respectively.

\subsection*{Acknowledgements}

The authors wish to thank the {\tt WiD} with {\tt D.A., D.D., I.C., P.M.R.}, and~{\tt G.V.}, for the great support and motivation provided. 
Luigi Caputi warmly thanks Ehud Meir and Irakli Patchkoria for the useful conversations, motivation and support.
Carlo Collari wishes to thank Qingtao Chen for its support during the past years, and Paolo Lisca for his feedback.
Luigi Caputi acknowledges support from the \'{E}cole Polytechnique F\'{e}d\'{e}rale de Lausanne via a collaboration agreement with the University of Aberdeen.
Sabino Di Trani was partially supported by GNSAGA - INDAM group during the writing of this paper.
During the writing of this paper Carlo Collari was a post-doc at the New York University Abu Dhabi, and he is currently supported by MIUR (grant 2017JZ2SW5).
Finally, all authors are  grateful to Filippo Callegaro, Irakli Patchkoria, and to some anonymous referees for the feedback on the early versions of the paper.

\section{Basic notions}\label{sec:notions}
In this first section we introduce and provide the basic notions and conventions about graphs and posets that will be used throughout the paper. In particular, in Subsection~\ref{sec:pathpost}, we introduce the path poset, one of the main ingredients in the construction of the multipath homology (Subsection~\ref{sec:multipathhom}).

\subsection{Digraphs}

In this work we only consider \emph{finite} graphs and digraphs. 
For a set $V$, let  $\wp(V)$ denote its power set.
We recall the definitions of various types of graphs -- see, for instance, \cite{West}.

\begin{defn}\label{def:graph}
An \emph{unoriented graph} \tG is a pair of finite sets $(V,E)$ consisting of: a set of \emph{vertices} $V$, and a subset $E$ of $\wp(V)$ whose elements, called \emph{edges}, are unordered pairs of {distinct} vertices of $\tG$.
\end{defn}

\begin{defn}\label{def:digraph}
A \emph{directed graph}, often shortened to \emph{digraph}, $\tG$ is a pair of finite sets $(V,E)$, where $E \subseteq V\times V \setminus \{ (v,v)\ |\ v\in V \}$. %
The elements of the set $V$ are called \emph{vertices}, while the elements of $E$ are called \emph{edges} of $\tG$.
\end{defn}

\begin{defn}\label{def:origraph}
A \emph{oriented graph}  $\tG = (V,E)$, is a digraph such that at most one among $(v,w)$ and $(w,v)$ belongs to $E$, for each $v,w\in V$.
\end{defn}

The main object of interest in this work are digraphs. {Unless otherwise stated}, we will refer to {digraphs}, simply, as \textbf{graphs}.
When dealing with (un)oriented graphs the adjective ``\emph{(un)oriented}'' will be explicitly stated. The forthcoming definitions for digraphs apply verbatim to unoriented graphs by discarding the orientation of the edges -- i.e.,~by replacing ordered pairs of vertices with unordered pairs.

\begin{rem}
Two vertices $v$ and $w$ in a digraph {can} share at most two edges: $(v,w)$ and $(w,v)$. There are no multiple edges between two vertices in oriented and unoriented graphs.
\end{rem}

\begin{rem}
For the sake of simplicity, we restricted ourselves to the case of digraphs. Everything in this paper can be carried out verbatim in the more general case of directed multigraphs.
\end{rem}

By definition, an edge of a digraph is an ordered set of two distinct vertices, say $e =(v,w)$. The vertex $v$ is called  the \emph{source} of $e$, while the vertex $w$ is called the \emph{target} of $e$. The source and target of an edge $e$ will be denoted by $s(e)$ and $t(e)$, respectively. If a vertex $v$ is {either} a source or a target of an edge $e$, we will say that \emph{$e$ is incident to $v$}.

\begin{notation}
In the follow up we shall also deal with more than one graph at the time. In such cases, the set of vertices and edges of a digraph \tG will be denoted by $V(\tG)$ and $E(\tG)$, respectively.
\end{notation}

\begin{defn}\label{def:morph}
A \emph{morphism of digraphs} from  $\tG_1$ to  $\tG_2$ is a function $\phi\colon V(\tG_1)\to V(\tG_2)$ such that:
\[ e = (v,w) \in E(\tG_1)\ \Longrightarrow\ \phi (e) \coloneqq (\phi(v),\phi(w)) \in E(\tG_2) \ .\]
\end{defn}

A morphism of digraphs  sends directed edges to directed  edges; in particular, it does not allow collapsing -- that is $(v,w) \in E(\tG_1) \implies \phi(v)\neq \phi(w)$. A morphism of digraphs is called \emph{regular} if it is injective as a function; digraphs and {regular} morphisms of digraphs form a category that we denote by $\mathbf{Digraph}$.  If  $\tG_1$ and $\tG_2$ are isomorphic in $\mathbf{Digraph}$, we write $\tG_1\cong \tG_2$.

\begin{defn}\label{def:leng}
The \emph{length} of a graph $\tG$ is the integer $\lgt(\tG) \coloneqq \#E(\tG)$, the number of edges in \tG. 
\end{defn}

\begin{defn}
A \emph{sub-graph} \tH of a graph \tG is a graph such that $V(\tH)\subseteq V(\tG)$ and $E(\tH)\subseteq E(\tG)$, and in such case we  write $\tH \leq \tG$.
If $\tH \leq \tG$ and $\tH\neq \tG$ we  say that \tH is a \emph{proper sub-graph} of \tG, and write $\tH< \tG$.
 If $\tH\leq \tG$ and $V(\tH) = V(\tG)$ we  say that \tH is a \emph{spanning sub-graph} of \tG.
\end{defn}

Given a spanning {proper} sub-graph $\tH \leq \tG$, we can find an edge $e$ in $E(\tG)\setminus E(\tH)$. We use the following notation:

\begin{notation}\label{not:union}
 The spanning sub-graph of \tG {obtained from \tH by adding an edge $e$}  is denoted {by} $\tH\cup e$.
\end{notation}

\subsection{Posets}

Let $S$ be a set. A (strict) \emph{partial order} on $S$ is a transitive binary relation  $\triangleleft$ such that, for each $x,y\in S$, at  most one among the following is true: $x \triangleleft y$, $y \triangleleft x$, or $x =y$. As a matter of notation, we will write  $x\trianglelefteq y$ in place of ``$x\triangleleft y$ or $x=y$''.

 Given a partial order, there is an associated \emph{covering relation} given by $x\  \widetilde{\triangleleft}\ y$ if, and only if, $x \triangleleft y$ and there is no $z$ such that $x\triangleleft z$, $z\triangleleft y$.
A partial order can be also seen as the transitive closure\footnote{The transitive closure of a relation $R \subset S\times S$ is a relation $R'$ such that: $(s,s')\in R'$ if, and only if, either $(s,s')\in R$, or there exists $s''\in S$ such that $(s,s''), (s'', s')\in R'$.} of its associated covering relation. Moreover, the associated covering relation is the smallest relation whose transitive closure is the given partial order.

\begin{defn}\label{def:poset}
A \emph{partially ordered set}, or simply \emph{poset}, is a pair $(S,\triangleleft)$ consisting of a set  $S$ and a partial order $\triangleleft$ on $S$. 
\end{defn}

A morphism of posets $f\colon (S,\triangleleft )\to (S',\triangleleft')$ is a monotonic map of sets; that is, a function $f: S \to S'$ such that $x\triangleleft y$ implies $f(x)\triangleleft' f(y)$. Posets and morphisms of posets form a category, which will be denoted by $\mathbf{Poset}$.

\begin{rem}\label{rem:posetiscat}
Each poset  $P=(S,\triangleleft)$ can be  seen as a (small) category $\mathbf{P}$ in a straightforward manner; the set of objects of {\bf P} is the set $S$, and the set of morphisms between $x$ and $y$ contains a single element if, and only if, $x\triangleleft y$ or $x=y$, and it is empty otherwise.
\end{rem}

\begin{defn}\label{def:max}
Let $P=(S,\triangleleft)$ be a poset. An element $m\in P$ is a \emph{maximal element} if there are no elements of $P$ strictly greater than $m$, i.e.,~if $m \trianglelefteq s$ with $s\in P$, then $m=s$. A \emph{maximum} of $P$ is an element $M\in S$ which is greater than any other element, i.e.,~$s\trianglelefteq M$ for all $s\in S$ .
\end{defn}

The following two facts are standard:
\begin{enumerate}[label = M.\roman*]
\item \label{fact:max 1} if $P=(S,\triangleleft)$ is a finite poset -- that is, $S$ is finite -- then for each $s\in S$ there exists a maximal element $m\in S$ such that $s\trianglelefteq m$;
\item \label{fact:max 2}  a poset has a unique maximal element if, and only if, said element is a maximum.
\end{enumerate}
Both \emph{minimal elements} and \emph{minima} are defined analogously by exchanging the role of $s$ with $m$ and $M$, respectively, in Definition \ref{def:max}. Moreover, the obvious translations of \ref{fact:max 1} and \ref{fact:max 2} hold.

A poset is called a \emph{Boolean poset}, if it is isomorphic to the power set $\wp(S)$ -- i.e.~the set of all subsets~-- of a finite set $S$ with partial order~$\subset$ given by inclusion. 
The \emph{standard Boolean poset} (of size $2^{n}$) is by definition the poset $\mathbb{B}(n)  = (\wp(\{ 0, 1, 2, \dots , n -1  \}),\subset)$. 
\begin{example}\label{exa:sub-graphs posets}
Let \tG be a {(possibly unoriented)} graph. Among  others, we can specifically consider two  posets: the \emph{poset of sub-graphs} $SG(\tG)$ and the \emph{poset of spanning sub-graphs} $SSG(\tG)$. The elements of these posets are all the sub-graphs and all the spanning sub-graphs of $\tG$, respectively. In both cases the order relation $<$ is given by the property of being a proper sub-graph. %
The covering relation $\prec$ of $<$ in $SSG(\tG)$ is easily checked to be the following:
\[ \tH \prec \tH' \iff \exists\ e \in E(\tH')\setminus E(\tH)\ \colon \ \tH' = \tH \cup e \ . \]
Equivalently, $\tH \prec \tH'$ if, and only if, $ E(\tH')\setminus E(\tH) = \{ e \}$ and $ E(\tH)\setminus E(\tH') =\emptyset$.
The covering relation on~$SG(\tG)$ is slightly different  from $\prec$; we need to consider, in addition to the case of above,  also the case where $E(\tH')= E(\tH)$ and~$V(\tH') = V(\tH)\cup \{ v\}$, for a certain~$v\notin V(\tH)$.

Note that  ${SSG}(\tG)$ is a  Boolean poset; in fact, we have natural isomorphisms of posets
\[ 
(SSG(\tG),<) \cong \left(\wp\left(E(\tG)\right),\subset \right)\]
given by
$\tH \mapsto E(\tH)$. On the contrary, the poset~$SG(\tG)$ is generally not isomorphic to a Boolean poset; a counterexample is given by the $1$-step graph -- cf.~Figure~\ref{fig:nstep}. However, $SG(\tG)$ is a sub-poset of a Boolean poset, namely the poset~$\left(\wp\left(V(\tG)\cup E(\tG)\right),\subset\right)$.
\end{example}

\begin{defn}\label{def:hereditary}
{Given a poset $(S,\triangleleft)$ a  \emph{sub-poset} is a subset $S'\subseteq S$ with the order $\triangleleft_{| S'\times S'}$ induced by~$\triangleleft$.} A sub-poset $(S',\triangleleft_{| S'\times S'})$ is called \emph{downward closed} (resp.~\emph{upward closed} ) with respect to $(S,\triangleleft)$, if for every $h\in S$  such that $h\triangleleft h'$ (resp.~$h'\triangleleft h$), for some $h'\in S'$, we have $h\in S'$. 
\end{defn}

\begin{rem}
The poset of spanning sub-graphs $SSG(\tG)$ is a sub-poset of the sub-graphs poset $SG(\tG)$, but it is easily checked {\bf not} to be downward closed. Nonetheless, it is upward closed.
\end{rem}

\begin{rem}
{The complement of an upward closed sub-poset is  downward closed, and \emph{vice-versa}.}
\end{rem}

We conclude the subsection with the definition of two properties which will be essential to define multipath cohomology.

\begin{defn}\label{def:faith and squre}
Let $(S,\triangleleft)$ be a poset and $(S',\triangleleft_{| S'\times S'})$ be a sub-poset of $(S,\triangleleft)$.
\begin{enumerate}
	\item\label{def:faith_label1} We say that $(S,\triangleleft)$ is \emph{squared} if for each triple $x,y,z\in S'$ such that $z$ covers $y$ and $y$ covers $x$, there is a unique $y'\neq y$ such that $z$ covers $y'$ and $y'$ covers $x$. Such elements $x$, $y$, $y'$, and $z$ will be called a \emph{square} in $S$.
\item We say that $(S',\triangleleft_{| S'\times S'})$ is \emph{faithful} if the covering relation in $S'$ induced by $\triangleleft_{| S'\times S'}$ is the restriction of the covering relation in $S$ induced by $\triangleleft$;
\end{enumerate}
\end{defn}

{Observe that square posets have also been called \emph{thin posets} in the literature; see, e.g.,~\cite[Section~4]{BJOR} or \cite{chandler2019posets}.}

\begin{example}\label{ex:bool is square}
All Boolean posets are squared.
\end{example}

\begin{example}\label{ex:down-up are faith and square}
Downward and upward closed sub-posets are faithful. Furthermore, each downward or upward closed sub-poset of a squared poset is squared.
\end{example}

{The following proposition is straightforward:}

\begin{prop}\label{prop:faith-square}
{Let $(S,\triangleleft)$ be a poset. Given the sub-posets $S',S'' \subset S$, we have:  
	\begin{enumerate}
		\item if $S''\subset S'$ is faithful and $S'\subset S$ is faithful, then $S''$ is faithful in~$S$;
		\item if $S'$ and $S''$ are faithful in $S$ (resp.~squared), then  $S'\cap S''$ is faithful  in $S$  (resp.~squared).
	\end{enumerate}
}
\end{prop}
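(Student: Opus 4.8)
The plan is to translate faithfulness into a statement about open intervals and then argue set-theoretically. For a sub-poset $S'\subseteq S$ and $x,y\in S'$, write $(x,y)_S=\{z\in S : x\triangleleft z\triangleleft y\}$ for the open interval computed in $S$. Unwinding the definitions, $x$ is covered by $y$ in $S'$ precisely when $x\triangleleft y$ and $(x,y)_S\cap S'=\emptyset$, and the inclusion $(x,y)_S\cap S'\subseteq(x,y)_S$ shows that a covering relation of $S$ that happens to lie in $S'$ is automatically a covering relation of $S'$. Hence \emph{faithfulness of $S'$ in $S$ is exactly the converse implication}: for $x,y\in S'$ with $x\triangleleft y$, if $(x,y)_S\cap S'=\emptyset$ then $(x,y)_S=\emptyset$. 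I would record this reformulation first, together with the analogous reading of the squared condition as a statement about length-two covering chains $x\,\widetilde{\triangleleft}\,y\,\widetilde{\triangleleft}\,z$.

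Granting the reformulation, part (1) is immediate. If $x,y\in S''$ form a covering pair in $S''$, then $(x,y)_S\cap S''=\emptyset$; faithfulness of $S''$ in $S'$ upgrades this to $(x,y)_S\cap S'=\emptyset$, and faithfulness of $S'$ in $S$ upgrades it further to $(x,y)_S=\emptyset$. Thus the covering pair persists in $S$, which is exactly faithfulness of $S''$ in $S$. In other words, faithfulness is transitive along nested sub-posets, and one only has to chain the two empty-interval implications.

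For part (2) I would treat the faithful and squared cases in parallel. Set $T=S'\cap S''$, and take covering data in $T$: a covering pair $x\,\widetilde{\triangleleft}\,y$ for the faithful claim, or a covering chain $x\,\widetilde{\triangleleft}\,y\,\widetilde{\triangleleft}\,z$ for the squared claim. The strategy is to (i) promote this to the same covering data inside each factor $S'$ and $S''$; (ii) apply the hypothesis on each factor separately (faithfulness to conclude $(x,y)_S=\emptyset$, or squaredness to produce a unique square-completing element $y'$); and (iii) reconcile the two outputs, checking that the element produced in $S'$ and the one produced in $S''$ coincide and land in $T$, forcing the match by the uniqueness clause in the squared condition.

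The main obstacle is step (i): a covering pair in $T$ need \emph{not} be a covering pair in $S'$, since an element of $S'\setminus S''$ could sit inside the open interval $(x,y)_S$ while contributing nothing to $(x,y)_S\cap T$. Making step (i) work is where the two hypotheses must genuinely interact, and it is the heart of the argument. I expect this to be clean in the setting relevant to the applications, namely when $S'$ and $S''$ are convex — in particular downward or upward closed (cf.~Example~\ref{ex:down-up are faith and square}) — because the intersection of convex sub-posets is again convex and convex sub-posets are faithful, so no interval-filling element can appear. Once step (i) is secured, the faithful claim closes exactly as in part (1), and the squared claim closes by combining the unique square-completions in $S'$ and $S''$ through uniqueness to land a common element $y'$ in $T$.
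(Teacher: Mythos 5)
Your interval reformulation of faithfulness is correct (faithfulness of $S'$ in $S$ is exactly the statement that covering pairs of $S'$ remain covering pairs of $S$), and your proof of part (1) is complete and correct: chaining the two empty-interval implications is all there is to it. The paper itself offers no argument -- it declares the proposition straightforward -- so for part (1) there is nothing finer to compare against.

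For part (2), however, you have a strategy with an acknowledged hole at step (i), and that hole cannot be patched: part (2) as stated is false, and the obstruction you isolate -- an element of $S'\setminus S''$ sitting inside $(x,y)_S$ while invisible to $S'\cap S''$ -- is precisely the mechanism of counterexamples. Concretely, take $S=\mathbb{B}(2)=(\wp(\{0,1\}),\subset)$ (which is even of the form $SSG(\tG)$, e.g.\ for $\tG=\tI_2$), and set $S'=\{\emptyset,\{0\},\{0,1\}\}$, $S''=\{\emptyset,\{1\},\{0,1\}\}$. Every covering pair of $S'$ is a covering pair of $S$, and likewise for $S''$, so both are faithful in $S$; but $S'\cap S''=\{\emptyset,\{0,1\}\}$ has the covering pair $(\emptyset,\{0,1\})$, which is not a covering pair of $S$ because $\{0\}$ lies strictly between. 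So the faithful claim fails. The squared claim fails as well, even if one assumes faithfulness in addition: let $S=\{x,y,y',y'',z\}$ with $y,y',y''$ pairwise incomparable and each strictly between $x$ and $z$, and set $S'=\{x,y,y',z\}$, $S''=\{x,y,y'',z\}$. Both are diamonds, hence squared, and both are faithful in $S$; yet $S'\cap S''=\{x,y,z\}$ is a three-element chain, in which the covering chain $x\,\widetilde{\triangleleft}\,y\,\widetilde{\triangleleft}\,z$ admits no completing element $y'$, so it is not squared. (Fattening the first example in the same way -- two incomparable middle elements in each of $S'$, $S''$ -- shows the faithful conclusion also fails under the stronger hypothesis that both sub-posets are squared.)

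Your closing instinct is the correct diagnosis rather than a cop-out to be apologised for: part (2) does become true, by essentially the argument you sketch in steps (i)--(iii), once $S'$ and $S''$ are convex -- in particular downward or upward closed, cf.\ Example~\ref{ex:down-up are faith and square} -- since then $(x,z)_S\subseteq S'\cap S''$ whenever $x,z$ lie in the intersection, which is exactly what step (i) needs. That is also the only regime in which the paper ever invokes this proposition (and in fact it only ever uses part (1), for $P\subseteq P(\tG)\subseteq SSG(\tG)$). But as written, a proof of part (2) cannot exist; your write-up should end with a counterexample and a proposed amended hypothesis, not with a deferral to ``the setting relevant to the applications.''
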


\subsection{Path posets}\label{sec:pathpost}
In this subsection we define one of the main ingredients in the construction of multipath cohomology: the path poset. To the best of the authors' knowledge, its first definition is due to Turner and Wagner~\cite{turner}.

Let \tG be a graph and let $|\tG|$ denote its geometric realisation as a CW-complex -- cf.~\cite[Appendix~A]{hatcher}.
A connected component of \tG is a sub-graph $\tH$ of \tG whose realisation $|\tH|$ is connected. We start with the definition of simple paths and multipaths:

\begin{defn}\label{def:paths}
A  \emph{simple path} of \tG  is a sequence of edges $e_1,...,e_n$ of \tG such that~$s(e_{i+1})=t(e_i)$ for $i=1,\dots,n-1$, and no vertex is encountered twice, i.e.,~if $s(e_i) = s(e_j)$ or $t(e_i) = t(e_j)$, then $i=j$, and it is not a cycle, i.e.,~$s(e_1)\neq t(e_n)$. 
\end{defn}

Simple paths can be seen as special kind of graphs;
 
{\begin{rem}\label{rem:simple path In}
If a connected graph \tG admits an ordering of all its edges with respect to which it is a simple path, then it is isomorphic to the graph~$\tI_n$ shown in Figure~\ref{fig:nstep}. The explicit isomorphism is given by the morphism of digraphs $\phi\colon V(\tG) \to V(\tI_n): s(e_i) \mapsto v_{i-1}, t(e_n) \mapsto v_n$.
\end{rem}}

We are interested in taking disjoint sets of simple paths; following \cite{turner}, we call them multipaths.

\begin{defn}\label{def:multipaths}
A \emph{multipath} of $\tG$ is a spanning sub-graph such that each connected component is either a vertex or its edges admit an ordering such that it is a simple path.
\end{defn}

\begin{rem}\label{rem:PG}
Every spanning sub-graph of a multipath is still a multipath. In particular, the set of multipaths of a graph \tG -- denoted by $P(\tG)$ -- forms  a downward closed sub-poset of $SSG(\tG)$ (with the induced order). Moreover, there is a unique minimum in both $P(\tG)$ and $SSG(\tG)$, which is the spanning sub-graph with no edges.
\end{rem}

With the definition of multipath in place we can present the main actor of the section.

\begin{defn}\label{def:pathposet}
	The \emph{path poset} of $\tG$ is the poset $(P(\tG),<)$ associated to $\tG$, that is, the set of multipaths of \tG ordered by the relation of ``being a sub-graph''. 
\end{defn}

\begin{notation}
When the partial order on $P(\tG)$ is not specified, we will always implicitly assume it to be the order relation~$<$.  Moreover, with abuse of notation, we will also  write $P(\tG)$ instead of  $(P(\tG),<)$.
\end{notation}

 We now provide some examples of path posets.

\begin{example}\label{ex:In}
Consider the coherently oriented linear graph $\tI_n$ of length $n$, illustrated in Figure~\ref{fig:nstep}.
In this case all spanning sub-graphs are multipaths, that is $P(\tI_n) = SSG(\tI_n)$. In particular, it follows that $(P(\tI_n), < )$ is a Boolean poset.
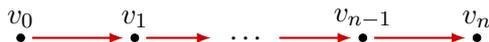
\begin{figure}[h]
	\begin{tikzpicture}[baseline=(current bounding box.center)]
		\tikzstyle{point}=[circle,thick,draw=black,fill=black,inner sep=0pt,minimum width=2pt,minimum height=2pt]
		\tikzstyle{arc}=[shorten >= 8pt,shorten <= 8pt,->, thick]
		
		\node[above] (v0) at (0,0) {$v_0$};
		\draw[fill] (0,0)  circle (.05);
		\node[above] (v1) at (1.5,0) {$v_1$};
		\draw[fill] (1.5,0)  circle (.05);
		\node[] at (3,0) {\dots};
		\node[above] (v4) at (4.5,0) {$v_{n-1}$};
		\draw[fill] (4.5,0)  circle (.05);
		\node[above] (v5) at (6,0) {$v_{n}$};
		\draw[fill] (6,0)  circle (.05);
		
		\draw[thick, bunired, -latex] (0.15,0) -- (1.35,0);
		\draw[thick, bunired, -latex] (1.65,0) -- (2.5,0);
		\draw[thick, bunired, -latex] (3.4,0) -- (4.35,0);
		\draw[thick, bunired, -latex] (4.65,0) -- (5.85,0);
	\end{tikzpicture}
	\caption{The $n$-step graph $I_n$.}
	\label{fig:nstep}
\end{figure}
\end{example}

\begin{example}\label{ex:Pn}
Consider the coherently oriented polygonal graph $\tP_n$ of length $n+1$, illustrated in Figure~\ref{fig:poly}. {Note that, according to our definition, also the digon $\tP_1$, which is shown explicitly in Figure \ref{fig:digon}, is a digraph}. In this case all spanning sub-graphs, but the polygon itself, are multipaths. Equivalently, we have $(P(\tP_n) \cup \{\tP_n\},<) = (SSG(\tP_n),<)$. In particular, $(P(\tP_n) ,<)$, for $n\in \mathbb{N}\setminus \{ 0 \}$, is {\bf not} a Boolean poset (as it is missing the maximum).
\begin{figure}[h]
\newdimen\R
\R=2.0cm
\begin{tikzpicture}
\draw[xshift=5.0\R, fill] (270:\R) circle(.05)  node[below] {$v_0$};
\draw[xshift=5.0\R,fill] (225:\R) circle(.05)  node[below left]   {$v_1$};
\draw[xshift=5.0\R,fill] (180:\R) circle(.05)  node[left] {$v_2$};
\draw[xshift=5.0\R,fill] (135:\R) circle(.05)  node[above left] {$v_3$};
\draw[xshift=5.0\R, fill] (90:\R) circle(.05)  node[above] {$v_4$};
\draw[xshift=5.0\R,fill] (45:\R) circle(.05)  node[above right] {$v_5$};
\draw[xshift=5.0\R,fill] (0:\R) circle(.05)  node[right] {$v_6$};
\draw[xshift=5.0\R,fill] (315:\R) circle(.05)  node[below right] {$v_{n}$};

\node[xshift=5.0\R] (v0) at (270:\R) { };
\node[xshift=5.0\R] (v1) at (225:\R) { };
\node[xshift=5.0\R] (v2) at (180:\R) { };
\node[xshift=5.0\R] (v3) at (135:\R) { };
\node[xshift=5.0\R] (v4) at (90:\R) { };
\node[xshift=5.0\R] (v5) at (45:\R) { };
\node[xshift=5.0\R] (v6) at (0:\R) { };
\node[xshift=5.0\R] (vn) at (315:\R) { };

\draw[thick, bunired, -latex] (v0)--(v1);
\draw[thick, bunired, -latex] (v1)--(v2);
\draw[thick, bunired, -latex] (v2)--(v3);
\draw[thick, bunired, -latex] (v3)--(v4);
\draw[thick, bunired, -latex] (v4)--(v5);
\draw[thick, bunired, -latex] (v5)--(v6);
\draw[thick, bunired, -latex] (vn)--(v0);

\draw[xshift=5.0\R, fill] (292.5:\R) node[below right] {$e_{n}$};
\draw[xshift=5.0\R,fill] (247.5:\R) node[below left] {$e_0$};
\draw[xshift=5.0\R,fill] (202.5:\R)   node[left] {$e_1$};
\draw[xshift=5.0\R,fill] (157.5:\R)  node[above left] {$e_2$};
\draw[xshift=5.0\R, fill] (112.5:\R)   node[above] {$e_3$};
\draw[xshift=5.0\R,fill] (67.5:\R) node[above right] {$e_4$};
\draw[xshift=5.0\R,fill] (22.5:\R) node[right] {$e_5$};
\draw[xshift=4.95\R,fill] (337.5:\R)  node {$\cdot$} ;
\draw[xshift=4.95\R,fill] (333:\R)  node {$\cdot$} ;
\draw[xshift=4.95\R,fill] (342:\R)  node {$\cdot$} ;
\end{tikzpicture}
\caption{The coherently oriented polygonal graph $\tP_n$ with a fixed ordering of vertices.} 
\label{fig:poly}
\end{figure}
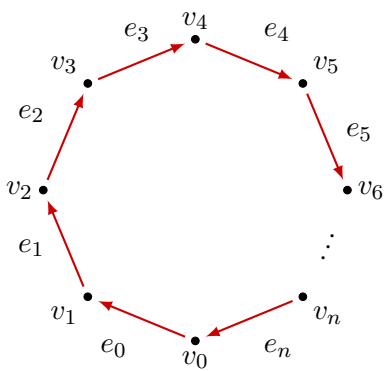

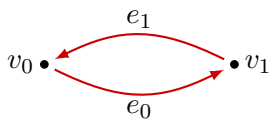
\begin{figure}[h]
\newdimen\R
\R=1.25cm
\begin{tikzpicture}
\draw[xshift=5.0\R,fill] (180:\R) circle(.05)  node[left] {$v_0$};
\draw[xshift=5.0\R,fill] (0:\R) circle(.05)  node[right] {$v_1$};

\node[xshift=5.0\R] (v1) at (180:\R) { };
\node[xshift=5.0\R] (v0) at (0:\R) { };

\draw[thick, bunired, -latex] (v0) .. controls +(-1,.5) and +(1,.5) .. (v1);
\draw[thick, bunired, -latex] (v1).. controls +(1,-.5) and +(-1,-.5) ..(v0);

\draw[xshift=5.0\R,fill,above] (0,.35) node {$e_1$};
\draw[xshift=5.0\R,fill,below] (0,-.35) node {$e_0$};
\end{tikzpicture}
\caption{The digon graph $\tt P_1$. }
\label{fig:digon}
\end{figure}
\end{example}

Recall that the symbol $\widetilde{\triangleleft}$ denotes a covering relation.
In order to visually represent  path posets associated to digraphs, we  use the associated Hasse diagrams:

\begin{defn}\label{def:Hasse}
The \emph{Hasse (di)graph} $\Hasse (S,\triangleleft)$ of a poset $(S,\triangleleft)$ is the graph whose vertices are the elements of $S$ and such that $(x,y)$ is an edge if, and only if, $x\,\widetilde{\triangleleft}\, y$.
\end{defn}

Note that the Hasse graph of a poset $S$ completely encodes the covering relation of $S$ and hence, by transitivity, the order relation.

\begin{example}
Consider the $Y$-shaped graphs in  Figure~\ref{fig:y}. Their associated path posets, up to isomorphism, are shown in Figure~\ref{fig:yposet}; the figures show the covering relation in the posets {or, alternatively, the Hasse digraph of the path poset}. 
Note that the path poset of the graph in Figure~\ref{fig:YB} is isomorphic to the path poset of the graph in  Figure~\ref{fig:YA}; in fact, these two graphs are isomorphic up to reversing the orientation in all arcs of one of the two.
However, the path poset of the graph  in Figure~\ref{fig:YB} is not isomorphic to the path poset of the graph in Figure~\ref{fig:YC} (e.g.,~there are no multipaths of length two in the latter). 
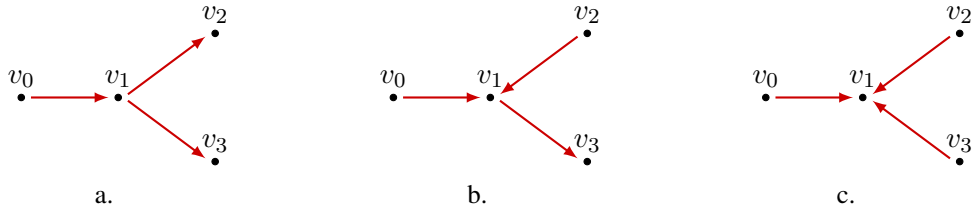
\begin{figure}[h]
	\centering
	\begin{subfigure}[b]{0.3\textwidth}
		\centering
	\begin{tikzpicture}[baseline=(current bounding box.center),scale =.85]
		\tikzstyle{point}=[circle,thick,draw=black,fill=black,inner sep=0pt,minimum width=2pt,minimum height=2pt]
		\tikzstyle{arc}=[shorten >= 8pt,shorten <= 8pt,->, thick]
		
		\node[above] (v0) at (0,0) {$v_0$};
		\draw[fill] (0,0)  circle (.05);
		\node[above] (v1) at (1.5,0) {$v_1$};
		\draw[fill] (1.5,0)  circle (.05);
		\node[above] (v2) at (3,1) {$v_{2}$};
		\draw[fill] (3,1)  circle (.05);
		\node[above] (v3) at (3,-1) {$v_{3}$};
		\draw[fill] (3,-1)  circle (.05);
		
		\draw[thick, bunired, -latex] (0.15,0) -- (1.35,0);
		\draw[thick, bunired, -latex] (1.65,0.05) -- (2.85,0.95);
		\draw[thick, bunired, -latex] (1.65,-0.05) -- (2.85,-0.95);
	\end{tikzpicture}
		\caption{\phantom{A}}\label{fig:YA}
	\end{subfigure}
	\begin{subfigure}[b]{0.3\textwidth}
	\centering
	\begin{tikzpicture}[baseline=(current bounding box.center),scale =.85]
		\tikzstyle{point}=[circle,thick,draw=black,fill=black,inner sep=0pt,minimum width=2pt,minimum height=2pt]
		\tikzstyle{arc}=[shorten >= 8pt,shorten <= 8pt,->, thick]
		
		\node[above] (v0) at (0,0) {$v_0$};
		\draw[fill] (0,0)  circle (.05);
		\node[above] (v1) at (1.5,0) {$v_1$};
		\draw[fill] (1.5,0)  circle (.05);
		\node[above] (v2) at (3,1) {$v_{2}$};
		\draw[fill] (3,1)  circle (.05);
		\node[above] (v3) at (3,-1) {$v_{3}$};
		\draw[fill] (3,-1)  circle (.05);
		
		\draw[thick, bunired, -latex] (0.15,0) -- (1.35,0);
		\draw[thick, bunired, -latex] (2.85,0.95) -- (1.65,0.05);
		\draw[thick, bunired, -latex] (1.65,-0.05) -- (2.85,-0.95);
	\end{tikzpicture}
				\caption{\phantom{B}}\label{fig:YB}
\end{subfigure}
	\begin{subfigure}[b]{0.3\textwidth}
	\centering
	\begin{tikzpicture}[baseline=(current bounding box.center),scale =.85]
		\tikzstyle{point}=[circle,thick,draw=black,fill=black,inner sep=0pt,minimum width=2pt,minimum height=2pt]
		\tikzstyle{arc}=[shorten >= 8pt,shorten <= 8pt,->, thick]
		
		\node[above] (v0) at (0,0) {$v_0$};
		\draw[fill] (0,0)  circle (.05);
		\node[above] (v1) at (1.5,0) {$v_1$};
		\draw[fill] (1.5,0)  circle (.05);
		\node[above] (v2) at (3,1) {$v_{2}$};
		\draw[fill] (3,1)  circle (.05);
		\node[above] (v3) at (3,-1) {$v_{3}$};
		\draw[fill] (3,-1)  circle (.05);
		
		\draw[thick, bunired, -latex] (0.15,0) -- (1.35,0);
		\draw[thick, bunired, -latex] (2.85,0.95) -- (1.65,0.05);
		\draw[thick, bunired, -latex] (2.85,-0.95) -- (1.65,-0.05);
	\end{tikzpicture}
				\caption{ \phantom{C }}\label{fig:YC}
\end{subfigure}
	\caption{Three non-isomorphic $Y$-shaped digraphs.}
	\label{fig:y}
\end{figure}

\begin{figure}[h]
	\centering
	\begin{subfigure}[b]{0.4\textwidth}

		\centering
	\begin{tikzpicture}[scale=0.4][baseline=(current bounding box.center)]
	\tikzstyle{point}=[circle,thick,draw=black,fill=black,inner sep=0pt,minimum width=2pt,minimum height=2pt]
	\tikzstyle{arc}=[shorten >= 8pt,shorten <= 8pt,->, thick]

	\node[below] (w0) at (0,0) {$\{v_0, v_1, v_2, v_3\}$};
	\draw[fill] (0,0)  circle (.05);
	    \begin{scope}[shift={(-1.7,4)}]
			\node[above] (v0) at (0,0) {$v_0$};
			\draw[fill] (0,0)  circle (.05);
			\node[above] (v1) at (1.5,0) {$v_1$};
			\draw[fill] (1.5,0)  circle (.05);
			\node[above] (v2) at (3,1) {$v_{2}$};
			\draw[fill] (3,1)  circle (.05);
			\node[above] (v3) at (3,-1) {$v_{3}$};
			\draw[fill] (3,-1)  circle (.05);
			
			\draw[thick, bunired, -latex] (0.15,0) -- (1.35,0);
		\end{scope}
	
		    \begin{scope}[shift={(-6.5,4)}]
		\node[above] (v0) at (0,0) {$v_0$};
		\draw[fill] (0,0)  circle (.05);
		\node[above] (v1) at (1.5,0) {$v_1$};
		\draw[fill] (1.5,0)  circle (.05);
		\node[above] (v2) at (3,1) {$v_{2}$};
		\draw[fill] (3,1)  circle (.05);
		\node[above] (v3) at (3,-1) {$v_{3}$};
		\draw[fill] (3,-1)  circle (.05);
		
		\draw[thick, bunired, -latex] (1.65,0) -- (2.85,0.95);
	\end{scope}

	    \begin{scope}[shift={(3.8,4)}]
	\node[above] (v0) at (0,0) {$v_0$};
	\draw[fill] (0,0)  circle (.05);
	\node[above] (v1) at (1.5,0) {$v_1$};
	\draw[fill] (1.5,0)  circle (.05);
	\node[above] (v2) at (3,1) {$v_{2}$};
	\draw[fill] (3,1)  circle (.05);
	\node[above] (v3) at (3,-1) {$v_{3}$};
	\draw[fill] (3,-1)  circle (.05);
	
		\draw[thick, bunired, -latex] (1.65,0) -- (2.85,-0.95);
\end{scope}

			\draw[thick, -latex] (-0.15,0.25) -- (-3.5,2.5);
			\draw[thick, -latex] (0,0.25) -- (0,2.5);
			\draw[thick, -latex] (0.15,0.25) -- (3.5,2.5);
			
			\begin{scope}[shift={(-5.5,9)}]
				\node[above] (v0) at (0,0) {$v_0$};
				\draw[fill] (0,0)  circle (.05);
				\node[above] (v1) at (1.5,0) {$v_1$};
				\draw[fill] (1.5,0)  circle (.05);
				\node[above] (v2) at (3,1) {$v_{2}$};
				\draw[fill] (3,1)  circle (.05);
				\node[above] (v3) at (3,-1) {$v_{3}$};
				\draw[fill] (3,-1)  circle (.05);
				
				\draw[thick, bunired, -latex] (1.65,0) -- (2.85,0.95);
							\draw[thick, bunired, -latex] (0.15,0) -- (1.35,0);
			\end{scope}
			
			\begin{scope}[shift={(2.5,9)}]
				\node[above] (v0) at (0,0) {$v_0$};
				\draw[fill] (0,0)  circle (.05);
				\node[above] (v1) at (1.5,0) {$v_1$};
				\draw[fill] (1.5,0)  circle (.05);
				\node[above] (v2) at (3,1) {$v_{2}$};
				\draw[fill] (3,1)  circle (.05);
				\node[above] (v3) at (3,-1) {$v_{3}$};
				\draw[fill] (3,-1)  circle (.05);
				
				\draw[thick, bunired, -latex] (1.65,0) -- (2.85,-0.95);
							\draw[thick, bunired, -latex] (0.15,0) -- (1.35,0);
			\end{scope}
		
		\draw[thick, -latex] (-0.25,6.25) -- (-2.3,7.5);
		\draw[thick, -latex] (-0.05,6.25) -- (2.3,7.5);
		\draw[thick, -latex] (-3.5,6.25) -- (-3.5,7.5);
		\draw[thick, -latex] (4.0,6.25) -- (4.0,7.5);
\end{tikzpicture}
		\caption{Path poset of Fig.~\ref{fig:YA}. }
\end{subfigure}
\hspace{0.08\textwidth}
	\begin{subfigure}[b]{0.4\textwidth}
	\centering
\begin{tikzpicture}[scale=0.4][baseline=(current bounding box.center)]
		\tikzstyle{point}=[circle,thick,draw=black,fill=black,inner sep=0pt,minimum width=2pt,minimum height=2pt]
		\tikzstyle{arc}=[shorten >= 8pt,shorten <= 8pt,->, thick]

		\node[below] (w0) at (0,0) {$\{v_0, v_1, v_2, v_3\}$};
		\draw[fill] (0,0)  circle (.05);
		\begin{scope}[shift={(-6.5,4)}]
			\node[above] (v0) at (0,0) {$v_0$};
			\draw[fill] (0,0)  circle (.05);
			\node[above] (v1) at (1.5,0) {$v_1$};
			\draw[fill] (1.5,0)  circle (.05);
			\node[above] (v2) at (3,1) {$v_{2}$};
			\draw[fill] (3,1)  circle (.05);
			\node[above] (v3) at (3,-1) {$v_{3}$};
			\draw[fill] (3,-1)  circle (.05);
			
			\draw[thick, bunired, -latex] (0.15,0) -- (1.35,0);
		\end{scope}
		
		\begin{scope}[shift={(3.8,4)}]
			\node[above] (v0) at (0,0) {$v_0$};
			\draw[fill] (0,0)  circle (.05);
			\node[above] (v1) at (1.5,0) {$v_1$};
			\draw[fill] (1.5,0)  circle (.05);
			\node[above] (v2) at (3,1) {$v_{2}$};
			\draw[fill] (3,1)  circle (.05);
			\node[above] (v3) at (3,-1) {$v_{3}$};
			\draw[fill] (3,-1)  circle (.05);
			
			\draw[thick, bunired, -latex] (2.85,0.95) -- (1.65,0);
		\end{scope}
		
		\begin{scope}[shift={(-1.7,4)}]
			\node[above] (v0) at (0,0) {$v_0$};
			\draw[fill] (0,0)  circle (.05);
			\node[above] (v1) at (1.5,0) {$v_1$};
			\draw[fill] (1.5,0)  circle (.05);
			\node[above] (v2) at (3,1) {$v_{2}$};
			\draw[fill] (3,1)  circle (.05);
			\node[above] (v3) at (3,-1) {$v_{3}$};
			\draw[fill] (3,-1)  circle (.05);
			
			\draw[thick, bunired, latex-] (1.65,0) -- (2.85,-0.95);
		\end{scope}
		
		\draw[thick, -latex] (-0.15,0.25) -- (-3.5,2.5);
		\draw[thick, -latex] (0,0.25) -- (0,2.5);
		\draw[thick, -latex] (0.15,0.25) -- (3.5,2.5);
	\end{tikzpicture}
		\caption{Path poset of Fig.~\ref{fig:YC}.}
\end{subfigure}
\caption{The path posets of the $Y$-shaped digraphs in Figg.~\ref{fig:YA} and \ref{fig:YC}.} 
\label{fig:yposet}
\end{figure}
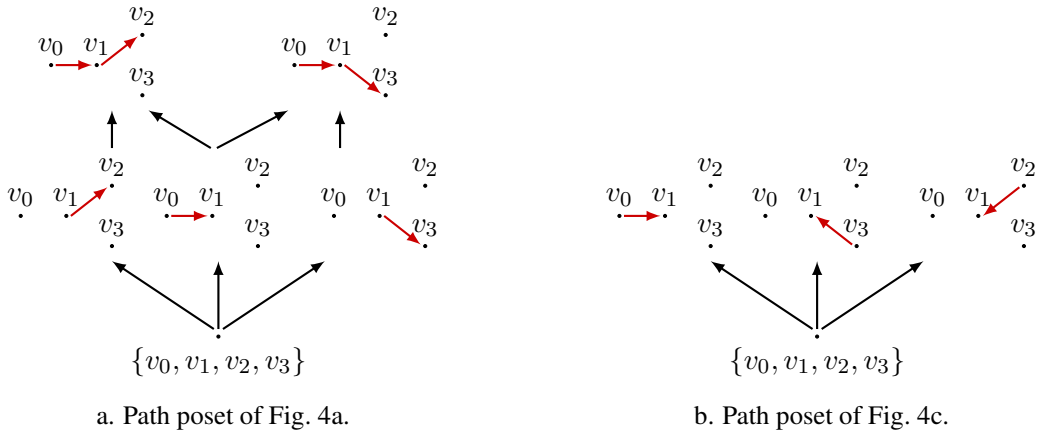
\end{example}

\begin{example}
Consider the $H$-shaped digraph of Figure~\ref{fig:H}. The associated path poset, which is illustrated in Figure~\ref{fig:Hposet}, has multipaths of length $2$ at most. 
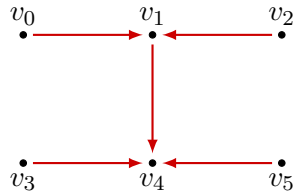
\begin{figure}[h]
	\centering
		\begin{tikzpicture}[baseline=(current bounding box.center), scale =.85]
			\tikzstyle{point}=[circle,thick,draw=black,fill=black,inner sep=0pt,minimum width=2pt,minimum height=2pt]
			\tikzstyle{arc}=[shorten >= 8pt,shorten <= 8pt,->, thick]
			
			\node[above] (v0) at (-1,1) {$v_0$};
			\draw[fill] (-1,1)  circle (.05);
			\node[above] (v1) at (1,1) {$v_1$};
			\draw[fill] (1,1)  circle (.05);
			\node[above] (v2) at (3,1) {$v_{2}$};
			\draw[fill] (3,1)  circle (.05);
			\node[below] (v3) at (-1,-1) {$v_{3}$};
			\draw[fill] (-1,-1)  circle (.05);
			\node[below] (v4) at (1,-1) {$v_{4}$};
			\draw[fill] (1,-1)  circle (.05);
			\node[below] (v5) at (3,-1) {$v_{5}$};
			\draw[fill] (3,-1)  circle (.05);
			
			\draw[thick, bunired, -latex] (-0.85,1) -- (0.85,1);
			\draw[thick, bunired, -latex] (-0.85,-1) -- (0.85,-1);
			\draw[thick, bunired, -latex] (2.85,1) -- (1.15,1);
			\draw[thick, bunired, -latex] (2.85,-1) -- (1.15,-1);
			\draw[thick, bunired, -latex] (1,0.85) -- (1,-0.85);
		\end{tikzpicture}
			\caption{A figure $H$-type digraph. }\label{fig:H}
\end{figure}

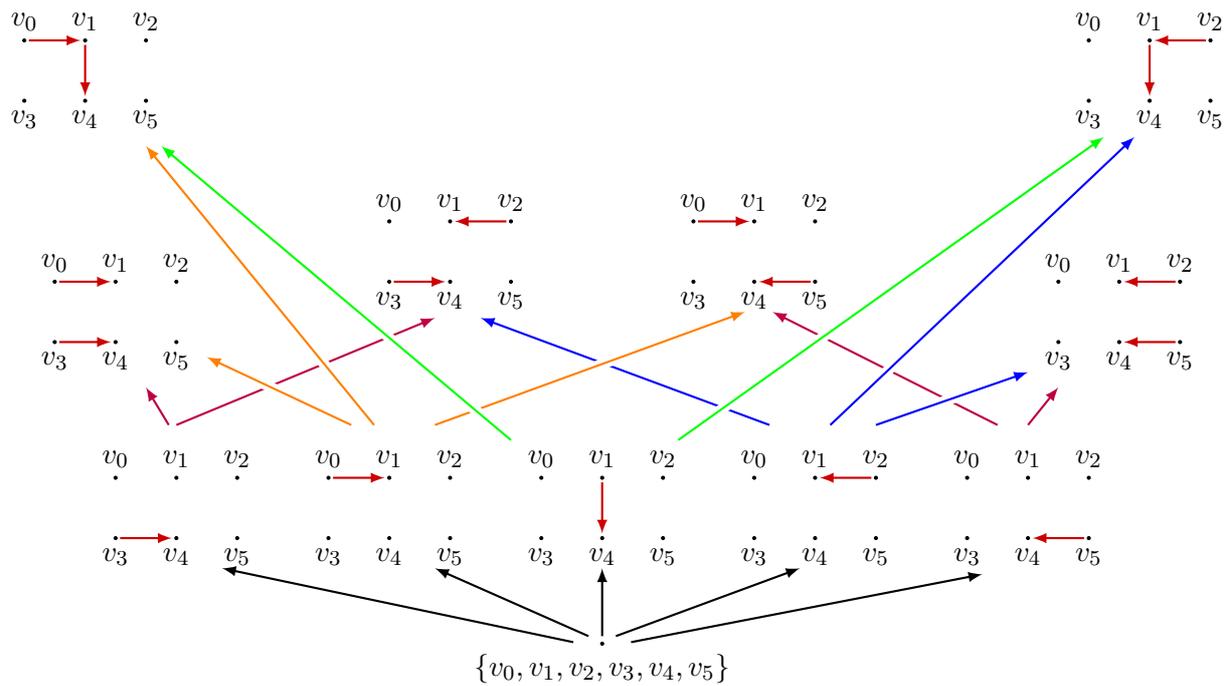
\begin{figure}[h]
	\centering
		\begin{tikzpicture}[scale=0.4][baseline=(current bounding box.center)]
			\tikzstyle{point}=[circle,thick,draw=black,fill=black,inner sep=0pt,minimum width=2pt,minimum height=2pt]
			\tikzstyle{arc}=[shorten >= 8pt,shorten <= 8pt,->, thick]

			\node[below] (w0) at (0,0) {$\{v_0, v_1, v_2, v_3,v_4,v_5\}$};
			\draw[fill] (0,0)  circle (.05);
			\begin{scope}[shift={(-8,4.5)}]
				\node[above] (v0) at (-1,1) {$v_0$};
				\draw[fill] (-1,1)  circle (.05);
				\node[above] (v1) at (1,1) {$v_1$};
				\draw[fill] (1,1)  circle (.05);
				\node[above] (v2) at (3,1) {$v_{2}$};
				\draw[fill] (3,1)  circle (.05);
				\node[below] (v3) at (-1,-1) {$v_{3}$};
				\draw[fill] (-1,-1)  circle (.05);
				\node[below] (v4) at (1,-1) {$v_{4}$};
				\draw[fill] (1,-1)  circle (.05);
				\node[below] (v5) at (3,-1) {$v_{5}$};
				\draw[fill] (3,-1)  circle (.05);
			\draw[thick, bunired, -latex] (-0.85,1) -- (0.85,1);		
			\end{scope}
			
			\begin{scope}[shift={(-1,4.5)}]
				\node[above] (v0) at (-1,1) {$v_0$};
				\draw[fill] (-1,1)  circle (.05);
				\node[above] (v1) at (1,1) {$v_1$};
				\draw[fill] (1,1)  circle (.05);
				\node[above] (v2) at (3,1) {$v_{2}$};
				\draw[fill] (3,1)  circle (.05);
				\node[below] (v3) at (-1,-1) {$v_{3}$};
				\draw[fill] (-1,-1)  circle (.05);
				\node[below] (v4) at (1,-1) {$v_{4}$};
				\draw[fill] (1,-1)  circle (.05);
				\node[below] (v5) at (3,-1) {$v_{5}$};
				\draw[fill] (3,-1)  circle (.05);
\draw[thick, bunired, -latex] (1,0.85) -- (1,-0.85);				
			\end{scope}
		
			\begin{scope}[shift={(6,4.5)}]
			\node[above] (v0) at (-1,1) {$v_0$};
			\draw[fill] (-1,1)  circle (.05);
			\node[above] (v1) at (1,1) {$v_1$};
			\draw[fill] (1,1)  circle (.05);
			\node[above] (v2) at (3,1) {$v_{2}$};
			\draw[fill] (3,1)  circle (.05);
			\node[below] (v3) at (-1,-1) {$v_{3}$};
			\draw[fill] (-1,-1)  circle (.05);
			\node[below] (v4) at (1,-1) {$v_{4}$};
			\draw[fill] (1,-1)  circle (.05);
			\node[below] (v5) at (3,-1) {$v_{5}$};
			\draw[fill] (3,-1)  circle (.05);
			
			\draw[thick, bunired, -latex] (2.85,1) -- (1.15,1);
			\end{scope}

			\begin{scope}[shift={(-15,4.5)}]
	\node[above] (v0) at (-1,1) {$v_0$};
	\draw[fill] (-1,1)  circle (.05);
	\node[above] (v1) at (1,1) {$v_1$};
	\draw[fill] (1,1)  circle (.05);
	\node[above] (v2) at (3,1) {$v_{2}$};
	\draw[fill] (3,1)  circle (.05);
	\node[below] (v3) at (-1,-1) {$v_{3}$};
	\draw[fill] (-1,-1)  circle (.05);
	\node[below] (v4) at (1,-1) {$v_{4}$};
	\draw[fill] (1,-1)  circle (.05);
	\node[below] (v5) at (3,-1) {$v_{5}$};
	\draw[fill] (3,-1)  circle (.05);
	
	\draw[thick, bunired, -latex] (-0.85,-1) -- (0.85,-1);
\end{scope}			

			\begin{scope}[shift={(13,4.5)}]
	\node[above] (v0) at (-1,1) {$v_0$};
	\draw[fill] (-1,1)  circle (.05);
	\node[above] (v1) at (1,1) {$v_1$};
	\draw[fill] (1,1)  circle (.05);
	\node[above] (v2) at (3,1) {$v_{2}$};
	\draw[fill] (3,1)  circle (.05);
	\node[below] (v3) at (-1,-1) {$v_{3}$};
	\draw[fill] (-1,-1)  circle (.05);
	\node[below] (v4) at (1,-1) {$v_{4}$};
	\draw[fill] (1,-1)  circle (.05);
	\node[below] (v5) at (3,-1) {$v_{5}$};
	\draw[fill] (3,-1)  circle (.05);

	\draw[thick, bunired, -latex] (2.85,-1) -- (1.15,-1);
\end{scope}	

			\draw[thick, -latex] (-0.95,0.05) -- (-12.5,2.5);
			\draw[thick, -latex] (-0.45,0.25) -- (-5.5,2.5);
			\draw[thick, -latex] (0,0.25) -- (0,2.5);
			\draw[thick, -latex] (0.45,0.25) -- (6.5,2.5);
			\draw[thick, -latex] (0.95,0.05) -- (12.5,2.3);
			
			\begin{scope}[shift={(-18,19)}]
\node[above] (v0) at (-1,1) {$v_0$};
\draw[fill] (-1,1)  circle (.05);
\node[above] (v1) at (1,1) {$v_1$};
\draw[fill] (1,1)  circle (.05);
\node[above] (v2) at (3,1) {$v_{2}$};
\draw[fill] (3,1)  circle (.05);
\node[below] (v3) at (-1,-1) {$v_{3}$};
\draw[fill] (-1,-1)  circle (.05);
\node[below] (v4) at (1,-1) {$v_{4}$};
\draw[fill] (1,-1)  circle (.05);
\node[below] (v5) at (3,-1) {$v_{5}$};
\draw[fill] (3,-1)  circle (.05);

\draw[thick, bunired, -latex] (-0.85,1) -- (0.85,1);
\draw[thick, bunired, -latex] (1,0.85) -- (1,-0.85);
			\end{scope}
		
	\begin{scope}[shift={(17,19)}]				
		\node[above] (v0) at (-1,1) {$v_0$};
		\draw[fill] (-1,1)  circle (.05);
		\node[above] (v1) at (1,1) {$v_1$};
		\draw[fill] (1,1)  circle (.05);
		\node[above] (v2) at (3,1) {$v_{2}$};
		\draw[fill] (3,1)  circle (.05);
		\node[below] (v3) at (-1,-1) {$v_{3}$};
		\draw[fill] (-1,-1)  circle (.05);
		\node[below] (v4) at (1,-1) {$v_{4}$};
		\draw[fill] (1,-1)  circle (.05);
		\node[below] (v5) at (3,-1) {$v_{5}$};
		\draw[fill] (3,-1)  circle (.05);

		\draw[thick, bunired, -latex] (2.85,1) -- (1.15,1);
		\draw[thick, bunired, -latex] (1,0.85) -- (1,-0.85);
	\end{scope}

			\begin{scope}[shift={(-17,11)}]
			\node[above] (v0) at (-1,1) {$v_0$};
\draw[fill] (-1,1)  circle (.05);
\node[above] (v1) at (1,1) {$v_1$};
\draw[fill] (1,1)  circle (.05);
\node[above] (v2) at (3,1) {$v_{2}$};
\draw[fill] (3,1)  circle (.05);
\node[below] (v3) at (-1,-1) {$v_{3}$};
\draw[fill] (-1,-1)  circle (.05);
\node[below] (v4) at (1,-1) {$v_{4}$};
\draw[fill] (1,-1)  circle (.05);
\node[below] (v5) at (3,-1) {$v_{5}$};
\draw[fill] (3,-1)  circle (.05);

\draw[thick, bunired, -latex] (-0.85,1) -- (0.85,1);
\draw[thick, bunired, -latex] (-0.85,-1) -- (0.85,-1);
			\end{scope}
		
		\begin{scope}[shift={(4,13)}]
			\node[above] (v0) at (-1,1) {$v_0$};
			\draw[fill] (-1,1)  circle (.05);
			\node[above] (v1) at (1,1) {$v_1$};
			\draw[fill] (1,1)  circle (.05);
			\node[above] (v2) at (3,1) {$v_{2}$};
			\draw[fill] (3,1)  circle (.05);
			\node[below] (v3) at (-1,-1) {$v_{3}$};
			\draw[fill] (-1,-1)  circle (.05);
			\node[below] (v4) at (1,-1) {$v_{4}$};
			\draw[fill] (1,-1)  circle (.05);
			\node[below] (v5) at (3,-1) {$v_{5}$};
			\draw[fill] (3,-1)  circle (.05);
			
\draw[thick, bunired, -latex] (2.85,-1) -- (1.15,-1);
\draw[thick, bunired, -latex] (-0.85,1) -- (0.85,1);
		\end{scope}
		
				\begin{scope}[shift={(-6,13)}]
			\node[above] (v0) at (-1,1) {$v_0$};
			\draw[fill] (-1,1)  circle (.05);
			\node[above] (v1) at (1,1) {$v_1$};
			\draw[fill] (1,1)  circle (.05);
			\node[above] (v2) at (3,1) {$v_{2}$};
			\draw[fill] (3,1)  circle (.05);
			\node[below] (v3) at (-1,-1) {$v_{3}$};
			\draw[fill] (-1,-1)  circle (.05);
			\node[below] (v4) at (1,-1) {$v_{4}$};
			\draw[fill] (1,-1)  circle (.05);
			\node[below] (v5) at (3,-1) {$v_{5}$};
			\draw[fill] (3,-1)  circle (.05);
			
				\draw[thick, bunired, -latex] (-0.85,-1) -- (0.85,-1);
			\draw[thick, bunired, -latex] (2.85,1) -- (1.15,1);
		\end{scope}
		
					\begin{scope}[shift={(16,11)}]
			\node[above] (v0) at (-1,1) {$v_0$};
			\draw[fill] (-1,1)  circle (.05);
			\node[above] (v1) at (1,1) {$v_1$};
			\draw[fill] (1,1)  circle (.05);
			\node[above] (v2) at (3,1) {$v_{2}$};
			\draw[fill] (3,1)  circle (.05);
			\node[below] (v3) at (-1,-1) {$v_{3}$};
			\draw[fill] (-1,-1)  circle (.05);
			\node[below] (v4) at (1,-1) {$v_{4}$};
			\draw[fill] (1,-1)  circle (.05);
			\node[below] (v5) at (3,-1) {$v_{5}$};
			\draw[fill] (3,-1)  circle (.05);
			
\draw[thick, bunired, -latex] (2.85,1) -- (1.15,1);
\draw[thick, bunired, -latex] (2.85,-1) -- (1.15,-1);
		\end{scope}
			
			\draw[thick, purple,  -latex] (-14.25,7.25) -- (-15.0,8.5);
			\draw[thick, purple,  -latex] (-14.0,7.25) -- (-5.5,10.8);
			\draw[white, line width =4 ] (-8.25,7.25) -- (-13.0,9.5);
			\draw[thick, orange, -latex] (-8.25,7.25) -- (-13.0,9.5);
			\draw[white, line width =4 ]  (-7.5,7.25) -- (-15.0,16.5);
			\draw[thick, orange, -latex] (-7.5,7.25) -- (-15.0,16.5);

			\draw[thick, blue, -latex] (5.5,7.25) -- (-4.0,10.8);
	
	\draw[thick, purple,  -latex] (14.0,7.25) -- (15.0,8.5);
	\draw[thick,purple,  -latex] (13.0,7.25) -- (5.6,11.0);
	
	\draw[white, line width =4 ]  (7.5,7.25) -- (17.5,16.8);
	\draw[thick, blue, -latex] (7.5,7.25) -- (17.5,16.8);
			
			\draw[white, line width =4 ]  (9.0,7.25) -- (14.0,9);			
			\draw[thick, blue, -latex] (9.0,7.25) -- (14.0,9);

			\draw[white, line width =4 ]  (-5.5,7.25) -- (4.7,11);
			\draw[thick, orange, -latex] (-5.5,7.25) -- (4.7,11);

			\draw[white, line width =4 ]  (-3.0,6.75) -- (-14.5,16.5);
			\draw[thick, green, -latex] (-3.0,6.75) -- (-14.5,16.5);
			\draw[white, line width =4 ]  (2.5,6.75) -- (16.5,16.8);
			\draw[thick, green, -latex] (2.5,6.75) -- (16.5,16.8);
		\end{tikzpicture}
		\caption{The path poset of the $H$-shaped digraph of Fig.~\ref{fig:H}. }\label{fig:Hposet}
	\end{figure}
\end{example}

The following remark will be essential in the functorial applications -- cf.~Section~\ref{sec:functoriality}.

\begin{rem}\label{rem:functordigrpo}
A morphism $f\colon\tG_1 \to \tG_2$ in \textbf{Digraph} (which is regular by definition) induces a morphism of posets $Pf\colon P(\tG_1) \to P(\tG_2)$; more precisely, to a multipath $\tH \subset P(\tG_1)$ we associate the spanning sub-graph~$Pf(\tH)$ of $\tG_2$ defined by $E(Pf(\tH))= \{ f(e) \mid e\in E(\tH) \}$. This association yields a functor $P\colon \mathbf{Digraph} \to \mathbf{Poset}$. Note that $Pf(P(\tG_1))$ is a faithful sub-poset of~$P(\tG_2)$.
\end{rem}

We conclude the section by noting that, in favourable cases, the path poset determines the graph.

\begin{prop}\label{prop:boolposet}
Let \tG be a connected graph of length $n$. If $P(\tG)$ has a maximum then we have that $P(\tG)  \cong \mathbb{B}(n)$  and $\tG \cong \tI_n$. In particular, a connected graph has a Boolean path poset if, and only if, it is isomorphic to $\tI_n$, for some $n$.
\end{prop}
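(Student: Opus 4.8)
The plan is to show that the hypothesis forces $\tG$ itself to be the maximum of $P(\tG)$, and then to identify $\tG$ with $\tI_n$ via the already-established facts on simple paths. First I would dispose of the degenerate case $n=0$: a connected graph with no edges is a single vertex, so $P(\tG)$ is a one-point poset, while both $\mathbb{B}(0)=\wp(\emptyset)$ and $\tI_0$ reduce to a point; hence the statement holds trivially. Assume henceforth $n\ge 1$.

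The crucial observation is that every edge of $\tG$ must already appear in the maximum. Indeed, for each $e\in E(\tG)$ the spanning sub-graph whose only edge is $e$ is a simple path of length one (the edge $(v,w)$ has $v\neq w$ and is not a cycle), hence a multipath, hence an element of $P(\tG)$. Denoting by $M$ the maximum of $P(\tG)$, each such one-edge multipath lies below $M$, so $e\in E(M)$. Collecting over all edges gives $E(\tG)\subseteq E(M)$; since $M$ is, by Definition~\ref{def:multipaths}, a spanning sub-graph of $\tG$, the reverse inclusion $E(M)\subseteq E(\tG)$ holds as well. Therefore $E(M)=E(\tG)$ and, as both are spanning, $M=\tG$. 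In particular $\tG\in P(\tG)$, that is, $\tG$ is itself a multipath.

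Now $\tG$ is connected, so it consists of a single connected component, namely $\tG$ itself; as $n\ge 1$ this component is not an isolated vertex, so by Definition~\ref{def:multipaths} it admits an ordering of its edges making it a simple path. Remark~\ref{rem:simple path In} then yields $\tG\cong\tI_n$, and, since $P$ is functorial (Remark~\ref{rem:functordigrpo}), Example~\ref{ex:In} together with Example~\ref{exa:sub-graphs posets} gives $P(\tG)\cong P(\tI_n)=SSG(\tI_n)\cong\wp(E(\tI_n))\cong\mathbb{B}(n)$, proving the first assertion.

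For the ``in particular'' equivalence: if $\tG\cong\tI_n$ then $P(\tG)\cong\mathbb{B}(n)$ is Boolean by Example~\ref{ex:In}. Conversely, if $P(\tG)$ is a Boolean poset $\wp(S)$, then it has a maximum (the top element $S$), so the first part applies and $\tG\cong\tI_n$. I do not expect a genuine obstacle here: the whole argument rests on the single structural remark that the maximum must contain every edge, after which the cited facts on simple paths and on $P(\tI_n)$ close the proof. The only point needing a little care is the passage from ``connected multipath'' to ``single simple path,'' which is handled by the definition of multipath together with Remark~\ref{rem:simple path In}.
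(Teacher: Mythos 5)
Your proof is correct and takes essentially the same approach as the paper: both arguments hinge on the observation that each single-edge spanning sub-graph is a multipath and must lie below the maximum, forcing the maximum to contain every edge of $\tG$ and hence to equal $\tG$, after which Remark~\ref{rem:simple path In} identifies $\tG$ with $\tI_n$ and Examples~\ref{ex:In} and~\ref{exa:sub-graphs posets} give $P(\tG)\cong\mathbb{B}(n)$. The only difference is cosmetic: you deduce $e\in E({\tt M})$ directly from the definition of maximum, whereas the paper reaches the same conclusion by contradiction through maximal elements (facts M.i--M.ii); your direct version is, if anything, slightly cleaner.
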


\begin{proof}
Denote by ${\tt M}$ the maximum, which is the unique maximal element, of $P(\tG)$.
We shall now prove that ${\tt M} = \tG$. 
That being true, \tG would be a connected graph admitting an ordering of the edges with respect to which is a simple path, since {\tt M} is a connected multipath. The statement would then follow from  Remark~{\ref{rem:simple path In}}.

Since $P(\tG)$ is finite poset, for every $x\in P(\tG)$ there is a maximal element $m\in P(\tG)$ such that $x \leq m$.
Assume, for the sake of contradiction, that ${\tt M} \neq \tG$. Then, there exists an edge $e\in E(\tG) \setminus E({\tt M})$. Consider the (multi-)path ${\tt e}$ defined by $E({\tt e})=\{ e \}$. 
Then, as state above, we have a maximal multipath ${\tt M}'$ such that ${\tt e \leq M'}$.
In particular, ${\tt M\neq M'}$; this is not possible as we have a unique maximal element in $P(\tG)$. 
\end{proof}

We pointed out in Example \ref{ex:Pn} that the coherently oriented polygonal graphs have a path poset which is almost a Boolean poset; more precisely, the path poset of $\tP_n$ is a Boolean poset minus the maximum.

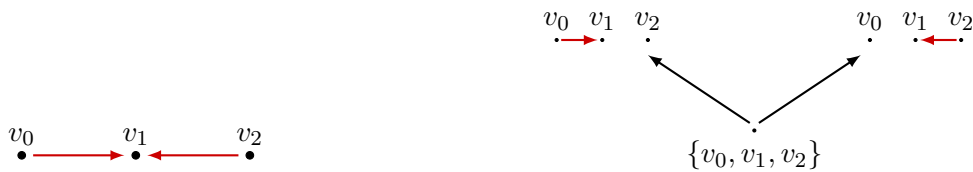
\begin{figure}[h]
	\centering
	\begin{subfigure}[b]{0.4\textwidth}
		
		\centering
		\begin{tikzpicture}[baseline=(current bounding box.center)]
		\tikzstyle{point}=[circle,thick,draw=black,fill=black,inner sep=0pt,minimum width=2pt,minimum height=2pt]
		\tikzstyle{arc}=[shorten >= 8pt,shorten <= 8pt,->, thick]
		
		\node at (0,0) {};
		\node[above] (v0) at (0,1) {$v_0$};
		\draw[fill] (0,1)  circle (.05);
		\node[above] (v1) at (1.5,1) {$v_1$};
		\draw[fill] (1.5,1)  circle (.05);
		\node[above] (v2) at (3,1) {$v_{2}$};
		\draw[fill] (3,1)  circle (.05);

		\draw[thick, bunired, -latex] (0.15,1) -- (1.35,1);
		\draw[thick, bunired, -latex] (2.85,1) -- (1.65,1);
	\end{tikzpicture}
\end{subfigure}
\hspace{0.1\textwidth}
\begin{subfigure}[b]{0.4\textwidth}
\centering
\begin{tikzpicture}[scale=0.4][baseline=(current bounding box.center)]
		\tikzstyle{point}=[circle,thick,draw=black,fill=black,inner sep=0pt,minimum width=2pt,minimum height=2pt]
		\tikzstyle{arc}=[shorten >= 8pt,shorten <= 8pt,->, thick]

		\node[below] (w0) at (0,0) {$\{v_0, v_1, v_2\}$};
		\draw[fill] (0,0)  circle (.05);
		\begin{scope}[shift={(-6.5,3)}]
			\node[above] (v0) at (0,0) {$v_0$};
			\draw[fill] (0,0)  circle (.05);
			\node[above] (v1) at (1.5,0) {$v_1$};
			\draw[fill] (1.5,0)  circle (.05);
			\node[above] (v2) at (3,0) {$v_{2}$};
			\draw[fill] (3,0)  circle (.05);

			\draw[thick, bunired, -latex] (0.15,0) -- (1.35,0);
			
		\end{scope}
		
		\begin{scope}[shift={(3.8,3)}]
			\node[above] (v0) at (0,0) {$v_0$};
			\draw[fill] (0,0)  circle (.05);
			\node[above] (v1) at (1.5,0) {$v_1$};
			\draw[fill] (1.5,0)  circle (.05);
			\node[above] (v2) at (3,0) {$v_{2}$};
			\draw[fill] (3,0)  circle (.05);
			
			\draw[thick, bunired, -latex] (2.85,0) -- (1.65,0);
		\end{scope}
		
		\draw[thick, -latex] (-0.15,0.25) -- (-3.5,2.5);
		
		\draw[thick, -latex] (0.15,0.25) -- (3.5,2.5);
	\end{tikzpicture}
\end{subfigure}
	\caption{A non-coherent linear digraph with two edges and its path poset. }\label{fig:nnstep}
\end{figure}

\begin{example}
{Consider the digon graph $\tt{P}_1$ illustrated in Figure~\ref{fig:digon}.} As depicted in Figure~\ref{fig:posetdigon}, its associated path poset consists of a minimum together with two elements corresponding to the two edges of the digon. It is easy to see that this poset is equivalent to the path poset associated to the linear digraph with two edges and non-coherent orientation illustrated in Figure~\ref{fig:nnstep}. 

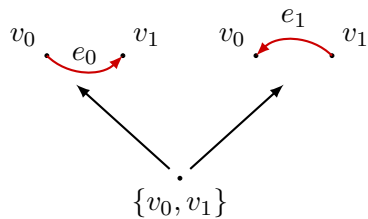
\begin{figure}[h]
\begin{tikzpicture}[scale = .5]

\draw[fill] (0,0) circle (.05) node[below] {$\{ v_0, v_1 \}$};
\node (a) at (0,0) {};
\node[below] (b) at (3,3) { };
\node[below] (c) at (-3,3) { };

\begin{scope}[shift ={+(2,3.25)}]

\node[above] at (1,.5) {$e_1$};

\draw[fill] (0,0) circle (.05) node[above left ]  {$ v_0$};
\draw[fill] (2,0) circle (.05) node[above right]  {$ v_1$};

\draw[bunired, thick] (0,0) edge[bend left=45, latex-] (2,0);
\end{scope}

\begin{scope}[shift ={+(-3.5,3.25)}]

\node[below] at (1,.5) {$e_0$};

\draw[fill] (0,0) circle (.05) node[above left ]  {$ v_0$};
\draw[fill] (2,0) circle (.05) node[above right]  {$ v_1$};

\draw[bunired, thick] (0,0) edge[bend left=-45, -latex] (2,0);
\end{scope}

\draw[thick, -latex] (a) -- (b);
\draw[thick, -latex] (a) -- (c);
\end{tikzpicture}
\caption{The path poset of the digon graph $\tP_1$ in Fig.~\ref{fig:digon}.}
\label{fig:posetdigon}
\end{figure}
\end{example}

{We claim that, aside from the graph in Figure \ref{fig:nnstep}, the only connected graphs whose path poset is a Boolean poset minus its maximum are the coherently oriented polygonal graphs. The key observation to prove our claim is the following;}

\begin{rem}\label{rem:allbutGmultipaths}
{If \tG is a graph of length $n$, and $P(\tG)$  is isomorphic to $\mathbb{B}(n)$ minus its maximum, then all the sub-graphs of \tG, but \tG itself, must be multipaths. In fact, $SSG(\tG)$ has exactly $2^n$ elements, which is the same number of elements in $\mathbb{B}(n)$. It follows that only one sub-graph \tH of \tG does not belong to $P(\tG)$. Since $P(\tG)$ is downward closed in $SSG(\tG)$, we must have $\tH = \tG$.}
\end{rem}

\begin{prop}\label{pro:posetpoly}
Let \tG be a connected graph of length $n>2$. If $P(\tG)$  is isomorphic to $\mathbb{B}(n)$ minus its maximum, then $\tG\cong \tP_{n-1}$.
\end{prop}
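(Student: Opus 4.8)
The plan is to leverage Remark~\ref{rem:allbutGmultipaths}, which already reduces the statement to a purely combinatorial condition on $\tG$: since $P(\tG)$ is isomorphic to $\mathbb{B}(n)$ minus its maximum, every proper spanning sub-graph of $\tG$ is a multipath, while $\tG$ itself is not. I would then translate ``being a multipath'' into a degree condition. By Definition~\ref{def:multipaths} together with Remark~\ref{rem:simple path In}, each connected component of a multipath is an isolated vertex or a copy of $\tI_k$; in either case every vertex is the source of at most one edge and the target of at most one edge. Hence in any multipath one has $d^{+}(v)\le 1$ and $d^{-}(v)\le 1$ for all vertices $v$, where $d^{+}$ and $d^{-}$ denote out- and in-degree.

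The central step is to show that $\tG$ itself satisfies these degree bounds. Suppose, for contradiction, that some vertex $v$ has out-degree at least two, witnessed by distinct edges $e,e'$ with source $v$ (the in-degree case being symmetric). Since $\lgt(\tG)=n>2$, there exists an edge $f\notin\{e,e'\}$. Deleting $f$ yields a proper spanning sub-graph of $\tG$ in which $v$ still has out-degree at least two, so this sub-graph is not a multipath, contradicting Remark~\ref{rem:allbutGmultipaths}. This is precisely where the hypothesis $n>2$ is essential: for $n=2$ the third edge $f$ need not exist, which matches the two distinct graphs in Figures~\ref{fig:digon} and~\ref{fig:nnstep} sharing the same path poset.

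Having established $d^{+}(v),d^{-}(v)\le 1$ for every vertex of $\tG$, I would invoke the elementary classification of digraphs all of whose in- and out-degrees are at most one: their connected components are directed paths and directed cycles. As $\tG$ is connected, it is a single directed path or a single directed cycle. A directed path is isomorphic to $\tI_n$ (again by Remark~\ref{rem:simple path In}), hence a multipath, which is excluded; so $\tG$ must be a directed cycle on $n$ edges. Finally, a connected directed cycle in which every vertex has in- and out-degree exactly one is coherently oriented and therefore isomorphic to $\tP_{n-1}$, which has length $n$ by Example~\ref{ex:Pn}; this completes the proof.

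The only step demanding genuine care is the classification of connected digraphs with $d^{+},d^{-}\le 1$. I expect to spell out that, tracing the graph along its unique out-edges, the walk either closes up into a single coherent cycle or terminates in a coherent path, and to record that a directed cycle is \emph{not} a multipath, since the ``not a cycle'' clause ($s(e_1)\neq t(e_n)$) in Definition~\ref{def:paths} forbids ordering its edges into a simple path. The degree–deletion argument is the conceptual heart of the proof and the exact place where the hypothesis $n>2$ enters.
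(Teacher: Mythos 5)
Your proof is correct, and it organizes the argument differently from the paper. The paper's proof picks a maximal element ${\tt M}$ of $P(\tG)$ (a multipath of length $n-1$, differing from $\tG$ by one edge) and runs a case analysis on whether ${\tt M}$ is connected: the disconnected case is killed by combining Proposition~\ref{prop:boolposet} with the observation that $\tG$ would contain a copy of the graph in Figure~\ref{fig:nnstep} (up to reversing both edges), and the connected case forces ${\tt M}$ to be a coherent linear graph whose missing edge must close it into the coherent polygon. You dispense with the maximal multipath entirely: you extract local degree bounds $d^{+},d^{-}\le 1$ on $\tG$ itself via the edge-deletion argument, then invoke the standard classification of connected digraphs with these bounds as coherent paths or coherent cycles, and exclude the path because $\tG\notin P(\tG)$. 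The two proofs share the same crucial mechanism — a vertex with two outgoing (or two incoming) edges, together with $n>2$, survives deletion of a third edge and yields a proper spanning sub-graph that is not a multipath, contradicting Remark~\ref{rem:allbutGmultipaths} — and both use $n>2$ at exactly that point; indeed, ``contains Figure~\ref{fig:nnstep} up to orientation reversal'' is just the paper's phrasing of your degree condition. What your route buys is a cleaner separation of concerns (a purely local obstruction plus a standard global classification) and independence from Proposition~\ref{prop:boolposet}; what the paper's route buys is that it stays inside the poset-theoretic framework, manipulating explicit elements of $P(\tG)$, which is the style of reasoning reused elsewhere in the paper.
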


\begin{proof}
Take one of of the $n$ maximal elements in $P(\tG)$, say ${\tt M}$. Note that  $\lgt({\tt M}) = n-1$. Moreover, since {\tt M} differs from $\tG$ by a single edge, and $\tG$ is connected, then {\tt M} has at most two connected components.

Assume, for the sake of contradiction, that {\tt M} is not connected. Each component of {\tt M} is a simple path.
It follows that \tG is either $\tI_n$ -- which is absurd by Proposition \ref{prop:boolposet} -- or contains the graph in Figure~\ref{fig:nnstep} ({up to orientation reversal of both edges}) as a sub-graph. Note that the graph in Figure \ref{fig:nnstep} cannot be \tG since $\lgt(\tG) = n >2$.
It follows that any proper spanning sub-graph containing  a copy of the graph in Figure \ref{fig:nnstep} is a sub-graph different than \tG which belongs to $SSG(\tG)$ but not to $P(\tG)$. This contradicts Remark~\ref{rem:allbutGmultipaths}.

From our argument above, it follows that {\tt M} is connected, and thus isomorphic to $\tI_n$ (since it is a multipath). So either $\tG \cong \tP_{n-1}$, $\tG \cong \tI_{n}$ (absurd), or again it contains a copy of the graph in Figure \ref{fig:nnstep}. The latter case can be excluded with  {the same} argument {as} above.
\end{proof}

\section{Digraph (co)homologies}\label{sec:digraph_hom}

The goal of this section is to outline a rather general framework within which to define cohomology theories of directed graphs, using \emph{poset homology}~\cite{chandler2019posets} as main tool -- see also Remark~\ref{rem:chandl}. For the sake of being self-contained, and also for clarity, we provide a fairly detailed exposition of the construction of poset homology for posets of sub-graphs. This is carried out in the first subsection. As the aforementioned construction might, \emph{a priori, } depend upon the choice of a sign assignment on the considered posets, we further explore this dependence in the second subsection. We point out here that more general cohomology theories of posets can be used to obtain similar digraph cohomologies; we explore it in Section~\ref{sec:compare}. 

\subsection{A poset homology}\label{subs:homology}

In this subsection we define, given a special type of poset coherently assigned to each digraph, and a choice of a sign assignment  (see Definition~\ref{def:sign_ass}), a cohomology theory for directed graphs; the cohomology theory depends on many choices and the functorial discussion is postponed to Section~\ref{sec:functoriality}. The construction presented here has been inspired by \cite{turner}, on one side, and by \cite{HGRong}, on the other side. In the former paper homologies for digraphs have been defined using  the  path poset functor -- cf.~Definition~\ref{def:pathposet} and Remark~\ref{rem:functordigrpo}; while in \cite{HGRong} an homology for non-oriented graphs  has been obtained via a construction similar to \cite{Khovanov}. Poset homology~\cite{chandler2019posets} interpolates between the two constructions. 

Recall from Definition~\ref{def:faith and squre} \eqref{def:faith_label1}  that a square in a poset $(S,\triangleleft)$ is given by elements $x$, $y$, $y'$, and $z$ such that $y\neq y'$, $x\ \widetilde{\triangleleft}\ y\ \widetilde{\triangleleft}\ z$, and  $x\ \widetilde{\triangleleft}\ y'\ \widetilde{\triangleleft}\ z$, where $\widetilde{\triangleleft}$ denotes the covering relation in $S$. Let $\bZ_2$ be the cyclic group on two elements.

\begin{defn}\label{def:sign_ass}
	A \emph{sign assignment} on a poset $(S,\triangleleft)$ is an assignment of elements $\epsilon_{x,y}\in \bZ_2$ to each pair of elements $x,y\in S$ with $x\ \widetilde{\triangleleft}\ y$, such that the equation
	\begin{equation}\label{eq:signassign}
		\epsilon_{x,y} + \epsilon_{y,z} \equiv \epsilon_{x,y'} + \epsilon_{y',z} + 1 \mod 2
	\end{equation}
	holds for each square  $x\ \widetilde{\triangleleft}\ y, y'\ \widetilde{\triangleleft}\ z$.
\end{defn}

\begin{rem}\label{rem:sing and subsets}
The restriction of a sign assignment to a sub-poset is a sign assignment.
\end{rem}

In general the existence of a sign assignment on a given poset is not clear. However, for the spanning sub-graphs poset -- or, better, for Boolean posets -- and their sub-posets, there is an easy sign assignment:

\begin{example}\label{exa:sign on Boolean}
Let \tG be a graph with a fixed total ordering $\triangleleft$ on the set of edges~$E(\tG)$. Recall from  Notation~\ref{not:union} and Example~\ref{exa:sub-graphs posets} that  $\tH \prec \tH'$ in $SSG(\tG)$ if, and only if, $\tH' = \tH \cup e$. Then, we can define a sign assignment on the poset $SSG(\tG)$ as follows:
\[ \epsilon (\tH,\tH') \coloneqq \# \{ e'\in E(\tH)\mid e' \triangleleft e \}\mod 2 \ , \]
where $ \tH' = \tH \cup e$. The verification is straightforward, but the  reader may consult, for example, \cite{Khovanov}.
\end{example}

The following definition will be used to define the cochain complexes.

\begin{defn}\label{def:ell}
	Let $P\subseteq SG(\tG)$ be a faithful sub-poset. We define the \emph{level} of an element $\tH \in P$ as follows:
	\[ \ell (\tH) = \# E(\tH) + \# V(\tH) - \min \{ \# E(\tH') + \# V(\tH') \mid \tH'\in P \} \ . \]
\end{defn}
{Note that the level of an element $\tH \in P\subseteq SG(\tG)$, if $P$ has a minimum, is just the difference between the distances of \tH and the minimum of $P$, respectively, from the minimum of $SG(\tG)$ in $\Hasse(SG(\tG))$.}

\begin{rem}
	If $P= SG(\tG), P(\tG) \subseteq SSG(\tG)$ then $\ell = \lgt$ -- cf.~Definition~\ref{def:leng}.
\end{rem}

 Recall from Remark~\ref{rem:posetiscat} that a poset $(S,\triangleleft) $ can be seen as a category~{\bf S}  with set of objects $S$, and the set of morphisms between $x$ and $y$ containing a single element if and only if $x\triangleleft y$ or $x=y$. 
\begin{rem}\label{rem:squares}
	Let ${\bf C}$ be a small category.
	For each square $x\ \widetilde{\triangleleft}\ y, y'\ \widetilde{\triangleleft}\ z$ in $(S,\triangleleft) $ and
	any covariant functor $\mathcal{F}\colon {\bf S}\to {\bf C}$, 
	we  have  
	\[\mathcal{F}(y\ \widetilde{\triangleleft}\ z)\circ\mathcal{F}(x\ \widetilde{\triangleleft}\ y) =\mathcal{F}(x\ {\triangleleft}\ z)= \mathcal{F}(y'\ \widetilde{\triangleleft}\ z)\circ\mathcal{F}(x\ \widetilde{\triangleleft}\ y') \]
	In other words, all functors preserve the commutativity of the squares in $(S,\triangleleft)$.
\end{rem}

Let ${\bf A}$ be an additive category,
 $P\subseteq SG(\tG) $ squared and faithful and $\epsilon$ a sign assignment on $P$.
Given a covariant 
functor 
$ \mathcal{F}\colon{\bf P} \to {\bf A}
$
we can define the cochain groups
\[ C^n_{\mathcal{F}}(P) \coloneqq \bigoplus_{\tiny\begin{matrix}
		\tH\in P\\
		\ell(\tH) = n
\end{matrix}}  \mathcal{F}(\tH)
\]
and the differentials
\[d^{n}=d^{n}_{\mathcal{F}} \coloneqq \sum_{\tiny\begin{matrix}
		\tH\in P\\
		\ell(\tH) = n
\end{matrix}} \sum_{\tiny\begin{matrix}
		\tH'\in P\\
		\tH \prec\tH'
\end{matrix}} (-1)^{ \epsilon(\tH,\tH')} \mathcal{F}(\tH\prec \tH') \ .\]

Note that the differentials $d^n$, and therefore the cochain complexes,  depend, \emph{a priori}, upon the choice of the sign assignment $\epsilon$. However, in the cases we are interested in, this choice does not affect the isomorphism type of the complexes -- cf.~Corollary~\ref{cor:homology_not_sign}. We will further discuss this topic in Section~\ref{sec:signs} below.  We now give the proof that the defined complexes  are indeed cochain complexes.

\begin{thm}\label{teo: general cohom}
	Let ${\bf A}$ be an additive category,   $P\subseteq SG(\tG) $ a squared and faithful poset, and $\epsilon$ a sign assignment on $P$. Then, for any $n\in\mathbb{N}$ and any covariant functor $\mathcal{F}\colon{\bf P} \to {\bf A}$ 
	we have $d^n \circ d^{n-1} \equiv 0$. 
	In particular, $(C^*_{\mathcal{F}}(P), d^*)$ is a cochain complex. 
\end{thm}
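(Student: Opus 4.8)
The plan is to prove $d^n\circ d^{n-1}=0$ by showing that each block of the composite, read as a matrix with respect to the direct-sum decompositions of the cochain groups, vanishes. Before doing so I would record the preliminary fact that, because $P$ is \emph{faithful}, a covering relation in $P$ is a covering relation in $SG(\tG)$, and each such relation raises $\ell$ by exactly one (adjoining a single edge, or a single vertex); hence $d^n$ genuinely maps $C^n_{\mathcal{F}}(P)$ to $C^{n+1}_{\mathcal{F}}(P)$, and the composite $d^n\circ d^{n-1}$ is assembled from the length-two chains of coverings. Concretely, fix $x,z\in P$ with $\ell(x)=n-1$ and $\ell(z)=n+1$; the block of $d^n\circ d^{n-1}$ from the summand $\mathcal{F}(x)$ to the summand $\mathcal{F}(z)$ is
\[ \sum_{\substack{y\in P\\ x\ \widetilde{\triangleleft}\ y\ \widetilde{\triangleleft}\ z}} (-1)^{\epsilon_{x,y}+\epsilon_{y,z}}\,\mathcal{F}(y\ \widetilde{\triangleleft}\ z)\circ\mathcal{F}(x\ \widetilde{\triangleleft}\ y). \]

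First I would use functoriality in the form of Remark~\ref{rem:squares}: for every intermediate $y$ the composite $\mathcal{F}(y\ \widetilde{\triangleleft}\ z)\circ\mathcal{F}(x\ \widetilde{\triangleleft}\ y)$ equals the single morphism $\mathcal{F}(x\ \triangleleft\ z)$, independent of $y$. Thus the displayed block factors as
\[ \Bigg(\sum_{\substack{y\in P\\ x\ \widetilde{\triangleleft}\ y\ \widetilde{\triangleleft}\ z}} (-1)^{\epsilon_{x,y}+\epsilon_{y,z}}\Bigg)\,\mathcal{F}(x\ \triangleleft\ z), \]
and the whole problem reduces to checking that the integer scalar in parentheses is zero.

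The heart of the argument is a pairing of the intermediate elements supplied by the \emph{squared} hypothesis, Definition~\ref{def:faith and squre}~\eqref{def:faith_label1}. For each $y$ with $x\ \widetilde{\triangleleft}\ y\ \widetilde{\triangleleft}\ z$ there is a unique $y'\neq y$ with $x\ \widetilde{\triangleleft}\ y'\ \widetilde{\triangleleft}\ z$; applying the property again to $y'$ returns $y$, so $y\mapsto y'$ is a fixed-point-free involution that partitions the intermediate elements into squares $\{y,y'\}$ based at $x$ and $z$. For each such square the defining relation~\eqref{eq:signassign} of the sign assignment gives $\epsilon_{x,y}+\epsilon_{y,z}\equiv \epsilon_{x,y'}+\epsilon_{y',z}+1\pmod 2$, whence $(-1)^{\epsilon_{x,y}+\epsilon_{y,z}}=-(-1)^{\epsilon_{x,y'}+\epsilon_{y',z}}$ and the two contributions cancel. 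Summing over the pairs yields $0$, so the block vanishes; since $x$ and $z$ were arbitrary, $d^n\circ d^{n-1}=0$. I expect the only delicate point to be the bookkeeping in the preliminary paragraph—verifying that faithfulness makes $\ell$ additive along coverings, so that precisely the level-$n$ elements arise as intermediates—after which the square-pairing together with the sign identity makes the cancellation automatic.
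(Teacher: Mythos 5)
Your proof is correct and follows essentially the same route as the paper's: reduce to the block of $d^n\circ d^{n-1}$ between a fixed $\mathcal{F}(x)$ and $\mathcal{F}(z)$, use the squared hypothesis to pair each intermediate $y$ with its unique partner $y'$, invoke the fact that the functor collapses both composites to the same morphism $\mathcal{F}(x\ \triangleleft\ z)$ (Remark~\ref{rem:squares}), and cancel via Equation~\eqref{eq:signassign}. Your only cosmetic departures are factoring out the common morphism so that the claim becomes the vanishing of a scalar, and phrasing the pairing as a fixed-point-free involution (the paper instead notes directly that squaredness leaves exactly two terms, which it combines at the end); the substance is identical.
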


{\begin{rem} In the theorem we make use of $P$ being squared {in an essential way}, otherwise, the squared differentials  might not have been trivial.\end{rem}}

\begin{proof}
	Fix a natural number~$n$; then, 
	\[C^n_{\mathcal{F}}(P) = \bigoplus_{\tiny\begin{matrix}
			\tH\in P\\
			\ell(\tH) = n
	\end{matrix}}  \mathcal{F}(\tH) \ .\]
	Let $\pi_{\tH}\colon C^\ast_{\mathcal{F}}(P) \to \mathcal{F}(\tH)$ and $\iota_{\tH}\colon \mathcal{F}(\tH)\to C^\ast_{\mathcal{F}}(P) $ be the projection onto $ \mathcal{F}(\tH)$ and the inclusion of $ \mathcal{F}(\tH)$ in $C^\ast_{\mathcal{F}}(P)$ as direct summand, respectively.
	Note that the composition $d^n \circ d^{n-1}$ equals $0$ if, and only if, the composition $\pi_{\tH''} \circ d^n \circ d^{n-1} $ is trivial for all $\tH''\in P$ such that {$\ell(\tH'') = n+1$}. In particular, $d^n \circ d^{n-1} \equiv 0$  if there are no $\tH''\in P$ with~$\ell(\tH'') = n+1$.
	
	Every element of $C^{n-1}_{\mathcal{F}}(P)$ is a linear combination of elements of $\mathcal{F}(\tH)$, for $\tH$ ranging in $P$ with~{$\ell(\tH) = n-1$}, and $d$ is linear. Thus, if the composition  $\pi_{\tH''}\circ d^n \circ d^{n-1} \circ \iota_{\tH} \equiv 0$, for all $\tH, \tH''$ such that $\ell(\tH) + 2 = \ell(\tH'')=n+1$, then $d^n \circ d^{n-1} \equiv 0$.
	We can factor the map $\pi_{\tH''}\circ d^n \circ d^{n-1} \circ \iota_{\tH}$ through $C^{n}_{\mathcal{F}}(P)$, and write
	\[\pi_{\tH''}\circ d^n \circ d^{n-1} \circ \iota_{\tH}  = \sum_{\tH\prec \tH'\prec \tH''} \left( \pi_{\tH''}\circ d^n \circ \iota_{\tH'}\right)  \circ \left( \pi_{\tH'} \circ d^{n-1} \circ \iota_{\tH} \right). \]
	The right hand side of the above equation vanishes if there is no $\tH'$ such that $\tH\prec \tH'\prec \tH''$. It follows that it is sufficient to check this case.
	
	Since $P$ is squared, if there is $\tH\prec \tH'_1 \prec \tH''$, then there a unique $\tH'_2$ $ (\neq \tH'_1)$ such that $\tH\prec \tH'_2 \prec \tH''$. In other words, $\tH,\tH'_1,\tH'_2,\tH''$ form a square in $P$. Thus, we obtain
	\[\pi_{\tH''}\circ d^n \circ d^{n-1} \circ \iota_{\tH}  = \left( \pi_{\tH''}\circ d^n \circ \iota_{\tH'_1}\right)  \circ \left( \pi_{\tH'_1} \circ d^{n-1} \circ \iota_{\tH} \right) +  \left( \pi_{\tH''}\circ d^n \circ \iota_{\tH'_2}\right)  \circ \left( \pi_{\tH'_2} \circ d^{n-1} \circ \iota_{\tH} \right) =\]
	\[ = (-1)^{\epsilon(\tH,\tH'_1)+ \epsilon(\tH'_1,\tH'')} \mathcal{F}(\tH'_1 \prec \tH'')\circ\mathcal{F}(\tH\prec \tH'_1) +  (-1)^{\epsilon(\tH,\tH'_2)+ \epsilon(\tH'_2,\tH'')} \mathcal{F}(\tH'_2 \prec \tH'')\circ\mathcal{F}(\tH\prec \tH'_2) =\]
	\[= \left((-1)^{\epsilon(\tH,\tH'_1)+ \epsilon(\tH'_1,\tH'')} +  (-1)^{\epsilon(\tH,\tH'_2)+ \epsilon(\tH'_2,\tH'')} \right) \mathcal{F}(\tH'_2 \prec \tH'')\circ\mathcal{F}(\tH\prec \tH'_2) \]
	where the last equality is due to the fact the functor~$\mathcal{F}$ preserves the commutative squares in $P$ -- cf.~Remark~\ref{rem:squares}.
	The result now follows immediately {as $\epsilon$ is a sign assignment on $P$}. 
\end{proof}

\begin{rem}\label{rem:chandl}
The definition of the cochain complex $C_{\mathcal{F}}(P)$ relies on the structure of the input graph~$\tG$, via the associated squared and faithful poset $P\subseteq SG(\tG)$, on the choice of the functor $\mathcal{F}$, and on a sign assignment $\epsilon$ on $P$.  More in general, the same machinery can be applied without graphs but dealing only with a certain type of posets. This viewpoint was taken by Chandler, and we refer the reader to  \cite{chandler2019posets} for a more comprehensive discussion.
For completeness we pursue our independently developed approach. In particular, we shall provide an independent proof of functoriality with respect to graphs, extending the generality to include also coefficients systems, in Section~\ref{sec:functoriality}.
\end{rem}

We conclude the section by observing that the general discussion of this section can be applied to the case of $\mathcal{G}\colon{\bf P} \to {\bf A}$ a contravariant functor. All proofs are straightforward adaptation of the proofs in the case of covariant functors. 

\subsection{Existence and uniqueness of sign assignments}\label{sec:signs}
The cochain complexes defined in the previous subection may depend on the choice of the sign assignment. In this subection, we see that this is actually not the case for a quite general class of posets, including path posets.

Recall from Definition~\ref{def:Hasse} that the \emph{Hasse graph} $\Hasse (S,\triangleleft)$ of a poset $(S,\triangleleft)$ is the graph whose vertices are the elements of $S$ and such that $(x,y)$ is an edge if an only if $x\,\widetilde{\triangleleft}\, y$, where $\widetilde{\triangleleft}$ denotes the covering relation. Then, a
 {sign assignment} on a  
 $(S,\triangleleft)$ as introduced in Definition~\ref{def:sign_ass}  can be seen as  a map $\epsilon\colon E(\Hasse (S,\triangleleft))\to \bZ_2$ on the edges~$E(\Hasse (S,\triangleleft))$ of $\Hasse (S,\triangleleft)$, such that Equation~\eqref{eq:signassign}
holds for each square  $x\ \widetilde{\triangleleft}\ y, y'\ \widetilde{\triangleleft}\ z$ of $S$. 

Consider the Hasse graph $\Hasse (S,\triangleleft)$ of  a poset $(S,\triangleleft)$  as a CW-complex (formally, by taking its geometric realization). 

\begin{defn}
	Given a poset $(S,\triangleleft)$ define $\mathscr{K}(S,\triangleleft)$ as the CW-complex obtained from $(S,\triangleleft)$ by attaching to the (geometric realization of the) Hasse graph $\Hasse (S,\triangleleft)$  a $2$-cell $e_{x,y,y',z}$ for each square $x\ \widetilde{\triangleleft}\ y, y'\ \widetilde{\triangleleft}\ z$ in $(S,\triangleleft)$. 
\end{defn}

We will now show that the existence and uniqueness of a sign assignment on a poset~$(S,\triangleleft)$ depends only upon the  topological structure of the CW-complex~$\mathscr{K}(S,\triangleleft)$. Denote by $( C_{\rm CW }^{*}(\mathscr{K}(S,\triangleleft);\bZ_2),d^*_{\rm CW})$ the CW-cochain complex of $\mathscr{K}(S,\triangleleft)$, with respect to the given CW-structure,  with coefficients in~$\bZ_2$. We can interpret the sign assignments as cochains in the CW-{co}chain complex associated to $\mathscr{K}(S,\triangleleft)$:

\begin{lem}\label{lem:sign_cocycle}
Let $\epsilon$ be a sign assignment on a poset $(S,\triangleleft)$,  and denote by $\psi$ the $2$-cochain which associates $1\in \bZ_2$ to each $2$-cell in $\mathscr{K}(S,\triangleleft)$.
Then, $\epsilon$ defines a cochain $a(\epsilon)\in C_{\rm CW}^{1}(\mathscr{K}(S,\triangleleft);\bZ_2)$ such that $d_{\rm CW}a(\epsilon) = \psi$. Moreover, for each $a \in C_{\rm CW}^{1}(\mathscr{K}(S,\triangleleft);\bZ_2)$ such that $d_{\rm CW}a = \psi$ there is a unique sign assignment $\epsilon$ such that $a = a(\epsilon)$.
	\end{lem}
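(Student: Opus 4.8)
The plan is to observe that the whole statement is a tautological dictionary translation once one unwinds the cellular cochain complex, so the work is entirely in setting up the right bijection and computing a single coboundary. First I would note that, by construction, the $1$-cells of $\mathscr{K}(S,\triangleleft)$ are precisely the edges of $\Hasse(S,\triangleleft)$, i.e.\ the covering pairs $x\ \widetilde{\triangleleft}\ y$. Since we use $\bZ_2$-coefficients, a cellular $1$-cochain in $C_{\rm CW}^{1}(\mathscr{K}(S,\triangleleft);\bZ_2)$ is nothing but a function from the set of $1$-cells to $\bZ_2$, hence nothing but an assignment of an element of $\bZ_2$ to each covering pair. I would therefore \emph{define} $a(\epsilon)$ to be the $1$-cochain whose value on the $1$-cell corresponding to $x\ \widetilde{\triangleleft}\ y$ is $\epsilon_{x,y}$. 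The map $\epsilon\mapsto a(\epsilon)$ is then manifestly a bijection between $\bZ_2$-valued functions on covering pairs and $C_{\rm CW}^{1}(\mathscr{K}(S,\triangleleft);\bZ_2)$; this bijectivity will supply the uniqueness claim at the end for free.

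Next I would compute $d_{\rm CW}a(\epsilon)$ cell by cell. For each square $x\ \widetilde{\triangleleft}\ y, y'\ \widetilde{\triangleleft}\ z$, the attached $2$-cell $e_{x,y,y',z}$ has attaching map running once along each of the four boundary edges $(x,y)$, $(y,z)$, $(x,y')$, $(y',z)$; with $\bZ_2$-coefficients its cellular boundary is therefore exactly the sum of these four $1$-cells. By definition of the cellular coboundary,
\[
(d_{\rm CW}a(\epsilon))(e_{x,y,y',z}) = \epsilon_{x,y}+\epsilon_{y,z}+\epsilon_{x,y'}+\epsilon_{y',z}\in\bZ_2.
\]
Now the defining Equation~\eqref{eq:signassign} of a sign assignment, rewritten in $\bZ_2$ (where addition and subtraction coincide), says exactly that this sum equals $1$. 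Hence $d_{\rm CW}a(\epsilon)$ takes the value $1$ on every $2$-cell, that is $d_{\rm CW}a(\epsilon)=\psi$, which proves the first assertion.

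For the ``moreover'' part I would run the same computation in reverse: given any $a\in C_{\rm CW}^{1}(\mathscr{K}(S,\triangleleft);\bZ_2)$ with $d_{\rm CW}a=\psi$, set $\epsilon_{x,y}\coloneqq a(x,y)$ for each covering pair. Then the condition $d_{\rm CW}a=\psi$, evaluated on $e_{x,y,y',z}$, is precisely $\epsilon_{x,y}+\epsilon_{y,z}+\epsilon_{x,y'}+\epsilon_{y',z}=1$, i.e.\ Equation~\eqref{eq:signassign} for that square; as this holds for every square, $\epsilon$ is a sign assignment with $a=a(\epsilon)$, and uniqueness is immediate from injectivity of $\epsilon\mapsto a(\epsilon)$. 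The only point that genuinely requires care—and the one I would state explicitly rather than compute—is the identification of the cellular boundary of each $e_{x,y,y',z}$ with the mod-$2$ sum of its four boundary edges; everything else is a direct matching of definitions.
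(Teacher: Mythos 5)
Your proof is correct and follows essentially the same route as the paper's: both identify $\bZ_2$-valued $1$-cochains with assignments on covering pairs, compute the cellular coboundary on each $2$-cell as the mod-$2$ sum of its four boundary edges, and observe that $d_{\rm CW}a=\psi$ is then exactly Equation~\eqref{eq:signassign} holding on every square. The only cosmetic difference is that you flag the attaching-map identification explicitly, which the paper simply asserts via the formula $d_{\rm CW}a(e)=\sum_{(x,y)\in\partial e}a(x,y)$.
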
%
	\begin{proof}
		A $1$-cocycle $a$ with values in $\bZ_2$ is a map from the set of $1$-cells (which are the edges of the Hasse graph, in our case) to $\bZ_2$. Since the edges of the Hasse graph correspond to the pairs in the covering relations, this is equivalent to the assignment of an element $\bZ_2$ for each pair $(x,y)$ such that  $x\ \widetilde{\triangleleft}\ y$.
		It is left to show that the differential of $a$ is $\psi$ if, and only if, Equation~\eqref{eq:signassign} holds for each square.
		Note that $d_{\rm CW}a(e) = \sum_{(x,y)\in \partial e} a(x,y)$ for $2$-cells $e$, therefore $d_{\rm CW}a = \psi$ if, and only if,
		\[ a({x,y}) + a({y,z}) + a({x,y'}) + a({y',z}) \equiv  1 \mod 2\]
		for each square  $x\ \widetilde{\triangleleft}\ y, y'\ \widetilde{\triangleleft}\ z$, concluding the proof.
	\end{proof}%

It is easy to see that a poset $(S,\triangleleft)$ admits a sign assignment if the CW-complex  $\mathscr{K}(S,\triangleleft)$ has trivial  second homology group:

\begin{prop}\label{prop:sign_exist}
	Let $(S,\triangleleft)$ be a poset.
	If $\mathrm{H}_{\rm CW}^2(\mathscr{K}(S,\triangleleft);\bZ_2)=0$, then there exists a sign assignment on~$(S,\triangleleft)$.
\end{prop}

\begin{proof}
Consider the cochain $\psi \colon C_2^{\rm CW}(\mathscr{K}(S,\triangleleft);\bZ_2)\to\bZ_2$  assigning $1\in\bZ_2$ to each  $2$-cell of $\mathscr{K}(S,\triangleleft)$. 
Since $\mathscr{K}(S,\triangleleft)$ has no $3$-cells, $d_{\rm CW}(\psi) \equiv 0$ and hence $\psi$ is a cocycle.~%
Since, by assumption, we have $\mathrm{H}_{\rm CW}^2(\mathscr{K}(S,\triangleleft);\bZ_2)=0$, every $2$-cocycle is a coboundary.~%
Thus, there is $a\in C_{\rm CW}^{1}(\mathscr{K}(S,\triangleleft);\bZ_2)$ such that $d_{{\rm CW}}a = \psi$.
The statement now follows directly from the second part of Lemma~\ref{lem:sign_cocycle}.
\end{proof}

The above proposition provides a condition for a poset to admit a sign assignment. We now describe when also the uniqueness is satisfied. First, we introduce the notion of isomorphisms  of sign assignments.

\begin{defn}
	Let $\epsilon,\epsilon'$ be  sign assignments on  a poset~$(S,\triangleleft)$. An \emph{isomorphism of sign assignments between $\epsilon$ and $\epsilon'$}  is a map  $\eta\colon S = V(\Hasse (S,\triangleleft))\to \bZ_2$
	such that 
	\begin{equation}\label{eq:square_sign_mor}
	\eta(x) +  \epsilon_{x,y}' = \epsilon_{x,y}  + \eta(y) \mod 2
\end{equation}
holds for all $x\ \widetilde{\triangleleft}\ y$. 
\end{defn}

Roughly speaking, an isomorphism of sign assignments is a map $\eta\colon S \to\bZ_2$ such that the elements of $\bZ_2$ on the edges of the square
\[	\begin{tikzcd}
		x\arrow[r, "\epsilon_{x,y}"]\arrow[d, "\eta_x"']& y\arrow[d, "\eta_y"] \\x\arrow[r,  "\epsilon'_{x,y}" ]&  y
	\end{tikzcd}\]
add up to $0\in\bZ_2$. Intuitively this condition encodes the "commutativity" of such squares.
We can now provide a  uniqueness result for sign assignments on posets -- compare with \cite[Lemma~5.7]{Putyra}.

\begin{prop}\label{prop:uni_sign}
	Let $\epsilon$ and $\epsilon'$ be two sign assignments on  a poset~$(S,\triangleleft)$. If $\mathrm{H}^1_{\rm CW}(\mathscr{K}(S,\triangleleft);\bZ_2)=0$, then there is an isomorphism $\eta$ of sign assignments  from $\epsilon$ to $\epsilon'$. 
\end{prop}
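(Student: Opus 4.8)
The plan is to recognise the uniqueness statement as a purely (co)homological one in $\mathscr{K}(S,\triangleleft)$, dualising the argument used for existence in Proposition~\ref{prop:sign_exist}. By Lemma~\ref{lem:sign_cocycle}, the two sign assignments correspond to $1$-cochains $a(\epsilon), a(\epsilon') \in C_{\rm CW}^{1}(\mathscr{K}(S,\triangleleft);\bZ_2)$ satisfying $d_{\rm CW}a(\epsilon) = \psi = d_{\rm CW}a(\epsilon')$, where $\psi$ is the $2$-cochain sending every square to $1\in\bZ_2$. So from the outset the problem is rephrased entirely in terms of the CW-cochain complex, and the hypothesis on $\mathrm{H}^1_{\rm CW}$ is exactly what one expects to use.

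First I would form the difference $c \coloneqq a(\epsilon) + a(\epsilon')$ (recall that we work over $\bZ_2$, so this is the same as $a(\epsilon) - a(\epsilon')$). Since $d_{\rm CW}$ is linear, $d_{\rm CW}c = \psi + \psi = 0$, so $c$ is a $1$-cocycle. Invoking the hypothesis $\mathrm{H}^1_{\rm CW}(\mathscr{K}(S,\triangleleft);\bZ_2)=0$, every $1$-cocycle is a coboundary, hence there is a $0$-cochain $\eta \in C_{\rm CW}^{0}(\mathscr{K}(S,\triangleleft);\bZ_2)$ with $c = d_{\rm CW}\eta$. As the $0$-cells of $\mathscr{K}(S,\triangleleft)$ are precisely the elements of $S$, such an $\eta$ is exactly a map $\eta\colon S \to \bZ_2$, which is the correct type of datum for a candidate isomorphism of sign assignments.

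It then remains to verify that this $\eta$ satisfies Equation~\eqref{eq:square_sign_mor}. Evaluating $c = d_{\rm CW}\eta$ on the $1$-cell corresponding to a covering $x\ \widetilde{\triangleleft}\ y$, and using that the boundary of that edge is $x+y$, gives $\epsilon_{x,y} + \epsilon'_{x,y} = \eta(x) + \eta(y)$ in $\bZ_2$; rearranging yields $\eta(x) + \epsilon'_{x,y} = \epsilon_{x,y} + \eta(y) \bmod 2$, which is exactly the defining relation of an isomorphism of sign assignments from $\epsilon$ to $\epsilon'$. This closes the argument.

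I do not anticipate a serious obstacle here: working over $\bZ_2$ trivialises additive inverses, so the identification of the coboundary formula with Equation~\eqref{eq:square_sign_mor} is immediate rather than a sign-chase. The only point deserving care is matching the CW-orientation conventions implicitly fixed in Lemma~\ref{lem:sign_cocycle} — namely the identification of $0$-cochains with vertex maps $S\to\bZ_2$ and of $(d_{\rm CW}\eta)(x,y)$ with $\eta(x)+\eta(y)$ on each edge of the Hasse graph.
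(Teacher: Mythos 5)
Your proof is correct and follows essentially the same route as the paper's: both form the difference $a(\epsilon)-a(\epsilon')$, observe it is a $1$-cocycle since both cochains have coboundary $\psi$, use $\mathrm{H}^1_{\rm CW}(\mathscr{K}(S,\triangleleft);\bZ_2)=0$ to write the difference as $d_{\rm CW}\eta$ for a $0$-cochain $\eta$, and then read off Equation~\eqref{eq:square_sign_mor} by evaluating on the edges of the Hasse graph. No gaps; the argument matches the paper's proof step for step.
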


\begin{proof}
Let $a(\epsilon), a(\epsilon')$ be the $1$-cochains corresponding to $\epsilon, \epsilon'$ as in Lemma~\ref{lem:sign_cocycle}. Notice that 
$$d_{CW} (a(\epsilon) - a(\epsilon')) =d_{CW} (a(\epsilon)) -d_{CW} ( a(\epsilon')) = \psi - \psi = 0 \ ,$$
where $\psi$ is the usual $2$-cocycle assigning $1\in\bZ_2$ to each face of $\mathscr{K}(S,\triangleleft)$.
Since, by assumption, $\mathrm{H}_{\rm CW}^1(\mathscr{K}(S,\triangleleft);\bZ_2)=0$, we must have
$ a(\epsilon) - a(\epsilon') = d_{CW} (\eta)$, for some $\eta\in C_{\rm CW}^0(\mathscr{K}(S,\triangleleft);\bZ_2)$.
We can see $\eta$ as a map 
\[ \eta \colon \{ 0\text{-cells of }\mathscr{K}(S,\triangleleft)\}= V(\Hasse(S,\triangleleft)) \longrightarrow \bZ_2 \ .\]
Moreover, the equality $a(\epsilon) - a(\epsilon') = d_{CW} (\eta)$, applied to each edge of the Hasse graph gives precisely the condition of isomorphisms of sign assignment in Equation~\eqref{eq:square_sign_mor}, concluding the proof.
\end{proof}

\begin{example}
Consider the two sign assignments on the Boolean poset $\left( \wp(\{ 0,1\}),\subseteq\right) $ illustrated in Figure~\ref{fig:two signs}. By definition $\mathscr{K}\left( \wp(\{ 0,1\}),\subseteq\right) $ is a  disk. Thus, $\mathrm{H}^1_{\rm CW}(\mathscr{K}\left( \wp(\{ 0,1\}),\subseteq\right));\bZ_2)~=~0$. This implies the uniqueness of the sign assignment up to isomorphism in this case. It is not difficult, in this case, to produce a concrete isomorphism:
\[\eta \colon V(\Hasse \left( \wp(\{ 0,1\}),\subseteq)\right) \longrightarrow \bZ_2\ : \ v\mapsto \begin{cases} 1 &\text{if }v = \{ 1\},\\ 0 &\text{otherwise}.\end{cases} \]
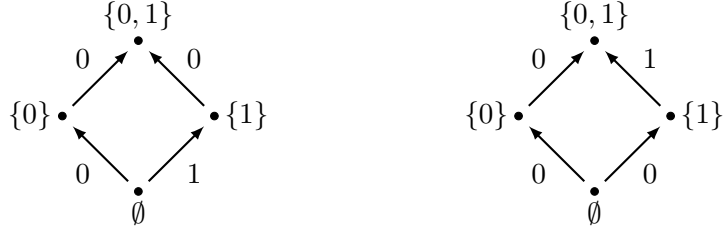
\begin{figure}
\begin{tikzpicture}
\node (a) at (0,0) {};
\node (b) at (1,1) {};
\node (c) at (-1,1) {};
\node (d) at (0,2) {};

\node[below] at (0,0) {$\emptyset$};
\node[right]  at (1,1) {$\{ 1\}$};
\node[left]  at (-1,1) {$\{ 0 \}$};
\node[above]  at (0,2) {$\{ 0, 1\}$};

\draw[fill] (a) circle (.05);
\draw[fill]  (b) circle (.05);
\draw[fill]  (c) circle (.05);
\draw[fill]  (d) circle (.05);

\node[below right]  at (.5,0.5) {$1$};
\node[below left]  at (-.5,.5) {$0$};
\node[above right]  at (.5,1.5) {$0$};
\node[above left]  at (-.5,1.5) {$0$};

\draw[thick, -latex] (a) -- (b);
\draw[thick, -latex] (a) -- (c);
\draw[thick, -latex] (b) -- (d);
\draw[thick, -latex] (c) -- (d);


\begin{scope}[shift ={+(6,0)}]

\node (a) at (0,0) {};
\node (b) at (1,1) {};
\node (c) at (-1,1) {};
\node (d) at (0,2) {};

\node[below] at (0,0) {$\emptyset$};
\node[right]  at (1,1) {$\{ 1\}$};
\node[left]  at (-1,1) {$\{ 0 \}$};
\node[above]  at (0,2) {$\{ 0, 1\}$};

\draw[fill] (a) circle (.05);
\draw[fill]  (b) circle (.05);
\draw[fill]  (c) circle (.05);
\draw[fill]  (d) circle (.05);

\node[below right]  at (.5,0.5) {$0$};
\node[below left]  at (-.5,.5) {$0$};
\node[above right]  at (.5,1.5) {$1$};
\node[above left]  at (-.5,1.5) {$0$};

\draw[thick, -latex] (a) -- (b);
\draw[thick, -latex] (a) -- (c);
\draw[thick, -latex] (b) -- (d);
\draw[thick, -latex] (c) -- (d);

\end{scope}

\end{tikzpicture}
\caption{Two (isomorphic) sign assignments on the poset $\left( \wp(\{ 0,1\}),\subseteq\right) $.}\label{fig:two signs}. 
\end{figure}
\end{example}

Recall the definition of {downward  closed} sub-poset $(S',\triangleleft_{| S'\times S'})$ of a poset $(S,\triangleleft)$ -- cf.~Definition~\ref{def:hereditary}.
As a consequence of Proposition~\ref{prop:uni_sign}, we get the following:

\begin{thm}\label{thm:uni_signs}
Let $P$ be a downward (or upward) closed  sub-poset of $SSG(\tG)$. Then, any two sign assignments $\epsilon$ and $\epsilon'$ on $P$ are isomorphic. 
\end{thm}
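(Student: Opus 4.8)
The plan is to reduce the statement to the uniqueness criterion of Proposition~\ref{prop:uni_sign}: it suffices to prove that $\mathrm{H}^1_{\rm CW}(\mathscr{K}(P);\bZ_2)=0$ for every downward or upward closed sub-poset $P\subseteq SSG(\tG)$. I would first dispose of the upward closed case by duality. Recall from Example~\ref{exa:sub-graphs posets} that $SSG(\tG)\cong(\wp(E(\tG)),\subseteq)=\mathbb{B}(n)$, with $n=\lgt(\tG)$. Complementation $c\colon A\mapsto E(\tG)\setminus A$ is an order-reversing bijection of $\wp(E(\tG))$ which carries an upward closed $P$ to a downward closed $c(P)$ and restricts to an order-reversing isomorphism $P\to c(P)$. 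Since reversing the order of a poset leaves both its covering relation and its squares unchanged, $c$ induces an isomorphism of CW-complexes $\mathscr{K}(P)\cong\mathscr{K}(c(P))$; hence the upward closed case follows from the downward closed one, and I may assume $P$ downward closed.

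The geometric heart of the argument is to realise $\mathscr{K}(P)$ as the $2$-skeleton of a contractible cubical complex. I would identify the elements of $\mathbb{B}(n)$ with the vertices $\{0,1\}^{E(\tG)}$ of the cube $[0,1]^{E(\tG)}$, a subset $A$ corresponding to its indicator vertex. A closed face of this cube is determined by a pair $A\subseteq A\cup I$ (forced-$1$ coordinates $A$, free coordinates $I$), its \emph{top vertex} being $A\cup I$. Let $\mathcal{C}(P)$ be the union of those closed faces whose top vertex lies in $P$. Because $P$ is downward closed, a face lies in $\mathcal{C}(P)$ as soon as its top vertex does, and its vertices, edges and $2$-faces are exactly the elements, covering relations and squares of $P$ (here I use that a downward closed sub-poset is faithful, Example~\ref{ex:down-up are faith and square}, so that squares in $P$ are precisely the $2$-faces of the cube with all four vertices in $P$). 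Thus $\mathscr{K}(P)$ is exactly the $2$-skeleton $\mathcal{C}(P)^{(2)}$.

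Finally I would show that $\mathcal{C}(P)$ is contractible by checking that it is star-shaped with respect to the origin $\emptyset\in P$. For a point $x$ lying in a face with top vertex $A\cup I\in P$ and any $t\in[0,1]$, the support of $tx$ is contained in $A\cup I$; hence $tx$ lies in the cube face whose free coordinates are that support, whose top vertex is again in $P$ by downward closure, so $tx\in\mathcal{C}(P)$. Therefore the straight-line homotopy $(x,s)\mapsto(1-s)x$ stays inside $\mathcal{C}(P)$, and a star-shaped set is contractible, giving $\pi_1(\mathcal{C}(P))=1$. Since the inclusion of the $2$-skeleton induces an isomorphism on $\pi_1$, we get $\pi_1(\mathscr{K}(P))=1$; consequently $\mathrm{H}_1(\mathscr{K}(P);\bZ_2)=0$ and, as $\bZ_2$ is a field, $\mathrm{H}^1_{\rm CW}(\mathscr{K}(P);\bZ_2)=0$. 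Proposition~\ref{prop:uni_sign} then yields the claimed isomorphism of sign assignments.

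The step I expect to be the main obstacle is purely bookkeeping: carefully matching the CW-structure of $\mathscr{K}(P)$ (the Hasse graph with one $2$-cell glued along each square) with the $2$-skeleton of $\mathcal{C}(P)$, and verifying that the star-shapedness computation invokes downward closure at the right place. The two genuinely topological inputs — that reversing orientation leaves $\mathscr{K}$ unchanged, and that a star-shaped subset of Euclidean space is contractible — are routine, as is the standard fact that passing to the $2$-skeleton does not change $\pi_1$.
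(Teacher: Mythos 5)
Your proof is correct, and it follows the paper's overall strategy --- reduce Theorem~\ref{thm:uni_signs} to Proposition~\ref{prop:uni_sign} by showing $\mathrm{H}^1_{\rm CW}(\mathscr{K}(P);\bZ_2)=0$ --- but you supply the decisive topological input in a genuinely different, self-contained way. The paper handles the full Boolean poset by observing that $\mathscr{K}(SSG(\tG))$ is the union of the $1$- and $2$-skeletons of an $n$-cube, and for a proper downward (or upward) closed sub-poset it asserts that $\mathscr{K}(P)\subset\mathscr{K}(SSG(\tG))$ ``retracts onto the minimum (or the maximum)'', deferring the detailed proof to~\cite{chandler2019posets}. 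You instead realise $\mathscr{K}(P)$ as the $2$-skeleton of the cubical subcomplex $\mathcal{C}(P)\subseteq[0,1]^{E(\tG)}$ of faces with top vertex in $P$ (downward closure makes this a subcomplex, and faithfulness, Example~\ref{ex:down-up are faith and square}, matches its vertices, edges and $2$-faces with the elements, covering relations and squares of $P$), prove that $\mathcal{C}(P)$ is star-shaped about the origin and hence contractible, and conclude that $\mathscr{K}(P)$ is simply connected because attaching cells of dimension at least $3$ does not change $\pi_1$; the upward closed case you dispose of by complementation duality rather than treating minimum and maximum symmetrically. Your route is not only independent of the cited literature but arguably more precise: $\mathscr{K}(P)$ itself is in general \emph{not} contractible --- for the full $3$-cube it is the boundary $2$-sphere, which deformation retracts onto no vertex --- so the paper's retraction claim, read literally about the $2$-complex, overstates what is true and only becomes rigorous through Chandler's results; your argument isolates exactly the property needed, namely simple connectivity inherited from the contractible ambient cubical complex. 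What the paper's approach buys is brevity and a link to the general poset-homology literature; what yours buys is a short, elementary, and fully self-contained proof.
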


\begin{proof}
The poset $SSG(\tG)$ is a Boolean poset  -- cf.~Example~\ref{exa:sub-graphs posets}. It follows that its Hasse diagram is the $1$-skeleton of an $n$-dimensional cube, with $n = \lgt(\tG)$.
This implies that $\mathscr{K}(SSG(\tG),<)$, which for $n\geq 2$ is the union of the $1$- and $2$-{skeletons} of a $n$-dimensional cube, has trivial homology groups in degree $i=1$ -- this being trivial for $n=1$. Therefore, the statement follows in this case from Proposition~\ref{prop:uni_sign}.
Note that there always exists a sign assignment on Boolean posets; this is true for homological reasons for $n\neq 3$, cf. Proposition~\ref{prop:sign_exist}, and we can even define it explicitly for all $n$  -- see for instance~\cite[Section 3]{Khovanov}.

In the general case in which $P$ is a downward or upward closed proper sub-poset of a cube, observe that it is squared (see Example~\ref{ex:down-up are faith and square}) and it contains either the minimum or the maximum of the cube. Furthermore, the sub-CW-complex $\mathscr{K}(P,<) \subset  \mathscr{K}(SSG(\tG),<)$ retracts onto the minimum (or the maximum), hence, again by Proposition~\ref{prop:uni_sign}, the uniqueness of the sign assignment up to isomorphism follows.
For a detailed proof of this fact we refer the interested reader to~\cite[Theorem~4.5]{chandler2019posets} (to be read in conjunction with~\cite[Theorem~2.9~(3) \& Theorem~5.14]{chandler2019posets}).
\end{proof}

In particular, if $P = P(\tG)$ is the path poset of a digraph $\tG$ -- cf.~Remark~\ref{rem:PG} -- we get:
\begin{cor}
	Any two sign assignments $\epsilon$ and $\epsilon'$ on $P(\tG)$ are isomorphic. 
\end{cor}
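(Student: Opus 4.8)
The plan is to recognise this corollary as a direct specialisation of Theorem~\ref{thm:uni_signs}. The only thing I would need to verify is that the path poset $P(\tG)$ satisfies the hypotheses of that theorem; namely, that it is a downward (or upward) closed sub-poset of $SSG(\tG)$.

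First I would recall Remark~\ref{rem:PG}, which states precisely that $P(\tG)$, the set of multipaths of $\tG$ ordered by the sub-graph relation, is a downward closed sub-poset of $SSG(\tG)$. Indeed, every spanning sub-graph of a multipath is again a multipath, so deleting edges keeps us inside $P(\tG)$; this is exactly the downward-closure condition of Definition~\ref{def:hereditary}.

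With this observation in hand, I would simply invoke Theorem~\ref{thm:uni_signs} with $P = P(\tG)$. The theorem guarantees that any two sign assignments on a downward closed sub-poset of $SSG(\tG)$ are isomorphic, and $P(\tG)$ is such a sub-poset; the conclusion follows immediately.

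I do not expect any obstacle here: the entire content of the corollary is already subsumed by Theorem~\ref{thm:uni_signs}, and the verification of the hypothesis is immediate from Remark~\ref{rem:PG}. The corollary is worth stating separately only because it isolates the case of primary interest, the path poset $P(\tG)$, which is the poset underpinning the definition of multipath cohomology; the uniqueness of sign assignments up to isomorphism is precisely what is needed (together with Corollary~\ref{cor:homology_not_sign}) to ensure that multipath cohomology does not depend on the chosen sign assignment.
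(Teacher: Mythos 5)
Your proposal is correct and matches the paper's own argument exactly: the paper derives this corollary as an immediate specialisation of Theorem~\ref{thm:uni_signs}, using Remark~\ref{rem:PG} to note that $P(\tG)$ is a downward closed sub-poset of $SSG(\tG)$. Nothing further is needed.
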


We conclude the section with an application to the cohomology theories defined in Subsection~\ref{subs:homology}.

\begin{cor}\label{cor:homology_not_sign}
Let $\tG$ be a digraph, $P\subseteq SSG(\tG)$ be a downward (or upward) closed sub-poset, and  $\mathcal{F}\colon{\bf P} \to {\bf A}$  a covariant functor to an additive category {\bf A}. Then, the cochain complex $(C^*_{\mathcal{F}}(P), d^*)$ does not depend, up to isomorphism, on the choice of the sign assignment on $P$. 
\end{cor}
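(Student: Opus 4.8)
The plan is to leverage Theorem~\ref{thm:uni_signs}, which guarantees that any two sign assignments on a downward (or upward) closed sub-poset $P \subseteq SSG(\tG)$ are isomorphic, and to convert such an isomorphism of sign assignments into an isomorphism of cochain complexes by means of a diagonal rescaling of the summands. First I would fix two sign assignments $\epsilon$ and $\epsilon'$ on $P$, with associated differentials $d^*_\epsilon$ and $d^*_{\epsilon'}$. The key preliminary observation is that the underlying graded object $C^*_{\mathcal{F}}(P) = \bigoplus_{\tH} \mathcal{F}(\tH)$ is the \emph{same} in both cases: the cochain groups depend only on the level function $\ell$, and only the differential records the sign data. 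By Theorem~\ref{thm:uni_signs} there exists an isomorphism of sign assignments $\eta\colon P \to \bZ_2$ from $\epsilon$ to $\epsilon'$, that is, a map satisfying Equation~\eqref{eq:square_sign_mor} for every covering pair $\tH \prec \tH'$.

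Next I would define the candidate isomorphism $\Phi\colon (C^*_{\mathcal{F}}(P), d^*_\epsilon) \to (C^*_{\mathcal{F}}(P), d^*_{\epsilon'})$ to act on the direct summand $\mathcal{F}(\tH)$ as multiplication by $(-1)^{\eta(\tH)}$. This is well defined in any additive category, since $(-1)^{\eta(\tH)}\id_{\mathcal{F}(\tH)}$ is just $\pm\id_{\mathcal{F}(\tH)}$. Being diagonal with respect to the direct-sum decomposition, $\Phi$ preserves the level grading, and since each component is $\pm\id$, it is its own inverse; hence $\Phi$ is an isomorphism of graded objects. It therefore only remains to verify that $\Phi$ is a chain map.

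The remaining, and only substantive, step is to check that $\Phi \circ d^*_\epsilon = d^*_{\epsilon'} \circ \Phi$, and it suffices to do so on each component $\mathcal{F}(\tH) \to \mathcal{F}(\tH')$ with $\tH \prec \tH'$. On such a component the left-hand side equals $(-1)^{\eta(\tH') + \epsilon(\tH,\tH')}\mathcal{F}(\tH \prec \tH')$, while the right-hand side equals $(-1)^{\eta(\tH) + \epsilon'(\tH,\tH')}\mathcal{F}(\tH \prec \tH')$; these agree precisely when $\eta(\tH') + \epsilon(\tH,\tH') \equiv \eta(\tH) + \epsilon'(\tH,\tH') \pmod 2$, which is exactly the rearrangement of Equation~\eqref{eq:square_sign_mor} with $x = \tH$ and $y = \tH'$. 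I expect this matching to be the crux of the argument: the notion of isomorphism of sign assignments was engineered so that the diagonal rescaling intertwines the two differentials, so the main (and modest) obstacle is simply to recognise that the chain-map identity reduces componentwise to \eqref{eq:square_sign_mor}, with no stray sign contributions arising from the covariant functoriality of $\mathcal{F}$. The contravariant case follows by the same argument, using the adaptations indicated at the end of Subsection~\ref{subs:homology}.
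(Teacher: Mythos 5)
Your proposal is correct and follows essentially the same route as the paper: both invoke Theorem~\ref{thm:uni_signs} to obtain an isomorphism of sign assignments $\eta$ and then define the diagonal map $\Phi = \bigoplus_{\tH} (-1)^{\eta(\tH)}\mathrm{Id}_{\mathcal{F}(\tH)}$, whose chain-map property reduces componentwise to Equation~\eqref{eq:square_sign_mor}. The only difference is cosmetic: the paper states the commutativity with the differentials as ``immediate,'' whereas you carry out the componentwise sign check explicitly, which is exactly the verification the paper leaves implicit.
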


\begin{proof}
Let $\epsilon$ and $\epsilon'$ be two sign assignments on $P$.  Denote by $(C^*_{\mathcal{F}}(P), d_{\epsilon}^*)$  and $(C^*_{\mathcal{F}}(P), d_{\epsilon'}^*)$  the associated cochain complexes defined using  $\epsilon$ and $\epsilon'$, respectively. 
By Theorem~\ref{thm:uni_signs}, any two sign assignments on $P$ are isomorphic. Let $\eta$ be an isomorphism between them and
define the map
\[ \Phi\colon (C^*_{\mathcal{F}}(P), d_{\epsilon}^*) \longrightarrow (C^*_{\mathcal{F}}(P), d_{\epsilon'}^*) \ ,\]
as $\Phi \coloneqq \bigoplus_{\tH} \Phi_{\tH}$ where $\Phi_{\tH} =  (-1)^{\eta(\tH)} Id_{\mathcal{F}(\tH)}$.
Observe that this clearly gives an isomorphism of modules. Furthermore, the commutativity of $\Phi$ with the differentials is immediate by the definition of isomorphisms of sign assignments (cf.~Equation~\eqref{eq:square_sign_mor}), hence it provides an isomorphism of chain complexes, concluding the proof.
\end{proof}

\section{Multipath cohomology}\label{sec:multipath}

The goal of this section is to define \emph{multipath cohomology} of directed graphs using poset homology. This will be achieved in the first subsection, whereas the second subsection is devoted to providing some computations. In particular, we will see that the multipath cohomology may be non-trivial when evaluated on  trees.

\subsection{Multipath cohomology}\label{sec:multipathhom}
{In this subsection we specialise the general construction described in Subsection~\ref{subs:homology} by taking as poset  the path poset $P(\tG)$ (Definition~\ref{def:pathposet}), and defining an explicit functor~${\mathcal{F}_{A,M}\colon{\bf P}(\tG)\to R\text{-}{\bf Mod}}$.
In order to define $\mathcal{F}_{A,M}$ and an explicit sign assignment on $P(\tG)$, we need some auxiliary data; more precisely, an ordering on the vertices of $\tG$.}

\begin{defn}\label{def:orderedigr}
An \emph{ordered digraph} is a digraph with a fixed well-ordering\footnote{Every non-empty subset has a minimal element.} of the vertices. 
\end{defn}

\begin{rem}\label{rem:order vertices implies edges}
The order of the vertices induces an order of  the edges of \tG given by the lexicographic order on the pairs source-target.
\end{rem}

We can use the ordering on the vertices of an ordered graph to index the connected components of any sub-graph $\tH<\tG$; the order being given according to the minimum of the vertices belonging to each component.

\begin{notation}\label{notat:x_comp}
Given a sub-graph \tH of an ordered graph $\tG$, we will denote by ${\rm index}_{\tH}(c)$ the position of a connected component~$c$ of $\tH$ with respect to the aforementioned order -- we start the count at $0$. More precisely, if the ordered connected  components of $\tH$ are $c_0 < c_1 <\dots < c_k$, then ${\rm index}_{\tH} (c) =i$ if $c = c_i$. Note that the definition of index is well-posed.
{Whenever $\tH$ is clear from the context, we will remove it from the notation of the index.}
\end{notation}

\begin{defn}
Consider $\tH \in SG(\tG)$, and $e\in E(\tG)\setminus E(\tH)$ such that $s(e),t(e)\in V(\tH)$. The \emph{source} (resp.~the \emph{target}) \emph{index of $e$ with respect to \tH} is defined as follows:
\[ s(e,\tH) = {\rm index}_{\tH}(c) \text{ such that }s(e)\in c\quad\text{(resp. }t(e,\tH) = {\rm index}_{\tH}(c)\text{ such that  }t(e)\in c\text{) \ .}\]
\end{defn}

The naming is motivated by the following facts: ${\rm index}(s(e)) = s(e,\tG_\emptyset)$ and ${\rm index}(t(e)) = t(e,\tG_\emptyset)$, where $\tG_{\emptyset}$ denotes the spanning sub-graph of \tG with no edges.

With this notation in place we are now ready to define a sign assignment~$\sigma_{\rm e} $  on $P(\tG)$:
\begin{equation}\label{eq:sigma_e}
 \sigma_{\rm e} (\tH, \tH') = \begin{cases}  
t(e,\tH) + 1 & \text{if }\tH' = \tH \cup e\text{ and }t(e,\tH)>s(e,\tH),
\\ s(e,\tH) & \text{if }\tH' = \tH \cup e\text{ and }s(e,\tH)>t(e,\tH),\\
\end{cases}\mod 2 \ .
\end{equation}

\begin{lem}\label{lem:signassignamet}
The function $\sigma_{\rm e} $ in Equation~\eqref{eq:sigma_e} gives a sign assignment on $P(\tG)$.
\end{lem}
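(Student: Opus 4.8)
The plan is to verify the square condition \eqref{eq:signassign} of Definition~\ref{def:sign_ass} directly on every square of $P(\tG)$. First I would record what a square is: since $P(\tG)$ is downward closed in the Boolean poset $SSG(\tG)$ (Remark~\ref{rem:PG}), it is squared (Example~\ref{ex:down-up are faith and square}), and a square is given by a multipath $\tH$ together with two distinct edges $e_1,e_2\in E(\tG)\setminus E(\tH)$ for which $\tH\cup e_1$, $\tH\cup e_2$, and $\tH\cup\{e_1,e_2\}$ are all multipaths; its corners are $\tH\prec \tH\cup e_i\prec \tH'$, where $\tH'\coloneqq\tH\cup\{e_1,e_2\}$. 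At the outset I would also note that $\sigma_{\rm e}$ is well defined on covering relations of $P(\tG)$: if $\tH\prec \tH\cup e$ is such a relation, then $s(e)$ and $t(e)$ lie in distinct components of $\tH$ (otherwise $\tH\cup e$ contains a cycle or repeats a vertex and is not a multipath), so $s(e,\tH)\neq t(e,\tH)$ and exactly one clause of \eqref{eq:sigma_e} applies. Writing $[\varphi]$ for the Boolean value ($1$ if $\varphi$ holds, $0$ otherwise), it is then convenient to record the uniform rewriting $\sigma_{\rm e}(\tH,\tH\cup e)\equiv \max\{s(e,\tH),t(e,\tH)\}+[\,t(e,\tH)>s(e,\tH)\,]\bmod 2$.

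The core computation is how component indices shift under a single edge addition. Set $a_i=s(e_i,\tH)$, $b_i=t(e_i,\tH)$, $m_i=\min\{a_i,b_i\}$, $M_i=\max\{a_i,b_i\}$. Adding $e_1$ merges the two components at indices $a_1,b_1$ into a single component occupying index $m_1$; consequently $s(e_2,\tH\cup e_1)=m_1$ if $a_2\in\{a_1,b_1\}$, and $s(e_2,\tH\cup e_1)=a_2-[\,a_2>M_1\,]$ otherwise, and likewise for $t(e_2,\tH\cup e_1)$, and symmetrically for the endpoints of $e_1$ in $\tH\cup e_2$. These four indices are exactly what feed into $\sigma_{\rm e}(\tH\cup e_1,\tH')$ and $\sigma_{\rm e}(\tH\cup e_2,\tH')$, so the whole identity becomes a finite computation once the shifts are substituted.

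Next I would classify the squares by how the index sets $\{a_1,b_1\}$ and $\{a_2,b_2\}$ overlap, using that $\tH'$ is a disjoint union of simple paths. The coincidence $a_1=a_2$ (or $b_1=b_2$) forces a vertex of out-degree (resp.~in-degree) $2$ in $\tH'$, hence is excluded; and $a_1=b_2$ together with $b_1=a_2$ creates a $2$-cycle in $\tH'$, hence is excluded too. Thus either (A) $a_1,b_1,a_2,b_2$ are pairwise distinct, or (B) exactly one coincidence holds, which by the symmetry $e_1\leftrightarrow e_2$ (which swaps the two corners and negates $\delta\coloneqq\sigma_{\rm e}(\tH,\tH\cup e_1)+\sigma_{\rm e}(\tH\cup e_1,\tH')-\sigma_{\rm e}(\tH,\tH\cup e_2)-\sigma_{\rm e}(\tH\cup e_2,\tH')$, and $-1\equiv 1$) may be taken to be $b_1=a_2$. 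In case (A) the shift $x\mapsto x-[\,x>M_1\,]$ is order preserving and avoids the value $M_1$ on $\{a_2,b_2\}$, so it preserves the orientation bit $[\,b_2>a_2\,]$ and sends $M_2\mapsto M_2-[\,M_2>M_1\,]$; substituting the four signs, everything cancels and $\delta\equiv [\,M_1>M_2\,]-[\,M_2>M_1\,]\equiv 1\bmod 2$, since $M_1\neq M_2$.

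The remaining case (B) is the main obstacle, because the merged index $m_1$ now genuinely appears ($s(e_2,\tH\cup e_1)$ collapses onto the merged component), which destroys the clean monotonicity cancellation of case (A). Here I would simply run through the six linear orders of the three distinct indices $a_1$, $y\coloneqq b_1=a_2$, and $b_2$, read off the four sign values from the shift formulas in each, and check $\delta\equiv 1\bmod 2$ every time; the only subtlety is tracking which endpoint lands on the merged component and how each surviving index is shifted past $M_1$ or $M_2$. Having verified \eqref{eq:signassign} in both cases, one concludes that $\sigma_{\rm e}$ is a sign assignment on $P(\tG)$.
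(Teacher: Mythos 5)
Your proposal is correct, and it follows the same overall strategy as the paper's proof: a direct verification of Equation~\eqref{eq:signassign} on each square of $P(\tG)$, split into exactly the same two cases by how the components of $\tH$ merge (your case (A), all four indices distinct, is the paper's case \textbf{(B)} where four components merge pairwise; your case (B), one coincidence, is the paper's case \textbf{(A)} where three components merge into one). The difference is in the bookkeeping, and it is a genuine improvement in one place: the paper handles the four-component case by reducing twelve configurations to six via a global orientation-reversal symmetry and then tabulating the four values of $\sigma_{\rm e}$ in each (its Table~\ref{tab: Case B edges}), whereas your uniform rewriting $\sigma_{\rm e}(\tH,\tH\cup e)\equiv\max\{s(e,\tH),t(e,\tH)\}+[\,t(e,\tH)>s(e,\tH)\,]$ together with the index-shift formula $x\mapsto x-[\,x>M_1\,]$ lets you dispose of that case in a single monotonicity-and-cancellation argument, with $\delta\equiv[M_1>M_2]+[M_2>M_1]\equiv 1$; no subcases at all. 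In the three-component case both proofs end up with a six-fold check, just organized differently: the paper enumerates the six coherent edge configurations on $c_i<c_j<c_k$ (Table~\ref{tab: Case A edges}), while you fix the configuration $a_1\to y\to b_2$ via the $e_1\leftrightarrow e_2$ swap (legitimate, since swapping negates $\delta$ and $-1\equiv 1$) and enumerate the six linear orders of the indices. You assert rather than display that check, but I ran all six orderings through your shift formulas and each gives $\delta\equiv 1$, so the plan closes. Two small points: your exclusion of $a_1=b_2,\ b_1=a_2$ produces a directed cycle through the two components and the two edges, not literally a $2$-cycle; and your well-definedness remark (that $s(e,\tH)\neq t(e,\tH)$ on covering relations) is used silently in the paper but is worth stating, as you do.
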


For the sake of presentation we moved the proof of the lemma to Appendix~\ref{app:lemmata}.

\begin{rem}
More generally, observe that,  for each faithful and squared poset $P\subseteq P(\tG)$, the restriction~$\sigma_{{\rm e}| P}$ is a sign assignment.
Here we are using the faithfulness of $P$ in {$P(\tG)$ (and, by Proposition~\ref{prop:faith-square}, in  $SSG(\tG)$)}  to be sure that the covering relation only amounts to the addition of a single edge. 
\end{rem}

We now construct an explicit functor~${\mathcal{F}_{A,M}\colon{\bf P}(\tG)\to R\text{-}{\bf Mod}}$. From now on, $R$ will denote a commutative ring with identity, $A$ an associative unital $R$-algebra and $M$ an $(A,A)$-bimodule{, i.e.,~$M$ is both a left and a right $A$-module, and the two actions are compatible}. 

Let $\tG$ be an ordered graph and let $v_0\in V(\tG)$ be the  minimum with respect to the given ordering.
Given a multipath $\tH<\tG$, to each connected component of $\tH$, but the one containing the vertex $v_0$, we associate a copy of $A$, and to the component containing $v_0$ we associate a copy of $M$. Then we take the ordered tensor product. More concretely, if $c_0<\dots<c_k$ is the set of ordered connected components of~$\tH$, we define: 
\begin{equation}\label{eq:fun_obj}
	\mathcal{F}_{A,M}(\tH):= {M_{c_{0}}} \otimes_R A_{c_1} \otimes_R \cdots 
	\otimes_R  A_{c_k} \ ,\end{equation}
where  all the modules are labelled  by the respective component. 

Assume $\tH' = \tH\cup e$. Denote by $c_0$,...,$c_{k}$ the ordered components of $\tH$,  denote by  $c'_0$,...,$c'_{k-1}$ the ordered components of $\tH'$, and assume that the addition of $e$ merges $c_i$ and $c_j$. Then, for each $h=0,...,k-1$, there is a natural identification
\begin{equation}\label{eq:identification_components}
	c'_h = \begin{cases} c_h & \text{if } 0\leq h<i \text{ or } i< h < j\\ c_i \cup e \cup c_j &\text{if }  h =i \\ c_{h+1} &\text{if }  j\leq h<k\end{cases}
\end{equation}
for some $0\leq i < j \leq k$. Using this  identification,
 we  define
$ \mu_{\tH\prec \tH'}\colon \mathcal{F}_{A,M}(\tH) \longrightarrow \mathcal{F}_{A,M}(\tH')$
 as 
\[ \mu_{\tH\prec \tH'}(a_0 \otimes \cdots \otimes a_k) =  
a_{0} \otimes \cdots \otimes a_{s(e,\tH)-1} \otimes a_{s(e,\tH)}\cdot a_{t(e,\tH)} \otimes a_{s(e,\tH)+1} \otimes \cdots \otimes \widehat{a_{t(e,\tH)}} \otimes \cdots \otimes a_{k-1} \otimes a_{k} 
	\]
where $ \widehat{a_{t(e,\tH)}} $ indicates the $a_{t(e,\tH)}$ is missing. We set
\begin{equation}\label{eq:fun_mor}
	\mathcal{F}_{A,M}(\tH\preceq \tH')\coloneqq
	\begin{cases}  
		\mu_{\tH\prec \tH'} & \text{ if } \tH\prec \tH'\\
		\mathrm{Id}_{\mathcal{F}_{A,M}(\tH)} & \text{ if } \tH= \tH'
	\end{cases} \ .
\end{equation}
 Equations~\eqref{eq:fun_obj} and  \eqref{eq:fun_mor} describe a functor 
\begin{equation}\label{eq:functor_pathposet}
	\mathcal{F}_{A,M}\colon \mathbf{P}(\tG)\to R\text{-}\mathbf{Mod}
	\end{equation}
from the category $ \mathbf{P}(\tG)$ associated to the path poset $P(\tG)$ to the additive category $R$-$\mathbf{Mod}$ of left $R$-modules. In fact, we have the following:

\begin{lem}\label{lemma:functpreservessq}
Let $\tG$ be an ordered digraph. The assignment $\mathcal{F}_{A,M}(\tH\prec \tH')\coloneqq  \mu_{\tH\prec \tH'}$ in Equation~\eqref{eq:fun_mor} preserves all the commutative squares in $\mathbf{P}(\tG)$ -- cf.~Remark~\ref{rem:squares}.
\end{lem}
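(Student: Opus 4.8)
The plan is to unwind both composites around an arbitrary square of $\mathbf{P}(\tG)$ and to check that they agree component-by-component in the target. By Remark~\ref{rem:squares}, together with the fact that $P(\tG)$ is downward closed in the Boolean poset $SSG(\tG)$ and hence squared, every square has the form $\tH \prec \tH\cup e_1,\ \tH\cup e_2 \prec \tH''$ with $e_1\neq e_2$ and $\tH'' = \tH\cup e_1\cup e_2$, all four graphs being multipaths. Fixing such a square, I would prove
\[ \mu_{\tH\cup e_1 \prec \tH''}\circ \mu_{\tH\prec \tH\cup e_1} = \mu_{\tH\cup e_2 \prec \tH''}\circ \mu_{\tH\prec \tH\cup e_2}. \]
The conceptual shortcut is to describe the common value intrinsically: in $\tH''$ each component $c$ is a directed simple path spliced together from several components of $\tH$, and the factor $\mathcal{F}_{A,M}$ assigns to $c$ should be the product of the factors assigned to the constituent components of $\tH$, multiplied in the order in which these components are met while traversing the directed path $c$. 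Indeed, each map $\mu$ multiplies the source-factor by the target-factor \emph{in that order}, and the simple-path condition forces the source of an added edge to be the terminal vertex of one constituent and its target to be the initial vertex of the next; hence source-times-target is precisely multiplication in path order, independently of the index order used to place the merged factor in Equation~\eqref{eq:identification_components}.

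With this reformulation the claim reduces to order-independence of the multiplications, organised by how many components the two edges touch. Two distinct edges cannot join the same unordered pair of components, since the resulting component of $\tH''$ would contain a cycle and fail to be a simple path; so $\tH''$ arises from $\tH$ either by merging two disjoint pairs of components, or by merging a single triple. In the disjoint case the two multiplications act in tensor slots untouched by one another, so the maps commute by the interchange law for $\otimes$; the only verification is that the indices $s(e_2,\tH\cup e_1), t(e_2,\tH\cup e_1)$ used in the second step match $s(e_2,\tH), t(e_2,\tH)$ after the index shift caused by the first merge (and symmetrically with $e_1$ and $e_2$ exchanged), which follows from Equation~\eqref{eq:identification_components} because the relative order of the untouched components is preserved.

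The substantive case, which I expect to be the main obstacle, is when $e_1$ and $e_2$ share a single component, so three component-factors of $\tH$ fuse into one factor of $\tH''$. Adding $e_1$ first brackets the triple product one way and adding $e_2$ first brackets it the other way, so one must see that both bracketings multiply the three factors in the \emph{same} linear order and then collapse via associativity. Consistency of the order is exactly the orientation rigidity of simple paths noted above: the directed structure of the merged component fixes one linear order on the three factors regardless of which edge is inserted first. If none of the three components contains the minimal vertex $v_0$, all three factors lie in $A$ and the bracketings agree by associativity of $A$; if one of them is the $v_0$-component, that factor lies in $M$ while the other two lie in $A$, the product has the shape $\cdots a\cdot m\cdot a\cdots$ with a single $M$-entry, and the equality of the two bracketings is guaranteed precisely by the $(A,A)$-bimodule axioms for $M$, namely associativity of the left and right actions and their compatibility $(am)b = a(mb)$. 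Since the disjoint and shared cases exhaust all squares, both composites compute the same path-ordered product in every component of $\tH''$, whence $\mathcal{F}_{A,M}$ preserves commutative squares; the residual bookkeeping of index shifts is routine and can be relegated to a direct check.
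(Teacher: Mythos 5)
Your proof is correct and is essentially the paper's argument: both reduce a square of $\mathbf{P}(\tG)$ to the addition of two edges $e_1,e_2$ to $\tH$, split according to whether three components of $\tH$ merge into one or two disjoint pairs merge separately, and conclude via associativity of $A$ together with the $(A,A)$-bimodule axioms when the $v_0$-component is involved (the disjoint case being commutation of operations acting in disjoint tensor slots). The only difference is organizational: the paper enumerates the orientation subcases via the figures and tables of its appendix and explicitly verifies just one of them, whereas your observation that source-times-target is multiplication in path-traversal order -- so that both composites compute the same path-ordered product -- disposes of all orientation subcases uniformly.
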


\begin{proof}
The possible configurations of squares in $P(G)$ are described in the proof of Lemma~\ref{lem:signassignamet}, contained in Appendix~\ref{app:lemmata} -- cf.~Figures~\ref{fig: subcases A} and~\ref{fig: subcases B}.  
We leave the full checking to the dedicated reader and present here only one case, namely case {\bf (A)} sub-case {\rm (b)}. The remaining checks can be dealt with similarly.
In the case at hand we have $\tH' = \tH \cup \{ e_1, e_2\}$ with $$t(e_2,\tH) = i < j = s(e_1,\tH) < s(e_2,\tH) = t(e_1,\tH)= k \ .$$
The schematic description of this configuration is shown in Figure~\ref{fig: F_AM comm sq}.
\begin{figure}[h]
\begin{tikzpicture}[thick]

\node (a) at (0,0) {};
\node[below left, bunired] at (0,0) {$c_i$};
\draw[fill,bunired] (a) circle (.05);

\node (b) at (1,1.5) {};
\node[above, bunired] at (1,1.5) {$c_j$};
\draw[fill,bunired] (b) circle (.05);

\node (c) at (2,0) {};
\node[below right, bunired] at (2,0) {$c_k$};
\draw[fill,bunired] (c) circle (.05);

\node[cdgreen,  right] at (1.5,.866) {$e_1$};
\node[cdgreen, below] at (1,.0) {$e_2$};

\draw[cdgreen, -latex, thick] (c) -- (a);
\draw[cdgreen, -latex,thick ] (b) -- (c);


\end{tikzpicture}
\caption{A schematic description of case {\bf (A)} sub-case {\rm (b)}.}\label{fig: F_AM comm sq}
\end{figure}
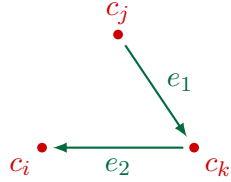
Now, we compute the two compositions directly, and we obtain
\[\mathcal{F}_{A,M}(\tH\cup e_1 \prec \tH')\circ  \mathcal{F}_{A,M}(\tH\prec \tH\cup e_1) (a_0 \otimes \cdots \otimes a_{h} ) = \]
\[=  a_0 \otimes \dots \otimes (a_j  a_k )a_i \otimes \dots  \otimes \widehat{a_j }\otimes \dots \otimes \widehat a_k \otimes \dots a_h \]
and
\[\mathcal{F}_{A,M}(\tH\cup e_2 \prec \tH')\circ  \mathcal{F}_{A,M}(\tH\prec \tH\cup e_2) (a_0 \otimes \cdots \otimes a_{h} ) = \]
\[=  a_0 \otimes \dots \otimes a_j  (a_k a_i )\otimes \dots  \otimes \widehat{a_j}\otimes \dots \otimes \widehat a_k \otimes \dots a_h \ . \]
The statement follows in this case by associativity of $A$ (and definition of $(A,A)$-bi-module if $i=0$).
\end{proof}

\begin{prop}\label{prop:F_AM functor}
		Let $\tG$ be an ordered digraph.
	The assignment $\mathcal{F}_{A,M}\colon \mathbf{P}(\tG)\to R\text{-}\mathbf{Mod}$ defines a covariant functor.
\end{prop}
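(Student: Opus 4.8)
The plan is to verify the three defining conditions for a covariant functor out of the poset category $\mathbf{P}(\tG)$: that identities are sent to identities, that a morphism of $R$-modules is attached to \emph{every} comparable pair $\tH\preceq \tH'$ (not just to the coverings for which Equation~\eqref{eq:fun_mor} is written), and that composition is respected. First I would observe that each $\mu_{\tH\prec \tH'}$ is genuinely a morphism in $R\text{-}\mathbf{Mod}$: it is induced by the multiplication map $A\otimes_R A\to A$, or by the actions $M\otimes_R A\to M$ and $A\otimes_R M\to M$ when the component containing $v_0$ is involved, all of which are $R$-bilinear and hence descend to the relevant tensor products. The identity condition is immediate, being built into Equation~\eqref{eq:fun_mor}.

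The substance of the proof lies in extending $\mathcal{F}_{A,M}$ to non-covering morphisms consistently. For a general pair $\tH< \tH'$ I would set $\mathcal{F}_{A,M}(\tH\preceq \tH')$ to be the composite $\mu_{\tH_{m-1}\prec \tH_m}\circ\cdots\circ \mu_{\tH_0\prec \tH_1}$ taken along a maximal chain of coverings $\tH=\tH_0\prec \tH_1\prec\cdots\prec \tH_m=\tH'$ in $P(\tG)$. Such a chain exists, and has length $m=\lgt(\tH')-\lgt(\tH)$, since every covering in $SSG(\tG)$ adds exactly one edge (cf.~Example~\ref{exa:sub-graphs posets}). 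The key point I must then establish is that this composite is \emph{independent of the chosen chain}. Here I would invoke Remark~\ref{rem:PG}: the path poset $P(\tG)$ is downward closed in the Boolean poset $SSG(\tG)\cong(\wp(E(\tG)),\subset)$, so the entire interval $[\tH,\tH']$ lies in $P(\tG)$ and is order-isomorphic to the Boolean lattice $(\wp(E(\tH')\setminus E(\tH)),\subset)$. In particular all intermediate subgraphs occurring along any chain are multipaths, so every $\mu$ in the composite is defined, and the interval is squared (Examples~\ref{ex:bool is square} and~\ref{ex:down-up are faith and square}).

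This chain-independence is the main obstacle, and I expect to dispatch it as follows. Within a Boolean lattice, any two maximal chains between the same endpoints are related by a finite sequence of elementary exchanges, each replacing a length-two segment $\tH_{i-1}\prec \tH_i\prec \tH_{i+1}$ by the opposite diagonal $\tH_{i-1}\prec \widehat{\tH}_i\prec \tH_{i+1}$ of the square $\tH_{i-1}\prec \tH_i,\widehat{\tH}_i\prec \tH_{i+1}$ (this is the combinatorial statement that orderings of the added edges differ by adjacent transpositions). By Lemma~\ref{lemma:functpreservessq} the assignment preserves every commutative square in $\mathbf{P}(\tG)$, so each elementary exchange leaves the composite unchanged---cf.~Remark~\ref{rem:squares}. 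Hence the composite along any maximal chain agrees, and $\mathcal{F}_{A,M}(\tH\preceq \tH')$ is well defined. Finally, for a composable triple $\tH< \tH'< \tH''$, concatenating a maximal chain from $\tH$ to $\tH'$ with one from $\tH'$ to $\tH''$ yields a maximal chain from $\tH$ to $\tH''$; by the chain-independence just established this gives $\mathcal{F}_{A,M}(\tH'\preceq \tH'')\circ \mathcal{F}_{A,M}(\tH\preceq \tH')=\mathcal{F}_{A,M}(\tH\preceq \tH'')$, so composition is respected and $\mathcal{F}_{A,M}$ is a covariant functor.
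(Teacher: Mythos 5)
Your proof is correct and follows essentially the same route as the paper's: define $\mathcal{F}_{A,M}$ on a general morphism $\tH\preceq\tH'$ by composing along a chain of coverings, reduce chain-independence to swapping the order in which two edges are added (adjacent transpositions), and conclude by the square-preservation Lemma~\ref{lemma:functpreservessq}. Your extra observations---that downward-closedness of $P(\tG)$ in $SSG(\tG)$ (Remark~\ref{rem:PG}) guarantees all intermediate sub-graphs are multipaths and that the interval $[\tH,\tH']$ is Boolean, hence squared---make explicit details the paper leaves implicit, but do not change the argument.
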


\begin{proof}
It is clear that $\mathcal{F}_{A,M}$ preserves the identities.
Let $f_{\tH,\tH'}\colon \tH \to \tH'$ be a morphism in $\mathbf{P}(\tG)$.
The morphism $f_{\tH,\tH'}$ can be written as the composition of the covering morphisms $f_{\tH_i,\tH_{i+1}}$ for any given chain $\tH=\tH_0\prec \tH_1 \prec\dots \prec \tH_{n-1}=\tH'$ in $ P(\tG)$ -- this is well-defined since we have a unique morphism between two related objects in ${\bf P}(\tG)$, cf.~Remark~\ref{rem:posetiscat}. 
We only have to show that the composition
\[ \mathcal{F}_{A,M}(\tH_{n-2}\prec \tH_{n-1} ) \circ \cdots \circ \mathcal{F}_{A,M}(\tH_0\prec \tH_1 )\]
depends only on $\tH = \tH_0$ and $\tH' =\tH_{n-1}$ and not on the chosen chain.

Note that $\tH' = \tH \cup \{ e_1 ,  \dots, e_{n-1}\}$, and each chain corresponds to a choice of the order in which we add the edges $ e_1 , ... , e_{n-1}$ to $\tH$.
Therefore, the proof boils down to showing that we can switch the order in which we add two edges to $\tH$.
This is equivalent to showing that $\mathcal{F}_{A,M}$ preserves the commutative squares in {\bf P}($\tG$). Thus, the proposition follows directly from Lemma~\ref{lemma:functpreservessq}.
\end{proof}

\begin{rem}
The above proof basically shows  that both the poset $P(\tG)$ and its squared faithful sub-posets are, in the language of \cite{chandler2019posets}, diamond transitive. For a more general proof of this fact in the case of downward or upward closed sub-posets of $SSG(\tG)$, or even more in general, the reader can consult~\cite{chandler2019posets}.
\end{rem}

We conclude this section with the following theorem which is an immediate consequence of Theorem~\ref{teo: general cohom}, Lemma~\ref{lem:signassignamet}, and Proposition~\ref{prop:F_AM functor}.

\begin{thm}
Given a graph $\tG$ the the graded $R$-module $C^{*}_{\mu}(\tG;A,M)\coloneqq C^*_{\mathcal{F}_{A,M}}(P(\tG))$ endowed with the map $d^*\coloneqq d^*_{\mathcal{F}_{A,M},\sigma}$ is a cochain complex.
\end{thm}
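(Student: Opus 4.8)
The plan is to recognise that this statement is a pure assembly of three previously established results, and that the only real work is to confirm that the path poset meets the structural hypotheses of Theorem~\ref{teo: general cohom}. So first I would check these hypotheses for the triple $(P(\tG),\mathcal{F}_{A,M},\sigma_{\rm e})$, and then invoke the theorem directly.

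The key structural point to verify is that $P(\tG)$ is a squared and faithful sub-poset of $SG(\tG)$. By Remark~\ref{rem:PG}, $P(\tG)$ is a downward closed sub-poset of $SSG(\tG)$, and $SSG(\tG)$ is a Boolean poset sitting inside $SG(\tG)$. Boolean posets are squared by Example~\ref{ex:bool is square}, and by Example~\ref{ex:down-up are faith and square} downward closed sub-posets are faithful, while a downward closed sub-poset of a squared poset is again squared. Combining these, $P(\tG)$ is both squared and faithful, as required. This is the genuinely load-bearing ingredient, since it is exactly the squared condition that forces the two summands attached to each square to cancel in the proof of Theorem~\ref{teo: general cohom}; however, no new argument is needed here, as all of it has been supplied by the earlier structural observations.

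With this in hand, I would take the additive (in fact abelian) category $\mathbf{A} = R\text{-}\mathbf{Mod}$ as target. By Lemma~\ref{lem:signassignamet}, $\sigma_{\rm e}$ is a sign assignment on $P(\tG)$, and by Proposition~\ref{prop:F_AM functor}, $\mathcal{F}_{A,M}\colon \mathbf{P}(\tG)\to R\text{-}\mathbf{Mod}$ is a covariant functor. Thus every hypothesis of Theorem~\ref{teo: general cohom} is satisfied, and the theorem yields $d^n\circ d^{n-1}=0$ for all $n$. Therefore $(C^*_{\mathcal{F}_{A,M}}(P(\tG)),d^*_{\mathcal{F}_{A,M},\sigma})$, which is by definition $(C^*_{\mu}(\tG;A,M),d^*)$, is a cochain complex, concluding the argument. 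Since the heavy lifting (the sign-assignment property, the functoriality, and the general $d^2=0$ result) is already done, there is no substantive obstacle; the one place demanding care is simply confirming that $P(\tG)$ is squared and faithful so that Theorem~\ref{teo: general cohom} applies.
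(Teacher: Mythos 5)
Your proof is correct and is essentially the paper's own argument: the paper likewise derives this theorem as an immediate consequence of Theorem~\ref{teo: general cohom}, Lemma~\ref{lem:signassignamet}, and Proposition~\ref{prop:F_AM functor}. Your explicit check that $P(\tG)$ is squared and faithful (via Remark~\ref{rem:PG} and Examples~\ref{ex:bool is square} and~\ref{ex:down-up are faith and square}) simply spells out a hypothesis the paper leaves implicit, so there is no substantive difference.
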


  By Corollary~\ref{cor:homology_not_sign}, up to isomorphism of chain complexes, $(C^{*}_{\mu}(\tG;A,M),d^*)$ does not depend on the choice of the sign $\sigma_e$.

Assume now that $M$ is isomorphic to $A$ as $(A,A)$-bi-module. Then, the chain complex does not depend, up to isomorphism, on the given ordering of the vertices of the graph. In other words, the isomorphism class of $(C^{*}_{\mu}(\tG;A,A),d^*)$ depends only on the underlying graph~$\tG$ and on the algebra $A$:

\begin{prop}
	Let $\tG$ be an ordered digraph.
	Then, the  cochain complex $(C^{*}_{\mu}(\tG;A,A),d^*)$ does not depend on the choice of the ordering on $V(\tG)$.
\end{prop}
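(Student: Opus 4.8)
The plan is to fix two well-orderings $o,o'$ of $V(\tG)$ and to produce an explicit isomorphism between the two resulting cochain complexes. Since $M=A$, the functor $\mathcal{F}_{A,A}^{o}$ of~\eqref{eq:fun_obj} sends a multipath $\tH$ to a tensor product $A_{c_0}\otimes_R\cdots\otimes_R A_{c_k}$ of one copy of $A$ for each connected component of $\tH$, with the order of the factors dictated by $o$ (there is now no distinguished $M$-factor attached to the component of the minimal vertex). Passing from $o$ to $o'$ leaves the set of tensor factors unchanged and only permutes their order; this suggests building the isomorphism out of the symmetric braiding of $R\text{-}\mathbf{Mod}$, and then correcting for the fact that the two sign assignments $\sigma_{\rm e}^{o}$ and $\sigma_{\rm e}^{o'}$ differ, the latter correction being supplied by Corollary~\ref{cor:homology_not_sign}.

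Concretely, first I would define for every multipath $\tH$ the $R$-module isomorphism $\Phi_{\tH}\colon \mathcal{F}_{A,A}^{o}(\tH)\to\mathcal{F}_{A,A}^{o'}(\tH)$ that reorders the tensor factors from their $o$-order into their $o'$-order. As $R$ is commutative, $R\text{-}\mathbf{Mod}$ is symmetric monoidal, so this permutation of factors is a well-defined isomorphism (and, being ungraded, carries no Koszul sign). Because the level $\ell(\tH)=\#E(\tH)+\#V(\tH)-\min(\cdots)$ is intrinsic to $\tH$ and independent of the ordering, each $\Phi_{\tH}$ preserves levels, so $\Phi:=\bigoplus_{\tH}\Phi_{\tH}$ is a grading-preserving isomorphism of the underlying graded $R$-modules.

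The main step, and the only genuinely delicate point, is to check that $\Phi$ intertwines the unsigned transition maps, namely that
\[
\Phi_{\tH'}\circ\mathcal{F}_{A,A}^{o}(\tH\prec\tH')=\mathcal{F}_{A,A}^{o'}(\tH\prec\tH')\circ\Phi_{\tH}
\]
for every covering $\tH\prec\tH'=\tH\cup e$. Here one must be careful because $A$ need not be commutative. The key observation is that the merging map $\mu_{\tH\prec\tH'}$ replaces the two factors sitting on the source and target components of $e$ by their product \emph{in the fixed order} $a_{s(e,\tH)}\cdot a_{t(e,\tH)}$ (source times target), placing it on the merged component; both the choice of the two factors to be multiplied and the order of multiplication depend only on $e$ and $\tH$, not on $o$. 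Since $\Phi$ merely relabels positions, computing ``reorder then multiply'' and ``multiply then reorder'' yields in both cases the same tensor, namely the components of $\tH'$ written in $o'$-order with the merged factor equal to source times target, so the square commutes on the nose. Consequently, using the single sign assignment $\sigma_{\rm e}^{o}$ on both sides, $\Phi$ is an isomorphism of cochain complexes $(C^{*}_{\mathcal{F}_{A,A}^{o}}(P(\tG)),d_{\sigma_{\rm e}^{o}})\to(C^{*}_{\mathcal{F}_{A,A}^{o'}}(P(\tG)),d_{\sigma_{\rm e}^{o}})$.

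Finally, I would remove the artificial sign assignment. Both $\sigma_{\rm e}^{o}$ and $\sigma_{\rm e}^{o'}$ are sign assignments on $P(\tG)$ by Lemma~\ref{lem:signassignamet}, and $P(\tG)$ is downward closed in $SSG(\tG)$ by Remark~\ref{rem:PG}; hence Corollary~\ref{cor:homology_not_sign} yields an isomorphism $(C^{*}_{\mathcal{F}_{A,A}^{o'}}(P(\tG)),d_{\sigma_{\rm e}^{o}})\cong(C^{*}_{\mathcal{F}_{A,A}^{o'}}(P(\tG)),d_{\sigma_{\rm e}^{o'}})$. Composing this with $\Phi$ produces the desired isomorphism $(C^{*}_{\mu}(\tG;A,A),d^{*}_{o})\cong(C^{*}_{\mu}(\tG;A,A),d^{*}_{o'})$, completing the argument. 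I expect the commutation check of the previous paragraph to be the crux, precisely because one must track that the multiplication order in $\mu_{\tH\prec\tH'}$ is intrinsic to $(e,\tH)$ and is therefore unaffected by the reordering $\Phi$.
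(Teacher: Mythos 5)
Your proposal is correct and takes essentially the same route as the paper's proof: the paper likewise constructs the permutation isomorphism on the tensor factors induced by the change of ordering, observes that it commutes with the differentials up to sign, and then invokes Corollary~\ref{cor:homology_not_sign} to absorb the discrepancy between the two sign assignments. The only difference is that you spell out the commutation check that the paper declares ``clear'' -- correctly identifying that the merged factor is always (source element)$\cdot$(target element), intrinsic to $(e,\tH)$, which is exactly where the hypothesis $M=A$ is needed.
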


\begin{proof}
A permutation of the ordering on the vertices of $\tG$ induces for each $\tH\in P(\tG)$ a permutation on the factors appearing in $\mathcal{F}(\tH)$. There is an induced natural isomorphism of modules induced by the latter, which extends to an isomorphism of chain complexes; the commutativity with the differentials is clear up to sign. The statement now follows by Corollary~\ref{cor:homology_not_sign}.
\end{proof}

\begin{rem}\label{rem:basedhomology}
	More generally, the cochain complex $(C^{*}_{\mu}(\tG;A,M),d^*)$ does not depend, up to isomorphims, on the choice of the order on $V(\tG)$ preserving the minimum -- which can be considered as a base vertex. 
\end{rem}

We are ready to give the main definition of the paper:

\begin{defn}\label{def:multipathhom}
 The \emph{multipath cohomology} $\mathrm{H}_{\mu}^*(\tG;A,M)$ of a digraph $\tG$ with $(A,M)$-coefficients is the homology  of the cochain complex $(C^{*}_{\mu}(\tG;A,M),d^*)$. When $A=M$ we simply write $\mathrm{H}_{\mu}^*(\tG;A)$.
\end{defn}

Consider the category $\mathbf{Digraph}_*$ of pointed digraphs, i.e.,~digraphs with the choice of a base vertex, and morphisms of pointed digraphs, i.e.,~morphisms of digraphs that preserve the base vertex. Then, we can define multipath cohomology of a pointed digraph $(\tG,v_0)$ with $(A,M)$-coefficients as the homology of the cochain complex $(C^{*}_{\mu}(\tG;A,M),d^*)$. Note that in the case $M\neq A$ we need to keep track of the base vertex because the associated cohomology groups $\mathrm{H}_{\mu}^*(\tG;A,M)$ may depend upon this choice -- cf.~Remark~\ref{rem:basevertexandhom}.

We conclude this subsection with the following observation on the choice of the signs:

  \begin{rem}
Observe that the sign assignment {on $SSG(\tG)$,} given in Example~\ref{exa:sign on Boolean} induces, by restriction, a sign assignment on the path poset $P(\tG)$. The cochain complex obtained from this sign assignment and the one obtained from $\sigma_{\rm e}$ are isomorphic. However, this is not true for more general sub-posets of~$P(\tG)$ (as it depends on their topology) and the two constructions may lead to non-isomorphic cohomology theories of digraphs.
\end{rem}

\subsection{Computations and examples}\label{subs:examples}

In this section we provide some computations of  multipath cohomology  -- see Table~\ref{tab: homology computatin}. Further calculations, new computational tools, and more general results concerning the structure of multipath homology, are developed in~\cite{secondo}.

For the whole section, unless otherwise specified,  we will always implicitly assume both $M$ and $A$ to be the ground ring $R$, and $R=\bK$ {to be a field}. Tensor products $\otimes$ will always be tensor products over~$\bK$.

\begin{rem}
From our computations, see Table~\ref{tab: homology computatin}, it follows that there exist trees with non-trivial multipath cohomology. {Most} digraph homology theories known to the authors -- as path homology, clique homology and Hochschild homology of digraphs, {or Turner-Wagner homology with constant coefficients} -- vanish on trees.
\end{rem}

Our first example is the non-coherently oriented linear digraph on three vertices. 
In this case  we provide the explicit computation of the multipath cohomology both with constant coefficients, that is~$M = A = \bK$, and in a non-constant setting, namely $M = A = \bK[x]/(x^2)$.
As we will see, these two coefficients provide different cohomologies, {showing}  that the multipath cohomology  actually depend{s} on the choices of~$A$ and~$M$.
Note also that, being the base ring a field,  this example additionally  shows that the classical universal coefficients theorem is not sufficient to recover the cohomology computed using $A$ from the cohomology computed using $R$.

\begin{example}
	Let $\tG$ be the non-coherent linear digraph on three vertices~$v_0,v_1,v_2$ -- cf.~Figure~\ref{fig:nnstep}.  Application of the functor $\mathcal{F}_{A,A}$ on (the category associated to) its  path poset $P(\tG)$ gives the following diagram of $\bK$-modules:
		\[A_{v_0}\otimes_{\bK}A_{v_1}\otimes_{\bK} A_{v_2} \xrightarrow{(m \otimes id_A) \oplus (id_A\otimes m)} 
			A_{(v_0,v_1)}\otimes_{\bK} A_{v_2}  \oplus   A_{v_0}\otimes_{\bK} A_{(v_2,v_1)}  \]
	where we have decorated the modules with the  components of the corresponding multipaths, and the arrows with the induced signs $\sigma_{\rm e}$ as by Equation~\eqref{eq:sigma_e}.  The map on the left sends the elementary tensor product $a_0\otimes a_1\otimes a_2\in A_{v_0}\otimes_{\bK}A_{v_1}\otimes_{\bK} A_{v_2}$ to the element $(a_0\cdot a_1)\otimes a_2\in A_{(v_0,v_1)}\otimes_{\bK} A_{v_2}$, whereas the map on the right sends the same element to $a_0\otimes(a_2\cdot a_1)\in A_{v_0}\otimes_{\bK} A_{(v_2,v_1)}$.
If $A=\bK$, using the identification $\bK\otimes_{\bK} \bK\cong \bK$ and the commutativity  of $\bK$, we get the cochain complex:
	\[
0\to \bK \xrightarrow{d^0=(\mathrm{Id}_{\bK},\mathrm{Id}_{\bK})} \bK^2 \xrightarrow{d^1=0} 0
\]
 It is now straightforward that the homology of such a cochain complex is concentrated in degree $1$ and is of dimension~$1$.

Now, take $A$ to be the $\bK$-algebra~$\mathbb{\bK}[x]/(x^2)$.
Fix the basis $e_0 = 1$ and $e_1 =x$ for $A$ as $\bK$-vector space. 
The basis for a tensor product of copies of $A$ will be given by elementary tensors of $e_0$ and $e_1$ ordered lexicographically.
We can now write explicitly the matrix associated to the differential $d^0$ with respect to these bases, which yields 
a matrix $M_{d^0}$  of rank $6$ (over any field). Therefore, we have that $ \dim(H_{\mu}^0(\tG;A,A)) = \dim(Ker(d^0)) =2$, and that~$\dim(H^1_{\mu}(\tG;A,A)) = 8 - \dim(Img(d^1)) = 2$, concluding our computations. 
\end{example}

To facilitate the calculations in the remaining examples, we will use some basic notions of
 algebraic Morse theory; a general reference  is {\cite[Chapter~11, Section~3]{Kozlov}.} Roughly speaking, algebraic Morse theory gives a way to reduce a (co)chain complex by eliminating acyclic summands via changes of bases. 
 
The theory works as follows; consider a finitely generated complex of  $\bK$-vector spaces, say $(C^*,d^*)$, and a basis $B_i = \{ b^{i}_{j}\}_{j}$ of $C^{i}$ as a $\bK$-vector space, for each $i$. With respect to these bases, the differential can be expressed as
\[ d(b^i_j) = \sum_{h} c^{i+1}_{j,h}b^{i+1}_{h} \ ,\]
for some $c^{i+1}_{j,h}\in \bK$. One can now construct a digraph {\tt C} by taking $V({\tt C})= \bigcup_{i} B_{i}$, and $(b^{i}_{k},b^{j}_{h})\in E({\tt C})$ if, and only if, $i=j-1$ and the  coefficient $c^{i+1}_{k,h}$ is non-trivial.

\begin{defn}[{\cite[Definition~11.1]{Kozlov}}]
	An \emph{acyclic matching} $M$ on a graph {\tt C} is  a subset of pairwise disjoint\footnote{Two edges $e$ and $e'$ are said to be disjoint if the sets $\{ s(e),t(e) \}$ and $\{ s(e'),t(e')\}$ are disjoint.} edges of {\tt C} such that the graph obtained from {\tt C} by changing the orientations of the edges in $M$  has no cycles, i.e.,~there are no embedded copies of $\tP_n$ in {\tt C}.
\end{defn}

The main result in algebraic Morse theory  (cf.~\cite[Theorem~11.24]{Kozlov}) is that, given an {acyclic matching} $M$ on {\tt C}, the complex $(C^*,d^*)$ is quasi-isomorphic to a complex $(C^*_{M},d_M^*)$, where $C^i_{M}$ is generated by all the $b^i_j$'s  that are not incident to the edges in $M$. 
\begin{rem}\label{rem:tot_match}
If {$M$ is an acyclic matching and}  $\{v\in V({\tt C})\mid v=s(e) \text{ or } v=t(e),  e\in M\} = V({\tt C})$, then the complex $(C^*,d^*)$ has trivial homology. 
\end{rem}

\begin{rem}\label{rem:par_match_only_one}
If {$M$ is an acyclic matching and}  $V({\tt C})\setminus \{v\in V({\tt C})\mid v=s(e) \text{ or } v=t(e),  e\in M\}\subseteq B_{i}$  for a fixed $i$, then $(C^*_{M},d_M^*)$ is concentrated in degree $i$. Therefore, {$(C^*_{M},d_M^*)$} has a trivial differential. Hence the homology of $(C^*,d^*)$ is concentrated in degree $i$, and it is isomorphic to~$C^i_M$. 
\end{rem}

In the following examples, for each digraph~$\tG$, we can take the graph {\tt C} to be the Hasse graph of the path poset~$P(\tG)$. This is due  to the following two facts:
\begin{itemize}
\item  all tensor products are taken over $\bK$ and $A= M = \bK$, hence $\mathcal{F}_{A,M}(\tH)\cong \bK$ has a single generator for each multipath $\tH $ in the path poset;
\item for each pair of multipaths $\tH, \tH'$ such that $\tH\prec \tH'$, the map $\mathcal{F}_{A,M}(\tH\prec \tH')$, under the identifications  $\mathcal{F}_{A,M}(\tH)\cong \bK$ and   $\mathcal{F}_{A,M}(\tH')\cong \bK$, can be taken to be  the identity up to a sign.
\end{itemize}

We can now proceed with the computation of the multipath cohomology of the $n$-step graph $\tI_n$.

\begin{example}
Let $\tI_n$ be the $n$-step graph in~Figure~\ref{fig:nstep}, we claim that $\mathrm{H}_{\mu}^*(\tI_n;\bK) =  0$ for all $n>0$.

If $n=0$, we have the degenerate case where $\tI_n$ is just a vertex with no edges. 
By definition, the cochain complex $({C}_{\mu}^*(\tI_0;\bK),d^*)$ is just a copy of $\bK$ in degree $0$ and has trivial differential. Hence, we have ${\rm H}_{\mu}^*(\tI_0;\bK) = {\rm H}_{\mu}^0(\tI_0;\bK) =\bK$. %

Let us turn back to the general computation.
Notice that the path poset $P(\tI_n)$ is a Boolean poset -- cf.~Example~\ref{ex:In}.
 Since  $\mathcal{F}_{A,M}(\tH)\cong \bK$ for each multipath $\tH \in P(\tI_n)$, 
 it follows that
	\[
	C^k_{\mu}(\tI_n; \bK,\bK) = \bigoplus_{\tiny\begin{matrix}
			\tH\in P\\
			\ell(\tH) = k
	\end{matrix}}  \mathcal{F}_{A,M}(\tH)\cong \bK^{{n}\choose{k}}
	\]
for each $k=0,...,n$. In other words, the resulting cochain complex $C^*_{\mu}(\tI_n; \bK,\bK)$ is of the form
	\[
	0\to \bK \xrightarrow{d^0} \bK^n \xrightarrow{d^1}\dots \xrightarrow{} \bK^{{n}\choose{k}} \xrightarrow{d^{{n}\choose{k}}}\bK^{{n}\choose{k+1}}\xrightarrow{} \dots \xrightarrow{} \bK^n \xrightarrow{d^n} \bK \xrightarrow{d^{n+1}} 0 \ .
	\]

An acyclic matching (the check of the non-existence of cycles is left to the reader) on the Hasse graph of $P(\tI_n) \cong \wp(\{0 ,..., n-1\})$ is given by all edges $(s, s\cup \{0\})$ with $s\in \wp(\{1 ,..., n-1\}))$. Since each~$s\in \wp(\{0 ,..., n-1\})$ either contains $0$ or does not, this matching touches all vertices of  $\Hasse(P(\tI_n))$, and our claim follows from Remark~\ref{rem:tot_match}.
\end{example}

\begin{rem}
In the general case $A\neq \bK$, the computation that $\mathrm{H}_{\mu}^*(\tI_n;A) =  0$  is more convolute. In Corollary~\ref{cor:vanishinghomIN} we will prove the claim for every unital algebra $A$ and positive degrees. In \cite{secondo} we prove a more general result on the vanishing of multipath cohomology for $A=\bK$.
\end{rem}

{In degree~$0$, the multipath cohomology is possibly not trivial -- e.g.,~$\mathrm{H}_{\mu}^*(\tI_n;A) \neq 0$, see~Corollary~\ref{cor:vanishinghomIN}. In the next remark we see that $\mathrm{H}_{\mu}^*(-;A,M)$, when $M\neq A$, depends on the choice of the base vertex.}

\begin{rem}\label{rem:basevertexandhom}
When the bimodule $M$ is not the $R$-algebra $A$ itself, the multipath cohomology  of a digraph~\tG may depend upon the choice of the base vertex. As an example, let $\tI_2$ be the $2$-step graph on vertices $v$ and $w$ and the only directed edge $(v,w)$. Choose first $v$ to be the base vertex; then, the associated cochain complex $\mathrm{C}_\mu(\tG;A,M)$ is
		\[ 0\to M\otimes A \xrightarrow{d^0}	M\to 0 \ , \]
		where $d_0$ is induced by the left action: $(m\otimes a)\mapsto m\cdot a$. If we  choose the base vertex to be $w$, we get
				\[ 0\to A\otimes M \xrightarrow{d^0}	M\to 0 \ , \]
		where now  $d_0$ is induced by the right action: $(a\otimes m)\mapsto a\cdot m$. Therefore, if left and right action do not agree, then the homology groups may differ, in this case.
\end{rem}

{We proceed with the computation of the multipath cohomology groups of the examples in Table~\ref{tab: homology computatin}.}

\begin{example}
Consider the graphs ${\tt Y}_1$ and ${\tt Y}_2$ depicted in Figures~\ref{fig:YA} and~\ref{fig:YC}, respectively.
Let us start with ${\tt Y}_1$. In this case we have an acyclic matching on $\Hasse(P({\tt Y}_1))$  which touches all vertices (see Figure~\ref{fig:Ymatch}). It follows from Remark~\ref{rem:tot_match} that $\rm H^*_{\mu}({\tt Y}_1;\bK) = 0$.

Moving on to the graph ${\tt Y}_2$, all (non-empty) acyclic matchings on $\Hasse(P({\tt Y}_2))$ consist of a single edge going from the  $({\tt Y}_2)_{\emptyset}$ (i.e.,~the multipath with no edges) to a multipath with a single edge (e.g.,~see Figure~\ref{fig:Ymatch}). This leaves only two vertices unmatched (i.e.,~not incident to the edges in the matching), both of length $1$  (and thus representing two generators in cohomological degree $1$). It follows from Remark~\ref{rem:par_match_only_one} that $\rm{H}^*_{\mu}({\tt Y}_2;\bK) = H^1_{\mu}({\tt Y}_2;\bK) \cong \bK^2$.
\begin{figure}[h]
	\begin{tikzpicture}[scale=0.4][baseline=(current bounding box.center)]
	\tikzstyle{point}=[circle,thick,draw=black,fill=black,inner sep=0pt,minimum width=2pt,minimum height=2pt]
	\tikzstyle{arc}=[shorten >= 8pt,shorten <= 8pt,->, thick]

	\node[below] (w0) at (0,0) {$\{v_0, v_1, v_2, v_3\}$};
	\draw[fill] (0,0)  circle (.05);
	    \begin{scope}[shift={(-1.7,4)}]
			\node[above] (v0) at (0,0) {$v_0$};
			\draw[fill] (0,0)  circle (.05);
			\node[above] (v1) at (1.5,0) {$v_1$};
			\draw[fill] (1.5,0)  circle (.05);
			\node[above] (v2) at (3,1) {$v_{2}$};
			\draw[fill] (3,1)  circle (.05);
			\node[above] (v3) at (3,-1) {$v_{3}$};
			\draw[fill] (3,-1)  circle (.05);
			
			\draw[thick, gray, -latex] (0.15,0) -- (1.35,0);
		\end{scope}
	
		    \begin{scope}[shift={(-6.5,4)}]
		\node[above] (v0) at (0,0) {$v_0$};
		\draw[fill] (0,0)  circle (.05);
		\node[above] (v1) at (1.5,0) {$v_1$};
		\draw[fill] (1.5,0)  circle (.05);
		\node[above] (v2) at (3,1) {$v_{2}$};
		\draw[fill] (3,1)  circle (.05);
		\node[above] (v3) at (3,-1) {$v_{3}$};
		\draw[fill] (3,-1)  circle (.05);
		
		\draw[thick, gray, -latex] (1.65,0) -- (2.85,0.95);
	\end{scope}

	    \begin{scope}[shift={(3.8,4)}]
	\node[above] (v0) at (0,0) {$v_0$};
	\draw[fill] (0,0)  circle (.05);
	\node[above] (v1) at (1.5,0) {$v_1$};
	\draw[fill] (1.5,0)  circle (.05);
	\node[above] (v2) at (3,1) {$v_{2}$};
	\draw[fill] (3,1)  circle (.05);
	\node[above] (v3) at (3,-1) {$v_{3}$};
	\draw[fill] (3,-1)  circle (.05);
	
		\draw[thick, gray, -latex] (1.65,0) -- (2.85,-0.95);
\end{scope}

			\draw[very thick, -latex,bunired] (-0.15,0.25) -- (-3.5,2.5);
			\draw[thick, -latex] (0,0.25) -- (0,2.5);
			\draw[thick, -latex] (0.15,0.25) -- (3.5,2.5);
			
			\begin{scope}[shift={(-5.5,9)}]
				\node[above] (v0) at (0,0) {$v_0$};
				\draw[fill] (0,0)  circle (.05);
				\node[above] (v1) at (1.5,0) {$v_1$};
				\draw[fill] (1.5,0)  circle (.05);
				\node[above] (v2) at (3,1) {$v_{2}$};
				\draw[fill] (3,1)  circle (.05);
				\node[above] (v3) at (3,-1) {$v_{3}$};
				\draw[fill] (3,-1)  circle (.05);
				
				\draw[thick, gray, -latex] (1.65,0) -- (2.85,0.95);
							\draw[thick, gray, -latex] (0.15,0) -- (1.35,0);
			\end{scope}
			
			\begin{scope}[shift={(2.5,9)}]
				\node[above] (v0) at (0,0) {$v_0$};
				\draw[fill] (0,0)  circle (.05);
				\node[above] (v1) at (1.5,0) {$v_1$};
				\draw[fill] (1.5,0)  circle (.05);
				\node[above] (v2) at (3,1) {$v_{2}$};
				\draw[fill] (3,1)  circle (.05);
				\node[above] (v3) at (3,-1) {$v_{3}$};
				\draw[fill] (3,-1)  circle (.05);
				
				\draw[thick, gray, -latex] (1.65,0) -- (2.85,-0.95);
							\draw[thick, gray, -latex] (0.15,0) -- (1.35,0);
			\end{scope}
		
		\draw[very thick, -latex,bunired] (-0.25,6.25) -- (-2.3,7.5);
		\draw[thick, -latex] (-0.05,6.25) -- (2.3,7.5);
		\draw[thick, -latex] (-3.5,6.25) -- (-3.5,7.5);
		\draw[very thick, -latex,bunired] (4.0,6.25) -- (4.0,7.5);
\end{tikzpicture}
\hspace{2em}
\begin{tikzpicture}[scale=0.4][baseline=(current bounding box.center)]
		\tikzstyle{point}=[circle,thick,draw=black,fill=black,inner sep=0pt,minimum width=2pt,minimum height=2pt]
		\tikzstyle{arc}=[shorten >= 8pt,shorten <= 8pt,->, thick]

		\node[below] (w0) at (0,0) {$\{v_0, v_1, v_2, v_3\}$};
		\draw[fill] (0,0)  circle (.05);
		\begin{scope}[shift={(-6.5,4)}]
			\node[above] (v0) at (0,0) {$v_0$};
			\draw[fill] (0,0)  circle (.05);
			\node[above] (v1) at (1.5,0) {$v_1$};
			\draw[fill] (1.5,0)  circle (.05);
			\node[above] (v2) at (3,1) {$v_{2}$};
			\draw[fill] (3,1)  circle (.05);
			\node[above] (v3) at (3,-1) {$v_{3}$};
			\draw[fill] (3,-1)  circle (.05);
			
			\draw[thick, gray, -latex] (0.15,0) -- (1.35,0);
		\end{scope}
		
		\begin{scope}[shift={(3.8,4)}]
			\node[above] (v0) at (0,0) {$v_0$};
			\draw[fill] (0,0)  circle (.05);
			\node[above] (v1) at (1.5,0) {$v_1$};
			\draw[fill] (1.5,0)  circle (.05);
			\node[above] (v2) at (3,1) {$v_{2}$};
			\draw[fill] (3,1)  circle (.05);
			\node[above] (v3) at (3,-1) {$v_{3}$};
			\draw[fill] (3,-1)  circle (.05);
			
			\draw[thick, gray, -latex] (2.85,0.95) -- (1.65,0);
		\end{scope}
		
		\begin{scope}[shift={(-1.7,4)}]
			\node[above] (v0) at (0,0) {$v_0$};
			\draw[fill] (0,0)  circle (.05);
			\node[above] (v1) at (1.5,0) {$v_1$};
			\draw[fill] (1.5,0)  circle (.05);
			\node[above] (v2) at (3,1) {$v_{2}$};
			\draw[fill] (3,1)  circle (.05);
			\node[above] (v3) at (3,-1) {$v_{3}$};
			\draw[fill] (3,-1)  circle (.05);
			
			\draw[thick, gray, latex-] (1.65,0) -- (2.85,-0.95);
		\end{scope}
		
		\draw[thick, -latex] (-0.15,0.25) -- (-3.5,2.5);
		\draw[thick, -latex] (0,0.25) -- (0,2.5);
		\draw[very thick, -latex, bunired] (0.15,0.25) -- (3.5,2.5);
	\end{tikzpicture}
\caption{Acyclic mathcings (in red and thicker) in the path posets of the graphs ${\tt Y}_1$(left) and ${\tt Y}_2$ (right) depicted in Figures~\ref{fig:YA} and~\ref{fig:YC}. }\label{fig:Ymatch} 
\end{figure}
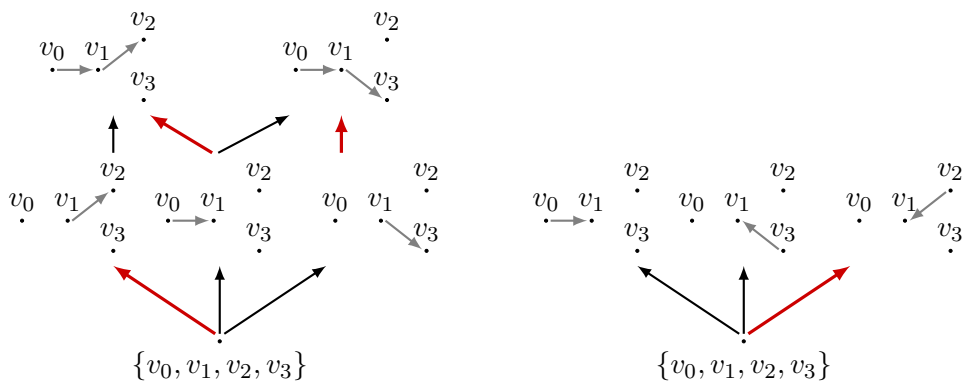
\end{example}

\begin{example}
Let ${\tt G}$ be the digraph illustrated in Figure~\ref{fig:H}.
An acyclic matching $M$ on the Hasse graph associated to $P(\tG)$ is shown in Figure~\ref{fig:Hposetmatch}. There are only two multipaths not incident to the edges in $M$, both of length $2$. It follows from Remark~\ref{rem:par_match_only_one} that $\rm H^*_{\mu}(\tG;\bK) = H^2_{\mu}(\tG;\bK) \cong \bK^2$.
\begin{figure}[h]
	\centering
	\begin{tiny}
		\begin{tikzpicture}[scale=0.3][baseline=(current bounding box.center)]
			\tikzstyle{point}=[circle,thick,draw=black,fill=black,inner sep=0pt,minimum width=2pt,minimum height=2pt]
			\tikzstyle{arc}=[shorten >= 8pt,shorten <= 8pt,->, thick]

			\node[below] (w0) at (0,0) {$\{v_0, v_1, v_2, v_3,v_4,v_5\}$};
			\draw[fill] (0,0)  circle (.05);
			\begin{scope}[shift={(-8,4.5)}]
				\node[above] (v0) at (-1,1) {$v_0$};
				\draw[fill] (-1,1)  circle (.05);
				\node[above] (v1) at (1,1) {$v_1$};
				\draw[fill] (1,1)  circle (.05);
				\node[above] (v2) at (3,1) {$v_{2}$};
				\draw[fill] (3,1)  circle (.05);
				\node[below] (v3) at (-1,-1) {$v_{3}$};
				\draw[fill] (-1,-1)  circle (.05);
				\node[below] (v4) at (1,-1) {$v_{4}$};
				\draw[fill] (1,-1)  circle (.05);
				\node[below] (v5) at (3,-1) {$v_{5}$};
				\draw[fill] (3,-1)  circle (.05);
			\draw[thick, gray, -latex] (-0.85,1) -- (0.85,1);		
			\end{scope}
			
			\begin{scope}[shift={(-1,4.5)}]
				\node[above] (v0) at (-1,1) {$v_0$};
				\draw[fill] (-1,1)  circle (.05);
				\node[above] (v1) at (1,1) {$v_1$};
				\draw[fill] (1,1)  circle (.05);
				\node[above] (v2) at (3,1) {$v_{2}$};
				\draw[fill] (3,1)  circle (.05);
				\node[below] (v3) at (-1,-1) {$v_{3}$};
				\draw[fill] (-1,-1)  circle (.05);
				\node[below] (v4) at (1,-1) {$v_{4}$};
				\draw[fill] (1,-1)  circle (.05);
				\node[below] (v5) at (3,-1) {$v_{5}$};
				\draw[fill] (3,-1)  circle (.05);
\draw[thick, gray, -latex] (1,0.85) -- (1,-0.85);				
			\end{scope}
		
			\begin{scope}[shift={(6,4.5)}]
			\node[above] (v0) at (-1,1) {$v_0$};
			\draw[fill] (-1,1)  circle (.05);
			\node[above] (v1) at (1,1) {$v_1$};
			\draw[fill] (1,1)  circle (.05);
			\node[above] (v2) at (3,1) {$v_{2}$};
			\draw[fill] (3,1)  circle (.05);
			\node[below] (v3) at (-1,-1) {$v_{3}$};
			\draw[fill] (-1,-1)  circle (.05);
			\node[below] (v4) at (1,-1) {$v_{4}$};
			\draw[fill] (1,-1)  circle (.05);
			\node[below] (v5) at (3,-1) {$v_{5}$};
			\draw[fill] (3,-1)  circle (.05);
			
			\draw[thick, gray, -latex] (2.85,1) -- (1.15,1);
			\end{scope}

			\begin{scope}[shift={(-15,4.5)}]
	\node[above] (v0) at (-1,1) {$v_0$};
	\draw[fill] (-1,1)  circle (.05);
	\node[above] (v1) at (1,1) {$v_1$};
	\draw[fill] (1,1)  circle (.05);
	\node[above] (v2) at (3,1) {$v_{2}$};
	\draw[fill] (3,1)  circle (.05);
	\node[below] (v3) at (-1,-1) {$v_{3}$};
	\draw[fill] (-1,-1)  circle (.05);
	\node[below] (v4) at (1,-1) {$v_{4}$};
	\draw[fill] (1,-1)  circle (.05);
	\node[below] (v5) at (3,-1) {$v_{5}$};
	\draw[fill] (3,-1)  circle (.05);
	
	\draw[thick, gray, -latex] (-0.85,-1) -- (0.85,-1);
\end{scope}			

			\begin{scope}[shift={(13,4.5)}]
	\node[above] (v0) at (-1,1) {$v_0$};
	\draw[fill] (-1,1)  circle (.05);
	\node[above] (v1) at (1,1) {$v_1$};
	\draw[fill] (1,1)  circle (.05);
	\node[above] (v2) at (3,1) {$v_{2}$};
	\draw[fill] (3,1)  circle (.05);
	\node[below] (v3) at (-1,-1) {$v_{3}$};
	\draw[fill] (-1,-1)  circle (.05);
	\node[below] (v4) at (1,-1) {$v_{4}$};
	\draw[fill] (1,-1)  circle (.05);
	\node[below] (v5) at (3,-1) {$v_{5}$};
	\draw[fill] (3,-1)  circle (.05);

	\draw[thick, gray, -latex] (2.85,-1) -- (1.15,-1);
\end{scope}	

			\draw[thick, -latex] (-0.95,0.05) -- (-12.5,2.5);
			\draw[thick, -latex] (-0.45,0.25) -- (-5.5,2.5);
			\draw[thick, -latex] (0,0.25) -- (0,2.5);
			\draw[thick, -latex] (0.45,0.25) -- (6.5,2.5);
			\draw[very thick, -latex, bunired] (0.95,0.05) -- (12.5,2.3);
			
			\begin{scope}[shift={(-18,19)}]
\node[above] (v0) at (-1,1) {$v_0$};
\draw[fill] (-1,1)  circle (.05);
\node[above] (v1) at (1,1) {$v_1$};
\draw[fill] (1,1)  circle (.05);
\node[above] (v2) at (3,1) {$v_{2}$};
\draw[fill] (3,1)  circle (.05);
\node[below] (v3) at (-1,-1) {$v_{3}$};
\draw[fill] (-1,-1)  circle (.05);
\node[below] (v4) at (1,-1) {$v_{4}$};
\draw[fill] (1,-1)  circle (.05);
\node[below] (v5) at (3,-1) {$v_{5}$};
\draw[fill] (3,-1)  circle (.05);

\draw[thick, gray, -latex] (-0.85,1) -- (0.85,1);
\draw[thick, gray, -latex] (1,0.85) -- (1,-0.85);
			\end{scope}
		
	\begin{scope}[shift={(17,19)}]				
		\node[above] (v0) at (-1,1) {$v_0$};
		\draw[fill] (-1,1)  circle (.05);
		\node[above] (v1) at (1,1) {$v_1$};
		\draw[fill] (1,1)  circle (.05);
		\node[above] (v2) at (3,1) {$v_{2}$};
		\draw[fill] (3,1)  circle (.05);
		\node[below] (v3) at (-1,-1) {$v_{3}$};
		\draw[fill] (-1,-1)  circle (.05);
		\node[below] (v4) at (1,-1) {$v_{4}$};
		\draw[fill] (1,-1)  circle (.05);
		\node[below] (v5) at (3,-1) {$v_{5}$};
		\draw[fill] (3,-1)  circle (.05);

		\draw[thick, gray, -latex] (2.85,1) -- (1.15,1);
		\draw[thick, gray, -latex] (1,0.85) -- (1,-0.85);
	\end{scope}

			\begin{scope}[shift={(-17,11)}]
			\node[above] (v0) at (-1,1) {$v_0$};
\draw[fill] (-1,1)  circle (.05);
\node[above] (v1) at (1,1) {$v_1$};
\draw[fill] (1,1)  circle (.05);
\node[above] (v2) at (3,1) {$v_{2}$};
\draw[fill] (3,1)  circle (.05);
\node[below] (v3) at (-1,-1) {$v_{3}$};
\draw[fill] (-1,-1)  circle (.05);
\node[below] (v4) at (1,-1) {$v_{4}$};
\draw[fill] (1,-1)  circle (.05);
\node[below] (v5) at (3,-1) {$v_{5}$};
\draw[fill] (3,-1)  circle (.05);

\draw[thick, gray, -latex] (-0.85,1) -- (0.85,1);
\draw[thick, gray, -latex] (-0.85,-1) -- (0.85,-1);
			\end{scope}
		
		\begin{scope}[shift={(4,13)}]
			\node[above] (v0) at (-1,1) {$v_0$};
			\draw[fill] (-1,1)  circle (.05);
			\node[above] (v1) at (1,1) {$v_1$};
			\draw[fill] (1,1)  circle (.05);
			\node[above] (v2) at (3,1) {$v_{2}$};
			\draw[fill] (3,1)  circle (.05);
			\node[below] (v3) at (-1,-1) {$v_{3}$};
			\draw[fill] (-1,-1)  circle (.05);
			\node[below] (v4) at (1,-1) {$v_{4}$};
			\draw[fill] (1,-1)  circle (.05);
			\node[below] (v5) at (3,-1) {$v_{5}$};
			\draw[fill] (3,-1)  circle (.05);
			
\draw[thick, gray, -latex] (2.85,-1) -- (1.15,-1);
\draw[thick, gray, -latex] (-0.85,1) -- (0.85,1);
		\end{scope}
		
				\begin{scope}[shift={(-6,13)}]
			\node[above] (v0) at (-1,1) {$v_0$};
			\draw[fill] (-1,1)  circle (.05);
			\node[above] (v1) at (1,1) {$v_1$};
			\draw[fill] (1,1)  circle (.05);
			\node[above] (v2) at (3,1) {$v_{2}$};
			\draw[fill] (3,1)  circle (.05);
			\node[below] (v3) at (-1,-1) {$v_{3}$};
			\draw[fill] (-1,-1)  circle (.05);
			\node[below] (v4) at (1,-1) {$v_{4}$};
			\draw[fill] (1,-1)  circle (.05);
			\node[below] (v5) at (3,-1) {$v_{5}$};
			\draw[fill] (3,-1)  circle (.05);
			
				\draw[thick, gray, -latex] (-0.85,-1) -- (0.85,-1);
			\draw[thick, gray, -latex] (2.85,1) -- (1.15,1);
		\end{scope}
		
					\begin{scope}[shift={(16,11)}]
			\node[above] (v0) at (-1,1) {$v_0$};
			\draw[fill] (-1,1)  circle (.05);
			\node[above] (v1) at (1,1) {$v_1$};
			\draw[fill] (1,1)  circle (.05);
			\node[above] (v2) at (3,1) {$v_{2}$};
			\draw[fill] (3,1)  circle (.05);
			\node[below] (v3) at (-1,-1) {$v_{3}$};
			\draw[fill] (-1,-1)  circle (.05);
			\node[below] (v4) at (1,-1) {$v_{4}$};
			\draw[fill] (1,-1)  circle (.05);
			\node[below] (v5) at (3,-1) {$v_{5}$};
			\draw[fill] (3,-1)  circle (.05);
			
\draw[thick, gray, -latex] (2.85,1) -- (1.15,1);
\draw[thick, gray, -latex] (2.85,-1) -- (1.15,-1);
		\end{scope}
			
			\draw[very thick, bunired,  -latex] (-14.25,7.25) -- (-15.0,8.5);
			\draw[thick, black,  -latex] (-14.0,7.25) -- (-5.5,10.8);
			\draw[white, line width =4 ] (-8.25,7.25) -- (-13.0,9.5);
			\draw[thick, black, -latex] (-8.25,7.25) -- (-13.0,9.5);
			\draw[white, line width =4 ]  (-7.5,7.25) -- (-15.0,16.5);
			\draw[very thick, bunired, -latex] (-7.5,7.25) -- (-15.0,16.5);

			\draw[thick, black, -latex] (5.5,7.25) -- (-4.0,10.8);
	
	\draw[thick, black,  -latex] (14.0,7.25) -- (15.0,8.5);
	\draw[thick, black,  -latex] (13.0,7.25) -- (5.6,11.0);
	
	\draw[white, line width =4 ]  (7.5,7.25) -- (17.5,16.8);
	\draw[thick, black, -latex] (7.5,7.25) -- (17.5,16.8);
			
			\draw[white, line width =4 ]  (9.0,7.25) -- (14.0,9);			
			\draw[very thick, bunired, -latex] (9.0,7.25) -- (14.0,9);

			\draw[white, line width =4 ]  (-5.5,7.25) -- (4.7,11);
			\draw[thick, black, -latex] (-5.5,7.25) -- (4.7,11);

			\draw[white, line width =4 ]  (-3.0,6.75) -- (-14.5,16.5);
			\draw[thick, black, -latex] (-3.0,6.75) -- (-14.5,16.5);
			\draw[white, line width =4 ]  (2.5,6.75) -- (16.5,16.8);
			\draw[very thick, bunired, -latex] (2.5,6.75) -- (16.5,16.8);
		\end{tikzpicture}
	\end{tiny}
		\caption{An acyclic mathcing (in red and thicker) in the path poset of the $H$-shaped digraph in Figure~\ref{fig:H}. }\label{fig:Hposetmatch}
	\end{figure}
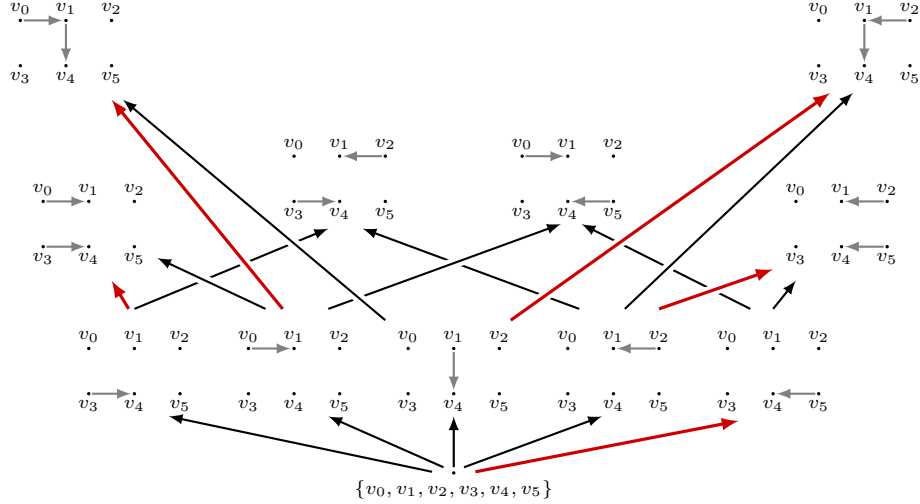
\end{example}

\section{Functorial properties  and exact sequences}\label{sec:functoriality}

The aim of the following section is to better understand the functorial properties of multipath cohomology.
The machinery developed here can be adapted also to other contexts and to the general framework described in Section~\ref{sec:digraph_hom}. As an application, in Subsection~\ref{sec:multipathvschrom} we will clarify the relationship between multipath cohomology and chromatic homology.

Let $\mathbf{Digraph}(n)$ be the sub-category of the category $\mathbf{Digraph}$ consisting of digraphs with precisely~$n$ vertices, and morphisms of digraphs.  In this section, among  others,  we prove the following functoriality result, which is one of the main result of the paper.

\begin{thm}[Theorem {\ref{thm:functoriality}}]
Let $R\text{-}\mathbf{Alg}$ be the category of unital $R$-algebras, $\mathbf{Digraph}^{\mathrm{op}}(n)$ the opposite category of $\mathbf{Digraph}(n)$, and $R\text{-}{\bf Mod}^{\rm gr}$ the category of graded $R$-modules. Then, multipath cohomology
	\[
	\mathrm{H}_\mu\colon \mathbf{Digraph}^{\mathrm{op}}{(n)}\times  {R\text{-}\mathbf{Alg}}\to R\text{-}{\bf Mod}^{\rm gr}
	\]
	is a bifunctor {for all $n\in \bN$}.
\end{thm}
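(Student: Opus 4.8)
The plan is to verify bifunctoriality by treating the two variables separately at the level of cochain complexes and then checking the interchange law. Concretely, for each regular morphism $\phi\colon \tG_1\to\tG_2$ in $\mathbf{Digraph}(n)$ and each $R$-algebra map $\psi\colon A\to B$ I would produce chain maps
\[
\phi^{*}\colon C^{*}_{\mu}(\tG_2;A)\to C^{*}_{\mu}(\tG_1;A), \qquad \psi_{*}\colon C^{*}_{\mu}(\tG;A)\to C^{*}_{\mu}(\tG;B),
\]
show each assignment respects identities and composition, and finally verify that $\psi_{*}$ and $\phi^{*}$ commute. Passing to cohomology then yields the bifunctor; by Corollary~\ref{cor:homology_not_sign}, together with the independence of the construction from the ordering of the vertices when $M=A$, the resulting groups do not depend on the auxiliary choices, so it suffices to work with one convenient choice of signs and ordering throughout.

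For the contravariant (digraph) variable I would use Remark~\ref{rem:functordigrpo}: a regular morphism $\phi\colon\tG_1\to\tG_2$ between digraphs with the \emph{same} number $n$ of vertices is a bijection on vertices, so the induced poset map $P\phi\colon P(\tG_1)\to P(\tG_2)$ is injective and identifies $P(\tG_1)$ with a faithful sub-poset $P'\coloneqq P\phi(P(\tG_1))\subseteq P(\tG_2)$. Because $\phi$ is a vertex-bijection carrying $E(\tG_1)$ injectively into $E(\tG_2)$, the image $P'$ is in fact downward closed, hence squared and faithful (Example~\ref{ex:down-up are faith and square}), and the level function $\ell$ is preserved. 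The complement $P(\tG_2)\setminus P'$ is upward closed, so the summands it indexes span a subcomplex of $C^{*}_{\mu}(\tG_2;A)$, and the projection onto the $P'$-summands is a chain map $C^{*}_{\mu}(\tG_2;A)\to C^{*}_{\mathcal{F}_{A,A}}(P')$. Identifying $P'\cong P(\tG_1)$ and noting that $\mathcal{F}_{A,A}$ restricted to $P'$ agrees with $\mathcal{F}_{A,A}$ on $P(\tG_1)$ — the component structure, and hence the number of tensor factors, is preserved by the vertex-bijection — gives the desired $\phi^{*}$. Functoriality in $\phi$ is then immediate: identities give identity projections, and composition of regular morphisms corresponds to composing the induced inclusions of downward-closed sub-posets, so the projections compose contravariantly.

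For the covariant (algebra) variable I would observe that an $R$-algebra homomorphism $\psi\colon A\to B$ induces, for each $\tH\in P(\tG)$, the factorwise map $\psi^{\otimes}\colon\mathcal{F}_{A,A}(\tH)\to\mathcal{F}_{B,B}(\tH)$. These assemble into a natural transformation $\mathcal{F}_{A,A}\Rightarrow\mathcal{F}_{B,B}$: naturality with respect to the covering morphisms $\mu_{\tH\prec\tH'}$, which multiply two adjacent tensor factors, is exactly the multiplicativity $\psi(a\cdot a')=\psi(a)\psi(a')$. Any natural transformation of functors $\mathbf{P}(\tG)\to R\text{-}\mathbf{Mod}$ induces a map of the associated cochain complexes commuting with the differentials (which are assembled, with signs, from the functor applied to covering relations; cf.~Theorem~\ref{teo: general cohom}), so $\psi$ yields $\psi_{*}$, and functoriality in $\psi$ follows from $\mathrm{id}^{\otimes}=\mathrm{id}$ and $(\psi'\psi)^{\otimes}=\psi'^{\otimes}\psi^{\otimes}$.

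The interchange law is the final step: since $\phi^{*}$ merely selects the summands indexed by $P'$ while $\psi_{*}$ acts inside each summand by $\psi^{\otimes}$, and the identification $P'\cong P(\tG_1)$ matches the corresponding summands $\mathcal{F}_{\bullet,\bullet}(\tH)$ on the nose, the two chain maps commute, yielding the required commuting square in cohomology. I expect the main obstacle to be precisely the compatibility underlying both $\phi^{*}$ and the interchange, namely that $\mathcal{F}_{A,A}$ is \emph{not} a coefficient system (Remark~\ref{rem:Faanotcsyst}): a morphism of digraphs that is not a vertex-bijection would change the number of connected components of a multipath, hence the number of tensor factors, and there would be no canonical comparison map. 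Restricting to $\mathbf{Digraph}(n)$ forces every regular morphism to be a vertex-bijection, which is exactly what makes the component structure — and therefore the functor $\mathcal{F}_{A,A}$ — transport correctly, and I would phrase the argument so that this is the only place where the hypothesis on $n$ is used.
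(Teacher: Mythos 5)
Your proof is correct and is essentially the paper's own argument: the algebra variable is handled exactly as in Proposition~\ref{prop:functalg}, and your projection $\phi^{*}$ onto the summands indexed by the downward closed image $P\phi(P(\tG_1))\subseteq P(\tG_2)$ (whose upward closed complement spans a subcomplex) is precisely the map $\pi_{\tG_2,\tG_1}$ of Proposition~\ref{prop:ses sub-graphs}, with composition checked as in Lemma~\ref{lemma:functo} and the $n$-vertex hypothesis invoked at the same point (Remark~\ref{rem:Faanotcsyst}) to keep the tensor factors matched. The only differences are cosmetic: you bypass the general path-like/coefficient-system language, you make the interchange law explicit (the paper leaves it implicit), and your naive handling of signs and vertex orderings is exactly the level of rigor the paper itself adopts in Remark~\ref{rem:signs}.
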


We start by discussing the functoriality of multipath cohomology with respect to the algebras.

\begin{prop}\label{prop:functalg}
	Let $\tG$ be a graph, and let $P $ be a squared and faithful sub-poset of $SSG(\tG)$ with a fixed sign assignment. Then,
	\[ {\rm H}^*_{\mathcal{F}_{-,-}}(P)\colon R\text{-}{\bf Alg} \longrightarrow R\text{-}{\bf Mod}^{gr}, \]
	which associates to $A$ the graded $R$-module ${\rm H}^*_{\mathcal{F}_{A,A}}(P)$, is a covariant functor.
	In particular, the multipath cohomology of a fixed graph is covariant with respect to morphisms of $R$-algebras.
\end{prop}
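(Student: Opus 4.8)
The plan is to realise every morphism of $R$-algebras $\phi\colon A\to B$ as a natural transformation $\Phi^{\phi}\colon\mathcal{F}_{A,A}\Rightarrow\mathcal{F}_{B,B}$ of functors $\mathbf{P}\to R\text{-}\mathbf{Mod}$, and then to use that the cochain complex construction $\mathcal{F}\mapsto(C^*_{\mathcal{F}}(P),d^*)$ is functorial in the coefficient functor once the sign assignment on $P$ is fixed. First I would define, for each $\tH\in P$ with ordered components $c_0<\dots<c_k$, the $R$-linear map
\[ \Phi^{\phi}_{\tH}\colon \mathcal{F}_{A,A}(\tH)=A_{c_0}\otimes\cdots\otimes A_{c_k}\longrightarrow \mathcal{F}_{B,B}(\tH)=B_{c_0}\otimes\cdots\otimes B_{c_k},\quad a_0\otimes\cdots\otimes a_k\mapsto \phi(a_0)\otimes\cdots\otimes\phi(a_k). \]
This is well defined because $\phi$ is $R$-linear, so $\phi^{\otimes(k+1)}$ descends to the tensor product over $R$; here $M=A$, so no component needs special treatment.

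The main step is to verify naturality, that is, that for every covering relation $\tH\prec\tH'$ the square
\[ \begin{tikzcd} \mathcal{F}_{A,A}(\tH) \arrow[r,"\mu^{A}_{\tH\prec\tH'}"]\arrow[d,"\Phi^{\phi}_{\tH}"'] & \mathcal{F}_{A,A}(\tH')\arrow[d,"\Phi^{\phi}_{\tH'}"] \\ \mathcal{F}_{B,B}(\tH)\arrow[r,"\mu^{B}_{\tH\prec\tH'}"'] & \mathcal{F}_{B,B}(\tH') \end{tikzcd} \]
commutes, where $\mu^{A},\mu^{B}$ denote the structure maps of $\mathcal{F}_{A,A}$ and $\mathcal{F}_{B,B}$ from Equation~\eqref{eq:fun_mor}. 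Using the identification of components in Equation~\eqref{eq:identification_components}, both composites fix every tensor factor except the merged one, on which $\mu$ acts by the product $a_{s(e,\tH)}\cdot a_{t(e,\tH)}$. Commutativity of the square therefore reduces to the single identity $\phi(a_s\cdot a_t)=\phi(a_s)\cdot\phi(a_t)$, which holds precisely because $\phi$ is multiplicative; since identity covering morphisms are sent to identities, this yields the natural transformation $\Phi^{\phi}$.

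Next I would observe that $\Phi^{\phi}$ induces a morphism of cochain complexes $\bigoplus_{\tH}\Phi^{\phi}_{\tH}\colon C^*_{\mathcal{F}_{A,A}}(P)\to C^*_{\mathcal{F}_{B,B}}(P)$. This map is graded, since $\Phi^{\phi}_{\tH}$ sends the summand indexed by $\tH$ to the summand indexed by the same $\tH$, hence preserves $\ell(\tH)$; and it commutes with the differentials because both differentials are the \emph{same} signed sum of structure maps (the sign assignment on $P$ is fixed, independent of the algebra), and naturality makes each summand of the sum commute. Passing to cohomology gives the required graded $R$-linear map $\mathrm{H}^*_{\mathcal{F}_{A,A}}(P)\to\mathrm{H}^*_{\mathcal{F}_{B,B}}(P)$.

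Finally, functoriality is immediate: $\mathrm{id}_A$ induces $\mathrm{id}^{\otimes(k+1)}=\mathrm{id}$ on each summand, hence the identity on cohomology, while for $A\xrightarrow{\phi}B\xrightarrow{\psi}C$ one has $(\psi\circ\phi)^{\otimes(k+1)}=\psi^{\otimes(k+1)}\circ\phi^{\otimes(k+1)}$ on each $\mathcal{F}_{A,A}(\tH)$ by functoriality of the tensor product, so the induced maps compose correctly. The last sentence of the statement is the special case $P=P(\tG)$. I expect the only genuine obstacle to be the naturality verification; the reindexing of factors caused by merging components is harmless, since $\Phi^{\phi}$ acts factorwise and hence trivially commutes with any permutation of the tensor factors, leaving multiplicativity of $\phi$ as the single fact in play.
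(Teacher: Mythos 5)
Your proposal is correct and follows essentially the same route as the paper's proof: define the map on each summand $\mathcal{F}_{A,A}(\tH)$ as $\phi^{\otimes(k+1)}$, note that the fixed sign assignment makes both differentials the same signed sums so that commutation with the differentials reduces to multiplicativity of $\phi$, and deduce functoriality from $(\psi\circ\phi)^{\otimes k}=\psi^{\otimes k}\circ\phi^{\otimes k}$. Your explicit naturality check (including the observation that the component reindexing is harmless because the map acts factorwise) simply spells out details the paper leaves implicit, and your natural-transformation framing matches the remark the paper places immediately after this proposition.
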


\begin{proof}
Let  $A$ be an $R$-algebra, and let $f\colon A\to B$ be a homomorphism of $R$-algebras. Recall the definition of the functor $\mathcal{F}_{A,A}$ from Section~\ref{sec:multipathhom}; we have  $\mathcal{F}_{A,A}(\tH)\coloneqq  A_{c_1} \otimes_R \cdots  
\otimes_R  A_{c_k}$ for each $\tH\in P$, and $\mathcal{F}_{A,A}(\tH\prec\tH')$ is induced by the multiplication. Since $f\colon A\to B$ is a $R$-algebras homomorphism,  it induces maps between the tensor powers
\[
f\otimes\dots\otimes f\colon  A_{c_1} \otimes_R \cdots  
\otimes_R  A_{c_k}=\mathcal{F}_{A,A}(\tH) \longrightarrow \mathcal{F}_{B,B}(\tH)=B_{c_1} \otimes_R \cdots  
\otimes_R  B_{c_k}.
\]
For each $n\in\bN$, these extend to a map 
\[C^n_{\mathcal{F}_{A,A}}(P) = \bigoplus_{\tiny\begin{matrix}
		\tH\in P\\
		\ell(\tH) = n
\end{matrix}}  \mathcal{F}_{A,A}(\tH)
\longrightarrow
 \bigoplus_{\tiny\begin{matrix}
		\tH\in P\\
		\ell(\tH) = n
\end{matrix}}  \mathcal{F}_{B,B}(\tH)=C^n_{\mathcal{F}_{B,B}}(P) \]
because $f$ extends linearly to directed sums.  Note that the sign assignment is the same on both complexes. Since $f$ commutes with the multiplication, the induced map commutes with the differentials. Thus the map induced by $f$ is a map of cochain complexes. 
The fact that this construction respects compositions is straightforward since $f^{\otimes k} \circ g^{\otimes k} = (f\circ g)^{\otimes k}$, for any composable morphisms of $R$-algebras $f$ and~$g$.
\end{proof}

\begin{rem}
	A  $R$-algebras homomorphism $f\colon A\to B$ provides a natural transformation between the two functors $\mathcal{F}_{A,A}\colon \mathbf{P}\to R\text{-}\mathbf{Mod}$ and $\mathcal{F}_{B,B}\colon \mathbf{P}\to R\text{-}\mathbf{Mod}$; this follows since $f$ extends to tensor powers and directed sums. A natural transformation $\eta\colon\mathcal{F}\to\mathcal{G}$ between two functors $\mathcal{F},\mathcal{G}\colon \mathbf{P}\to \mathbf{A}$, which preserves the biproducts in $\mathbf{A}$, induces a morphism of cochain complexes $\eta^*\colon C_{\mathcal{F}}(P)\to C_{\mathcal{G}}(P)$. 
\end{rem}

Before turning back to multipath cohomology, we consider the behaviour of the cohomology ${\rm H}_{\mathcal{F}}$ under change of graph. First, we need a ``coherent''  way to choose, for each graph, a squared sub-poset of $SSG(\tG)$. 
Recall that for a poset $P$ we denote by {\bf P} the associated category -- cf.~Remark~\ref{rem:posetiscat}.

\begin{defn}
Let $S\colon \mathbf{Digraph}\to \mathbf{Poset}$ be a covariant functor. The functor $S$ is called \emph{path-like} if the following properties hold
\begin{itemize}
	\item $S(\tG)\subseteq SSG(\tG)$ is a {faithful} sub-poset; 
	\item $S(\phi)(S(\tG'))$ is a downward closed sub-poset of $S(\tG)$;
	\item $S(\phi)$, seen as a functor between the associated categories $\mathbf{S}(\tG')$ and $ \mathbf{S}(\tG)$, is faithful\footnote{A functor is called \emph{faithful} if, for each pair of objects, it is injective on the sets of morphisms between them.} as a functor;
\end{itemize}
for each regular morphism of digraphs $\phi\colon\tG' \to \tG$.
\end{defn}

\begin{example}
	The functors $SSG\colon  \mathbf{Digraph}\to \mathbf{Poset}$ and $P\colon  \mathbf{Digraph}\to \mathbf{Poset}$ associating to a digraph $\tG$  the poset of spanning sub-graphs and the path poset, respectively,
are path-like functors. This follows from Remark~\ref{rem:functordigrpo} in the case of the functor~$P$; in a similar way, this is also true for the functor~$SSG$.
\end{example}

\begin{rem}
If $S$ is a path-like functor, then $S(\tG) = S(Id_{\tG})(S(\tG))$ is  squared.
\end{rem}

The second ingredient needed is a way to fix  $\mathcal{F}$ for each graph.~Let $S\colon \mathbf{Digraph}\to \mathbf{Poset}$  be a covariant functor and  ${\bf A}$ an Abelian category (eg.~$R\text{-}{\bf Mod}$).

\begin{defn}\label{def:coefsyst}
 A \emph{coefficients system} for $S$  is family of functors $\{\mathcal{F}_{S, \tG }\colon {\bf S}(\tG) \to {\bf A}\}_{\tG}$  such that, given a regular morphism of digraph $ \phi\colon \tG' \longrightarrow \tG $,
the associated functor $S(\phi)\colon {\bf S}(\tG') \to {\bf S}(\tG)$ makes the following diagram commute:
\[
\begin{tikzcd} 
\mathbf{S}(\tG') \arrow[rr , "S(\phi)"] \arrow[dr,"\mathcal{F}_{S, \tG^\prime}"']& & \mathbf{\bf S}(\tG) \arrow[dl, "\mathcal{F}_{S, \tG}"]  \\ 
\phantom{a}& \mathbf{A} &   \phantom{a} \\ 
\end{tikzcd}
\]
\end{defn}

\begin{rem}\label{rem:Faanotcsyst}
	The functor $\mathcal{F}_{A,A}$ is not a coefficients system for the functor path poset~$P$ unless either we restrict to ${\bf Digraph}(n)\subset {\bf Digraph}$, or we  work with constant coefficients -- i.e.,~$A=R$.
\end{rem}

\begin{notation}
For $\phi\colon \tG'\to \tG$ a regular map of digraphs and $S\colon \mathbf{Digraph}\to \mathbf{Poset}$ a functor, we denote by $S_\phi(\tG',\tG)$ the following poset:
\[
S_{\phi}(\tG',\tG)\coloneqq S(\tG)\setminus S(\phi)(S(\tG')) \ .
\]
\end{notation}

We are ready to compare, under  mild hypotheses, the cochain complexes associated to two graphs.

\begin{rem}\label{rem:signs}
Recall that the complex $C^*_{\mathcal{F}}(P)$ depends also on a sign assignment $\epsilon$ on $P$, and should have been denoted by $C^*_{\mathcal{F}}(P,\epsilon) $.
By Theorem~\ref{thm:uni_signs}, if $P\subseteq SSG(\tG)$ is upward or downward closed, then~$C^*_{\mathcal{F}}(P,\epsilon) \cong C^*_{\mathcal{F}}(P,\epsilon')$ for any two sign assignments $\epsilon,\epsilon'$ on $P$. This fact motivated the removal of the sign assignment from the notation. 

When comparing complexes associated to different graphs, their sub-complexes, or their quotient complexes, we need to be more careful;
it is often the case that we have a chain map  
\[ f_0 \colon C^*_{\mathcal{F}}(P,\epsilon_{0})\to C^*_{\mathcal{F}'}(P',\epsilon'_{0}) \ , \]
while we might need a chain map
\[ f_1 \colon C^*_{\mathcal{F}}(P,\epsilon_{1})\to C^*_{\mathcal{F}'}(P',\epsilon'_{1}) \ .\]
In our case, $P$ and $P'$ will be either upward or downward closed. Hence, to obtain $f_1$ it is sufficient to compose $f_0$ with isomorphisms associated to the change of sign assignments, say $\eta$ and $\eta'$,  such that the following diagram
\[
\begin{tikzcd} C^*_{\mathcal{F}}(P,\epsilon_{1}) \arrow[r , "f_1"]  \arrow[d,"\eta"']& C^*_{\mathcal{F}'}(P',\epsilon'_{1})\\
C^*_{\mathcal{F}}(P,\epsilon_{0}) \arrow[r , "f_0"]& C^*_{\mathcal{F}'}(P',\epsilon'_{0})  \arrow[u,"\eta^\prime"'] 
\end{tikzcd}
\]
is commutative.
Formally, in order to prove functoriality, one needs to find a coherent way to fix the isomorphisms $\eta$, $\eta'$ once and for all.  
One approach would be to extend the category of posets to pairs poset-sign assignment, and expand the notion of coefficient systems to this setting. This can be done formally -- compare with~\cite[Sections 6 and 7]{chandler2019posets}, where a similar approach is pursued. Nonetheless, for the sake of simplicity and to ease the notation,  signs and the induced isomorphisms will be treated na\"ively in this section; we do not fix them nor require compatibility, instead we just make use of the existence of such isomorphisms.
\end{rem}

From now on, given any $P\subseteq SSG(\tG)$, for some digraph $\tG$, we fix a sign assignment on $P$ as a restriction of a (fixed) sign assignment on $SSG(\tG)$. This choice is immaterial, up to isomorphism of the complex $C^*_{\mathcal{F}}(P)$, when assuming $P$ to be a downward (or upward) closed sub-poset of $SSG(\tG)$ by~Theorem~\ref{thm:uni_signs}.

Recall that $\ell_P$ denotes the level in a faithful sub-poset $P\subset SG(\tG)$, see Definition~\ref{def:ell}.

\begin{prop}\label{prop:seschfsy}
Let $S\colon \mathbf{Digraph}\to \mathbf{Poset}$ be a path-like functor, and $\mathcal{F}_{S,-}$ be a coefficient system for~$S$. 
Then, we have the following short exact sequence of coochain complexes:
\[ 0 \to C^*_{\mathcal{F}_{S,\tG}}(S_{\phi}(\tG',\tG))\left[- \min_{x\in S_{\phi}(\tG',\tG)} \{ \ell_{S(\tG)}(x)\} \right] {\longrightarrow} C^*_{\mathcal{F}_{S,\tG}}(S(\tG)){\longrightarrow}C^*_{\mathcal{F}_{S,\tG'}}(S(\tG')) \to 0 \]
\end{prop}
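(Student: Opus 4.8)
The plan is to realise the middle complex $C^*_{\mathcal{F}_{S,\tG}}(S(\tG))$ as an extension whose sub- and quotient complexes are supported on the two pieces of the partition $S(\tG)=U\sqcup Z$, where $U:=S(\phi)(S(\tG'))$ and $Z:=S_{\phi}(\tG',\tG)=S(\tG)\setminus U$. By the definition of path-like functor, $U$ is downward closed in $S(\tG)$, so its complement $Z$ is upward closed. Writing $C^n:=C^n_{\mathcal{F}_{S,\tG}}(S(\tG))=\bigoplus_{\ell_{S(\tG)}(x)=n}\mathcal{F}_{S,\tG}(x)$, I would first split each cochain group as the direct sum of the summands indexed by $x\in U$ and those indexed by $x\in Z$, and then check that each piece behaves as claimed under the differential.

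First I would verify that the $Z$-summands form a subcomplex. The differential sends $\mathcal{F}_{S,\tG}(x)$ into $\bigoplus_{x\prec y}\mathcal{F}_{S,\tG}(y)$; if $x\in Z$ and $x\prec y$ then $x<y$, hence $y\in Z$ because $Z$ is upward closed. Thus the $Z$-part is preserved by $d$ and is a subcomplex, while the $U$-part is the corresponding quotient carrying the projected differential, which yields the short exact sequence of complexes immediately. To identify the subcomplex I would compare the ambient level $\ell_{S(\tG)}$ with the intrinsic level $\ell_{Z}$ of the standalone poset $Z$: by Definition~\ref{def:ell} these differ by the constant $\min_{x\in Z}\ell_{S(\tG)}(x)$, which is exactly the shift $\left[-\min_{x\in S_{\phi}(\tG',\tG)}\ell_{S(\tG)}(x)\right]$ in the statement. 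Since covering relations and signs restrict (Remark~\ref{rem:sing and subsets}), the differentials agree, so the subcomplex is isomorphic to $C^*_{\mathcal{F}_{S,\tG}}(S_{\phi}(\tG',\tG))$ with that shift.

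For the quotient I would use that $S(\phi)\colon S(\tG')\to U$ is an isomorphism onto the faithful sub-poset $U$ (faithfulness guarantees covering relations correspond), together with the coefficient-system identity $\mathcal{F}_{S,\tG}\circ S(\phi)=\mathcal{F}_{S,\tG'}$ of Definition~\ref{def:coefsyst}, which makes both the objects and the structure maps match. Because $U$ is downward closed it contains the minimum of $S(\tG)$, so $\ell_U=\ell_{S(\tG)}|_U$ and no shift is needed; a direct check gives $\ell_{S(\tG')}=\ell_U\circ S(\phi)$, as $S(\phi)$ preserves the edge count and the vertex count is constant on spanning sub-graphs. Hence the quotient is isomorphic to $C^*_{\mathcal{F}_{S,\tG'}}(S(\tG'))$, completing the identification of the three terms.

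The main obstacle I anticipate is the bookkeeping of signs across the three complexes: the differentials on the sub- and quotient complexes inherit a sign assignment by restriction, and I must ensure these coincide with the ones used to define the standalone complexes $C^*_{\mathcal{F}_{S,\tG}}(S_{\phi}(\tG',\tG))$ and $C^*_{\mathcal{F}_{S,\tG'}}(S(\tG'))$. Since $Z$, $U$ and $S(\tG')$ are all upward or downward closed, Theorem~\ref{thm:uni_signs} shows that any two sign assignments on them are isomorphic; as explained in Remark~\ref{rem:signs}, one composes the na\"ive identifications with the isomorphisms induced by these changes of sign assignment so that every comparison square commutes, and the sequence is then exact up to these fixed isomorphisms.
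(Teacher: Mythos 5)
Your proposal is correct and follows essentially the same route as the paper's proof: both decompose $S(\tG)$ into the upward closed complement $S_{\phi}(\tG',\tG)$ (giving the shifted subcomplex, with the shift accounted for by comparing ambient and intrinsic levels) and the downward closed image $S(\phi)(S(\tG'))$ (giving the quotient), then identify the quotient with $C^*_{\mathcal{F}_{S,\tG'}}(S(\tG'))$ via the coefficient-system identity, treating sign assignments by restriction plus the uniqueness result (Theorem~\ref{thm:uni_signs}, Remark~\ref{rem:signs}). No substantive difference from the paper's argument.
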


\begin{proof}
By definition, we have
\[C^n_{\mathcal{F}_{S,\tG}}(S(\tG)) = \bigoplus_{\tiny\begin{matrix}
			\tH\in S(\tG)\\
			\ell_{S(\tG)}(\tH) = n
	\end{matrix}}  \mathcal{F}_{S,\tG}(\tH) \ ,\]
and 
\[C^n_{\mathcal{F}_{S,\tG}}(S_{\phi}(\tG',\tG)) \left[-\min_{x\in  S_{\phi}(\tG',\tG)} \left\{ \ell_{\tG
} (x) \right\}\right] =\bigoplus_{\tiny\begin{matrix}
			\tH\in S_{\phi}(\tG',\tG)\\
			\ell_{S(\tG)}(\tH) = n
	\end{matrix}}  \mathcal{F}_{S,\tG}(\tH)\] 
where we used $\ell_{S(\tG)}(\tH)=\ell_{S_{\phi}(\tG',\tG)}(\tH) +\min \left\{ \ell_{\tG
} (x)\mid x\in S_{\phi}(\tG',\tG)\right\}$, for $\tH\in S_{\phi}(\tG',\tG)$.
As a consequence, we get a natural inclusion of cochain complexes.
Note that the inclusion commutes with the differential due to the fact that the poset $S_{\phi}(\tG',\tG)$ is upward closed, and the sign assignment on  $S_{\phi}(\tG',\tG)$ is induced by $SSG(\tG)$.

We need to identify the quotient, with respect to this inclusion, with the cochain complex associated to $S(\tG')$.
At the level of modules, we have
\[\frac{C^n_{\mathcal{F}_{S,\tG}}(S(\tG))}{C^n_{\mathcal{F}_{S,\tG}}(S_{\phi}(\tG',\tG)) \left[-\min_{x\in  S_{\phi}(\tG',\tG)} \left\{ \ell_{\tG
} (x) \right\}\right]} =\bigoplus_{\tiny\begin{matrix}
			\tH\in S(\tG)\setminus S_{\phi}(\tG',\tG)\\
			\ell_{S(\tG)}(\tH) = n
	\end{matrix}}  \mathcal{F}_{S,\tG}(\tH) \ .\] 
Since $S(\phi)(S(\tG'))$ and $S_{\phi}(\tG',\tG)$ are, by definition, complementary in $S(\tG)$, we can identify the above quotient with $C^*_{\mathcal{F}_{S,\tG}}(S(\phi)(S(\tG')))$.  
Now, the components of the differentials corresponding to the coverings $\tH'\prec \tH$ with $\tH\notin S(\phi)(S(\tG'))$ are set to $0$ in the quotient.  
Thus, the above identification commutes with the differentials, hence inducing an isomorphism of cochain complexes, since  the sign assignment on the poset $S(\phi)(S(\tG'))$ is induced by $SSG(\tG)$.

To conclude the proof, we need to identify $C^*_{\mathcal{F}_{S,\tG}}(S(\phi)(S(\tG')))$ with $C^*_{\mathcal{F}_{S,\tG'}}(S(\tG'))$.
The functor $S$ is path-like. Therefore, by definition, we have that
\[ \mathcal{F}_{S,\tG'}(\tH) = \mathcal{F}_{S,\tG}(S(\phi)(\tH)) \ , \]
and similarly for the maps associated to the covering relations. This gives us an identification of $C^*_{\mathcal{F}_{S,\tG}}(S(\phi)(S(\tG')))$ with $C^*_{\mathcal{F}_{S,\tG'}}(S(\tG'))$ as graded $R$-modules. Observe that there is no shift in the identification because $S(\phi)(S(\tG'))$ is downward closed. This identification commutes with the differentials up to an isomorphism induced by a change of sign assignment in one of the complexes. Composing the quotient map with such isomorphism gives us the desired short exact sequence.
\end{proof}

We now consider the functor $S$ to be either the path poset functor $P$ or $SSG$, and the functor~$\mathcal{F}$ to be the functor $\mathcal{F}_{A,A}$, for $A$ an $R$-algebra.

\begin{prop}\label{prop:ses sub-graphs}
	Let $\phi\colon\tG'\to\tG$ be regular morphism of digraphs. The inclusion  of $S(\tG')$ in $S(\tG)$ induces the following short exact sequence of complexes:
	\[ 0 \to C^*_{\mathcal{F}_{A,A}}( S_{\phi}(\tG',\tG))\left[-\min_{x\in S_{\phi}(\tG',\tG)}\{ \ell(x) \}\right] {\to} C^*_{\mathcal{F}_{A,A}}(S{(\tG)}) \to C^*_{\mathcal{F}_{A,A}}(S{(\tG')}) \otimes A^{\otimes \# (V(\tG) \setminus V(\tG'))}\to 0 \]
	In particular, if $\tG'$ is a spanning sub-graph of $\tG$, we have the short exact sequence
	\begin{equation}\label{eq:pigg} 0 \to C^*_{\mathcal{F}_{A,A}}(S_{\phi}(\tG',\tG))\left[-\min_{x\in S_{\phi}(\tG',\tG)}\{ \ell(x) \}\right] {\longrightarrow} C^*_{\mathcal{F}_{A,A}}(S{(\tG)})\overset{\pi_{\tG,\tG'}}
	{\longrightarrow}C^*_{\mathcal{F}_{A,A}}(S{(\tG')}) \to 0. \end{equation}
\end{prop}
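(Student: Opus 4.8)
The plan is to run the argument of Proposition~\ref{prop:seschfsy} almost verbatim, the only genuine new ingredient being that $\mathcal{F}_{A,A}$ fails to be a coefficient system for $S$ (Remark~\ref{rem:Faanotcsyst}); this failure is controlled and produces precisely the correction factor $A^{\otimes\#(V(\tG)\setminus V(\tG'))}$. Both $S=P$ and $S=SSG$ are path-like, so $S(\phi)(S(\tG'))$ is downward closed in $S(\tG)$ and its complement $S_\phi(\tG',\tG)$ is upward closed. First I would fix a sign assignment on $S(\tG)$ as the restriction of one on $SSG(\tG)$, and restrict it further to the two sub-posets. Since $S_\phi(\tG',\tG)$ is upward closed, every covering $\tH\prec\tH'$ with $\tH\in S_\phi(\tG',\tG)$ has $\tH'\in S_\phi(\tG',\tG)$; hence the summands indexed by $S_\phi(\tG',\tG)$ form a subcomplex of $C^*_{\mathcal{F}_{A,A}}(S(\tG))$, which after the shift by $-\min_{x\in S_\phi(\tG',\tG)}\{\ell(x)\}$ is exactly $C^*_{\mathcal{F}_{A,A}}(S_\phi(\tG',\tG))$, giving the left-hand injection.

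The quotient at level $n$ is the sum of $\mathcal{F}_{A,A}(\tH)$ over $\tH\in S(\tG)\setminus S_\phi(\tG',\tG)=S(\phi)(S(\tG'))$, and since this sub-poset is downward closed the induced differential is that of $C^*_{\mathcal{F}_{A,A}}(S(\phi)(S(\tG')))$, with no shift (the level of a spanning sub-graph equals its number of edges, cf.\ the remark after Definition~\ref{def:ell}, and $\phi$ is injective on edges). It then remains to identify this with $C^*_{\mathcal{F}_{A,A}}(S(\tG'))\otimes A^{\otimes r}$, where $r=\#(V(\tG)\setminus V(\tG'))$. The functor $S(\phi)$ restricts to an isomorphism of posets $S(\tG')\xrightarrow{\sim}S(\phi)(S(\tG'))$ preserving covering relations and levels, and the key observation is that the edges of $S(\phi)(\tH')$ are the $\phi$-images of those of $\tH'$, all incident only to vertices of $\phi(V(\tG'))$; hence each of the $r$ vertices of $V(\tG)\setminus\phi(V(\tG'))$ is an isolated component of $S(\phi)(\tH')$ for \emph{every} $\tH'\in S(\tG')$. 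By Equation~\eqref{eq:fun_obj} each such isolated vertex contributes a factor $A$, so that, up to the canonical commutativity isomorphisms of $R$-modules reindexing the tensor factors (coherent across all $\tH'$ once the orderings are fixed, and immaterial by the ordering-independence of the complex),
\[\mathcal{F}_{A,A}(S(\phi)(\tH'))\cong\mathcal{F}_{A,A}(\tH')\otimes A^{\otimes r}.\]

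Moreover, the merging maps of Equation~\eqref{eq:fun_mor} attached to a covering $S(\phi)(\tH')\prec S(\phi)(\tH'')$ multiply together only components coming from $\tH'$ — the $r$ isolated vertices are never merged, as no edge of the image is incident to them — so under this identification the differential becomes $d\otimes\mathrm{id}_{A^{\otimes r}}$, yielding an isomorphism of complexes $C^*_{\mathcal{F}_{A,A}}(S(\phi)(S(\tG')))\cong C^*_{\mathcal{F}_{A,A}}(S(\tG'))\otimes A^{\otimes r}$ and hence the first short exact sequence; signs are handled exactly as in Remark~\ref{rem:signs}, by composing with the isomorphism induced by a change of sign assignment, which is legitimate since all posets involved are upward or downward closed and Theorem~\ref{thm:uni_signs} applies. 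Finally, when $\tG'$ is a spanning sub-graph one has $V(\tG')=V(\tG)$, so $r=0$, $A^{\otimes 0}=R$, and tensoring over $R$ with $R$ is the identity, giving the second sequence with the quotient map $\pi_{\tG,\tG'}$. I expect the main obstacle to be precisely this bookkeeping of the extra tensor factors: checking that the isolated vertices contribute a fixed factor $A^{\otimes r}$ uniformly over the whole poset, that the differential splits as $d\otimes\mathrm{id}$, and that the reorderings of tensor factors and the sign assignments are compatible under $S(\phi)$.
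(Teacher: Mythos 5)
Your proposal is correct and follows essentially the same route as the paper's proof: run Proposition~\ref{prop:seschfsy} verbatim until the point where the coefficient-system property would be invoked, then patch it with the identification $\mathcal{F}_{A,A}(S(\phi)(\tH'))\cong\mathcal{F}_{A,A}(\tH')\otimes A^{\otimes \#(V(\tG)\setminus V(\tG'))}$ coming from the isolated vertices, extend it to the covering maps by tensoring with the identity of $A^{\otimes r}$, and conclude the spanning case from $A^{\otimes 0}=R$. Your write-up is in fact slightly more careful than the paper's about the bookkeeping (the direction of the tensor identification, the splitting of the differential as $d\otimes\mathrm{id}$, and the coherence of the reordering isomorphisms), but the underlying argument is identical.
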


\begin{proof}
The proof proceeds verbatim as the proof of Proposition~\ref{prop:seschfsy}, until the identification of the complex $C^*_{\mathcal{F}_{A,A}}(S{(\tG')})$ and $C^*_{\mathcal{F}_{A,A}}(S(\phi)(S{(\tG')}))$.
At this point, we need to use  that the family of functors $ \mathcal{F}_{S,-} =\mathcal{F}_{A,A}$ is a coefficient system, however this is not true -- cf.~Remark~\ref{rem:Faanotcsyst}.
Nonetheless, we have
\[ \mathcal{F}_{S,\tG'}(\tH) = \mathcal{F}_{S,\tG}(S(\phi)(\tH))\otimes A^{\otimes \# (V(\tG) \setminus V(\tG'))} \ , \]
and, the identification extends to the maps associated to the covering relations by tensoring with the opportune tensor power of $Id_{A}$. The proof now continues exactly as in Proposition~\ref{prop:seschfsy}. We conclude the proof by observing that if $\tG'\in SSG(\tG)$ we have $A^{\otimes \# (V(\tG) \setminus V(\tG'))} = R$, and the statement follows.
\end{proof}

With the same notations, we can now consider compositions of morphisms of digraphs:

\begin{lem}\label{lemma:functo}
If $\tG''\in SSG(\tG)$ and $\tG'' \subseteq \tG' \subseteq \tG$, then $\pi_{\tG,\tG'}\circ \pi_{\tG',\tG''} = \pi_{\tG,\tG''}$, where $\pi_{\tG,\tG'}$ is the induced morphism in Equation~\eqref{eq:pigg}.
\end{lem}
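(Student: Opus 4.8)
The plan is to reduce the claim to an explicit summand-wise description of the projections $\pi_{\tG,\tG'}$ extracted from the proof of Proposition~\ref{prop:ses sub-graphs}, and then to check the identity on each direct summand. First I would record that, since $\tG''\in SSG(\tG)$ and $\tG''\subseteq\tG'\subseteq\tG$, all three graphs are spanning sub-graphs of $\tG$; in particular $V(\tG)=V(\tG')=V(\tG'')$, so the tensor factors $A^{\otimes \#(V(\tG)\setminus V(\tG'))}$ appearing in Proposition~\ref{prop:ses sub-graphs} all equal $R$ and drop out. Under the identification $SSG(\tG')=\{\tH\in SSG(\tG)\mid E(\tH)\subseteq E(\tG')\}$, the sub-poset $S(\tG')=S(\phi)(S(\tG'))$ is precisely the set of multipaths $\tH\in S(\tG)$ with $E(\tH)\subseteq E(\tG')$, and it is downward closed in $S(\tG)$.

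Second, I would fix the sign conventions once and for all, as in the paragraph preceding the lemma: choose a single sign assignment $\epsilon$ on $SSG(\tG)$ and use its restriction to define the sign assignments on $S(\tG)$, $S(\tG')$, $S(\tG'')$, and on every intermediate $S_{\phi}$-complex. With this choice the identification of the quotient in Proposition~\ref{prop:ses sub-graphs} requires no change-of-sign isomorphism, so $\pi_{\tG,\tG'}$ becomes the strict map which, on the summand $\mathcal{F}_{A,A}(\tH)$ of $C^*_{\mathcal{F}_{A,A}}(S(\tG))$, is the identity onto the corresponding summand of $C^*_{\mathcal{F}_{A,A}}(S(\tG'))$ when $E(\tH)\subseteq E(\tG')$, and is zero otherwise. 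That $\mathcal{F}_{A,A}(\tH)$ is literally the same module whether $\tH$ is viewed in $\tG$ or in $\tG'$ follows because the connected components of a spanning sub-graph, together with their vertex-order-induced indexing, depend only on $\tH$ and not on the ambient graph. The same description holds for $\pi_{\tG',\tG''}$ and $\pi_{\tG,\tG''}$ with $E(\tG')$ replaced by $E(\tG'')$.

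Third, I would verify that the composite $C^*_{\mathcal{F}_{A,A}}(S(\tG))\to C^*_{\mathcal{F}_{A,A}}(S(\tG''))$ obtained by first applying $\pi_{\tG,\tG'}$ and then $\pi_{\tG',\tG''}$ equals $\pi_{\tG,\tG''}$, working summand by summand and using the containments $E(\tG'')\subseteq E(\tG')\subseteq E(\tG)$. Fix $\tH\in S(\tG)$. If $E(\tH)\subseteq E(\tG'')$, then also $E(\tH)\subseteq E(\tG')$, so $\pi_{\tG,\tG'}$ sends $\mathcal{F}_{A,A}(\tH)$ identically to the summand $\mathcal{F}_{A,A}(\tH)$ of $C^*_{\mathcal{F}_{A,A}}(S(\tG'))$, and $\pi_{\tG',\tG''}$ then sends it identically to $\mathcal{F}_{A,A}(\tH)\subseteq C^*_{\mathcal{F}_{A,A}}(S(\tG''))$; this is exactly $\pi_{\tG,\tG''}$ on that summand. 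If instead $E(\tH)\not\subseteq E(\tG'')$, then $\pi_{\tG,\tG''}$ is zero on $\mathcal{F}_{A,A}(\tH)$, while on the other side either $E(\tH)\not\subseteq E(\tG')$, so $\pi_{\tG,\tG'}$ already kills the summand, or $E(\tH)\subseteq E(\tG')$ and then $\pi_{\tG',\tG''}$ kills it because $E(\tH)\not\subseteq E(\tG'')$. In every case the two composites agree on $\mathcal{F}_{A,A}(\tH)$, and since these summands span the cochain groups, the maps coincide.

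The only genuine subtlety, and the step I would be most careful about, is the coherence of the sign assignments across the three complexes. If one instead took the intrinsic sign $\sigma_{\rm e}$ on each path poset, the identifications of Proposition~\ref{prop:ses sub-graphs} would each carry a change-of-sign isomorphism, and the composite would equal $\pi_{\tG,\tG''}$ only after checking that these isomorphisms compose correctly (equivalently, that the relevant $0$-cochains add up). Invoking the convention fixed before the lemma, namely that all signs are restricted from a single $\epsilon$ on $SSG(\tG)$, which is legitimate by Theorem~\ref{thm:uni_signs} since $SSG(\tG')$ and $SSG(\tG'')$ are faces of the cube $SSG(\tG)$, removes this subtlety entirely and makes all three projections strict, so the summand-wise check above suffices.
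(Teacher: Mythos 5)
Your proposal is correct and follows essentially the same route as the paper's proof: both describe each projection $\pi_{\tG,\tG'}$ summand-wise (identity on $\mathcal{F}_{A,A}(\tH)$ when $\tH$ is a multipath of the sub-graph, zero otherwise) and verify the composite summand by summand, invoking the same two supporting facts — that $\mathcal{F}_{A,A}$ behaves as a coefficients system here because all three graphs are spanning (same vertex set), and that the sign assignments are coherently restricted from a single one on $SSG(\tG)$. Your write-up is simply a more explicit version of the paper's terser argument, with the sign subtlety spelled out rather than delegated to Remark~\ref{rem:signs}.
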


\begin{proof}
We can explicitly write the maps:
	\[	\begin{tikzcd} 
	 C^n_{\mathcal{F}_{A,A}}(S(\tG)) = \bigoplus_{\tiny\begin{matrix}
			\tH\in S(\tG)\\
			\ell_{\tG}(\tH) = n
	\end{matrix}}  \mathcal{F}_{A,A}(\tH) \arrow[r, "\pi_{\tG,\tG''}"]\arrow[d, "\pi_{\tG,\tG'}"']&     \bigoplus_{\tiny\begin{matrix}
	\tH\in S(\tG'') \\
	\ell_{\tG}(\tH) = n
\end{matrix}}  \mathcal{F}_{A,A}(\tH) = C^n_{\mathcal{F}_{A,A}}(S(\tG''))\\
C^n_{\mathcal{F}_{A,A}}(S(\tG'))=
	 \bigoplus_{\tiny\begin{matrix}
			\tH\in S(\tG')\\
			\ell_{\tG'}(\tH) = n
	\end{matrix}}  \mathcal{F}_{A,A}(\tH) \arrow[ur, "\pi_{\tG',\tG''}"']& \\
\end{tikzcd}\]
Since each of the maps above restricts to the identity for $\tH$ appearing in the summands, 
and is zero otherwise, we get a commutative diagram of cochain complexes. Note that we are implicitly using the fact that the (family of) functor(s) $\mathcal{F}_{A,A}$ is a coefficients system (since $\tG',\tG''$ are spanning sub-graphs of $\tG$) for $S=SSG$ or $S=P$, and Remark~\ref{rem:signs}.
\end{proof}

We are ready to conclude the proof of the functoriality.

\begin{proof}[Proof of Theorem~\ref{thm:functoriality}]
The statement follows from Lemma~\ref{lemma:functo}, giving the functoriality with respect to maps of digraphs, and Proposition~\ref{prop:functalg}, giving the functoriality with respect to maps of $R$-algebras.
\end{proof}

We conclude the section with the result of functoriality with respect to change of base rings:

\begin{thm}\label{thm:functofix}
	Let $\mathbf{Ring}$ be the category of unital rings and $\mathbf{Ab}^{\mathrm{gr}}$ be the category of graded Abelian groups. Then, the multipath cohomology
	\[
	\mathrm{H}_\mu(-;-)\colon \mathbf{Digraph}^{\mathrm{op}}\times\mathbf{Ring}\to  \mathbf{Ab}^{\mathrm{gr}}
	\]
	is a bifunctor.
\end{thm}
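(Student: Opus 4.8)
The plan is to establish the three ingredients of a bifunctor separately---contravariant functoriality in the digraph variable for a fixed ring, covariant functoriality in the ring variable for a fixed digraph, and the interchange (compatibility) of the two---the whole point being that setting $A=R$ removes exactly the obstruction that, in Theorem~\ref{thm:functoriality}, forced the restriction to $\mathbf{Digraph}(n)$. First I would record the crucial simplification: when $A=M=R$ the functor $\mathcal{F}_{R,R}$ sends every multipath $\tH\in P(\tG)$ with ordered components $c_0<\dots<c_k$ to $R_{c_0}\otimes_R\dots\otimes_R R_{c_k}\cong R$, since tensoring copies of $R$ over $R$ returns $R$. Consequently, as observed in Remark~\ref{rem:Faanotcsyst}, $\mathcal{F}_{R,R}$ is a \emph{genuine} coefficients system for the path-poset functor $P$: for a regular morphism $\phi\colon\tG'\to\tG$ the extra isolated vertices in $V(\tG)\setminus\phi(V(\tG'))$ contribute only tensor factors $R$, which are invisible, so that $\mathcal{F}_{R,R}(S(\phi)(\tH))=R=\mathcal{F}_{R,R}(\tH)$.

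With this in hand the digraph functoriality is immediate from the machinery of Section~\ref{sec:functoriality}. Since $\mathcal{F}_{R,R}$ is a coefficients system, Proposition~\ref{prop:seschfsy} applies verbatim to \emph{every} regular morphism $\phi\colon\tG'\to\tG$ (not merely to spanning sub-graphs), producing a quotient chain map $C^*_\mu(\tG;R)\twoheadrightarrow C^*_\mu(\tG';R)$ and hence an induced map $\phi^*\colon\mathrm{H}^*_\mu(\tG;R)\to\mathrm{H}^*_\mu(\tG';R)$. I would then verify the composition law $\pi_{\tG,\tG''}=\pi_{\tG',\tG''}\circ\pi_{\tG,\tG'}$ exactly as in Lemma~\ref{lemma:functo}; the only place that lemma needed $\tG''\in SSG(\tG)$ was to make the factor $A^{\otimes\#(V(\tG)\setminus V(\tG'))}$ of Proposition~\ref{prop:ses sub-graphs} disappear, and for $A=R$ this factor is always $R$, so the diagonal ``identity on surviving summands, zero otherwise'' description of the maps remains valid for arbitrary regular morphisms. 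This yields a contravariant functor $\mathbf{Digraph}^{\mathrm{op}}\to\mathbf{Ab}^{\mathrm{gr}}$ for each fixed ring.

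For the ring variable I would exploit that, under the identifications $\mathcal{F}_{R,R}(\tH)\cong R$, each covering map $\mathcal{F}_{R,R}(\tH\prec\tH')$ is the canonical multiplication isomorphism $R\otimes_R R\xrightarrow{\sim}R$, i.e.\ the identity up to sign. Hence the index set of the direct-sum decomposition (the multipaths at each level) is independent of $R$, and the differential is given by a fixed matrix with entries in $\{0,\pm1\}$. This exhibits $C^*_{\mathcal{F}_{R,R}}(P(\tG))$ as the base change $R\otimes_{\bZ}C^*_{\mathcal{F}_{\bZ,\bZ}}(P(\tG))$, so any ring homomorphism $f\colon R\to R'$ induces the chain map $f\otimes\mathrm{id}$ and therefore $f_*\colon\mathrm{H}^*_\mu(\tG;R)\to\mathrm{H}^*_\mu(\tG;R')$, covariantly in $R$ (this is the base-ring analogue of Proposition~\ref{prop:functalg}). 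Finally, the bifunctor condition reduces to checking that, for $\phi\colon\tG'\to\tG$ and $f\colon R\to R'$, the restriction maps $\pi_\phi$ and the base-change maps $f\otimes\mathrm{id}$ commute already at the cochain level: the former projects onto the summands indexed by multipaths in the image of $\phi$, while the latter acts summand-wise by $f$, so the two operations are independent and commute; passing to cohomology yields the required commuting square.

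The main obstacle---and really the only conceptual point---is the first step: verifying that $\mathcal{F}_{R,R}$ is a coefficients system over all of $\mathbf{Digraph}$, that is, that the vertex-count discrepancy which breaks functoriality for a general algebra $A$ genuinely evaporates when $A=R$ because $R^{\otimes k}\cong R$. Once this is secured, both functorialities and their interchange are formal consequences of the short-exact-sequence framework together with the base-change description of the constant-coefficient complex, and no new computations are required beyond those already carried out for Theorem~\ref{thm:functoriality}.
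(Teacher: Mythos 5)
Your proposal is correct and follows essentially the same route as the paper: the paper's (very terse) proof rests on exactly the collapse $S\otimes_R R\otimes_R\dots\otimes_R R\cong S$ --- i.e., that constant coefficients make $\mathcal{F}_{R,R}$ a genuine coefficients system --- and then invokes ``a reasoning as in Lemma~\ref{lemma:functo} and Theorem~\ref{thm:functoriality}'' for functoriality in the digraph variable, just as you do. The only cosmetic difference is that the paper phrases the ring-variable functoriality as extension of scalars $S\otimes_R(-)$ along $f\colon R\to S$, whereas you phrase it as base change of the integral complex $C^*_{\mathcal{F}_{\bZ,\bZ}}(P(\tG))$ from $\bZ$; on the canonical basis indexed by multipaths these give the same maps.
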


\begin{proof}
	For a homomorphism $f\colon R\to S$ of rings, there is a extension of scalars functor  along $f$ defined as $S\otimes_R (-)\colon R\text{-}\mathbf{Mod} \to S\text{-}\mathbf{Mod}$ where the tensor product in $S$ is regarded as $R$-module via the map~$f$. In this way, we get natural isomorphisms~$S\otimes_R R\cong S$ (more generally, it is true that if $R$ is commutative and $M$ an $R$-module, then $M\otimes_R R\cong M$), and, for each product $R\otimes_R\dots \otimes_R R$, isomorphisms  $S\otimes_R R\otimes_R\dots \otimes_R R\cong S\otimes_R R\cong S$. A reasoning as in Lemma~\ref{lemma:functo} and Theorem~\ref{thm:functoriality} gives the functoriality with respect to all regular maps of digraphs (with any finite number of vertices).
	\end{proof}
	
\section{Other poset (co)homologies and Turner-Wagner's approach}\label{sec:compare}

The definition of multipath cohomology given in Section~\ref{sec:multipath} uses a certain homology of posets which we referred to as {poset homology}. After application of the path poset functor $P\colon \mathbf{Digraph}\to \mathbf{Poset}$ -- cf.~Remark~\ref{rem:functordigrpo} -- other (co)homology theories of posets can also be used to get similar graph (co)ho\-mo\-lo\-gy theories; for example, the general \emph{functor homology} (of categories) -- see, e.g, \cite{Gabriel1967CalculusOF,maclane:71} -- or the \emph{cellular cohomology} (of posets) introduced by Turner and Everitt~\cite{TurnerEverittCell}. 
In this section, we provide a brief review of these (co)homology theories, and compare them with poset homology on (suitable modifications of) path posets.   In particular, we argue that, after mild modifications, we can interpret multipath cohomology groups as (cellular and thence) functor  cohomology groups, shading light on the  nature of multipath cohomology.

\subsection{Functor homology (of posets)}

For a poset $P$, recall that $\mathbf{P}$ denotes its associated category -- cf.~Remark~\ref{rem:posetiscat}. 
Given a functor $\cF\colon \mathbf{P} \to \bA$, where $\bA$ is a complete and cocomplete Abelian category, we can define the \emph{functor homology} (resp.~\emph{cohomology}) groups $\mathrm{H}_*(\bP;\cF)$ (resp.~$\mathrm{H}^*(\bP;\cF)$) as the associated higher colimits (resp.~higher limits). For the sake of completeness, we spell out the definition. Denote by $\mathbf{1}$ be the category with a single object and a single morphism. Then, there is a unique functor~$\mathcal{T}\colon \mathbf{P}\to \mathbf{1}$. Since $\bA$ is complete and cocomplete, both left and right Kan extensions of $\cF$ exist. In particular, the left Kan extension~$\Lan_{\mathcal{T}}\cF$ of $\cF$ along $\mathcal{T}$ exists, and it yields the colimit functor of~$\cF$.

\begin{defn}\cite{maclane:71}\label{def:functhom}
The \emph{functor homology~$\mathrm{H}_n(\mathbf{P};\cF)$ of $\mathbf{P}$ with coefficients in $\cF$} is  the $n$-th left derived functor of~$\Lan_{\mathcal{T}}\cF$.
\end{defn}

Analogously, the right Kan extension along $\mathcal{T}$ yields the limit of~$\cF$; thus,~$\mathrm{H}^n(\mathbf{P};\cF)$ is given by the $n$-th derived functor of~$\lim \cF$. Definition~\ref{def:functhom} is rather abstract; more concretely, $\mathrm{H}_*(\mathbf{P};\cF)$ can be computed (see \cite{Gabriel1967CalculusOF}) as the homology groups of the chain complex
\[
   \dots \xrightarrow{\partial_{n}} \bigoplus_{c_0\to \dots\to c_n} \cF(c_0) \xrightarrow{\partial_{n-1}}  \dots \xrightarrow{\partial_2} \bigoplus_{c_0\to c_1\to c_2} \cF(c_0) \xrightarrow{\partial_1} \bigoplus_{c_0\to c_1} \cF(c_0) \xrightarrow{\partial_0} \bigoplus_{c_0\in \mathbf{P}} \cF(c_0) \to 0
\]
with differential
\[
\partial_{n}(f(c_0\to\dots\to c_{n+1}))=\cF(c_0\to c_1)f(c_1\to\dots c_{n+1})+ \sum_{i=1}^{n+1} (-1)^i f(c_0\to\dots\to \widehat{c_i}\to \dots c_{n+1})
\]
where $\widehat{c_i}$ means that $c_i$ is missing, and the parenthesis $(c_0\to \dots\to c_n)$ denotes the inclusion of~$f\in \mathcal{F}(c_0)$ into the summand corresponding to the sequence $c_0\to\dots\to c_{n+1}$.
Dually, the Roos complex~\cite{Roos179864} computes the functor cohomology groups $\mathrm{H}^n(\mathbf{P};\cF)$.
More precisely, $\mathrm{H}^*(\mathbf{P};\cF)$ is the cohomology of the cochain complex
\[
0\to \prod_{c_0\in \mathbf{P}} \cF(c_0)\xrightarrow{d^0} \prod_{c_0\to c_1} \cF(c_1)\xrightarrow{d^1} \prod_{c_0\to c_1\to c_2} \cF(c_2)\xrightarrow{d^2}\dots\xrightarrow{d^{n-1}} \prod_{c_0\to \dots\to c_n} \cF(c_n) \xrightarrow{d^{n}}\dots
\]
endowed with differential $d^n$, whose evaluation on~$f\in \prod_{c_0\to \dots\to c_n} \cF(c_n)$, is given by
\begin{align*}
d^n(f)(c_0\to\dots\to c_{n+1})= &\ (-1)^{n+1}\cF(c_n\to c_{n+1})f(c_0\to \dots\to c_n)+ \\
 + &\sum_{i=0}^n (-1)^i f(c_0\to\dots\to \widehat{c_i}\to \dots c_{n+1}) \ .
\end{align*} 
Note that here $(c_0\to \dots\to c_n)$ denotes the projection onto the factor corresponding to the sequence~$c_0\to\dots\to c_{n+1}$.
In other words, functor (co)homology groups are defined as the (co)homology groups of a suitable (co)simplicial replacement.
We also point out that similar constructions can be performed using contravariant functors instead of covariant. 
  
The homology of a category with coefficients in a functor has been extensively studied and the literature on it is very rich. When restricting to constant functors, the functor (co)homology groups  depend only on the geometric realisation of the source category -- cf.~\cite{quillenI}. 
In particular, by \cite[Corollary~2]{quillenI}, every poset with an initial element has with respect to the constant functor the homology of a point. 
We now provide an example.

\begin{example}\label{ex:pushout}
	Consider the path poset associated to the digon digraph -- see Figure~\ref{fig:posetdigon}. Its associated category is the pushout category $1\leftarrow 0 \rightarrow 2$, where the initial object $0$ corresponds to the empty multipath. For an Abelian category $\bA$ and functor $\cF$, set $f\coloneqq \cF(0\to 1)$ and $g\coloneqq \cF(0\to 2)$.  The corresponding functor homology groups are the homology groups of the chain complex
	\[
	0\to A_0\oplus A_0\to A_0\oplus A_1\oplus A_2 \to 0
	\]
	where $A_0,A_1,A_2$ are objects of $\bA$ with $\cF(i)=A_i$, and the only non-trivial map is given by 
	\[
	(a,b)\mapsto (- (a+b), f(a), g(b)) \ .
	\]
	The homology groups of the complex are therefore $\mathrm{H}_0(\mathbf{P};\cF)= \colim \cF$, $\mathrm{H}_1(\mathbf{P};\cF)\simeq \ker(f)\cap \ker(g) $, and they are $0$ in higher degrees. Note that the functor cohomology groups are trivial in all degrees but the $0$-th (in which it agrees with $A_0$), because the category has an initial object. Note also that the poset homology groups as defined in Section~\ref{sec:digraph_hom}, would be given by the kernel and image of $f-g$.
\end{example}

Assume now that $\cF$ takes values in $\bA=\mathbf{Ab}$, the  category of Abelian groups, and assume that $\cF$ sends every morphism in $\mathbf{P}$, i.e., every $x\leq y$ in $P$, to  an isomorphism of $\bA$. Then, $\cF$ induces a local coefficient system on the classyfing space\footnote{the geometric realization of the nerve} $B\mathbf{P}$ of $\mathbf{P}$, i.e., on the order complex of $P$. Quillen has shown \cite{quillenI} that there is an isomorphism
\[
\mathrm{H}_*(\mathbf{P},\cF)\cong\mathrm{H}_*(B\mathbf{P},\cF)
\]
between the homology groups of the category $\mathbf{P}$ and the classical homology groups of the space $B\mathbf{P}$, with local coefficients (here for simplicity denoted with the same symbol $\cF$). In order to show it, one considers the skeleton filtration 
\[
B\mathbf{P}^{(0)}\subseteq B\mathbf{P}^{(1)}\subseteq\dots 
\]
and the associated spectral sequence with $E^1$-term $E^1_{p,q}=\mathrm{H}_{p+q}(B\mathbf{P}^{(p)}), B\mathbf{P}^{(p-1)},\cF)$. When $q=0$, the $E^1$-term yields the homology groups $\mathrm{H}_p(\mathbf{P},\cF)$. The spectral sequence converges to $\mathrm{H}_p(B\mathbf{P},\cF)$, providing the isomorphism. In a similar fashion, Turner and Everitt have defined the so-called cellular cohomology groups of posets, as we shall recall in the next subsection.

\subsection{Cellular poset cohomology}

Cellular poset (co)homology is a rather general (co)homology theory of posets introduced by Turner and Everitt~\cite{TurnerEverittCell}.   
The cellular poset (co)chain groups are defined using a relative version of functor (co)homology and, for a rather large class of posets, it agrees with functor (co)homology, providing a tool to the computation of the higher (co)limits of functors on posets. We now proceed by reviewing its definition in the cohomological case (the homological case is analogous).

In what follows, we assume that $P$ is a finite and ranked poset, with rank function $\rk\colon P\to \bN$. Let~$r\coloneqq \max \{\rk{x}\mid x\in P\}$ be the maximum rank; then one can filter $P$ with sub-posets
\[
P^k\coloneqq \{x\in P\mid \rk(x)\geq r-k\} \ ,
\]
yielding a filtration $P^0\subseteq P^1\subseteq \dots\subseteq P^r=P$. Let $\cF$ be a  \emph{contravariant} functor (a {presheaf}) on  (the associated category of) $P$.

\begin{defn}\cite[Definition~2.1]{TurnerEverittCell}
    The cellular cochain complex has cochain groups
    \[
    {\rm C}^i_{\text{cell}}(P;\mathcal{F})\coloneqq \mathrm{H}^n(\mathbf{P}^i,\mathbf{P}^{i-1},\cF)
    \]
    where $\mathrm{H}^n(\mathbf{P}^i,\mathbf{P}^{i-1},\cF)$ are the relative functor cohomology groups.
\end{defn}
The differentials are also induced from functor cohomology  -- see \cite{TurnerEverittCell} for a description of the differential. Observe that, as taking the classifying space of a category is natural, the relative cohomology groups appearing in the definition can be interpreted as the usual relative cohomology groups (of the associated classifying spaces). 
One can compute explicitly this complex via the following formulae 
\[{\rm C}^i_{\text{cell}}(P;\mathcal{F}) = \begin{cases} \bigoplus_{\rk (x) = n} \mathcal{F}(x) & i = 0\\ \bigoplus_{\rk (x) = n - i} \widetilde{{\rm H}}^i(\vert NP_{>x}\vert ,\mathcal{F}(x)) & i>0 \\
\end{cases}\]
where $N$ denotes the nerve, $\vert\ \cdot \ \vert$ denotes the geometric realisation, and $\widetilde{{\rm H}}^*$  denotes the usual reduced singular cohomology -- see \cite[Propositions~3,~4 and~5]{TurnerEverittCell}.

Note that the functors appearing in the definition of cellular cohomology are contravariant, hence defined on  $\mathbf{P}^{\text{op}}$.
The constant functor can be seen both as a covariant and as a contravariant functor, hence computations can be carried on in both cases. We now proceed with an example of calculation, computing the cellular cohomology of a path poset, with respect to the contravariant constant functor.

\begin{example}\label{ex:multivscell1}
Consider the poset $P$ and the graph $\tG$ represented in Figure~\ref{fig:pposet1}. The path poset $P(\tG)$ is isomorphic to $P$. For a fixed field $\bK$, consider the constant functor~$\bK$ on the category associated to the poset $P$. 
    \begin{figure}[h]
	\centering
		\begin{tikzpicture}[baseline=(current bounding box.center), scale =.85]
			\tikzstyle{point}=[circle,thick,draw=black,fill=black,inner sep=0pt,minimum width=2pt,minimum height=2pt]
			\tikzstyle{arc}=[shorten >= 8pt,shorten <= 8pt,->, thick]
			
			\node (v0) at (0,0) {};
			\draw[fill] (0,0)  circle (.05);
			\node (v1) at (-3,2) {};
			\draw[fill] (-3,2)  circle (.05);
			\node (v2) at (-1,2) {};
			\draw[fill] (-1,2)  circle (.05);
			\node (v3) at (1,2) {};
			\draw[fill] (1,2)  circle (.05);
			\node (v4) at (3,2) {};
			\draw[fill] (3,2)  circle (.05);
			\node (v5) at (-1,4) {};
			\draw[fill] (-1,4)  circle (.05);
			\node (v6) at (1,4) {};
			\draw[fill] (1,4)  circle (.05);
			
			\draw[thick, -latex] (v0) -- (v1);
			\draw[thick, -latex] (v0) -- (v2);
			\draw[thick,  -latex] (v0) -- (v3);
			\draw[thick, -latex] (v0) -- (v4);
			
			\draw[thick,  latex-] (v6) -- (v3);
			\draw[thick, latex-] (v6) -- (v4);
			\draw[thick,  latex-] (v5) -- (v1);
			\draw[thick, latex-] (v5) -- (v2);
			
			\begin{scope}[shift ={+(5,2)}]
			\node (v0) at (0,0) {};
			\draw[fill] (0,0)  circle (.05) node[above left] {$v_0$};
			\node (v1) at (2,0) {};
			\draw[fill] (2,0)  circle (.05) node[right] {$v_1$};
			\node (v2) at (4,0) {};
			\draw[fill] (4,0)  circle (.05) node[above right] {$v_2$};

			\draw[thick, bunired, latex-] (v1) .. controls +(.5,.5) and +(-.5,.5) .. (v2);
			\draw[thick, bunired, latex-] (v2) .. controls +(-.5,-.5) and +(.5,-.5) .. (v1);
			\draw[thick, bunired, latex-] (v0) .. controls +(.5,.5) and +(-.5,.5) .. (v1);
			\draw[thick, bunired, latex-] (v1) .. controls +(-.5,-.5) and +(.5,-.5) .. (v0);
			\end{scope}
		\end{tikzpicture}
			\caption{The poset $P$ (left) and a graph realising $P$ as its path poset. }\label{fig:pposet1}
\end{figure}
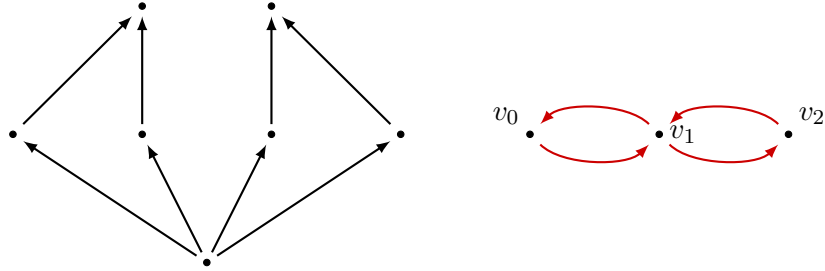

We now compute the cellular cochain groups  of the poset $P$. First, observe that the poset $P$ is ranked with rank function $\mathrm{rk}$ given by the distance from the minimum; this function is bounded with maximum value $r=2$, which is achieved by the maximal elements. The corank function is defined to be $|x|\coloneqq 2-\mathrm{rk}(x)$. In degree $0$, the cellular cochain complex is generated by the (evaluation of the constant) functor $\bK$ at the maxima, obtaining:
\[
C_{\text{cell}}^0(P;\bK)\cong \bK^2 \ .
\]
In order to analyse the higher degrees, we use \cite[Proposition~3]{TurnerEverittCell}:
\[
C_{\text{cell}}^n(P;\bK)\cong \prod_{|x|=n}\mathrm{H}^n(P_{\geq x}, P_{>x}; \bK)
\]
with the convention that, $\mathrm{H}^n(P_{\geq x}, \emptyset; k) = \mathrm{H}^n(P_{\geq x}; k) $ -- see \cite[ pg.~140]{TurnerEverittCell} -- and where $\mathrm{H}^*$ denotes functor cohomology.  
Then, for the elements $x$ in $P$ of corank $1$, we get
\[
\mathrm{H}^1(P_{\geq x}, P_{>x}; \bK)\cong \widetilde{\mathrm{H}}^{0}(P_{>x}; \bK)
\]
by \cite[Proposition~4]{TurnerEverittCell}. As $P_{>x}$ consists of a single point, we get $\widetilde{\mathrm{H}}^{0}(P_{>x}; \bK)\cong \widetilde{\mathrm{H}}^{0}(\{*\} ; \bK)\cong 0$ (cf.~\cite[pg.~140]{TurnerEverittCell}). Therefore, we have:
\[
C^1(P;\bK)\cong 0 \ .
\]
We conclude the computation of the cellular cohomology groups by analysing $C^2(P;\bK)$, as there are no elements of corank $\geq 3$. There is only a single element $m$ of corank $2$, given by the minimum of $P$, and the geometric realisation of $P_{>m}$ consists of  two intervals. By \cite[Propositions~3 \& 4]{TurnerEverittCell}, 
\[
C_{\text{cell}}^2(P;\bK)\cong \widetilde{\mathrm{H}}^{1}(P_{>m}; \bK)\cong 0 \ .
\]
Therefore,  it follows that the cellular cohomology, in this case, is concentrated in degree $0$ where its dimension is $2$.
\end{example}

Arguing as in Example~\ref{ex:multivscell1}, we have the analogue of Example~\ref{ex:pushout}:

\begin{example}\label{ex:gfahia}
	
	Consider the path poset associated to the digon digraph -- see Figure~\ref{fig:posetdigon}. Then, $P$ has a unique element $m$ of rank $0$ and two elements of rank $1$. Then, 
	\[
C_{\text{cell}}^0(P;\bK)\cong \bK^2 
\]
generated by the elements of rank $1$. The group $C_{\text{cell}}^1(P;\bK)$, instead, is isomorphic to $\widetilde{\mathrm{H}}^{1}(P_{>m}; \bK)\cong \bK$. The differential acts by $(x,y)\mapsto x-y$, giving $\mathrm{H}_{\text{cell}}^1(P;\bK)\cong \bK$ and $0$ in other degrees. When passing to arbitrary coefficients, as in Example~\ref{ex:pushout}, let $A_i\coloneqq \cF(i)$ and set $f^*\coloneqq \cF(0\to 1)$, $g^*\coloneqq \cF(0\to 2)$. Then, the cellular cochain complex becomes 
\[
0\to A_1\oplus A_2\to A_0\to 0
\]
with unique differential $(a,b)\mapsto f^*(a)-g^*(b)$. 
\end{example}

Using a spectral sequences argument, one can prove that, for certain ranked and finite posets, cellular cohomology groups compute the higher limits of (a contravariant functor) $\cF$. We first recall -- see \cite[Definition~3.1]{TurnerEverittCell} -- that a ranked poset is \emph{cellular} if and only if for every contravariant functor~$\cF$ on $\mathbf{P}$, the relative functor cohomology groups $\mathrm{H}^i(\mathbf{P}^n,\mathbf{P}^{n-1},\cF)$ are $0$ for all $i\neq n$. For example, for $X$ a regular CW-complex, the face poset $P(X)^{\text{op}}$ with reversed inclusion (hence, $x\leq y$ if and only if $y\subseteq x$) is cellular -- see \cite[Section~4.1]{TurnerEverittCell}. By \cite[Theorem~1]{TurnerEverittCell}, when $P$ is a cellular poset and $\cF\colon \mathbf{P}\to \mathbf{Ab}$ a controvariant functor, there are isomorphisms
$\mathrm{H}_{\text{cell}}^*(\mathbf{P},\cF)\cong \mathrm{H}^*(\mathbf{P},\cF)$ between cellular cohomology groups and the functor cohomology groups, showing that for a large class of posets cellular (co)chain groups compute the higher (co)limits.

\subsection{Comparisons on path posets}\label{sec:Turner}

In this subsection we restrict to posets arising as path posets of digraphs.
The idea of defining graph homologies using the path poset is, to the best of the author's knowledge, due  to Turner and Wagner, and inspired this work.
In \cite{turner}, Turner and Wagner make use of functor homology to define a graph homology, as the functor homology groups of the (category associated to the) path poset. 
In the special case $\mathcal{F} = \mathcal{F}_{A,M}$, that is the functor defined in Equation~\eqref{eq:functor_pathposet} (or, better, a symmetrised version of it, cf.~\cite{turner}), we get what we call the \emph{Turner-Wagner homology}~$TW$ of $\tG$:
\[ TW_*(\tG;A,M) :={\rm H}_*({\bf P}(\tG); \mathcal{F}_{A,M} ) \ . \]
Here we point out a small technical issue; if the module $M$ is different from $A$, we have to fix a base vertex, and the theory provides an homology for \emph{based digraphs}, i.e.~graphs with a base vertex, exactly as in our case -- cf.~Remark~\ref{rem:basedhomology}.
As every category with an initial element, with respect to the constant functor, has the homology of a point, we obtain the following:

\begin{rem}
We have that $ TW_{0}(\tG;R,R) \cong R$, and $ TW_{i}(\tG;R,R) = 0$ for $i>0$.
\end{rem}

An immediate consequence of the previous remark and of the examples in Subsection~\ref{subs:examples}, along with Example~\ref{ex:pushout}, is the following result.

\begin{rem}
The (co)homologies $TW$ and ${\rm H}_{\mu}$ are not isomorphic nor dual to each other. 
\end{rem}

In order to understand the precise relation between the multipath cohomology of a graph and Turner-Wagner homology, we use cellular cohomology as an intermediate theory. In the following, we aim to show that, after some mild modifications of the path poset, all these theories agree.
However, despite the similarities, it is easy to see that these are different ``on the nose'':

\begin{example}
	Consider the path poset $\tt P_1$  of the digon graph -- see Figure~\ref{fig:digon}. Note that the associated category is the pushout category $1\leftarrow 0\rightarrow 2$. As shown in Example~\ref{ex:pushout}, for an algebra $A$ and the functor $\cF_{A,A}\colon \mathbf{P}\to \bA$ described in Equation~\eqref{eq:functor_pathposet}, we have $\cF_{A,A}(0)=A\otimes A $ and $\cF_{A,A}(1)=\cF_{A,A}(2)=A$. The functor homology groups are the homology groups of the complex 
	\[
	0\to A\otimes A\oplus A\otimes A\to A\otimes A\oplus A\oplus A \to 0
	\]
	whose differential is given by
	\[
	(a_0\otimes b_0,a_1\otimes b_1)\mapsto (a_0\otimes b_0+a_1\otimes b_1, -a_0b_0, -a_1b_1) \ .
	\]
	Note that, as the path poset has a minimum, the functor \emph{cohomology} groups are all trivial in higher degree, and isomorphic to $A\otimes A$ in degree $0$.
	The functor $\cF_{A,A}$ is not directly defined on $\mathbf{P}^\text{op}$, so we can not directly compute the associated cellular cohomology groups. However, $\cF_{A,A}$ can be seen as a contravariant functor on $\mathbf{P}^\text{op}$, in which case  the cellular cohomology groups can be computed. Observe that the only non-trivial cellular cochain group in this case is $C_{\text{cell}}^0(\mathbf{P}^{\text{op}},\cF_{A,A})\cong A\otimes A$. Note also that the cellular \emph{homology} groups would be trivial because of the analogue of \cite[Theorem~1]{TurnerEverittCell} in this context. 
	When considering the multipath cohomology cochain complex, we get:
	\[
	0\to A\otimes A\to A\oplus A\to 0
	\]
	with unique differential
	\[
	a\otimes b \mapsto (ab, - ba) \ .
	\]
	To be concrete, when $A = \bK$ we get that functor homology and cellular cohomology are both of dimension~$1$ concentrated in degree~$0$, whereas multipath cohomology is of dimension $0$ concentrated in degree~$1$.
\end{example}

The previous example shows that the poset homology theories described in these section, when evaluated at the path poset, are not the same on the nose. However, they become all equivalent after some mild modification of the path poset, as we now shall explain. 

Let $\tG$ be a digraph and let $\mathbf{P}(\tG)^{\mathrm{op}}$ be the opposite category (with same objects as $\mathbf{P}(\tG)$ but reversed arrows) of $\mathbf{P}(\tG)$.  Consider the category $\mathbf{Q}(\tG)\coloneqq \mathbf{P}(\tG)^{\mathrm{op}}\setminus\{\emptyset\}$ obtained from $\mathbf{P}(\tG)^{\mathrm{op}}$ by removing the empty multipath -- \emph{i.e.,}~the terminal object in $\mathbf{P}(\tG)^{\mathrm{op}}$. Note that $\mathcal{F}_{A,M}$ is a functor on~$\mathbf{P}(\tG)$, hence a presheaf on $\mathbf{P}(\tG)^{\mathrm{op}}$. Then, the cellular cochain groups  ${\rm C}^i_{\text{cell}}(\mathbf{Q}(\tG);\mathcal{F}_{A,M})$ and $ C_{\mu}^{i+1}(\tG; \mathcal{F}_{A,M}) $ are isomorphic for all~$i\geq 0$. Furthermore, this isomorphism is an isomorphism of chain complexes  ${\rm C}^*_{\text{cell}}(\mathbf{Q}(\tG);\mathcal{F}_{A,M})\cong C_{\mu}^{*\geq 1}(\tG; \mathcal{F}_{A,M}) $. Therefore we obtain the following remark.

\begin{rem}\label{rem:multi-funct-cell}
 Although $P(\tG)$ is not cellular in the sense of \cite{TurnerEverittCell},  $Q(\tG)$ is -- cf.~\cite[Section~4.1]{TurnerEverittCell}; thus the previous isomorphism of cochain complexes, together with \cite[Theorem~1]{TurnerEverittCell}, provides  isomorphisms
 \[
 \mathrm{H}_\mu^{i}(\tG; {A,M}) \cong {\rm H}^{i-1}_{\text{cell}}(\mathbf{Q}(\tG);\mathcal{F}_{A,M})\cong {\rm H}^{i-1}(\mathbf{Q}(\tG); \mathcal{F}_{A,M} )
 \]
 of cohomology groups between $\mathrm{H}_\mu^{i}(\tG; {A,M}) $, the cellular cohomology ${\rm H}^{i-1}_{\text{cell}}(\mathbf{Q}(\tG);\mathcal{F}_{A,M})$ and the functor cohomology groups ${\rm H}^{i-1}(\mathbf{Q}(\tG); \mathcal{F}_{A,M} )$, for all $i>1$.
 \end{rem}

In the light of Remark \ref{rem:multi-funct-cell}, one can wonder if the graded module obtained by removing the minimum from the path poset in the Turner-Wagner construction, and multipath cohomology become related. However, this is not generally the case, as shown by the next example. Before that, recall that the face poset of a simplicial complex $X$ is the poset on the set of simplices of $X$, ordered by containment. 
The augmented face poset of $X$ is its face poset together with a minimum element $\emptyset$ corresponding to the empty
simplex. 

\begin{example}
The path poset $P(\tG)$ is the augmented face poset of a topological space $X = X(\tG)$ -- see \cite[Section~6]{secondo}. Note that the geometric realisation of an augmented face poset is always contractible (since it is the cone on the geometric realisation of the face poset). In particular,  the geometric realisation $B(P(\tG))$ is the cone over $B(P(\tG)\setminus \{\emptyset\})$. 
For $A = R$ the base ring, the functor homology of (the category associated to) $P(\tG)\setminus \{\emptyset\}$ 
with coefficients in $\cF_{R,R}$ agrees with the simplicial homology of $X\simeq B(P(\tG)\setminus \{\emptyset\})$ with coefficients in $R$. On the other hand,
it is not difficult to see -- see \cite[Theorem~6.8]{secondo} --  that multipath cohomology is simplicial, i.e.,
\[ \widetilde{\rm H}^{n}(X;R) \cong \mathrm{H}_\mu^{n+1}(\tG; R), \]
where $\widetilde{\rm H}^*$ denotes the reduced simplicial cohomology. Therefore, although the Turner-Wagner homology $TW_*(\tG;\cF_{R,R})={\rm H}_{*}(\mathbf{P}(\tG); \mathcal{F}_{R,R} )$ is always trivial (as $P(\tG)$ is an augmented face poset), after removing the minimum element, the associated functor homology ${\rm H}_{i}(\mathbf{P}(\tG)\setminus\{ \emptyset\}; \mathcal{F}_{R,R} )$ is not. In fact,  the functor homology groups
${\rm H}_{i}(\mathbf{P}(\tG)\setminus\{ \emptyset\}; \mathcal{F}_{R,R} )$ and the multipath cohomology groups $\mathrm{H}_\mu^{i+1}(\tG; R)$ are related, for $i\geq 2$, by the standard universal coefficients theorem.
The induced short exact sequence
\[
0\to \mathrm{Ext}^1_R({\rm H}_{i-1}(\mathbf{P}(\tG)\setminus\{ \emptyset\}; \mathcal{F}_{R,R})) \to \mathrm{H}_\mu^{i+1}(\tG; R) \to \mathrm{Hom}_R({\rm H}_{i}(\mathbf{P}(\tG)\setminus\{ \emptyset\}; \mathcal{F}_{R,R}))\to 0
\]
features an $\mathrm{Ext}$ functor, which is non trivial in general. For instance, taking $A = R = \bZ$, the multipath cohomology of the bipartite complete graph ${\tt K}_{5,5}$ has $3$-torsion (\cite[Proposition 4.5]{spri2}).
\end{example}

The connection shown in the previous example between functor homology and multipath cohomology is given by two facts; the first, that functor (co)homology of a category, for nice functors, agrees with the usual (co)homology of the classifying space (with local coefficients as in \cite[Section 7]{QuillenPoset}), and the second, that the classifying space of the opposite category is naturally homeomorphic to the classifying space of the category itself. As shown in Remark~\ref{rem:multi-funct-cell}, multipath cohomology and functor cohomology agree (in degree $i\geq 2 $) when we pass to the opposite category associated to the path poset. Then, in the Turner-Wagner approach, which uses functor homology, one computes the higher colimits of $\cF$, whereas multipath cohomology provides a way to compute the higher limits of $\cF\circ \mathrm{op}$; when $\cF$ is a coefficient system in the sense of Quillen, as in the case of constant functors, higher limits and colimits are computed as usual cohomology on the classyfing  spaces; then, as the $\mathrm{op}$ functor does not change the homotopy type of the classifying spaces, the assertion follows. Note that this does not provide a precise relation for non-local coefficients (e.g.~$\mathcal{F}_{A,M}$, $A\neq \bK$).  In particular, this reasoning does not provide a precise relationship between Turner-Wagner and multipath cohomologies.

\begin{rem}
All said above provides an alternative way to define multipath cohomology; i.e.,~after passing to the path poset and removing the minimum, one can take the opposite associated category and compute (equivalently) either the higher limits of $\cF_{A,M}$ or the associated cellular cohomology groups (as the obtained poset is now cellular). However, functor and cellular cohomologies are not directly computable from the definitions, whereas poset homology happens to be quite computable, also algorithmically. The approach has shown to be fruitful in computing the multipath cohomology of all linear graphs -- cf.~\cite{secondo}.
\end{rem}

To conclude the comparisons, we point out that, in special cases, like the linear graph $\tI_n$ and the polygonal graph $\tP_n$ the difference between the multipath and the Turner-Wagner homologies is controlled. 
This is also due to the fact that both homologies provide roughly the same amount of information as the chromatic homology -- see \cite{turner-everitt,turner} for relations between $TW$ and the chromatic homology. 
In the next subsection we recall the definition of the latter homology, and prove a comparison result for the graphs $\tI_n$ and $\tP_n$.

\section{Comparison with chromatic homology}\label{sec:chromatic}

In this section we compare multipath cohomology with chromatic homology of unoriented graphs \cite{HGRong,Prz}. The latter can be seen as a special case of the construction in Section~\ref{sec:digraph_hom}; in light of this observation, we can interpret  multipath cohomology as an extension of chromatic homology to the directed setting.

In the first subsection, we briefly revise the construction of the chromatic homology (both in its original version \cite{HGRong} and in Przytycki's variant \cite{Prz}). We argue that the multipath cohomology of a graph differs from either of these theories computed for the underlying unoriented graph. This uses the fact that multipath cohomology is sensible to orientations. Nonetheless, in the special case of coherently oriented polygonal graphs and linear graphs, we prove that the two (co)homology theories contain the same amount of information -- cf.~Theorem~\ref{thm:isowithPr}. 
As a consequence, see Corollary~\ref{cor:hochschild}, we obtain that the multipath cohomology of the coherently oriented polygon recovers (a truncated version of) the Hochschild homology of its coefficients.  As an application of the functoriality, in the second subsection we clarify the relationship between multipath cohomology and chromatic homology, providing the long exact sequence relating multipath and chromatic cohomologies.

\subsection{Chromatic homologies}

In this subsection we review the construction of two graph homology theories.  The first of these homologies goes under the name of chromatic homology and was introduced by Helme-Guizon and Rong~\cite{HGRong}. The second homology is a variation of the chromatic homology, and it is due to Przytycki \cite{Prz}.

Let $\tt{G}$ denote a {\bf unoriented} graph with ordered edges and a base vertex $v_0$.  Let $A$ be a {\bf commutative} unital $R$-algebra, and $M$ be an $(A,A)$-bimodule.  Assume that the $A$-action on $M$ is symmetric -- that is~$a\cdot m = m\cdot a $ for all $m\in M$ and $a\in A$.  To each spanning sub-graph $\tt{H}\in SSG(\tG)$ we associate the module
\[ M({\tt H})= M \otimes \bigotimes_{c\ \not\ni\ v_0} A_{c} \ ,\]
where $c$ ranges among the connected components of $\tt H$ -- ordered arbitrarily.
If $\tt{H} \prec \tt{H'}$  then ${\tt H} \cup  e  = \tt{H'}$, for some edge $e$. We can define a map ${\tt d}_{{\tt H} \prec {\tt H'}}\colon M({\tt H}) \to M({\tt H'})$ (cf.~\cite{HGRong}). There are two cases to consider depending on the number of components merged by $e$;
\begin{itemize}
	\item[(i)] the edge $e$ is incident into two distinct components of $\tt H$. We have a natural identification of the components of $\tt H$ and $\tt H'$ that do not share vertices with $e$. Furthermore, precisely two distinct components, say $c_1$ and $c_2$, of $\tt H$ are merged into a single component of $\tt H'$, say $c'$. The map ${\tt d}_{{\tt H} \prec {\tt H'}}\colon M({\tt H}) \to M({\tt H'})$  is defined as the identity on all factors, but those corresponding to $c_1$ and $c_2$, where it behaves as follows
	\[ A_{c_1} \otimes A_{c_2} \to A_{c'} :\ a\otimes b \mapsto ab = ba\]
	or, if $c_{3-i}$ for $i\in\{1,2\}$ contains the marked vertex, 
	\[ M \otimes A_{c_i} \to A_{c'} :\ a\otimes b \mapsto a\cdot b \ ;\]
	
	\item[(ii)]\label{item:casePrhom} the edge $e$ is incident to a single  component of $\tt H$. There is a natural identification of {\bf all} components of $\tt H$ and $\tt H'$, and the map ${\tt d}_{{\tt H} \prec {\tt H'}}\colon M({\tt H}) \to M({\tt H'})$  is taken to be the corresponding identification of the associated modules.
\end{itemize}
Similarly,  Przytycki (cf.~\cite{Prz}) defines the map $\widehat{\tt d}_{{\tt H} \prec {\tt H'}}\colon M({\tt H}) \to M({\tt H'})$ as above, but setting it to be the zero map instead of the identity in case (ii).
The cochain complexes 
\[ ({C}^{*}_{\rm Chrom}({\tt G};A,M),{\tt d}^*)\quad\text{and}\quad(\widehat{C}^{*}_{\rm Chrom}({\tt G};A,M),\widehat{\tt d}^*) \ ,\]
are defined as follows:
\[ {C}^{i}_{\rm Chrom}({\tt G};A,M)= \widehat{C}^{i}_{\rm Chrom}({\tt G};A,M) = \bigoplus_{\tiny \begin{matrix}{\tt H} \subset {\tt G}\\ \# E({\tt H }) = i \end{matrix}} M({\tt H}) \ ,\]
and, for $x\in M({\tt H}) $,
\[ {\tt d}(x) = \sum_{\tt{H} \prec \tt{H'}} (-1)^{\zeta(\tt{H} \prec \tt{H'})} {\tt d}_{{\tt H} \prec {\tt H'}}(x) \qquad\text{and}\qquad \widehat{\tt d}(x) =\sum_{\tt{H} \prec \tt{H'}} (-1)^{\zeta(\tt{H} \prec \tt{H'})} \widehat{\tt d}_{{\tt H} \prec {\tt H'}}(x) \ ,\]
where $\zeta$ is defined as:
\begin{equation}\label{eq:zeta} \zeta ({\tt H}\prec {\tt H} \cup e ) = \begin{cases} 0 & \text{if an even number of edges preceding } e \text{ belong to } {\tt H}\text{,} \\ 1 & \text{otherwise.}\end{cases}\end{equation}

\begin{rem}\label{rem:order edges}
	The chain complexes $ ({C}^{*}_{\rm Chrom}({\tt G};A,M),{\tt d}^*)$ and $(\widehat{C}^{*}_{\rm Chrom}({\tt G};A,M),\widehat{\tt d}^*)$ do not depend on the ordering of the edges up to isomorphism -- see~\cite{HGRong,Prz}. 
\end{rem}

Recall that $\tI_n$ denotes the $n$-step graph in Figure~\ref{fig:nstep}, and $\tP_n$ denotes the polygonal graph in Figure~\ref{fig:poly}. 

\begin{rem}\label{rem:edgePrs}
In the special case of the coherently oriented line graph~$\tI_n$ and of the polygon~$\tP_n$,
	the cochain complexes  $ ({C}^{*}_{\rm Chrom}({\tt G};A,M),{\tt d}^*)$ and $(\widehat{C}^{*}_{\rm Chrom}({\tt G};A,M),\widehat{\tt d}^*)$ can be extended, using the orientation of $\tI_n$ and of $\tP_n$, to the non-commutative context -- cf.~\cite[Remark 2.3 (ii)]{Prz} -- and this extension is perfectly identical to our definition of $\mu$ -- cf.~subsection~\ref{sec:multipathhom}.
\end{rem}

Observe that the chromatic homology theories can be recovered from the framework in Section~\ref{subs:homology}.

\begin{rem}\label{rem:cromandours}
	Consider an unoriented graph $\tG$ and the poset $P = SSG(\tG)\supseteq P(\tG)$. Recall that  $\mathbf{P}$ denotes the category associated to $P$. Consider the covariant functor $\mathcal{F}\colon \mathbf{P}\to R\text{-}\mathbf{Mod}$ defined by extending the functor $\mathcal{F}_{A,M}\colon \mathbf{P}(\tG)\to R\text{-}\mathbf{Mod}$, cf.~Equation~\eqref{eq:functor_pathposet}, to the whole   $SSG(\tG)$.  This extension is defined as follows: when the covering relation $\tH\prec\tH'$ is as in case (ii)  above,  $\mathcal{F}$ is either the identity or the $0$-map, depending whether we want to recover $({C}^{*}_{\rm Chrom}({\tt G};A,M),{\tt d}^*)$ or $(\widehat{C}^{*}_{\rm Chrom}({\tt G};A,M),\widehat{\tt d}^*)$. These constructions do not depend on signs by Corollary~\ref{cor:homology_not_sign}.
\end{rem}

The following theorem establishes a first relation between multipath and chromatic (co)homologies.

\begin{thm}\label{thm:isowithPr}
Let $A$ be a unital $R$-algebra, and $M$ an $(A,A)$-bimodule.  Then, we have the following isomorphisms of chain complexes (of $R$-modules):
		\begin{equation}\label{eq:caseIthm}
			(C^{*}_{\mu}(\tI_n;A,M),d)  \cong (\widehat{C}^{*}_{\rm Chrom}({\tt I}_n;A,M),\widehat{\tt d}) \cong ({C}^{*}_{\rm Chrom}({\tt I}_n;A,M),{\tt d})
		\end{equation}
	and 
	\begin{equation}\label{eq:intheemprHH}
		(C^{*}_{\mu}(\tP_n;A,M),d) \oplus (M[n+1],0) \cong (\widehat{C}^{*}_{\rm Chrom}({\tt P}_n;A,M),\widehat{\tt d}) \ ,
	\end{equation}
		where $ (M[n+1],0)$ is the cochain complex consisting of a copy of $M$ in degree $n+1$.
\end{thm}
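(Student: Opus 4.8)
The plan is to reduce both isomorphisms to a single structural observation: the multipath functor $\mathcal{F}_{A,M}$ and the chromatic functors differ only on the covering relations of \emph{case (ii)} (an edge incident to a single component), and for the graphs $\tI_n$ and $\tP_n$ these case-(ii) coverings are completely controlled. Throughout I would use Remark~\ref{rem:cromandours}, which realises both chromatic complexes inside the framework of Subsection~\ref{subs:homology} as $C^*_{\mathcal{F}}(SSG(\tG))$ for the appropriate extension $\mathcal{F}$ of $\mathcal{F}_{A,M}$ to $SSG(\tG)$, together with Remark~\ref{rem:edgePrs}, which guarantees that on $\tI_n$ and $\tP_n$ the orientation-based (non-commutative) extension of the chromatic differential agrees with $\mu$ on every case-(i) covering. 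Since $SSG(\tG)$ is Boolean and $P(\tG)$ is a downward-closed sub-poset, Corollary~\ref{cor:homology_not_sign} and Theorem~\ref{thm:uni_signs} let me ignore the distinction between the sign conventions $\sigma_{\rm e}$ and $\zeta$ up to isomorphism of complexes.

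For \eqref{eq:caseIthm} I would first recall from Example~\ref{ex:In} that $P(\tI_n) = SSG(\tI_n)$, so the underlying graded modules and the grading $\ell = \lgt$ coincide for all three complexes. The key point is that $\tI_n$ is acyclic: the edge $(v_{i-1},v_i)$ is the unique path in $\tI_n$ between its endpoints, so whenever this edge is not in $\tH$ the vertices $v_{i-1}$ and $v_i$ lie in distinct components of $\tH$. Hence every covering relation in $SSG(\tI_n)$ is of case (i), the two chromatic functors agree with each other and, by Remark~\ref{rem:edgePrs}, with $\mathcal{F}_{A,M}$, and all three differentials coincide up to the irrelevant choice of sign. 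This yields the chain of isomorphisms in \eqref{eq:caseIthm}.

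For \eqref{eq:intheemprHH} I would use Example~\ref{ex:Pn}: $SSG(\tP_n) = P(\tP_n) \cup \{\tP_n\}$, the full cycle being the unique spanning sub-graph that is not a multipath. The decisive calculation is to classify the case-(ii) coverings in $SSG(\tP_n)$. Since $\tP_n$ is a cycle, for a non-edge $e$ the only way both endpoints of $e$ sit in a single component of $\tH$ is that $\tH$ already contains the complementary arc, i.e.\ $\tH$ is a Hamiltonian path of length $n$ and $\tH \cup e = \tP_n$. Thus the case-(ii) coverings are exactly the $n+1$ coverings $\tH \prec \tP_n$, and in Przytycki's complex $\widehat{C}^*_{\rm Chrom}$ these are all the zero map. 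Because $\tP_n$ is maximal there are no coverings out of it either; consequently the summand $\mathcal{F}(\tP_n) = M$, sitting in degree $n+1$ (the full cycle is connected and contains $v_0$), carries no incoming or outgoing differential and splits off as the direct summand $(M[n+1],0)$.

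What remains is the subcomplex on $P(\tP_n) = SSG(\tP_n)\setminus\{\tP_n\}$. Every covering relation there is of case (i) --- a case-(ii) covering would close the unique cycle and hence have target $\tP_n \notin P(\tP_n)$ --- so its functor and differential agree with $\mathcal{F}_{A,M}$ and $\mu$ by Remark~\ref{rem:edgePrs}, identifying this subcomplex with $(C^*_\mu(\tP_n;A,M),d)$. Combining the two pieces gives the splitting \eqref{eq:intheemprHH}. The main obstacle I anticipate is bookkeeping rather than conceptual: one must check that the identifications $M(\tH) \cong \mathcal{F}_{A,M}(\tH)$ respect the (otherwise arbitrary) orderings of tensor factors, and that the orientation-induced multiplication order in the extended chromatic differential matches $a_{s(e,\tH)}\cdot a_{t(e,\tH)}$ exactly; both are handled by Remark~\ref{rem:edgePrs}, but they are precisely the places where the non-commutativity of $A$ genuinely matters.
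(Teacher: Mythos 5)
Your proposal is correct and follows essentially the same route as the paper's proof: both rest on Remark~\ref{rem:edgePrs} and Remark~\ref{rem:cromandours} together with the poset identifications $SSG(\tI_n) = P(\tI_n)$ and $SSG(\tP_n) = P(\tP_n)\cup\{\tP_n\}$, with sign issues dispatched by Theorem~\ref{thm:uni_signs} and Corollary~\ref{cor:homology_not_sign}. Your explicit classification of the case-(ii) coverings (none for $\tI_n$; exactly the coverings $\tH\prec\tP_n$ for the polygon, which is what makes the summand $M[n+1]$ split off) is precisely the detail the paper leaves implicit in its terse argument.
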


\begin{proof}
	By Remark~\ref{rem:edgePrs}, the cochain  complexes  $ ({C}^{*}_{\rm Chrom}({\tt G};A,M),{\tt d}^*)$ and $(\widehat{C}^{*}_{\rm Chrom}({\tt G};A,M),\widehat{\tt d}^*)$  can be defined for arbitrary unital $R$-algebras, using the orientation of the coherently oriented $n$-step graph~$\tI_n$ or the polygon~$\tP_n$. 
The proof follows directly from Remark~\ref{rem:cromandours} by noticing that $SSG(\tI_n) = P(\tI_n)$ and $SSG(\tP_n)$ is the poset $P(\tP_n)\cup \{ \tP_n \}$ obtained from the path poset~$P(\tP_n)$  by adding the $\tP_n$ as the maximum.
\end{proof}

\begin{cor}\label{cor:vanishinghomIN}
Let $A$ be a unital $R$-algebra, and $R$ a principal ideal domain. Then, for all $n\in \mathbb{N}$, we have $\mathrm{H}^i_{\mu}(\tI_n;A) = 0$, for all $i\in\bN \setminus \{  0 \}$, and\[ \mathrm{rank}_R(\mathrm{H}^0_{\mu}(\tI_n;A))= \begin{cases} \mathrm{rank}_R(A)(\mathrm{rank}_R(A)-1)^n & n>0,\\ 
\mathrm{rank}_R(A) & n = 0.
\end{cases}
\]
\end{cor}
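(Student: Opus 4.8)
The plan is to exploit the very simple combinatorial shape of the path poset of $\tI_n$ together with the unit of $A$, running an induction on $n$; no flatness of $A$ over $R$ is needed, and the hypothesis that $R$ is a PID enters only at the end, to pass from vanishing of cohomology to ranks. First I would make the complex explicit. By Example~\ref{ex:In}, $P(\tI_n)=SSG(\tI_n)$ is Boolean, so $C^i_\mu(\tI_n;A)=\bigoplus_{|S|=i}\mathcal{F}_{A,A}(\tH_S)$, where $S$ ranges over the $i$-element subsets of the edge set $\{e_1,\dots,e_n\}$ (with $e_j=(v_{j-1},v_j)$), the spanning sub-graph $\tH_S$ is a disjoint union of paths with $n+1-i$ components, and $\mathcal{F}_{A,A}(\tH_S)\cong A^{\otimes (n+1-i)}$ by Equation~\eqref{eq:functor_pathposet}; the differential is the signed sum of the component-merging multiplications. (By Theorem~\ref{thm:isowithPr} this is also the chromatic complex of $\tI_n$, but I will argue directly.)

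Next I would isolate the first edge $e_1$. Splitting the summands according to whether $e_1\in S$ exhibits $C^*_\mu(\tI_n;A)$ as a mapping cone: the subgraphs containing $e_1$ form a subcomplex isomorphic to $C^*_\mu(\tI_{n-1};A)[-1]$ (merging $v_0$ into the component of $v_1$ does not change the number of tensor factors), while those avoiding $e_1$ form the quotient $A\otimes_R C^*_\mu(\tI_{n-1};A)$ (the isolated vertex $v_0$ contributes a tensor factor $A$). The induced connecting map is the degree-$0$ chain map $\phi\colon A\otimes_R C^*_\mu(\tI_{n-1};A)\to C^*_\mu(\tI_{n-1};A)$ coming from adding $e_1$, i.e.\ the left action $a\otimes z\mapsto a\cdot z$ on the first tensor factor, so that $C^*_\mu(\tI_n;A)\cong\mathrm{Cone}(\phi)$. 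The crucial point is that $\phi$ is split surjective: the unit of $A$ gives the ($R$-linear) section $z\mapsto 1\otimes z$.

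The induction then proves that $C^*_\mu(\tI_n;A)$ is $R$-linearly chain homotopy equivalent to a single module $H_n$ in degree $0$, where $H_0=A$ and $H_n=\ker\big(A\otimes_R H_{n-1}\xrightarrow{\ \mathrm{act}\ }H_{n-1}\big)$. Indeed, an $R$-linear homotopy equivalence $C^*_\mu(\tI_{n-1};A)\simeq H_{n-1}[0]$ is carried by the additive functor $A\otimes_R(-)$, and by homotopy invariance of the mapping cone, to $\mathrm{Cone}(\phi)\simeq\mathrm{Cone}\big(A\otimes_R H_{n-1}\xrightarrow{\mathrm{act}}H_{n-1}\big)$; the latter is the two-term complex $[A\otimes_R H_{n-1}\to H_{n-1}]$ in degrees $0,1$, and splitting it along the unit section shows it is equivalent to $H_n[0]$. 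This simultaneously yields $\mathrm{H}^i_\mu(\tI_n;A)=0$ for $i>0$ and $\mathrm{H}^0_\mu(\tI_n;A)\cong H_n$, with base case $\tI_0$ (or $\tI_1$, where $C^*_\mu\simeq\ker(A\otimes_R A\to A)[0]$).

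Finally, for the ranks I would use the Euler characteristic over the PID $R$: since the cohomology is concentrated in degree $0$, $\mathrm{rank}_R\mathrm{H}^0_\mu(\tI_n;A)=\sum_{i=0}^n(-1)^i\,\mathrm{rank}_R C^i_\mu(\tI_n;A)=\sum_{i=0}^n(-1)^i\binom{n}{i}a^{\,n+1-i}=a(a-1)^n$, where $a=\mathrm{rank}_R A$ and I use multiplicativity of rank under $\otimes_R$ over a domain; for $n=0$ the complex is $A$, of rank $a$. (Equivalently, the split exact sequence $0\to H_n\to A\otimes_R H_{n-1}\to H_{n-1}\to 0$ gives the recursion $\mathrm{rank}\,H_n=(a-1)\mathrm{rank}\,H_{n-1}$.) The main obstacle is precisely the flatness-free bookkeeping in the inductive step: because $A$ need not be flat over $R$ one cannot tensor cohomology, so it is essential to propagate an honest $R$-linear chain homotopy equivalence $C^*_\mu(\tI_{n-1};A)\simeq H_{n-1}[0]$ (not merely a quasi-isomorphism) through $A\otimes_R(-)$ and through the cone, the unit of $A$ being exactly what provides the required splitting at each stage.
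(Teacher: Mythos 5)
Your proof is correct in strategy but follows a genuinely different route from the paper's, whose entire argument is: by Theorem~\ref{thm:isowithPr}, Equation~\eqref{eq:caseIthm}, the complex $C^{*}_{\mu}(\tI_n;A,M)$ is isomorphic to Przytycki's chromatic complex of the line graph, and the vanishing plus the rank formula are then quoted directly from \cite[Lemma~3.3]{Prz}. You instead re-prove everything from scratch: the decomposition of the Boolean poset $P(\tI_n)$ along the edge $e_1$, the presentation $C^{*}_{\mu}(\tI_n;A)\cong\mathrm{Cone}(\phi)$, the unit section splitting $\phi$, and an induction producing an $R$-linear homotopy equivalence with $H_n[0]$. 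What your route buys: it is self-contained (no external citation), it makes the flatness-free nature of the statement transparent, and it yields an explicit model $H_n=\ker(A\otimes_R H_{n-1}\to H_{n-1})$ of $\mathrm{H}^0_{\mu}(\tI_n;A)$ together with the clean recursion $\mathrm{rank}_R H_n=(a-1)\,\mathrm{rank}_R H_{n-1}$ coming from the split exact sequence, which is sharper than the Euler-characteristic count. What the paper's route buys: brevity, and the identification with chromatic homology, which is then reused for the polygon/Hochschild results.

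One step you assert without justification genuinely needs an argument: ``by homotopy invariance of the mapping cone, $\mathrm{Cone}(\phi)\simeq\mathrm{Cone}(\mathrm{act})$''. Homotopy invariance of cones requires the comparison square (with $\phi$ on top, $\mathrm{act}$ on the bottom, and the equivalences $\mathrm{id}_A\otimes p$ and $p$ as vertical maps) to commute at least up to homotopy; since your section $z\mapsto 1\otimes z$ is only $R$-linear, the inductively constructed equivalence $p$ carries no a priori $A$-equivariance, so this compatibility is not automatic. It does hold, for the following reason. Write $Y=C^{*}_{\mu}(\tI_{n-1};A)$ and note that the differential of $Y$ is left $A$-linear for the first-tensor-factor action (every merge either multiplies on the right of the first factor or involves other factors), so $\ker d^0_Y=\mathrm{H}^0(Y)$ is an $A$-submodule; take the $A$-action on $H_{n-1}$ to be the one transported along $p$ from $\ker d^0_Y$. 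If $h$ is a homotopy realising $\mathrm{id}_Y\simeq i\circ p$, then in degree $0$ one has $y-i^0p^0(y)=h^1(d^0y)$, hence $p^0(a\cdot y)-a\cdot p^0(y)=p^0\big(a\cdot h^1(d^0y)\big)$, so the discrepancy factors through $\mathrm{id}_A\otimes d^0_Y$ and $\widetilde h(a\otimes w)\coloneqq p^0(a\cdot h^1(w))$ is the required homotopy. With this verification (and with the sign discrepancies in your decomposition absorbed by Corollary~\ref{cor:homology_not_sign}), your induction closes and the rank computation over the PID $R$ goes through as you state.
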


\begin{proof} 
By Theorem~\ref{thm:isowithPr}, Equation \eqref{eq:caseIthm}, the statement follows directly from \cite[Lemma~3.3]{Prz}.
\end{proof}

For a unital $R$-algebra $A$ and  an $(A,A)$-bimodule $M$, denote by $\mathrm{HH}_*(A,M)$ the \emph{Hochschild homology} of $A$ with coefficients in the bimodule $M$ -- see, for instance, \cite[Section~1.1.3]{loday} for the definition. Let $ \widehat{\mathrm{H}}^*_{\rm Chrom}({\tt G};A,M)$ denote the homology of the complex $(\widehat{C}^{*}_{\rm Chrom}({\tt G};A,M),\widehat{\tt d})$. We conclude the section showing that the multipath cohomology groups of the polygon agree with the Hochschild homology of $A$ with coefficients in the bimodule $M$:

\begin{cor}\label{cor:hochschild}
	Let $A$ be a flat unital $R$-algebra, and $M$ an $(A,A)$-bimodule and let $\tP_n$ be the polygon (cf.~Figure~\ref{fig:poly}). Then, we have the following chain of isomorphisms of homology groups:
	\[ \mathrm{H}_{\mu}^{i}(\tP_n;A,M) \cong  \widehat{\mathrm{H}}_{\rm Chrom}^i({\tt P}_n;A,M) \cong \mathrm{HH}_{n-i}(A,M),\quad \text{for } i= 1,...,n.\]
\end{cor}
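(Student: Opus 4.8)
The plan is to break the stated chain into its two links and to reduce each to a result that is already available: the left-hand isomorphism to Theorem~\ref{thm:isowithPr}, and the right-hand one to Przytycki's computation in~\cite{Prz}. No genuinely new complex needs to be constructed.

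For the first link, $\mathrm{H}_{\mu}^{i}(\tP_n;A,M)\cong\widehat{\mathrm{H}}_{\rm Chrom}^i(\tP_n;A,M)$, I would start from the isomorphism of cochain complexes provided by Theorem~\ref{thm:isowithPr}, Equation~\eqref{eq:intheemprHH},
\[
(\widehat{C}^{*}_{\rm Chrom}(\tP_n;A,M),\widehat{\tt d})\;\cong\;(C^{*}_{\mu}(\tP_n;A,M),d)\oplus(M[n+1],0).
\]
Passing to cohomology and using that cohomology commutes with finite direct sums gives $\widehat{\mathrm{H}}_{\rm Chrom}^i\cong \mathrm{H}_{\mu}^i\oplus\mathrm{H}^i(M[n+1],0)$. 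The second summand is concentrated in degree $n+1$, hence vanishes for every $i\le n$; this yields the first isomorphism in the whole range $i=1,\dots,n$ and requires no hypothesis beyond those of Theorem~\ref{thm:isowithPr}.

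For the second link, $\widehat{\mathrm{H}}_{\rm Chrom}^i(\tP_n;A,M)\cong\mathrm{HH}_{n-i}(A,M)$, I would invoke~\cite{Prz}. By Remark~\ref{rem:edgePrs} the complex $(\widehat{C}^{*}_{\rm Chrom}(\tP_n;A,M),\widehat{\tt d})$ is exactly the oriented, non-commutative extension of Przytycki's chromatic complex of the polygon, so his argument applies without change. The structural feature driving the identification is worth recording: in the cycle the two endpoints of a missing edge $e$ lie in the same component of a spanning subgraph $\tH$ only if $\tH$ already contains all the remaining $n$ edges, so for $\#E(\tH)<n$ every covering $\tH\prec\tH\cup e$ is of type (i) and $\widehat{\tt d}$ acts by multiplication, whereas the single covering out of degree $n$ closes the cycle and is the zero map. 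The chromatic complex is nonetheless strictly larger than the Hochschild complex (in degree $i$ it carries $\binom{n+1}{i}$ copies of $M\otimes A^{\otimes(n-i)}$), so the passage to $\mathrm{HH}_{n-i}(A,M)$ is a genuine homology computation and not a complex isomorphism; this is precisely the content of~\cite{Prz}, which I would cite in the same manner \cite[Lemma~3.3]{Prz} was used for Corollary~\ref{cor:vanishinghomIN}. The flatness of $A$ over $R$ enters only here, to guarantee that the standard complex $M\otimes A^{\otimes\bullet}$ actually computes $\mathrm{HH}_*(A,M)$ over the general base ring $R$.

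I expect the main obstacle to lie entirely in this second link. Its formal invocation is immediate once Remark~\ref{rem:edgePrs} is in place, but the substance---matching the chromatic differential of the polygon, with its cyclic edge ordering and the signs $\zeta$ of Equation~\eqref{eq:zeta}, against the Hochschild boundary after the degree shift $i\mapsto n-i$, and checking that flatness makes the identification valid degree by degree---is exactly what is carried out in~\cite{Prz}. I would therefore keep the new content confined to the direct-sum argument of the first link and to the structural reduction above, deferring the homological identification to the cited reference.
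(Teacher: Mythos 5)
Your proposal is correct and follows essentially the same route as the paper: the paper's proof is a one-line deduction combining Theorem~\ref{thm:isowithPr}, Equation~\eqref{eq:intheemprHH} (whose direct summand $(M[n+1],0)$ only affects degree $n+1$, exactly as you argue) with Przytycki's computation, cited there as \cite[Theorem~3.1]{Prz}. Your additional commentary on the structure of the chromatic differential on the polygon and on where flatness enters is accurate but is subsumed in the citation of \cite{Prz}.
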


\begin{proof}
	The result follows directly from \cite[Theorem~3.1]{Prz} and Theorem~\ref{thm:isowithPr}, Equation \eqref{eq:intheemprHH}.
\end{proof}

\begin{rem}
By \cite[Theorem~1]{turner}, we have that $TW_{i}(\tP_n;A,M) \cong  \widehat{\mathrm{H}}_{\rm Chrom}^{n-i}({\tt P}_n;A,M)$ for $i$ in the set  $\{ 1,...,n\}$. From which it follows the isomorphism with the multipath cohomology in this case.
\end{rem}

We conclude this section by remarking that in general the chromatic and the multipath homologies are distinct, also in the case where $A$ is commutative.

\begin{prop}
The cohomologies  ${\rm H}_{\rm Chrom}$ and ${\rm H}_{\mu}$ are not isomorphic. 
\end{prop}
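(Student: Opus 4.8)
The plan is to prove non-isomorphism by exhibiting a single digraph on which the two theories disagree, working over a field so that $A$ is in particular commutative (as required by the chromatic construction, thereby also covering the commutative case emphasised in the surrounding discussion). Let $\tG$ denote the non-coherently oriented linear digraph on three vertices $v_0\to v_1\leftarrow v_2$ of Figure~\ref{fig:nnstep}, and take $A=M=\bK$. Its underlying unoriented graph is the path $P_3$ on three vertices, with edge set $\{e_1,e_2\}$, where $e_1=\{v_0,v_1\}$ and $e_2=\{v_1,v_2\}$. First I would record the multipath side, then compute the chromatic side of $P_3$, and finally compare.

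For the multipath cohomology, I would simply invoke the explicit computation carried out in Subsection~\ref{subs:examples} (recorded also in Table~\ref{tab: homology computatin}), which gives $\mathrm{H}^1_{\mu}(\tG;\bK)\cong\bK$ and $\mathrm{H}^i_{\mu}(\tG;\bK)=0$ for $i\neq 1$. The structural reason, worth emphasising, is that the full spanning subgraph $\{e_1,e_2\}$ is \emph{not} a directed simple path, since both edges are oriented into $v_1$; hence it is not a multipath, and $P(\tG)$ has no top element. Consequently the multipath cochain complex is $0\to\bK\to\bK^2\to 0$, concentrated in levels $0$ and $1$, and this missing top cell is exactly what leaves a generator in degree $1$.

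For the chromatic homology of $P_3$, I would compute directly from the definition in this section. Over $A=\bK$ each spanning subgraph $\tH\in SSG(P_3)$ contributes $M(\tH)=\bK^{\otimes(\#\text{components})}\cong\bK$, so the cochain complex has shape $0\to\bK\xrightarrow{{\tt d}^0}\bK^2\xrightarrow{{\tt d}^1}\bK\to 0$ in degrees $0,1,2$, the degree-$2$ summand corresponding precisely to the connected spanning subgraph $\{e_1,e_2\}$. Under the identifications $\bK\otimes_{\bK}\bK\cong\bK$ every covering map ${\tt d}_{\tH\prec\tH'}$ is $\pm$ the identity of $\bK$, so both ${\tt d}^0$ and ${\tt d}^1$ are nonzero, hence of rank $1$. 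Since $({C}^{*}_{\rm Chrom},{\tt d}^*)$ is a cochain complex, i.e.\ ${\tt d}^1{\tt d}^0=0$ (the signs $\zeta$ of Equation~\eqref{eq:zeta} being arranged for exactly this), the dimension count $\dim \mathrm{H}^1=2-\operatorname{rank}{\tt d}^0-\operatorname{rank}{\tt d}^1=2-1-1=0$ together with injectivity of ${\tt d}^0$ and surjectivity of ${\tt d}^1$ shows the complex is acyclic, so ${\rm H}^*_{\rm Chrom}(P_3;\bK)=0$ (this matches Remark~\ref{rem:cromandours}, which realises the chromatic complex inside the poset-homology framework).

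I would then conclude: since $\mathrm{H}^1_{\mu}(\tG;\bK)\cong\bK\neq 0={\rm H}^1_{\rm Chrom}(P_3;\bK)$, the two theories are not isomorphic. There is no serious obstacle here—the proof is a counterexample—so the only real content, and the point I would stress, is the contrast between the two complexes: the connected spanning subgraph $\{e_1,e_2\}$ furnishes a degree-$2$ cell for the chromatic theory, and its presence makes that complex acyclic, whereas the same subgraph is \emph{not} a multipath, so its absence leaves multipath cohomology supported in degree $1$. The mildest point of care is the chromatic computation, but since the conclusion depends only on both differentials being nonzero on a $(1,2,1)$-complex, it is robust to the precise sign conventions.
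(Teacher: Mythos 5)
Your proof is correct. Both the multipath side (the complex $0\to\bK\to\bK^2\to 0$ with injective differential, giving $\mathrm{H}^1_\mu\cong\bK$, as in Table~\ref{tab: homology computatin}) and the chromatic side check out: for the unoriented path on three vertices with $A=\bK$, all covering maps merge two components, so every component of ${\tt d}$ is $\pm\operatorname{id}_\bK$, both differentials of the $(1,2,1)$-complex are nonzero, and the rank count forces acyclicity.

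The paper proves the same proposition by a related but distinct mechanism, and it is worth contrasting the two. The paper never computes chromatic homology at all: it takes two different orientations of the same underlying unoriented graph (the coherently oriented $\tI_3$ and a non-coherently oriented linear graph), observes that their multipath cohomologies differ, and then uses the fact that chromatic homology is blind to orientation — it factors through the underlying unoriented graph — so it assigns the same groups to both digraphs and hence cannot agree with multipath cohomology on both. Your argument instead fixes a single digraph and computes both theories explicitly on it. The paper's route is shorter and more conceptual (the orientation-sensitivity of $\mathrm{H}_\mu$ is the whole point, and no chromatic complex ever appears), but it only shows the theories disagree on \emph{at least one of} the two graphs. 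Your route costs a small extra computation but buys more precise information: it exhibits a specific digraph and degree where the theories differ, and since $\mathrm{H}^*_{\rm Chrom}$ vanishes identically there while $\mathrm{H}^*_\mu$ does not, the conclusion is robust against any regrading or shift one might try to impose when comparing the two theories.
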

\begin{proof}
The multipath homology of the non-coherent $3$-step graph is different from the homology of $\tI_3$. Since the chromatic homology does not distinguish orientations the statement follows.
\end{proof}

\subsection{Short exact sequences and  chromatic homology}\label{sec:multipathvschrom}

Here we  apply the machinery developed in Section~\ref{sec:functoriality} to obtain a long exact sequence featuring both multipath and chromatic homologies. This clarifies the relationship between the two homology theories. 
As an application we  recover the isomorphisms in the case of the linear graph and polygonal graph, when $A$ is a commutative $R$-algebra.

For an oriented graph $\tG$, let $ \widehat{C}^*_{\rm Chrom}(\tG;A)$ be the chromatic cochain complex of the underlying unoriented graph. From the results in the previous section, it follows immediately:

\begin{prop}[Proposition \ref{prop:sesChromMulti}]
Let $\tG$ be an oriented graph, and let $A$ be a commutative $R$-algebra. Then, we have the following short exact sequence of complexes
\[ 0 \to \widetilde{C}_{\mu}(\tG;A)\ {\longrightarrow} \widehat{C}_{\rm Chrom}(\tG;A) \longrightarrow C_{\mu}(\tG;A)\to 0 \]
where we set
\[\widetilde{C}_{\mu}(\tG;A)\coloneqq C_{\mathcal{F}_{A,A}}(SSG(\tG)\setminus P(\tG))  \left[- \min_{x\in SSG(\tG) \setminus P(\tG)} \{ \ell (x) \} \right]\]
and we extended $\mathcal{F}_{A,A}(\tH \prec \tH\cup e )$ to be zero if the number of components of $\tH$ and $\tH\cup e$ is the same.
\end{prop}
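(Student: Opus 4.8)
The plan is to realise all three complexes as poset cohomology on the single Boolean poset $SSG(\tG)$, and then to read off the short exact sequence from the decomposition of $SSG(\tG)$ into the path poset and its complement. First I would invoke Remark~\ref{rem:cromandours}: the Przytycki complex $\widehat{C}^*_{\rm Chrom}(\tG;A)$ is, by construction, the complex $C^*_{\mathcal{F}}(SSG(\tG))$ associated to the functor $\mathcal{F}$ on $\mathbf{SSG}(\tG)$ obtained by extending $\mathcal{F}_{A,A}$ so that a covering relation $\tH\prec\tH\cup e$ acts by multiplication when $e$ merges two components and by the zero map otherwise (commutativity of $A$ is exactly what makes this extension well defined on the chromatic side). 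Since $SSG(\tG)$ is a Boolean poset (Example~\ref{exa:sub-graphs posets}) it is squared (Example~\ref{ex:bool is square}), and $P(\tG)$ is a downward closed sub-poset (Remark~\ref{rem:PG}); hence $U\coloneqq SSG(\tG)\setminus P(\tG)$ is upward closed.

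Next I would apply the general short exact sequence construction already carried out in the proof of Proposition~\ref{prop:seschfsy} (see also Proposition~\ref{prop:ses sub-graphs}) to this internal decomposition. Because $U$ is upward closed, every covering relation out of an element of $U$ stays in $U$, so the differential preserves the cochain groups $\bigoplus_{\tH\in U}\mathcal{F}(\tH)$ and these form a sub-complex; because $P(\tG)$ is downward closed (hence faithful, Example~\ref{ex:down-up are faith and square}), the induced quotient differential retains exactly the covering relations internal to $P(\tG)$. Tracking the level function (Definition~\ref{def:ell}), an element $\tH\in U$ sits in degree $\ell_{SSG(\tG)}(\tH)=\ell_U(\tH)+\min_{x\in U}\ell(x)$, which is precisely the shift appearing in the definition of $\widetilde{C}_\mu^*(\tG;A)$; thus the inclusion is degree preserving and we obtain
\[0\to C^*_{\mathcal{F}}(U)\Big[-\min_{x\in U}\ell(x)\Big]\to C^*_{\mathcal{F}}(SSG(\tG))\to C^*_{\mathcal{F}}(P(\tG))\to 0.\]
By construction the sub-complex is $\widetilde{C}_\mu^*(\tG;A)$ and the middle term is $\widehat{C}^*_{\rm Chrom}(\tG;A)$.

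The step I expect to require the most care is identifying the quotient $C^*_{\mathcal{F}}(P(\tG))$ with the multipath complex $C_\mu^*(\tG;A)$. This amounts to checking that $\mathcal{F}$ agrees with $\mathcal{F}_{A,A}$ on every covering relation inside $P(\tG)$: if $\tH$ and $\tH\cup e$ are both multipaths, then $e$ must join the endpoints of two distinct path-components — otherwise $\tH\cup e$ would contain a cycle or a vertex of degree at least three and fail to be a multipath (Definition~\ref{def:multipaths}) — so $e$ always merges two components and $\mathcal{F}$ is the multiplication map $\mu$, never the zero map of case (ii). With the three terms identified, the short exact sequence is established; the long exact sequence relating ${\rm H}^*_\mu$, $\widetilde{\rm H}^*_\mu$ and $\widehat{\rm H}^*_{\rm Chrom}$ then follows from the standard zig-zag (snake lemma) argument applied to a short exact sequence of cochain complexes, and the independence from the chosen sign assignments throughout is guaranteed by Corollary~\ref{cor:homology_not_sign}.
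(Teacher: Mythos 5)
Your proposal is correct and follows essentially the same route as the paper: the paper's proof likewise identifies $\widehat{C}^*_{\rm Chrom}(\tG;A)$ with $C^*_{\mathcal{F}}(SSG(\tG))$ for the extended functor of Remark~\ref{rem:cromandours} and then runs the argument of Proposition~\ref{prop:seschfsy} with the downward closed sub-poset $P = P(\tG)$ and its upward closed complement, exactly as you do. Your explicit check that every covering relation inside $P(\tG)$ merges two components (so the restriction of $\mathcal{F}$ to $P(\tG)$ is $\mathcal{F}_{A,A}$) is left implicit in the paper but is a correct and welcome detail.
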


\begin{proof}
Fix a graph $\tG$ and consider $\mathcal{F}\colon {\bf SSG}(\tG) \to {\bf A}$.
Following the proof of Proposition~\ref{prop:seschfsy} almost verbatim, we obtain that: if $P$ is a downward closed sub-poset of $SSG(\tG)$,  then we have the following short exact sequence of chain complexes
\[ 0 \to C_{\mathcal{F}_{\vert SSG(\tG) \setminus P}}(SSG(\tG) \setminus P) \left[ -\min_{x\in SSG(\tG) \setminus P} \{ \ell (x) \} \right] {\longrightarrow} C_{\mathcal{F}}(SSG(\tG)) \longrightarrow C_{\mathcal{F}_{\vert P}}(P) \to 0 \]
where the sign assignments are induced by any sign assignment on $SSG(\tG)$.~The statement now follows by taking $P = P(\tG)$ and $\mathcal{F}=\mathcal{F}_{A,A}$.
\end{proof}

As a consequence we (partially) recover one of the main results of this paper:

\begin{cor}
Let $A$ be a commutative unital $R$-algebra.~Then, we have the following isomorphisms of chain complexes (of $R$-modules):
		\begin{equation}
			(C^{*}_{\mu}(\tI_n;A ),d)  \cong (\widehat{C}^{*}_{\rm Chrom}({\tt I}_n;A ),\widehat{\tt d}) \cong ({C}^{*}_{\rm Chrom}({\tt I}_n;A ),{\tt d})
		\end{equation}
	and 
	\begin{equation}
		(C^{*}_{\mu}(\tP_n;A),d) \oplus (A[n+1],0) \cong (\widehat{C}^{*}_{\rm Chrom}({\tt P}_n;A ),\widehat{\tt d}),
	\end{equation}
where $ (A[n+1],0)$ indicates the cochain complex consisting of a copy of $A$ in degree $n+1$.
\end{cor}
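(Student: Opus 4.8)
The plan is to deduce both isomorphisms directly from the short exact sequence of Proposition~\ref{prop:sesChromMulti}, specialising it to $\tG = \tI_n$ and $\tG = \tP_n$ and analysing the sub-complex $\widetilde{C}_{\mu}$ in each case. Since $\widetilde{C}_{\mu}(\tG;A) = C_{\mathcal{F}_{A,A}}(SSG(\tG)\setminus P(\tG))[-\min_{x}\ell(x)]$, everything hinges on understanding the complement $SSG(\tG)\setminus P(\tG)$, which for these two graphs is especially simple. Throughout I would use Remark~\ref{rem:cromandours} to identify $\widehat{C}_{\rm Chrom}$ with the extension of $\mathcal{F}_{A,A}$ to $SSG(\tG)$, commutativity of $A$ being exactly what makes that extension (and hence the whole sequence) available.

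For $\tI_n$, Example~\ref{ex:In} gives $SSG(\tI_n) = P(\tI_n)$, so the complement is empty and $\widetilde{C}_{\mu}(\tI_n;A) = 0$. The short exact sequence then collapses to an isomorphism $\widehat{C}^*_{\rm Chrom}(\tI_n;A) \cong C^*_{\mu}(\tI_n;A)$, which is the first claimed isomorphism. For the remaining equality $\widehat{C}^*_{\rm Chrom}(\tI_n;A) \cong C^*_{\rm Chrom}(\tI_n;A)$, I would observe that $\tI_n$ is a tree: for any spanning sub-graph $\tH$ and any edge $e\in E(\tI_n)\setminus E(\tH)$, the endpoints of $e$ lie in distinct connected components of $\tH$, since otherwise $\tH\cup e\subseteq \tI_n$ would contain a cycle. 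Hence every covering relation falls into case~(i) of the chromatic construction, and case~(ii)---the only place where ${\tt d}$ and $\widehat{\tt d}$ differ---never occurs. The two chromatic differentials therefore coincide, so the complexes are literally equal.

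For $\tP_n$, Example~\ref{ex:Pn} gives $SSG(\tP_n) = P(\tP_n)\cup\{\tP_n\}$, so the complement is the single element $\{\tP_n\}$. As $\tP_n$ is connected, $\mathcal{F}_{A,A}(\tP_n) = A$, and with $\ell(\tP_n) = \lgt(\tP_n) = n+1$ the shift convention places this copy of $A$ in cohomological degree $n+1$; since the sub-poset is a singleton its internal differential is trivial, so $\widetilde{C}_{\mu}(\tP_n;A) = (A[n+1],0)$. The sequence then reads
\[ 0 \to (A[n+1],0) \to \widehat{C}^*_{\rm Chrom}(\tP_n;A) \to C^*_{\mu}(\tP_n;A) \to 0. \]
It remains to split it as complexes. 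The quotient $C^*_{\mu}(\tP_n;A)$ is supported in degrees $0,\dots,n$ while the sub-complex sits in degree $n+1$, so the sequence splits degreewise for trivial reasons; the only obstruction to a chain-level splitting is the component of $\widehat{\tt d}$ landing in degree $n+1$, namely $\widehat{C}^n_{\rm Chrom}(\tP_n)\to \widehat{C}^{n+1}_{\rm Chrom}(\tP_n)=A$. This map is a sum over the maximal multipaths (each a connected polygon-minus-one-edge) of the move that adds the last edge and closes the cycle; as this move does \emph{not} change the number of connected components, it is a case-(ii) move and hence zero for $\widehat{\tt d}$. With the top differential vanishing, the module-level identification in degrees $\le n$ assembles into a chain map, yielding $\widehat{C}^*_{\rm Chrom}(\tP_n;A)\cong C^*_{\mu}(\tP_n;A)\oplus (A[n+1],0)$. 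The main point to get right is exactly this vanishing of the top differential together with the bookkeeping of the degree shift; the rest is formal.
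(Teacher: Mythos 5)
Your proof is correct and follows essentially the same route as the paper: both specialise the short exact sequence of Proposition~\ref{prop:sesChromMulti} to $\tI_n$ and $\tP_n$ and observe that $SSG(\tG)\setminus P(\tG)$ is empty in the first case and the single point $\{\tP_n\}$ in the second. Your explicit check that the top chromatic differential $\widehat{C}^{n}_{\rm Chrom}(\tP_n;A)\to \widehat{C}^{n+1}_{\rm Chrom}(\tP_n;A)$ vanishes (being a case-(ii) move, since closing the cycle does not change the number of components) is precisely the non-formal ingredient that makes the sequence split at the chain level; the paper leaves this implicit in its ``immediate consequence'', relying on the extension of $\mathcal{F}_{A,A}$ by zero on such covering relations.
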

\begin{proof}
It is sufficient to notice that the poset $SSG(\tG)\setminus P(\tG)$ is either empty (if $\tG = \tI_n$) or a single point (if~$\tG = {\tt P}_n$). The corollary is an immediate consequence of Proposition~\ref{prop:sesChromMulti}.
\end{proof}

\section{Open questions}\label{sec:questions}

In this section we gather some open questions.

\begin{q}[Full functoriality]
We have shown in Subsection~\ref{sec:functoriality} that multipath cohomology is a bifunctor when restricting either to the category of rings or to the category of graphs with same number of vertices. Is it possible to lift this result simultaneously  to the full categories $\mathbf{Digraph}$ of directed graphs and $R$-$\mathbf{Alg}$ of $R$-algebras? If not, what are the obstructions to this extension?
\end{q}

\begin{q}
[Cyclic homology theories and extensions]
	One of the main  properties of ${\rm H}_{\mu}(-;A)$ (for a fixed $A$) is that it recovers (a truncation of) the Hochschild homology of $A$. To the best of the authors' knowledge, it is still open a question by \cite{Prz} whether or not it is possible to recover, in a similar fashion, also the cyclic homology groups of $A$ -- see \cite{loday} for the definition. 
Moreover, the construction in Section~\ref{subs:homology} can be generalised, by application of the nerve functor and a suitable adaptation, to the realm of $\infty$-categories -- cf.~\cite{lurieHTT}. In particular, this generalisation should hold for functors in the module categories over commutative ring spectra.  A topological enhancement of the cyclic homology theories is given by the so-called \emph{topological Hochschild homology} (or \emph{topological cyclic homology}) -- cf.~\cite{TC}. 
Do we have for {topological Hochschild homology}, cyclic homology, negative homology, or periodic homology, a result similar to Proposition~\ref{prop:multipath recover HH}?
\end{q}

\begin{q}[Categorification of graph invariants]
The chromatic homology is named after the chromatic polynomial, which can be obtained as the graded Euler characteristic of the chromatic homology. In other terms, we can say that the chromatic homology is a categorification of the chromatic polynomial. This holds, of course, for a specific choice of the (commutative) algebra $A$ (e.g.~it must be graded or filtered, and its graded dimension should be the chromatic polynomial of a vertex). The first question is: are there natural choices of the algebra $A$ such that the appropriate Euler characteristic of $C_{\mu}^*(\tG;A)$ is a known invariant of the graph $\tG$? In general, what are the combinatorial properties of the graded Euler characteristic of the multipath cohomology of a graph with coefficients in a graded algebra?
\end{q}

\begin{q}[Relationship with Turner-Wagner theory]\label{q:TW-multipath}
We showed that the chromatic homology and the multipath cohomology, when both are defined (i.e.~$A$ commutative), fit into a long exact sequence. Does it exist a long exact sequence, or a spectral sequence, featuring both the multipath and Turner-Wagner homologies with general coefficients?
\end{q}

\begin{q}[Spectral sequences and applications]
A very interesting and deep feature of Khovanov homology is that it admits a spectral sequence which abuts to a very simple homology called Lee homology~\cite{Lee}. 
From this and similar spectral sequences one can extract numerical invariants with interesting applications to low-dimensional topology and knot theory. 
More importantly, these spectral sequences provide structural information on Khovanov homology.
Chromatic homology mimics Khovanov homology. Hence, it is not surprising to find similar spectral sequences and invariants in the context of chromatic homology \cite{leeforchromatic}.  
Are there similar spectral sequences for multipath cohomology? If yes, which kind of information can be extracted from them?
\end{q}

\begin{q}[Persistent multipath cohomology]\label{q:persistent}
Persistent Homology \cite{elz, ez} is nowadays one of the main tools adopted in Topological Data Analysis, with applications in several domains. 
One usually starts with a fixed number of data points, joined by (weighted) edges representing the connections between them. These edges are typically added gradually; that is, we have a filtration of the resulting (unoriented) graph $\tG$. This filtration is a sequence $\tG_{0} \subset \dots\subset \tG_{n}$ of spanning sub-graphs of~$\tG$.
 Then, one uses the functorial properties of the classical homology to obtain information in the form of persistent homology groups. Within this framework, one usually works with unoriented graphs, but in concrete applications graphs are often directed; it is also interesting to compare the undirected versus the directed information {(cf.~\cite{neuro})}.  
Multipath cohomology is  a cohomology theory of directed graphs and it is functorial with respect to morphisms of digraphs with same number of vertices.
It is hence natural to define a persistent multipath cohomology for filtrations of digraphs.
Which information of the input data can multipath cohomology capture?  How does it compare with the analysis using unoriented graphs?  
\end{q}

\bibliographystyle{alpha}
\bibliography{bibliography}

\newpage

\appendix

\section{Proof of Lemma \ref{lem:signassignamet}}\label{app:lemmata}

\begin{lem}
	The function $\sigma_{\rm e}$ in Equation~\eqref{eq:sigma_e} gives a sign assignment on $P(\tG)$.
\end{lem}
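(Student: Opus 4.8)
The plan is to check Equation~\eqref{eq:signassign} directly on every square of $P(\tG)$. Recall that a square here is given by a multipath $\tH$ and two edges $e_1,e_2\in E(\tG)\setminus E(\tH)$ for which $\tH\cup e_1$, $\tH\cup e_2$ and $\tH\cup\{e_1,e_2\}$ are again multipaths, the four corners being $\tH\prec \tH\cup e_1,\,\tH\cup e_2\prec \tH\cup\{e_1,e_2\}$. First I would record the uniform rewriting
\[ \sigma_{\rm e}(\tH,\tH\cup e)\equiv \max\{s(e,\tH),t(e,\tH)\}+\chi_e \pmod 2, \]
where $\chi_e=1$ if $t(e,\tH)>s(e,\tH)$ and $\chi_e=0$ otherwise; this is legitimate because the two endpoints of $e$ always lie in \emph{distinct} components of $\tH$ (otherwise $\tH\cup e$ would not be a multipath), so the two clauses of~\eqref{eq:sigma_e} are exhaustive and mutually exclusive.

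The second ingredient is the reindexing rule~\eqref{eq:identification_components}: passing from $\tH$ to $\tH\cup e_1$ collapses the two components merged by $e_1$ to the single index $\min\{s(e_1,\tH),t(e_1,\tH)\}$, leaves unchanged every other component of index below $j_1:=\max\{s(e_1,\tH),t(e_1,\tH)\}$, and lowers by one every component of index above $j_1$. I would use this to express $s(e_2,\tH\cup e_1)$ and $t(e_2,\tH\cup e_1)$, and symmetrically the indices of $e_1$ in $\tH\cup e_2$, in terms of the data in $\tH$.

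With these two observations the proof splits into two cases. In Case~B, where $e_1$ and $e_2$ are incident to four \emph{distinct} components of $\tH$, neither endpoint of $e_2$ collapses when $e_1$ is added: each merely shifts, the relative order of $s(e_2,\tH)$ and $t(e_2,\tH)$ is preserved (so $\chi_{e_2}$ is unchanged), and $\max\{s(e_2,\tH\cup e_1),t(e_2,\tH\cup e_1)\}=j_2-[\,j_2>j_1\,]$ with $j_2:=\max\{s(e_2,\tH),t(e_2,\tH)\}$. Feeding this into the two sides of~\eqref{eq:signassign} makes all the $\max$- and $\chi$-terms cancel, leaving the difference $[\,j_1>j_2\,]+[\,j_2>j_1\,]$; since the four indices are distinct we have $j_1\neq j_2$, so this equals $1$, which is exactly the required discrepancy. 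This clean cancellation is the source of the $+1$ in the sign-assignment identity.

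The main obstacle is Case~A, where $e_1$ and $e_2$ share a component and together touch only three components of $\tH$: now one endpoint of $e_2$ genuinely \emph{collapses} into the component merged by $e_1$, rather than merely shifting. Here I would enumerate the sub-configurations according to the linear order of the three involved components and to which endpoints of the shared component the two edges attach --- these are precisely the configurations recorded in Figures~\ref{fig: subcases A} and~\ref{fig: subcases B}, subject to the constraint that $\tH\cup\{e_1,e_2\}$ be a genuine simple path. In each sub-case one recomputes the $\max$- and $\chi$-terms after the collapse and verifies~\eqref{eq:signassign} by hand; tracking how the maximum and the parity $\chi_{e_2}$ change under the collapse (controlled by the number of components lying strictly between the two collapsing indices) is the delicate bookkeeping, but in every configuration the asymmetry between the orders $e_1$-then-$e_2$ and $e_2$-then-$e_1$ contributes exactly one unit modulo $2$. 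Collecting all cases yields~\eqref{eq:signassign} for every square, proving that $\sigma_{\rm e}$ is a sign assignment.
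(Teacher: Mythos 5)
Your overall skeleton is exactly the paper's: check Equation~\eqref{eq:signassign} square by square, where a square amounts to adding two edges $e_1,e_2$ to a multipath $\tH$, and split according to whether the two edges touch three components of $\tH$ (the paper's Case \textbf{(A)}) or four (Case \textbf{(B)}). Within that shared structure, your handling of the four-component case is a genuine improvement. The rewriting $\sigma_{\rm e}(\tH,\tH\cup e)\equiv\max\{s(e,\tH),t(e,\tH)\}+\chi_e \pmod 2$ is a correct and clean reformulation of Equation~\eqref{eq:sigma_e} (and your justification that the two clauses are exhaustive is right: both endpoints in one component would create a cycle or a branching, so $\tH\cup e$ would not be a multipath). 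Since the reindexing map of Equation~\eqref{eq:identification_components} is strictly increasing on the surviving indices, your claims that $\chi_{e_2}$ is preserved and that the maximum becomes $j_2-[\,j_2>j_1\,]$ are correct, and the cancellation leaving $[\,j_1>j_2\,]+[\,j_2>j_1\,]=1$ settles Case B in one computation. The paper instead reduces twelve configurations to six by an orientation-reversal symmetry and tabulates them (Figure~\ref{fig: subcases B} and Table~\ref{tab: Case B edges}); your argument replaces that table entirely.

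The weakness is Case A, which is precisely where the paper invests its effort (the six coherently-oriented configurations of Figure~\ref{fig: subcases A} and the computations of Table~\ref{tab: Case A edges}). You correctly diagnose why your uniform formula breaks there -- one endpoint of $e_2$ collapses into the component merged by $e_1$, rather than merely shifting -- and you correctly constrain the enumeration (orientations must be coherent, no cycles allowed). But you never carry out the resulting computations: you assert that ``in every configuration the asymmetry contributes exactly one unit modulo 2.'' That assertion happens to be true -- for instance, in the paper's subcase (a) the two sides come out as $(j+1)+k$ and $(k+1)+(j+1)+1$, which indeed differ by $1$ modulo $2$ -- but as written your proposal proves half the lemma and postulates the other half, so it is not yet a complete proof; the finite check must actually be done, as the paper does. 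Two smaller remarks: the three-component configurations are those of Figure~\ref{fig: subcases A} alone (Figure~\ref{fig: subcases B} depicts the four-component case you already handled), and it would strengthen the write-up to note explicitly that in Case A the collapsed endpoint always lands on the index $\min\{s(e_1,\tH),t(e_1,\tH)\}$, since that is the one piece of bookkeeping that drives every sub-case.
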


\begin{proof}
Consider a square~$\tH\prec \tH'_1,\tH'_2 \prec \tH''$ in $P(\tG)$.
Then, there exist two edges $e_1$ and $e_2$ of $\tG$ such that $\tH'_1 = \tH \cup e_1$, $\tH'_2 = \tH \cup e_2$, and $\tH'' = \tH'_2\cup e_1  = \tH'_1 \cup e_2 $ (cf.~Example~\ref{exa:sub-graphs posets} and Figure~\ref{fig: square}).
\begin{figure}[h]
	\centering
	\begin{tikzpicture}
		\node (a) at (0,0) {};
		\node (b) at (1,.866) {};
		\node (c) at (1,-.866) {};
		\node (d) at (2,0) {};

		\node[left] at (0,0) {$\tH$};
		\node[above] at (1,.866) {$\tH'_1 = \tH \cup e_1$};
		\node[below] at (1,-.866) {$\tH'_2 = \tH \cup e_2$};
		\node[right] at (2,0) {$\tH'' = \tH'_2\cup e_1 $};
		\node[right] at (2,-.5) {$ \phantom{\tH''}= \tH'_1 \cup e_2 $};

		\draw[fill] (a) circle (.05) ;
		
		\draw[fill] (b) circle (.05) ;
		
		\draw[fill] (c) circle (.05) ;
		
		\draw[fill] (d) circle (.05) ;
		
		\draw[-latex] (a) -- (b);
		\draw[-latex] (a) -- (c);
		\draw[-latex] (b) -- (d);
		\draw[-latex] (c) -- (d);

	\end{tikzpicture}
	\caption{A square in $P(\tG)$: four multipaths such that $\tH\prec \tH'_1,\tH'_2 \prec \tH''$. }
	\label{fig: square}
\end{figure}
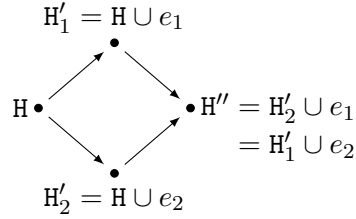

The proof is split in cases, according to the number of components of $\tH$ which are merged by adding  the edges $e_1$ and $e_2$. First, adding both $e_1$ and $e_2$ to $\tH$ decreases the number of connected components by at most two.
Second, the result of the addition of $e_1$ and $e_2$ must still be a multipath -- submultipath of $\tH''$ to be precise.  In particular, {observe that cycles are not allowed.}  It follows that 
there are two cases:

\begin{enumerate}[label = {\bf (\Alph*)}]
\item\label{item: A} three connected components of $\tH$ merge into a single  connected component of $\tH''$;
\item\label{item: B}  four connected components of $\tH$ merge into two connected components of $\tH''$;
\end{enumerate}

All cases are divided into subcases depending on the indices of the components involved (to be more precise, on the relative order of said indices), and on the orientations of the edges $e_1$ and $e_2$ -- see Figures~\ref{fig: subcases A} and \ref{fig: subcases B}. We now proceed with the core of the proof.

\begin{enumerate}[label = {\bf (\Alph*)}]
\item Three connected components, $c_i$, $c_j$, and $c_k$, of the multipath $\tH$ are merged into a single component of $\tH''$. Without loss of generality, up to a permutation of the labels of the components, we may assume that $i<j<k$. 
Note that  $e_1$ and  $e_2$  cannot be incident to the same pair of components otherwise $\tH''$ would contain a loop.
We  have six subcases {in total} -- cf.~Figure~\ref{fig: subcases A}. 
Since the result of merging the components $c_i$, $c_j$, and $c_k$ must be a {unique simple} path, the orientations of $e_1$ and $e_2$ must be coherent; that is, the source of an edge has to be the target of the previous one in the resulting path, and the edges $e_1$ and $e_2$ can not have same sources or targets -- e.g.~if the  {source} of $e_1$ lies in  $c_k$, then the  {source} of $e_2$ cannot lie in $c_k$.
We report in Table~\ref{tab: Case A edges} the result of the computation of the signs of $\sigma_{\rm e}$ in this case.

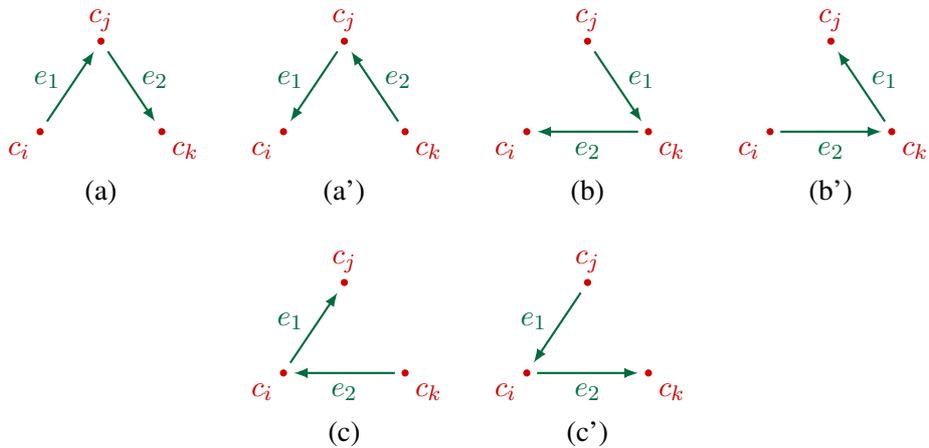
\begin{figure}[h]
\centering
\begin{tikzpicture}[scale =.8]


\node (a) at (0,0) {};
\node[below left, bunired] at (0,0) {$c_i$};
\draw[fill,bunired] (a) circle (.05);

\node (b) at (1,1.5) {};
\node[above, bunired] at (1,1.5) {$c_j$};
\draw[fill,bunired] (b) circle (.05);

\node (c) at (2,0) {};
\node[below right, bunired] at (2,0) {$c_k$};
\draw[fill,bunired] (c) circle (.05);

\node[cdgreen,  left] at (.5,.866) {$e_1$};
\node[cdgreen,  right] at (1.5,.866) {$e_2$};

\draw[cdgreen, -latex, thick] (a) -- (b);
\draw[cdgreen, -latex,thick ] (b) -- (c);

\node at (1,-1) {(a)};


\begin{scope}[shift ={+(4,0)}]
\node (a) at (0,0) {};
\node[below left, bunired] at (0,0) {$c_i$};
\draw[fill,bunired] (a) circle (.05);

\node (b) at (1,1.5) {};
\node[above, bunired] at (1,1.5) {$c_j$};
\draw[fill,bunired] (b) circle (.05);

\node (c) at (2,0) {};
\node[below right, bunired] at (2,0) {$c_k$};
\draw[fill,bunired] (c) circle (.05);

\node[cdgreen,  left] at (.5,.866) {$e_1$};
\node[cdgreen,  right] at (1.5,.866) {$e_2$};

\draw[cdgreen, -latex, thick] (c) -- (b);
\draw[cdgreen, -latex,thick ] (b) -- (a);

\node at (1,-1) {(a')};
\end{scope}


\begin{scope}[shift ={+(8,0)}]

\node (a) at (0,0) {};
\node[below left, bunired] at (0,0) {$c_i$};
\draw[fill,bunired] (a) circle (.05);

\node (b) at (1,1.5) {};
\node[above, bunired] at (1,1.5) {$c_j$};
\draw[fill,bunired] (b) circle (.05);

\node (c) at (2,0) {};
\node[below right, bunired] at (2,0) {$c_k$};
\draw[fill,bunired] (c) circle (.05);

\node[cdgreen,  right] at (1.5,.866) {$e_1$};
\node[cdgreen, below] at (1,.0) {$e_2$};

\draw[cdgreen, -latex, thick] (c) -- (a);
\draw[cdgreen, -latex,thick ] (b) -- (c);

\node at (1,-1) {(b)};


\begin{scope}[shift ={+(4,0)}]
\node (a) at (0,0) {};
\node[below left, bunired] at (0,0) {$c_i$};
\draw[fill,bunired] (a) circle (.05);

\node (b) at (1,1.5) {};
\node[above, bunired] at (1,1.5) {$c_j$};
\draw[fill,bunired] (b) circle (.05);

\node (c) at (2,0) {};
\node[below right, bunired] at (2,0) {$c_k$};
\draw[fill,bunired] (c) circle (.05);

\node[cdgreen,  right] at (1.5,.866) {$e_1$};
\node[cdgreen, below] at (1,.0) {$e_2$};

\draw[cdgreen, -latex, thick] (c) -- (b);
\draw[cdgreen, -latex,thick ] (a) -- (c);

\node at (1,-1) {(b')};
\end{scope}
\end{scope}


\begin{scope}[shift ={+(4,-4)}]

\node (a) at (0,0) {};
\node[below left, bunired] at (0,0) {$c_i$};
\draw[fill,bunired] (a) circle (.05);

\node (b) at (1,1.5) {};
\node[above, bunired] at (1,1.5) {$c_j$};
\draw[fill,bunired] (b) circle (.05);

\node (c) at (2,0) {};
\node[below right, bunired] at (2,0) {$c_k$};
\draw[fill,bunired] (c) circle (.05);

\node[cdgreen,  left] at (.5,.866) {$e_1$};
\node[cdgreen, below] at (1,.0) {$e_2$};

\draw[cdgreen, -latex, thick] (a) -- (b);
\draw[cdgreen, -latex,thick ] (c) -- (a);

\node at (1,-1) {(c)};


\begin{scope}[shift ={+(4,0)}]
\node (a) at (0,0) {};
\node[below left, bunired] at (0,0) {$c_i$};
\draw[fill,bunired] (a) circle (.05);

\node (b) at (1,1.5) {};
\node[above, bunired] at (1,1.5) {$c_j$};
\draw[fill,bunired] (b) circle (.05);

\node (c) at (2,0) {};
\node[below right, bunired] at (2,0) {$c_k$};
\draw[fill,bunired] (c) circle (.05);

\node[cdgreen,  left] at (.5,.866) {$e_1$};
\node[cdgreen, below] at (1,.0) {$e_2$};

\draw[cdgreen, latex-, thick] (a) -- (b);
\draw[cdgreen, latex-,thick ] (c) -- (a);

\node at (1,-1) {(c')};
\end{scope}

\end{scope}


\end{tikzpicture}
\caption{A schematic description of the subcases of Case A. Note that, since the merge of the components $c_i$, $c_j$, and $c_k$ must be a path, all possible orientations of $e_1$ and $e_2$ are precisely those illustrated.}
\label{fig: subcases A}
\end{figure}

\begin{table}[h]
\begin{tabular}{c|cc|cc}
subcase & $\sigma_{\rm e}(\tH,\tH'_1)$ &  $\sigma_{\rm e}(\tH'_1,\tH'')$ &$ \sigma_{\rm e}(\tH,\tH'_2)$ & $\sigma_{\rm e}(\tH'_2,\tH'')$ \\
\hline\hline
(a) &  $j + 1$ &  $k$ & $k + 1 $ & $j + 1$ \\
(a') &  $j$ &  $k -1 $ & $k $ & $j$ \\
(b) &  $k + 1$ &  $j$ & $k $ & $j$ \\
(b') &  $k$ &  $j+1$ & $k+1 $ & $j+1$ \\
(c) &  $j + 1$ &  $k - 1$ & $k$ & $j + 1$\\%
(c') &  $j$ &  $k$ & $k +1$ & $j$ \\%
\end{tabular}
\caption{Computations for all subcases of case  {\bf(A)}
}\label{tab: Case A edges}
\end{table}
\item Four connected components of $\tH$, say $c_i$, $c_j$, $c_k$ and $c_h$, are pairwise merged to obtain exactly  two connected  components of $\tH''$.  
Without loss of generality we may assume $i<j<k<h$.
We have twelve relevant cases, but we can reduce them to six;
in fact, a change in the orientation of the edges induces a change of the parity of the  index. 
As a consequence, a simultaneous change in the orientations of both $e_1$ and $e_2$ 
affect our computation by a global sign.
All six cases are shown in Figure \ref{fig: subcases B} and the results are summarised in Table~\ref{tab: Case B edges}.
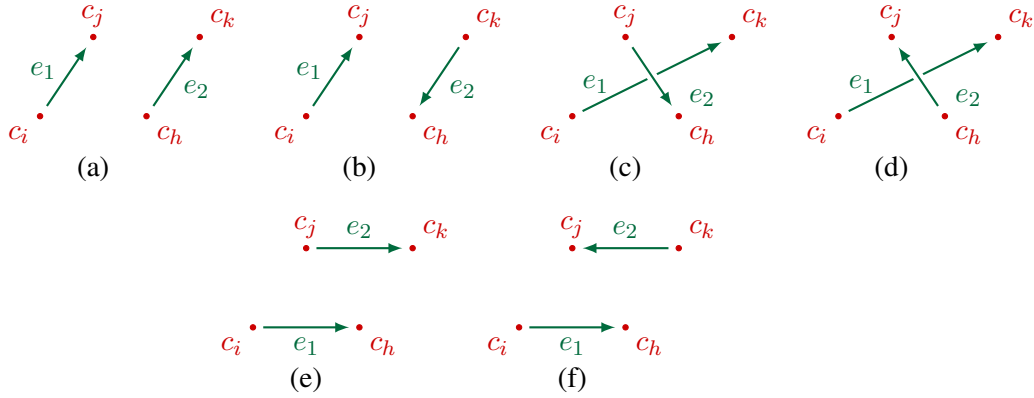
\begin{figure}[h]
\centering
\begin{tikzpicture}[scale =.7]


\node (a) at (0,0) {};
\node[below left, bunired] at (0,0) {$c_i$};
\draw[fill,bunired] (a) circle (.05);

\node (b) at (1,1.5) {};
\node[above, bunired] at (1,1.5) {$c_j$};
\draw[fill,bunired] (b) circle (.05);

\node (c) at (2,0) {};
\node[below right, bunired] at (2,0) {$c_h$};
\draw[fill,bunired] (c) circle (.05);

\node (d) at (3,1.5) {};
\node[above right, bunired] at (3,1.5) {$c_k$};
\draw[fill,bunired] (d) circle (.05);

\node[cdgreen,  left] at (.5,.866) {$e_1$};
\node[cdgreen,below  right] at (2.5,.866) {$e_2$};

\draw[cdgreen, -latex, thick] (a) -- (b);
\draw[cdgreen, -latex,thick ] (c) -- (d);

\node at (1,-1) {(a)};


\begin{scope}[shift ={+(5,0)}]

\node (a) at (0,0) {};
\node[below left, bunired] at (0,0) {$c_i$};
\draw[fill,bunired] (a) circle (.05);

\node (b) at (1,1.5) {};
\node[above, bunired] at (1,1.5) {$c_j$};
\draw[fill,bunired] (b) circle (.05);

\node (c) at (2,0) {};
\node[below right, bunired] at (2,0) {$c_h$};
\draw[fill,bunired] (c) circle (.05);

\node (d) at (3,1.5) {};
\node[above right, bunired] at (3,1.5) {$c_k$};
\draw[fill,bunired] (d) circle (.05);

\node[cdgreen,  left] at (.5,.866) {$e_1$};
\node[cdgreen,below  right] at (2.5,.866) {$e_2$};

\draw[cdgreen, -latex, thick] (a) -- (b);
\draw[cdgreen, -latex,thick ] (d) -- (c);

\node at (1,-1) {(b)};
\end{scope}


\begin{scope}[shift ={+(10,0)}]
\node (a) at (0,0) {};
\node[below left, bunired] at (0,0) {$c_i$};
\draw[fill,bunired] (a) circle (.05);

\node (b) at (1,1.5) {};
\node[above, bunired] at (1,1.5) {$c_j$};
\draw[fill,bunired] (b) circle (.05);

\node (c) at (2,0) {};
\node[below right, bunired] at (2,0) {$c_h$};
\draw[fill,bunired] (c) circle (.05);

\node (d) at (3,1.5) {};
\node[above right, bunired] at (3,1.5) {$c_k$};
\draw[fill,bunired] (d) circle (.05);

\node[cdgreen, above  right] at (0,0.25) {$e_1$};
\node[cdgreen,above  right] at (2,0) {$e_2$};

\draw[cdgreen, -latex, thick] (a) -- (d);
\draw[white, line width =4 ] (c) -- (b);
\draw[cdgreen, -latex,thick ] (b) -- (c);

\node at (1,-1) {(c)};


\begin{scope}[shift ={+(5,0)}]

\node (a) at (0,0) {};
\node[below left, bunired] at (0,0) {$c_i$};
\draw[fill,bunired] (a) circle (.05);

\node (b) at (1,1.5) {};
\node[above, bunired] at (1,1.5) {$c_j$};
\draw[fill,bunired] (b) circle (.05);

\node (c) at (2,0) {};
\node[below right, bunired] at (2,0) {$c_h$};
\draw[fill,bunired] (c) circle (.05);

\node (d) at (3,1.5) {};
\node[above right, bunired] at (3,1.5) {$c_k$};
\draw[fill,bunired] (d) circle (.05);

\node[cdgreen, above  right] at (0,0.25) {$e_1$};
\node[cdgreen,above  right] at (2,0) {$e_2$};

\draw[cdgreen, -latex, thick] (a) -- (d);
\draw[white, line width =4 ] (c) -- (b);
\draw[cdgreen, -latex,thick ] (c) -- (b);

\node at (1,-1) {(d)};
\end{scope}
\end{scope}


\begin{scope}[shift ={+(4,-4)}]

\node (a) at (0,0) {};
\node[below left, bunired] at (0,0) {$c_i$};
\draw[fill,bunired] (a) circle (.05);

\node (b) at (1,1.5) {};
\node[above, bunired] at (1,1.5) {$c_j$};
\draw[fill,bunired] (b) circle (.05);

\node (c) at (2,0) {};
\node[below right, bunired] at (2,0) {$c_h$};
\draw[fill,bunired] (c) circle (.05);

\node (d) at (3,1.5) {};
\node[above right, bunired] at (3,1.5) {$c_k$};
\draw[fill,bunired] (d) circle (.05);

\node[cdgreen,  below] at (1,0) {$e_1$};
\node[cdgreen,above] at (2,1.5) {$e_2$};

\draw[cdgreen, -latex, thick] (a) -- (c);
\draw[cdgreen, -latex,thick ] (b) -- (d);

\node at (1,-1) {(e)};


\begin{scope}[shift ={+(5,0)}]

\node (a) at (0,0) {};
\node[below left, bunired] at (0,0) {$c_i$};
\draw[fill,bunired] (a) circle (.05);

\node (b) at (1,1.5) {};
\node[above, bunired] at (1,1.5) {$c_j$};
\draw[fill,bunired] (b) circle (.05);

\node (c) at (2,0) {};
\node[below right, bunired] at (2,0) {$c_h$};
\draw[fill,bunired] (c) circle (.05);

\node (d) at (3,1.5) {};
\node[above right, bunired] at (3,1.5) {$c_k$};
\draw[fill,bunired] (d) circle (.05);

\node[cdgreen,  below] at (1,0) {$e_1$};
\node[cdgreen,above] at (2,1.5) {$e_2$};

\draw[cdgreen, -latex, thick] (a) -- (c);
\draw[cdgreen, -latex,thick ] (d) -- (b);

\node at (1,-1) {(f)};
\end{scope}

\end{scope}
\end{tikzpicture}
\caption{A schematic description of the subcases of Case {\bf (B)} up to a global change in the orientations of $e_1$ and $e_2$.}
\label{fig: subcases B}
\end{figure}

\begin{table}[h]
\begin{tabular}{c|cc|cc}
subcase & $\sigma_{\rm e}(\tH,\tH'_1)$ &  $\sigma_{\rm e}(\tH'_1,\tH'')$ &$ \sigma_{\rm e}(\tH,\tH'_2)$ & $\sigma_{\rm e}(\tH'_2,\tH'')$ \\
\hline\hline
(a) &  $j + 1$ &  $k$ & $k + 1 $ & $j + 1$ \\
(b) &  $j + 1$ &  $k-1$ & $k $ & $j+1$ \\
(c) &  $k + 1$ &  $h$ & $h+1$ & $k+ 1$\\%
(d) &  $k + 1$ &  $h-1$ & $h$ & $k +1$ \\%
(e) &  $h + 1$ &  $ k +1$ & $k+1$ & $h$ \\%
(f) &  $h+1$ &  $k$ & $k$ & $h$ \\%
\end{tabular}
\caption{Computations for all relevant subcases of case  {\bf(B)} 
}\label{tab: Case B edges}
\end{table}

\end{enumerate}

It follows from {\bf (A)} and {\bf (B)} that $\sigma_{\rm e}$ is a sign assignment on $P(\tG)$, which concludes the proof.
\end{proof}

%
%
%

\end{document}